\documentclass[opre]{informs3}

\allowdisplaybreaks

\RequirePackage{tgtermes}
\RequirePackage{newtxtext}
\RequirePackage{newtxmath}
\RequirePackage{endnotes}
\usepackage{changepage}

\OneAndAHalfSpacedXI 

\usepackage{amsmath,amsfonts,cancel}
\usepackage{pgfplots} 
\usepackage{adjustbox}
\usepackage[shortlabels]{enumitem}
\usepackage{algorithm}
\usepackage[noend]{algpseudocode}
\usepackage{mathtools}
\usepackage{dsfont}
\usepackage{boldline} 
\usepackage{tcolorbox}
\usepackage{booktabs}
\usepackage{xspace}
\usepackage{subcaption}
\usepackage{tensor}
\usepackage{natbib}

\algnewcommand{\Inputs}[1]{  
  \State \textbf{Inputs:}
  \Statex \hspace*{\algorithmicindent}\parbox[t]{.8\linewidth}{\raggedright #1}
}
\algnewcommand{\Initialize}[1]{
  \State \textbf{Initialize:}
  \Statex \hspace*{\algorithmicindent}\parbox[t]{.8\linewidth}{\raggedright #1}
}
\algrenewcommand\algorithmicloop{}  
\newcommand{\Begin}{\Loop}
\newcommand{\End}{\EndLoop}

\usepackage{placeins}
\usepackage[colorlinks=true,breaklinks=true,bookmarks=true,urlcolor=magenta,citecolor=magenta,linkcolor=magenta,bookmarksopen=false,draft=false]{hyperref}
\usepackage[capitalize]{cleveref}

\usepackage{thmtools}
\usepackage{thm-restate}

\usepackage{etoolbox}
\makeatletter
\@ifundefined{theoremstyle}{
  \usepackage{amsthm}
  \newtheorem{lemma}{Lemma}
  
  \newtheorem{definition}{Definition}

  \theoremstyle{definition}
  \newtheorem{remark}{Remark}
  \newtheorem{assumption}{Assumption}
}{}
\makeatother
\newenvironment{rproofof}[1]{ \ifdefined\opre \begin{proof}{\it Proof of #1.}
\else \begin{proof}[Proof of #1] \fi }{ \ifdefined\opre 
\Halmos \end{proof} \else \end{proof} \fi  }

\theoremstyle{TH}%

\usepackage[table]{xcolor}
\definecolor{royalblue}{RGB}{54,97,223}
\definecolor{nupurple}{RGB}{64,31,104}
\definecolor{kulblue}{RGB}{36,146,178}
\definecolor{indigoblue}{RGB}{110,141,237}
\definecolor{lightred}{RGB}{255,110,103}
\definecolor{darkorange}{RGB}{205,144,50}
\definecolor{darkcyan}{RGB}{57,167,208}

\usepackage{color-edits}
\addauthor{srs}{orange}
\addauthor{nha}{teal}
\addauthor{mu}{indigoblue}
\addauthor{or}{blue}

\newcommand{\Exp}[1]{\mathbb E \left[ #1 \right]} 

\renewcommand{\Pr}{\mathbb{P}}

\newcommand{\E}{\mathbb{E}}

\DeclarePairedDelimiter{\abs}{\lvert}{\rvert}

\renewcommand{\S}{\mathcal{S}}

\newcommand{\bfs}{\mathbf{s}}
\newcommand{\Hls}{H_{\textsf{LS}}}
\newcommand{\Hbl}{H_{\textsf{BL}}}
\newcommand{\bl}{\textsf{BL}}
\newcommand{\ls}{\textsf{LS}}
\newcommand{\lsl}{\textsf{LSL}}
\newcommand{\NSBLIC}{\textsf{\scalebox{0.9}{NSIC-BL}}\xspace}
\newcommand{\NSLSIC}{\textsf{\scalebox{0.9}{NSIC-LS}}\xspace}
\newcommand{\NSLSICL}{\textsf{\scalebox{0.9}{NSIC-LSL}}\xspace}
\newcommand{\ADSWITCH}{\textsf{\scalebox{0.9}{A}\scalebox{0.75}{D}\scalebox{0.9}{S}\scalebox{0.75}{WITCH}}\xspace}
\newcommand{\LAIS}{\textsf{\scalebox{0.9}{LAIS}}\xspace}
\newcommand{\SWUCRL}{\textsf{\scalebox{0.9}{SWUCRL2-CW}}\xspace}
\newcommand{\IOPEA}{\textsf{\scalebox{0.9}{IOPEA}}\xspace}
\newcommand{\OIOPEA}{\textsf{\scalebox{0.9}{Oracle IOPEA}}\xspace}
\newcommand{\SIOPEA}{\textsf{\scalebox{0.9}{Schedule IOPEA}}\xspace}
\newcommand{\MDPC}{\textsf{\scalebox{0.9}{MDP-C}}\xspace}
\newcommand{\OMDPC}{\textsf{\scalebox{0.9}{Oracle MDP-C}}\xspace}
\newcommand{\SMDPC}{\textsf{\scalebox{0.9}{Schedule MDP-C}}\xspace}

\newcommand{\calE}{\mathcal{E}}
\newcommand{\taugammaopt}{\tau_t^{*_\gamma}}
\newcommand{\Rb}{R_{\tau}}
\newcommand{\Tb}{\mathcal{T}}
\newcommand{\Cb}{6}
\newcommand{\A}{\mathcal{A}}
\newcommand{\Avk}[2]{\ensuremath{\mathcal{A}_{#1}^{#2}}}

\usepackage{multirow}

\crefname{assumption}{assumption}{assumptions}

\mathchardef\mhyphen="2D 
\ifdefined \argmin
\else \DeclareMathOperator*{\argmin}{arg\,min}
\fi
\ifdefined \argmax
\else \DeclareMathOperator*{\argmax}{arg\,max}
\fi

\DeclarePairedDelimiter{\ceil}{\lceil}{\rceil}

\let\originalleft\left
\let\originalright\right
\renewcommand{\left}{\mathopen{}\mathclose\bgroup\originalleft}
\renewcommand{\right}{\aftergroup\egroup\originalright}

\usepackage{graphicx}
\usepackage{csquotes}
\usepackage{graphics}
\usepackage{float}
\pgfplotsset{compat=1.18} 

\usepackage{tikz}
\usetikzlibrary{patterns}
\usetikzlibrary{positioning, angles, quotes, calc, arrows.meta} 
\usetikzlibrary{decorations.pathreplacing}
\usepgfplotslibrary{fillbetween}
\usepackage{subfiles} 

\usepackage{natbib}
 \bibpunct[, ]{(}{)}{,}{a}{}{,}%
 \def\bibfont{\small}%
\renewenvironment{proof}[1][Proof]{%
  \par\noindent{\itshape #1.} \ignorespaces
}{%
  \hfill\Halmos\par
}

\renewcommand{\paragraph}[1]{\subsubsection*{#1}}

\EquationsNumberedThrough    

\TheoremsNumberedThrough     
\ECRepeatTheorems  %

\MANUSCRIPTNO{} 

\begin{document}


\RUNAUTHOR{Amiri, Sinclair, and Udenio}

\RUNTITLE{Non-Stationary Inventory Control with Lead Times}

\TITLE{Non-Stationary Inventory Control with Lead Times}

\ARTICLEAUTHORS{%

\AUTHOR{Nele H. Amiri}
\AFF{Department of Decision Sciences and Information Management,
KU Leuven, \EMAIL{nelehelena.thomsen@kuleuven.be}}

\AUTHOR{Sean R. Sinclair}
\AFF{Department of Industrial Engineering and Management Sciences,
Northwestern University, \EMAIL{sean.sinclair@northwestern.edu}}

\AUTHOR{Maximiliano Udenio}
\AFF{Department of Decision Sciences and Information Management,
KU Leuven, \EMAIL{maxi.udenio@kuleuven.be}}
} 

\ABSTRACT{%
We study non-stationary single-item, periodic-review inventory control problems in which the demand distribution is unknown and may change over time. We analyze how demand non-stationarity affects learning performance across inventory models, including systems with demand backlogging or lost-sales, both with and without lead times. For each setting, we propose an adaptive online algorithm that optimizes over the class of base-stock policies and establish performance guarantees in terms of dynamic regret relative to the optimal base-stock policy at each time step. 
The algorithms leverage the convexity and one-sided feedback structure of inventory costs to enable counterfactual policy evaluation despite demand censoring. In backlogging systems and lost-sales models with zero lead time, our algorithms adapt to unknown demand changes while matching, up to logarithmic factors, the rates known for the corresponding stationary learning problems. In lost-sales systems with positive lead times, the combination of demand censoring and delayed replenishment restricts counterfactual policy evaluation and leads to weaker regret guarantees.
We complement the theoretical analysis with simulation results showing that our methods significantly outperform existing non-oracle benchmarks.
}%




\KEYWORDS{Inventory control, non-stationary demand, censored demand, reinforcement learning, regret analysis} 

\maketitle

\section{Introduction}  \label{sec:intro}
Data-driven inventory control has transformed large-scale production and supply chain systems by enabling firms to integrate ever-larger streams of data into their replenishment decisions. While inventory control has always relied on historical evidence, traditional approaches have been largely limited to periodic analysis of restricted datasets to parameterize static policies.
Leading retailers such as Amazon and Walmart now have the capability to process high-volume operational data to dynamically update replenishment decisions, enabling them to improve demand forecasting, reduce stockouts and overstock, and accelerate fulfillment~\citep{AmazonInventory, AmazonDelivery, Walmart}.

Despite this, much of the theoretical foundation for inventory control continues to assume that demand follows a fixed, stationary distribution~\citep{zhang2020closing,yuan2021marrying,agrawal2022learning,chen2024learning}, an assumption that is often violated in modern supply chains, where both endogenous and exogenous factors cause structural breaks in demand.
Such regime changes are difficult to anticipate, and even in hindsight it is often challenging to determine whether observed shifts are temporary or permanent.
Moreover, these structural breaks vary widely in their origin, magnitude, and duration. For instance, US online sales during the 2025 Thanksgiving holiday period reached \$44.2 billion, a 7.7\% year-over-year increase, with Black Friday alone experiencing a 10.4\% surge~\citep{Adobe, Mastercard}.  
Exogenous shocks can lead to similarly severe changes. Tightening export controls imposed by China in 2025, for instance, resulted in 40\% of European firms reporting delays in obtaining export licenses, prompting one in three to seek alternative sourcing options~\citep{EUChamberCommerceChina}.  These patterns illustrate the limitations of stationary demand models and highlight the need for adaptive inventory policies capable of responding to {\em unknown} repeated structural shifts in demand.

Under non-stationary demand, decision makers must continually adjust their replenishment policies while remaining robust to stochastic noise, and without knowing {\em if}, {\em when}, or {\em how} the underlying demand distribution has shifted. This challenge is exacerbated by the fact that the frequency and magnitude of changes are typically unknown, precluding any ability to tune learning or adaptation rates in advance. The particularities of inventory systems introduce additional difficulties. Observations may be censored in lost-sales settings, dynamically coupled across periods through inventory carry-over, and delayed due to lead times. With lead times, inventory available today reflects replenishment decisions made under past demand conditions, yet it is consumed by demand from the current regime. 

Much of the existing literature has focused on algorithms that perform optimally under stationary demand~\citep{huh2009asymptotic, huh2011adaptive, yuan2021marrying, agrawal2022learning, chen2024learning}, yet even infrequent deviations from stationarity can lead to substantial performance loss for such methods \citep{tunc2011cost}.
Recent work has started relaxing the stationarity assumption, but introduces other limitations. Some approaches assume that demand changes only infrequently~\citep{chen2021data}, or follows a cyclic pattern with known cycle length~\citep{gong2024bandits}. Others use forecast-based techniques, which separate prediction from control. These methods often degrade when historical data quickly becomes obsolete, and rarely provide strong performance guarantees \citep{theodorou2025forecast}. 
A separate line of work formulates non-stationary inventory control as a general reinforcement learning (RL) problem with finite state-action spaces, or assumes discrete demand~\citep{chen2021data, cheung2023nonstationary, mao2025model}. While broadly applicable, such formulations can scale poorly with lead times and do not exploit inventory-specific feedback structures.
These considerations motivate our study on the implications of non-stationary demand. Specifically, we address the following research questions.
\begin{itemize}
    \item {\em What is the performance cost of unknown, piecewise stationary demand in base-stock policy optimization, and when can adaptive algorithms match stationary learning rates without knowing the extent of non-stationarity in advance?}
    \item {\em How do demand censoring and lead times alter the feedback available for policy evaluation and change point detection?}
\end{itemize}

\subsection{Contributions}
We study single-item, periodic-review inventory control problems with continuous and time-varying demand. The latter is modeled as piecewise stationary with a non-stationarity measure~$S$ which denotes the unknown number of stationary segments over a time horizon~$T$.
Our analysis covers systems with demand backlogging and lost-sales, both with or without a deterministic lead time~$L$. We restrict attention to base-stock policies, which replenish inventory up to a target level after each review.
We measure performance by dynamic regret, defined as the cumulative difference between the stationary average cost of the employed base-stock policy and that of the optimal base-stock policy for the demand distribution in each time step.

\subsubsection*{Feedback Structures for Non-Stationary Base-Stock Learning.}
A central challenge in non-stationary inventory control is the tension between {\em policy evaluation} and {\em change point detection}. Inventory carry-over and lead times couple costs over time, thus accurately estimating the cost of a given replenishment policy requires long runs under a fixed policy. At the same time, effective detection of demand shifts requires frequent experimentation with alternative policies, since otherwise, changes in performance can be confounded with sampling noise.
We use counterfactual policy evaluation to reduce this tension. Building on information-sharing ideas from stationary inventory control and reinforcement learning~\citep{gong2024bandits,zhang2025information,chen2025managing}, we show that trajectories generated under one base-stock level can be reused to estimate the costs of other base-stock levels, with the scope of reuse depending on the inventory model (\cref{lemma:feedback_structure}). Under backlogging, this yields a full-feedback structure and naturally gives rise to a change detection and elimination algorithm. Under lost-sales, the feedback is one-sided, and with positive lead times it is valid only under more restricted policy sequences. These distinctions drive the different algorithmic designs and regret guarantees in the three settings.

\subsubsection*{Regret Lower Bound.} 
We begin by establishing an $\Omega (\sqrt{ST})$ dynamic regret lower bound for non-stationary inventory control with demand backlogging, or lost-sales with zero lead time, measured against the optimal base-stock policy at each time step (\cref{thm:lower_bound}). 
The bound holds even when the degree of non-stationarity~$S$ is known. Since this rate matches the minimax lower bounds for stationary settings (\cref{lemma:lower_bound_stationary} and \cite{zhang2020closing}), it shows that the need for adaptivity to switching demand does not increase the difficulty of learning in these settings when demand is effectively stationary.

\subsubsection*{Algorithms with Regret Guarantees.}
Building on these feedback structures, we develop online algorithms tailored to each inventory model. The algorithms optimize over the class of base-stock policies on the continuous interval~$[0,U]$, given an upper bound~$U$ on the optimal base-stock level. They employ a common structure of successive elimination combined with restarts triggered by detected changes in the cost function, allowing them to adapt to unknown and abrupt demand shifts without requiring prior knowledge of the number of stationary segments~$S$, except where explicitly stated. 
The backlogging algorithm leverages full feedback to detect changes using cost estimates for all base-stock levels. The lost-sales algorithms require additional mechanisms for change point detection to compensate for censored observations: targeted exploration when $L=0$, and a convexity-based method when $L>0$.
For the proposed algorithms, we establish regret guarantees in terms of dynamic regret, with polynomial dependence on the lead time and the size of the policy space~$[0,U]$. See \cref{table:regret_bounds} for a summary of our results.

\begin{table*}[!t]
\resizebox{\textwidth}{!}{
\begin{tabular}{c c c c c c}
\hline\up 
\textbf{Setting} & \textbf{Reference} & \textbf{Demand} & \multicolumn{2}{c}{\textbf{Non-Stat. Measure}} & \textbf{Dynamic Regret} \\ 
\hline\up 
\multirow{3}{*}{\shortstack{Backlogging \\ $L\geq 0$}}
& \multirow{2}{*}{\cite{gong2024bandits}}
& \multirow{2}{*}{\shortstack{continuous, \\ multi-product}}
& \multirow{2}{*}{$H$}
& \multirow{2}{*}{known}
& \multirow{2}{*}{$(L+1)UH^{5/2}T^{5/6}$} \\

& & & & & \\
                                                      & this paper & continuous & $S$ & unknown & $(L+1)\sigma S^{1/2}T^{1/2}$ \\
\specialrule{0.1pt}{\aboverulesep}{\belowrulesep}
\multirow{5}{*}{\begin{tabular}{c}\shortstack{Lost-sales \\ $L=0$}\end{tabular}} & \cite{chen2021data}$^\dagger$ & discrete & $S$ & unknown & $ST^{1/2}$ \\
                                    & \multirow{2}{*}{\cite{cheung2023nonstationary}$^\dagger$} & \multirow{2}{*}{discrete} & \multirow{2}{*}{$V$} & known & $K V^{1/3} T^{2/3}$ \\
                                    & & & & unknown & $K^{7/6}V^{1/4}T^{3/4}$ \\
                                    & \cite{gong2024bandits} & continuous & $H$ & known & $UH^{7/2}T^{1/2}$ \\
                                    & this paper & continuous & $S$ & unknown & $U^{3/2} S^{1/2} T^{1/2}$ \\
\specialrule{0.1pt}{\aboverulesep}{\belowrulesep}
\multirow{2}{*}{\shortstack{Lost-sales \\ $L>0$}} & \multirow{2}{*}{this paper} & \multirow{2}{*}{continuous} & \multirow{2}{*}{$S$} & known & $(L+1)^{2/3} U S^{1/3} T^{2/3}$ \\
& & & & unknown & $(L+1)^{2/3} U \min\{ S T^{2/3}, S^{1/2} T^{3/4} \}$\down \\ \hline
\end{tabular} 
}
\caption{Upper bounds on the dynamic regret (up to logarithmic factors) of non-stationary inventory control algorithms. Non-stationarity measures are number of switches~$S$, variation budget~$V$, and cycle length~$H$. The time horizon is~$T$, lead time~$L$, and $U$ and $K$ are inventory capacity (continuous demand) or the number of inventory levels (discrete demand), respectively. The variable $\sigma^2$ is the variance proxy for the sub-Gaussian demand distribution in the backlogging model. References marked with $^\dagger$ use a stronger regret benchmark. Our guarantees translate to this with the same order when $L=0$ or with knowledge of $S$ (\cref{app:regret_transformation}).}
\label{table:regret_bounds}
\end{table*}

\begin{itemize}
    \item {\em Demand Backlogging.}\; In this model, counterfactual feedback yields a full-information structure. The cost of any base-stock level can be estimated from data collected under any other level. This allows us to combine change point detection with successive elimination in a direct way. Our algorithm, \NSBLIC (\Cref{algo:backlog}), attains dynamic regret ${\tilde{\mathcal{O}}((L+1)\sqrt{ST})}$ without prior knowledge of $S$ (\cref{thm:regret_backlogging}).
    This rate matches the lower bound in $S$ and $T$ up to logarithmic factors (\cref{thm:lower_bound}), and in the stationary case when $S=1$, it recovers the corresponding $\sqrt{T}$ benchmark rate (\cref{lemma:lower_bound_stationary}). This highlights that our algorithm is robust to non-stationarity while remaining near-optimal when demand is stationary.
    \item {\em Lost-Sales, $L = 0$.}\; In the lost-sales model with instantaneous replenishment, demand censoring yields left-sided feedback where data from a higher base-stock level can be used to evaluate any lower level. 
    This asymmetry creates a difficulty for change detection, since previously eliminated high base-stock levels may become optimal after a demand shift. We design \NSLSIC (\Cref{algo:lost_sales}), which combines successive elimination with targeted sampling of the maximal base-stock level to monitor such changes.
    The algorithm achieves expected dynamic regret
    ${\tilde{\mathcal{O}}(U^{3/2}\sqrt{ST})}$ (\cref{thm:regret_LS_L_zero}).  This bound is again near-optimal in both $T$ and unknown $S$ (\cref{thm:lower_bound}), and it matches the rate for the stationary problem~\citep{zhang2020closing}, demonstrating robustness to non-stationarity at no cost to optimality when demand is stationary. This improves upon existing methods with discrete demand \citep{chen2021data}.
    
    \item {\em Lost-Sales, $L > 0$.}\;
    With positive lead times, the left-sided feedback becomes substantially more restrictive.  Counterfactual evaluation is valid only along non-decreasing sequences of base-stock levels. As a result, the targeted exploration strategy used when $L=0$ would repeatedly interrupt the sample paths needed for cost estimation. We design \NSLSICL (\Cref{algo:lost_sales_lead_time}), which instead uses convexity of the cost function together with left-sided feedback to detect relevant changes without explicitly sampling eliminated base-stock levels.
    When $S$ is known, \NSLSICL achieves regret ${\tilde{\mathcal{O}}((L+1)^{2/3} U S^{1/3} T^{2/3})}$, and ${\tilde{\mathcal{O}}((L+1)^{2/3} U \min\{ S T^{2/3}, S^{1/2} T^{3/4}\})}$ when $S$ is unknown (\cref{thm:regret_LS_L_positive}). These are, to the best of our knowledge, the first regret guarantees for non-stationary lost-sales systems with positive lead times.
\end{itemize}
Taken together, these results show how the feedback structure of the inventory model affects the regret guarantees for non-stationary base-stock learning. In the backlogging model and the zero-lead time lost-sales model, counterfactual feedback is sufficiently rich to obtain $\sqrt{ST}$-type regret rates, matching the corresponding lower bound up to logarithmic factors. In essence, there is {\em no inherent penalty for adaptivity to regime changes if demand is stationary}.  In the positive-lead time lost-sales model, censoring and delayed replenishment become more deeply intertwined.  Our algorithm addresses this by using convexity to replace explicit exploration of larger base-stock levels, but the resulting regret guarantees are weaker. Establishing matching lower bounds for this setting is an important open question.

\subsubsection*{Empirical Validation.} Finally, we demonstrate the strong practical performance of our algorithms via extensive computational experiments. Specifically, we empirically investigate how performance depends on (i) lead time $L$, (ii) (knowledge of) the number of changes $S$, (iii) demand censoring, and (iv) the use of counterfactual feedback.  We observe that our algorithms outperform strong benchmark methods, including baselines that have prior knowledge of the degree of non-stationarity or the change points.

\subsubsection*{Paper Organization.}
In the next section, we review the related literature. Our demand backlogging and lost-sales inventory control models are introduced in \cref{sec:model}. \cref{sec:problem} explores the difficulties of designing algorithms for these problems under non-stationary conditions and establishes the regret lower bound. Our algorithms and their theoretical regret guarantees are presented in \cref{sec:algorithm}. We demonstrate their numerical performance in the simulation experiments in \cref{sec:experiments}, and conclude in \cref{sec:conclusions}.
\section{Related Literature}  \label{sec:related_literature}

We discuss the lines of research most closely related to the study of non-stationary online learning and its applications in inventory control. For more background, we refer the reader to the books \cite{agarwal2019reinforcement, lattimore2020bandit} for multi-armed bandits, and \citet{sutton1998reinforcement, puterman2014markov, bertsekas2019reinforcement} for RL. Further details on inventory control models are in \citet{porteus1990stochastic, zipkin2000foundations}.

\paragraph{Non-Stationary Bandits.} 
Multi-armed bandits provide a canonical model for the exploration-exploitation trade-off in sequential decision-making. Most works assume a stationary environment, in which the objective is to compete with the best fixed arm. In non-stationary settings, however, the learner aims to minimize dynamic regret, comparing their performance against the best time-varying sequence of arms. This requires algorithms to carefully balance exploiting historical data with discarding information that has become outdated due to changes in the reward function.

One major line of research focuses on {\em switching bandits}, where the costs of $K$ arms may undergo abrupt changes, with at most $S$ piecewise stationary segments over the horizon~$T$ \citep{garivier2011upper}.  \citet{auer2002nonstochastic} show that any algorithm faces a dynamic regret of $\Omega(\sqrt{KST})$.  
Algorithms designed for switching bandits typically use either passive adaptation (through discounting or sliding windows)~\citep{kocsis2006discounted, garivier2011upper}, or active adaptation, combining stationary algorithms with explicit change point detection~\citep{hartland2007change,liu2018change,cao2019nearly,foussoul2023mnl,lindstaahl2023change,huang2025change}. Because the latter have shown better performance empirically~\citep{peng2024complexity}, the algorithms we propose follow an active-adaption paradigm.
While most algorithms require prior knowledge of $S$, an important exception is \citet{auer2019adaptively} who show that it is possible to attain the same regret without knowledge of the number of switches. The authors introduce \ADSWITCH, an arm-elimination algorithm that judiciously triggers exploration among all suboptimal arms to balance the regret incurred by detection delay with that incurred by exploitation.  \cite{besson2022efficient} obtain similar guarantees under a minimum separation condition on change points. We employ ideas from \ADSWITCH in the lost-sales setting when $L = 0$, where we additionally exploit the information feedback by exploring only the largest base-stock policy $U$.

A second major framework quantifies non-stationarity using a total variation budget $V$, which measures the cumulative change of the cost of any arm and thus captures drifting environments in which changes occur gradually over time~\citep{besbes2015non}.  \citet{besbes2014stochastic} show a minimax lower bound of $\Omega((KV)^{1/3}T^{2/3})$ and propose a near-optimal algorithm for the case of known $V$.  \citet{cheung2022hedging} introduce an algorithm designed for unknown variation budgets which, however, achieves sub-optimal regret $\tilde{\mathcal{O}}((KV)^{1/3} T^{2/3} + K^{1/4} T^{3/4})$.  Other variants of non-stationary bandits include rotting, restless, and Brownian bandits~\citep{levine2017rotting,whittle1988restless,seznec2020single,ortner2012regret,slivkins2008adapting}.

When extending beyond finitely many arms, infinite-armed bandits impose additional structure such as Lipschitz continuity or convexity. For example, \citet{nguyen2025non} establish regret lower bounds for Lipschitz continuous cost functions. In online convex optimization, \citet{besbes2015non} show that strongly convex losses with first-order feedback admit regret on the order of $\sqrt{VT}$, while \citet{wang2025adaptivity} prove that with only zeroth-order feedback or gradient feedback of convex functions the rate degrades to ${\Omega(V^{1/3}T^{2/3})}$.  For general convex functions under zeroth-order feedback, algorithms typically achieve regret on the order of $T^{3/4}$ \citep{flaxman2004online,zhao2021bandit}.  While these results apply to newsvendor problems with no inventory carry-over, they do not cleanly extend to the inventory control systems with dynamics we study.

\paragraph{Non-Stationary Reinforcement Learning.}
In reinforcement learning, the minimax regret lower bound for stationary, undiscounted reward Markov decision processes (MDPs) in the infinite horizon average cost setting is ${\Omega(\sqrt{D_{\max}\abs{\mathcal{S}}\abs{\mathcal{A}}T})}$, where $D_{\max}$ denotes the maximum diameter of the MDP, and $\mathcal{S}$ and $\mathcal{A}$ represent the state and action space, respectively \citep{auer2008near}.
To handle non-stationarity, techniques similar to those in the bandit literature have been adapted to RL \citep{auer2008near,gajane2018sliding,ortner2020variational}.  We note that these works typically measure regret using the expected cumulative cost incurred by the algorithm, whereas our main results use stationary average cost regret, see \cref{app:related_lit_discussion_avg_cost} for further discussion.
A notable contribution is the algorithm of \citet{cheung2023nonstationary}, similar to the bandit algorithm by \citet{cheung2022hedging}, which attains ${\tilde{\mathcal{O}}(D_{\max}\abs{\mathcal{S}}^{2/3} \abs{\mathcal{A}}^{1/2} V^{1/4} T^{3/4})}$ regret, where $V$ denotes the unknown variation budget in the reward and state transition function.
They further establish the lower bound ${\Omega(D_{\max}^{2/3}(\abs{\mathcal{S}}\abs{\mathcal{A}} V)^{1/3} T^{2/3})}$, showing that non-stationary MDPs are fundamentally harder than their stationary counterparts. 
In the episodic RL setting, \citet{mao2025model} follow similar ideas and obtain $\tilde{\mathcal{O}}((\abs{\mathcal{S}}\abs{\mathcal{A}}V)^{1/3} HT^{2/3} + (HT)^{3/4})$ regret, where $H$ is the episode length. \cite{wei2021non} introduce a black-box approach that is near-optimal in the episodic setting, but requires access to either $D_{\max}$ or the amount of variation when applied to the infinite horizon problem.

Beyond finite state and action spaces, \cite{domingues2021kernel} derive regret guarantees for non-stationary finite horizon RL with Lipschitz continuous cost functions of ${\tilde{\mathcal{O}}(H^2 \min\{ V^{1/3} (T/H)^{\frac{2d+2}{2d+3}}, V^{1/2} (T/H)^{\frac{2d+1}{2d+2}}\})}$, where $d$ is the covering dimension of the state-action space equipped with a metric, and the variation budget is known. This naturally covers inventory control problems, but the dimension $d$ is linear in the lead time, resulting in exponential in contrast to the polynomial dependence we establish.
Overall, existing approaches to non-stationary RL typically require either a known amount of non-stationarity, or finite state–action spaces. However, when applied to inventory control, the state space grows exponentially in the lead time.

\paragraph{Inventory Control with Non-Stationary Demand.}
Early work on inventory control under non-stationarity assumes that the demand process follows a specific parametric form, such as Markovian or autoregressive processes \citep{sethi1997optimality,graves1999single,treharne2002adaptive}.
A complementary stream adopts forecast-based frameworks, which serially apply time-series estimation techniques to inform subsequent replenishment decisions \citep{cao2019quantile,ren2024data}. These studies emphasize the tight interaction between demand forecasting methods and inventory control, showing how forecast bias, uncertainty, and model misspecification propagate into replenishment decisions under evolving demand patterns \citep{strijbosch2011interaction,goltsos2022inventory,babai2025fifty}. Recent contributions extend this using supervised and deep reinforcement learning methods to adapt policies directly from demand forecasts or other contextual information \citep{dehaybe2024deep,temizoz2025zero}. 
Our work contributes to the literature on fully data-driven, non-parametric methods with performance guarantees.

Inventory control models have been studied extensively under stationary demand \citep{huh2009asymptotic,huh2011adaptive,ban2020confidence,yuan2021marrying,chen2024learning,xie2026deepstock}.
In lost-sales models with either censored or uncensored demand, \citet{zhang2020closing} show that the static regret is lower bounded as $\Omega(\sqrt{T})$. Building on the stochastic convex optimization algorithm of \cite{agarwal2011stochastic}, \cite{agrawal2022learning} propose an online algorithm for lost-sales settings with a deterministic lead time~$L$ that achieves static regret ${\tilde{O}((L+1)U\sqrt{T})}$, where $U$ is an upper bound on the optimal base-stock level. Notably, unlike general RL-based approaches, this rate grows only linearly with the lead time.

Several works leverage the specific inventory feedback structures to improve learning efficiency. \citet{yuan2021marrying} utilize the one-sided feedback structure under censored demand to integrate stochastic gradient descent with bandit algorithms, while \citet{gong2020provably,gong2024bandits} apply these information structures to customize $Q$-learning for episodic and unknown cyclic demand patterns. \citet{chen2025managing} demonstrate that this information-sharing property remains valid even in systems with stochastic product returns. \citet{zhang2025information} provide a unifying framework for these concepts by formalizing information-ordered policies.
Our work builds on this line of research.  We use counterfactual policy evaluation and ordered feedback as algorithmic primitives, and focus on how these can be combined with change point detection to learn base-stock policies under unknown, piecewise stationary demand.

One recent step in this direction is provided by \citet{gong2024bandits}, who examine a non-discarding multi-product backlogging model with cyclic demand, positive lead time, and fixed joint-ordering cost. Their proposed algorithm obtains a regret of ${\tilde{\mathcal{O}}((L+1) U H^{5/2} T^{5/6})}$ where $H$ is the known cycle length. For lost-sales problems with zero lead time, they design an algorithm which achieves ${\tilde{\mathcal{O}}(H^{7/2} U T^{1/2})}$ regret. Their regret definition as well as the cyclic-demand model and multi-product structure are different from our setting, but the comparison highlights the role of inventory-specific feedback in non-stationary learning.

The lost-sales problem of gradually changing discrete demand with zero lead time and fixed order cost is studied by \cite{cheung2023nonstationary}. The authors propose a sliding window algorithm with confidence widening, which obtains a rate of ${\tilde{\mathcal{O}}(\abs{\mathcal{S}} V^{1/3} T^{2/3})}$ under a stronger regret benchmark where ${\abs{\mathcal{S}}}$ refers to the finite number of inventory levels, and the variation budget~$V$ is known.
In a finite horizon formulation with episode length $H$, \citet{mao2025model} establish a near-optimal dynamic regret of ${\tilde{\mathcal{O}}((\abs{\mathcal{S}}\abs{\mathcal{A}}V)^{1/3}HT^{2/3})}$ where $\mathcal{A}$ is the finite set of feasible policies and $V$ is known.
Finally, \citet{chen2021data} study inventory control under piecewise stationary discrete demand. 
While their algorithm nearly matches the ${\Omega(\sqrt{T})}$ lower bound, it relies on the assumption that the number of switches satisfies ${S=\mathcal{O}(\log(T))}$, and it does not extend to higher rates of change, continuous demand distributions, or models with lead time. When the frequency of switches is higher, it suffers regret linear in $S$.
Taken together, prior work has developed tools to leverage information feedback for stationary inventory optimization, and non-stationary learning without lead times.  Our contribution is to combine counterfactual policy evaluation with active change point detection for continuous demand base-stock learning under unknown piecewise stationary demand, and to show how the resulting guarantees differ across backlogging as well as lost-sales systems with and without lead time.
\section{Problem Formulation} \label{sec:model}

\paragraph{Technical Notation.} 
In what follows, we use ${[\cdot]^+ \coloneq \max \{ \cdot, 0 \}}$.  For any ${s, t \in \mathbb{N}^2}$ we set $[s,t] \coloneq \{s,\dots, t\}$ and ${[t] \coloneq [1,t]}$. We use $\Pr_F(\cdot)$ and $\E_F[\cdot]$ respectively to denote the probability of an event and the expectation of a random variable when the underlying randomness has a distribution function $F$. Lastly, we use $\mathcal{O}(\cdot)$ to denote rates omitting terms with no dependence on the number of time steps $T$.

\subsection{Model Setup}
We consider a non-stationary variant of classical single-item, periodic-review inventory control problems \citep{huh2009adaptive,huh2009asymptotic,zhang2020closing,agrawal2022learning}.
A retailer is faced with making ordering decisions $Q_t$ online over a period of steps ${t=1, \dots , T}$. At the beginning of each step $t$, the inventory manager observes the current inventory level $I_t$ before replenishment, as well as the $L$ previous unfulfilled orders in the pipeline, denoted ${Q_{t-L}, \dots , Q_{t-1}}$. Here, the integer ${L \in \mathbb{N}_0}$ is the deterministic lead time, or delay in the number of steps between placing an order and receiving it. We set the initial inventory vector ${(I_t, Q_{t-L}, \dots , Q_{t-1})}$ at $t=1$ to the zero vector.
At the beginning of each step, the decision maker picks an order quantity $Q_t$ to arrive at step $t+L$.
Subsequently, the order placed $L$ time steps earlier arrives. Next, an unobserved demand $D_t$ is generated independently from an unknown, potentially time-varying, distribution~$F_t$ with finite mean, and independent from the order decisions. Depending on whether we model backlogged systems or lost-sales, the next on-hand inventory level is given by
\begin{equation} \label{eq:on-hand_inventory_balance}
    I_{t+1} = \begin{cases}
              I_t + Q_{t-L} - D_t & \text{backlogging} \\
              [I_t + Q_{t-L} - D_t]^+ = I_t + Q_{t-L} - Y_t & \text{lost-sales},
              \end{cases}
\end{equation} 
where ${Y_t \coloneq \min\{I_t + Q_{t-L}, D_t\}}$ is the quantity sold in the lost-sales regime.
Under demand backlogging, we see that $I_{t+1}$ may be negative corresponding to unfulfilled orders. With lost-sales, unmet demand $[D_t - I_t - Q_{t-L}]^+$ is lost and censored, that is, only the sales $Y_t$ are observed.

Any understocked inventory ${[D_t - I_t - Q_{t-L}]^+}$ incurs a cost with a known non-negative parameter~$b$ and leftover inventory ${[I_t + Q_{t-L} - D_t]^+}$ incurs holding costs with known non-negative parameter~$h$. The immediate cost is the sum of holding and lost-sales cost. Due to demand censoring under lost-sales, the lost-sales cost is not observable.
Therefore, following \cite{agrawal2022learning} we define the pseudo cost incurred in time step $t$ via
\begin{align} \label{eq:cost_function}
    C_t &= h \cdot [I_t + Q_{t-L} - D_t]^+ + b \cdot [D_t - I_t - Q_{t-L}]^+ - b \cdot D_t \notag \\
        &= h \cdot (I_t + Q_{t-L} - Y_t) - b \cdot Y_t.
\end{align}
The excess inventory and sales in \cref{eq:cost_function} are, as opposed to the lost-sales in the immediate cost function, always observed. We note that the true cost equals the pseudo cost up to an additive $b \cdot D_t$ factor, and hence has no impact on algorithm performance or the average cost function.  We use this cost for theoretical analysis, but in the numerical results we present metrics measured on the true cost function.

\paragraph{Base-Stock Policies.}
The learning agent optimizes over time-varying base-stock policies, a well-known heuristic class which is asymptotically optimal under demand backlogging, and in lost-sales systems if the lead time is zero or the ratio of the lost-sales penalty parameter and the holding cost parameter goes to infinity \citep{karlin1958inventory,scarf1960optimality,huh2009asymptotic}.
At the beginning of each step $t$, the decision maker chooses a base-stock level~$\tau_t$. The order decision~$Q_t$ resulting from this brings the inventory position up to level~$\tau_t$, i.e., $Q_t$ is the non-negative difference between $\tau_t$ and the inventory on-hand plus on-order (less backlogs). Hence we have that $Q_t = [\tau_t - I_t - \sum_{i=1}^L Q_{t-i}]^+$ and the inventory update is given by
\vspace{-3pt}
\begin{equation}
     \label{eq:inventory_balance}
    (I_{t+1}, Q_{t-L+1}, \dots , Q_{t}) = \begin{cases}
              (I_t + Q_{t-L} - D_t, Q_{t-L+1}, \dots , Q_{t-1}, [\tau_t - I_t - \sum_{i=1}^L Q_{t-i}]^+) & \text{backlogging} \\
              (I_t + Q_{t-L} - Y_t, Q_{t-L+1}, \dots , Q_{t-1}, [\tau_t - I_t - \sum_{i=1}^L Q_{t-i}]^+) & \text{lost-sales}.
              \end{cases}
\end{equation}

We define the expected infinite horizon average cost for base-stock policy $\tau$ with demand sampled from distribution $F_t$ as
\vspace{-17pt}
\begin{equation} \label{eq:asymptotic_cost_function}
    \mu_t(\tau) = \mathbb{E}_{F_t} \Big[ \lim_{T \rightarrow \infty} \frac{1}{T} \sum_{t'=1}^T C_{t'}(\tau) \Bigm\vert D \sim F_t, (I_1, Q_{1-L}, \dots , Q_{0})=\mathbf{0} \Big],
\end{equation}
where we denote $C_{t'}(\tau)$ as the pseudo cost at time step~$t'$ when following base-stock policy~$\tau$.
\citet{agrawal2022learning} show that the average pseudo cost differs from the true cost by ${b \cdot \mathbb{E}_{D \sim F_t}[D]}$, a quantity independent of the policy.  Hence, in the following, we focus on the pseudo cost which we refer to as the (expected) cost function. We further use ${\tau_t^* = \argmin_{\tau \in [0, U]} \mu_t(\tau)}$ to denote the optimal base-stock level when the demand distribution is $F_t$, and make the following assumptions on the demand distributions ${(F_t)_{t \in [T]}}$.

\begin{assumption} \label{ass:main_assumption}
For all $t \in [T]$ we have that
\begin{enumerate}
    \item $\tau_t^* \in [0,U]$, \label{ass:bounded_action_space}
    \item $F_t(0) > 0$, \label{ass:positive_F}
    \item Under lost-sales with $L>0$, $\mathbb{E}_{F_t}[\inf\{t' \mid \sum_{i=1}^{t'}D_i \geq 1, D_i \sim F_t \ \forall i \in [t']\}] \leq \nu < \infty$, \label{ass:bounded_depletion_steps}
    \item Under demand backlogging, $F_t$ are sub-Gaussian with variance proxy $\sigma^2$. \label{ass:sub-Gaussian_demand} 
\end{enumerate} 
\end{assumption}

The first condition is common in the literature, and can be imposed, for instance, by assuming that $F_t$ is supported on $[0,U]$ for all $t$, or by adding appropriate mean and variance bounds on ${(F_t)_{t \in [T]}}$ \citep{agrawal2022learning}. The second condition is required for establishing that ${\mu_t(\tau)}$ is convex in $\tau$ for all ${\tau \in [0,U]}$~\citep{janakiraman2004lost}. The third condition states that the expected number of realizations from $F_t$ required for their sum to exceed one is upper bounded by a finite value~$\nu$. We use this in the analysis of the algorithm for the lost-sales setting with a positive lead time to ensure that we can switch from higher to lower base-stock levels in a constant number of time steps (see Definition 1 in \citet{agrawal2022learning} or Assumption 1b in \citep{gong2024bandits}). We emphasize, however, that the value of $\nu$ is not required to be known.
When demand is backlogged, we assume the demand distribution is sub-Gaussian and an upper bound on the parameter~$\sigma$ given. This condition ensures that the inventory backlog does not grow without limit, and is necessary to ensure that our estimates of the average cost function are concentrated. This is a mild condition, satisfied by bounded demands and by most light-tailed distributions common in inventory models and applications.

\paragraph{Objective.} 
We measure the performance of an online algorithm by its dynamic regret, the cumulative difference in the average cost of the base-stock policies employed versus the optimal base-stock policy in each time step for the specific demand distribution~$F_t$.
\begin{definition}[Dynamic Regret] \label{def:regret}
    The {\bf dynamic regret} of a sequence of base-stock policies given by base-stock levels ${(\tau_t)_{t \in [T]}}$ with respect to the optimal sequence of base-stock policies is defined as
    \begin{equation*}
        R(T) \coloneq \sum_{t=1}^T \Big( \mu_t(\tau_t) - \min_{\tau \in [0,U]} \mu_t(\tau)\Big).
    \end{equation*}
\end{definition}

Our regret guarantees measure in-class regret over the class of base-stock policies, rather than regret against a globally optimal policy. Base-stock policies are optimal in the backlogging setting and for lost-sales systems with zero lead time~\citep{karlin1958inventory,scarf1960optimality}. For lost-sales systems with positive lead times, they are known to have strong empirical performance and asymptotic optimality guarantees~\citep{huh2009asymptotic, bu2023asymptotic, yuan2024asymptotic}. Moreover, benchmarking against base-stock policies is standard in online inventory learning due to their analytical tractability~\citep{agrawal2022learning,zhang2020closing}.

\begin{remark}
The dynamic regret in \Cref{def:regret} is defined in terms of expected average costs $\mu_t(\tau_t)$ rather than expected random costs $\E_{F_t}[C_t(\tau_t)]$ realized along a particular sample path, as is common in non-stationary learning research~\citep{garivier2011upper,auer2019adaptively,besson2022efficient,huang2025change}. We use this benchmark to focus on how effectively our algorithms adapt to unknown changes in the demand distribution without knowledge of the degree of non-stationarity. Nevertheless, if the latter is known or when $L=0$, we obtain the same order of regret for the stronger benchmark. Details are provided in \cref{app:regret_transformation}.
\end{remark}
 
Due to demand non-stationarity, regret upper bounds depend on the variation in the sequence of demand distributions~$F_t$.  For instance, consider a setting where $F_t$ changes across each time step.  Then there is no hope of achieving sublinear regret since the algorithm is unable to learn $F_t$.  
We model variations in the demand distribution as piecewise stationary, defined via the total number of changes in $F_t$.

\begin{definition}[Non-Stationarity Measure]
    The {\bf total number of stationary time intervals} in the demand distribution over time horizon~$T$ is
    \begin{equation*}
        S \coloneq 1 + \sum_{t=2}^T \mathds{1}\{F_{t-1} \neq F_t\}.
    \end{equation*}
\end{definition}

Demand distributions, change points, and the total number of changes are unknown to the decision maker. The demand distributions can be considered as being selected by an oblivious adversary, while the demand itself is stochastic and follows their respective distributions.

This regime-switching model, as demonstrated in prior work such as \cite{chen2021data}, reflects the empirical reality that demand often remains stable over certain periods and changes abruptly due to, for instance, seasonal shifts, promotions, new product introductions, or external shocks~\citep{Adobe, Mastercard, EUChamberCommerceChina}. 
Other works consider total variation measures $V$ or constant known demand cycle lengths $H$~\citep{cheung2023nonstationary,keskin2023nonstationary,gong2024bandits,an2025nonstationary,mao2025model}. Since $S$ provides a natural upper bound on both $H$ and $V$, it is the most stringent metric and an interpretable proxy for the operational burden of non-stationarity.

\subsection{Structure of Inventory Control Problems} \label{sec:structure_inventory_control_problems}

In order to learn efficient replenishment policies in non-stationary environments, we must exploit the structures inherent to inventory control problems.
Under \Cref{ass:main_assumption} we establish that the system satisfies several properties regarding the cost functions $\mu_t(\tau)$ for all ${t \in [T]}$, the first of which is that the average cost function is Lipschitz continuous in the base-stock level $\tau$.

\begin{restatable}[Lipschitz Property]{lemma}{LipschitzLemma} \label{lemma:lipschitz}
    The function $\mu_t(\tau)$ is Lipschitz continuous in $\tau$ with factor $\max\{h, b\}$. 
\end{restatable}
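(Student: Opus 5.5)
I will prove the bound with a pathwise coupling argument. Fix $t$ and two base-stock levels $\tau<\tau'$ in $[0,U]$. I will run both policies on the same i.i.d.\ demand sequence $D_1,D_2,\dots\sim F_t$, starting from the zero initial state. I will then bound the per-period cost difference $\abs{C_{t'}(\tau)-C_{t'}(\tau')}$ by $\max\{h,b\}$ times a quantity whose long-run average is at most $\tau'-\tau$. The claim $\abs{\mu_t(\tau)-\mu_t(\tau')}\le \max\{h,b\}\abs{\tau-\tau'}$ then follows by averaging, letting $T\to\infty$, and taking expectations in \cref{eq:asymptotic_cost_function}.

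\textbf{Step 1: the available stock is $1$-Lipschitz in the level.} Write the cost in the first form of \cref{eq:cost_function} as a function of the on-hand stock $x_{t'}=I_{t'}+Q_{t'-L}$, namely $g(x,D)=h[x-D]^+ + b[D-x]^+ - bD$. The map $x\mapsto g(x,D)$ is $\max\{h,b\}$-Lipschitz for every fixed $D$. It therefore suffices to show that the coupled on-hand stocks satisfy $\abs{x_{t'}(\tau)-x_{t'}(\tau')}\le \tau'-\tau$, at least in the long-run average sense.

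\textbf{Step 2a: backlogging.} Starting from the zero state, the inventory position after ordering is $\max\{\tau,\text{position}\}$. Under the policy with level $\tau$ this equals $\tau$ in every period, since the position never exceeds $\tau$ and demand only lowers it. Consequently $x_{t'}(\tau)=\tau-\sum_{i=t'-L}^{t'-1}D_i$ once $t'>L$. The difference across the two policies is then exactly $\tau'-\tau$.

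\textbf{Step 2b: lost-sales.} I will argue by induction that the coupled state vectors (on-hand stock plus each pipeline order) are monotone in the level and that their total differs by at most $\tau'-\tau$. The map $x\mapsto[x-D]^+$ is monotone and nonexpansive. Both policies raise the position exactly to their level, because the position is always at most the level. Together these facts show that the position gap stays in $[0,\tau'-\tau]$ and that every coordinate of the $\tau'$ system dominates the corresponding coordinate of the $\tau$ system. The on-hand gap is bounded by the position gap, since the pipeline differences are nonnegative.

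\textbf{Main obstacle.} The hardest step is making the lost-sales induction precise when $L>0$. Lost sales can reduce the position of the $\tau'$ system more than that of the $\tau$ system. I therefore need to check that the ordering step restores the gap to exactly $\tau'-\tau$, and that coordinatewise dominance is preserved. I will verify this carefully by tracking the sum $\sum_i (Q_i(\tau')-Q_i(\tau)) + (I(\tau')-I(\tau))$.

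Once the per-period bound holds, the remaining steps are routine. The bound transfers to the limit average, with no interchange issues because the bound is deterministic. When the limit in \cref{eq:asymptotic_cost_function} exists, the Lipschitz estimate holds directly. Otherwise I would work with $\limsup$ and $\liminf$ along the coupling.
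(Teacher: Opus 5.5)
Your proposal is correct. For backlogging it is essentially the paper's argument. The paper uses \cref{lemma:stationary_distribution_backlogging} to write the stationary on-hand stock as $\tau-\sum_{i=1}^L D_{t-i}$, couples the two stationary costs through common demands, and uses that the one-period cost is $\max\{h,b\}$-Lipschitz in on-hand stock. You do the same along entire trajectories from the zero state. This gives a deterministic bound for every finite horizon, so beyond the existence of the limit in \cref{eq:asymptotic_cost_function} you need no stationarity lemma.

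For lost-sales the paper gives no proof and simply cites \citet[Lemma 8]{agrawal2022learning}. You instead supply a self-contained monotone coupling, and it goes through as outlined. Both systems start at zero and the pre-order position never exceeds the level, so the post-order position equals the level in every period. Hence $Q_t(\tau')-Q_t(\tau)=Y_{t-1}(\tau')-Y_{t-1}(\tau)$ for $t\ge 2$, and $Q_1(\tau')-Q_1(\tau)=\tau'-\tau$. Coordinatewise dominance is then the same induction as in the paper's proof of \cref{lemma:feedback_structure} (the positive-lead-time case with a constant sequence). The on-hand gap equals $(\tau'-\tau)-\sum_{i=0}^{L-1}\bigl(Q_{t-i}(\tau')-Q_{t-i}(\tau)\bigr)$, which therefore lies in $[0,\tau'-\tau]$. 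The obstacle you flag dissolves once you note that the ordering step resets the position gap to exactly $\tau'-\tau$. Your route thus makes the lemma self-contained using tools already in the paper.
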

The result for the lost-sales regime is derived in \cite[Lemma 8]{agrawal2022learning}. The proof of \Cref{lemma:lipschitz} for backlogging is stated in \Cref{app:proof_lipschitz_concentration_backlogging}.  We will see in the design of our algorithms that we leverage the Lipschitz property by discretizing the {\em continuous} set of base-stock levels for practical implementation.  The next lemma establishes that the average cost $\mu_t(\tau)$ is convex in the base-stock level $\tau$.

\begin{lemma}[Convexity] \label{lemma:convexity}
    For any demand distribution $F_t$ with ${F_t(0) > 0}$, the function $\mu_t(\tau)$ is convex in~$\tau$.
\end{lemma}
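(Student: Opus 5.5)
We split the proof by inventory model and, in each case, reduce the long-run average cost $\mu_t(\tau)$ to a quantity whose convexity in $\tau$ is standard. Throughout, $F_t$ is fixed, so we drop the subscript and treat demands $D_1,D_2,\dots$ as i.i.d.\ draws from $F$.

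\emph{Backlogging.} Starting from the zero state, a base-stock policy with level $\tau\ge 0$ keeps the inventory position equal to $\tau$ from the first period onward, since backlogged demand is always replenished. Hence, for every $t'>L$, the net inventory after demand satisfies
\[
I_{t'}+Q_{t'-L}-D_{t'}=\tau-\sum_{i=0}^{L} D_{t'-i}.
\]
The pseudo cost at $t'$ is therefore $g(\tau-\Lambda_{t'})$, where $g(x)=h[x]^+ + b[-x]^+ - bD_{t'}$ and $\Lambda_{t'}$ is a sum of $L+1$ i.i.d.\ demands. The first $L$ periods do not affect the Cesàro limit. For the remaining periods we can swap limit and expectation: dominated convergence applies because demand has finite mean, and the ergodic theorem holds because the sequence $(\Lambda_{t'})$ is $L$-dependent and stationary. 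This gives
\[
\mu(\tau)=\mathbb{E}\bigl[h[\tau-\Lambda]^+ + b[\Lambda-\tau]^+\bigr]-b\,\mathbb{E}[D].
\]
For each fixed realisation of $\Lambda$, the integrand is a convex piecewise-linear function of $\tau$. Expectation preserves convexity, so $\mu$ is convex. The condition $F(0)>0$ is not needed in this case.

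\emph{Lost sales.} When $L=0$, every period begins at level $\tau$ (after the first), because on-hand inventory cannot exceed $\tau$ and we order up to it. The cost is then the newsvendor-type expression $\mathbb{E}[h(\tau-\min\{\tau,D\})-b\min\{\tau,D\}]$. This is convex because $\min\{\tau,D\}$ is concave in $\tau$, it enters with a negative coefficient, and the remaining term $h\tau$ is linear. When $L>0$, the state dynamics are nonlinear and we rely on the known result of \citet{janakiraman2004lost}. They prove, via a sample-path argument, that the long-run average cost of a base-stock policy in a lost-sales system with lead time is convex in the base-stock level, under the condition that the demand has positive mass at zero. That condition is exactly $F_t(0)>0$, which is Assumption~\ref{ass:main_assumption}(\ref{ass:positive_F}).

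The plan is to state their result and then check that it transfers to our setting. Two things need verifying. First, our pseudo cost $h(I+Q-Y)-bY$ and their true holding-plus-lost-sales cost differ by $b\,\mathbb{E}[D]$ in average, a constant independent of $\tau$, as noted after \eqref{eq:asymptotic_cost_function}. Second, the initial zero state does not matter: under $F(0)>0$, the chain is regenerative, with a run of periods whose demand sums to at least the on-hand inventory occurring with positive probability. So the limit in \eqref{eq:asymptotic_cost_function} exists and does not depend on the initial state.

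The main obstacle is this $L>0$ lost-sales case. If we had to prove it directly rather than cite it, we would follow the Janakiraman–Roundy route. First, express pipeline and on-hand quantities as functions of $\tau$ along a fixed demand path. Then show that total sales over a regeneration cycle are concave in $\tau$, while inventory carried is correspondingly convex. Finally, pass to the average through the renewal-reward theorem.
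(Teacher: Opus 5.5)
Your proposal follows essentially the same route as the paper. For backlogging, convexity comes from the piecewise-linear per-period cost $h[\tau-\Lambda]^+ + b[\Lambda-\tau]^+ - bD$ in the stationary regime, which the appendix also derives with the same sum-of-$L+1$-demands representation. For lost sales, the paper simply invokes Theorem~12 of Janakiraman and Roundy under $F_t(0)>0$. Your separate newsvendor argument for $L=0$ is a harmless refinement, and it shows the condition is not needed in that case.

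One correction to your transfer check for $L>0$ concerns where the regeneration guaranteed by $F_t(0)>0$ comes from. It does not come from runs of demand that exhaust on-hand stock. In a lost-sales system with lead time, such runs leave the pipeline in a history-dependent configuration: with on-hand stock at zero, each new order just equals the arriving one. Such runs also have nothing to do with $F_t(0)>0$. The regeneration instead comes from $L+1$ consecutive zero-demand periods. Starting from any state with inventory position at most $\tau$, these drive the system to the fixed atom $(\tau,0,\dots,0)$. With that fix, the existence of the limit and its independence from the zero initial state follow as you state.
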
 
In the backlogging model, convexity of the infinite horizon average cost follows directly from the piecewise linear form of the per-period cost in the base-stock level. Since each per-period cost is convex, the time average over i.i.d.\ demand converges to a convex limit. With lost-sales, \citet[Theorem 12]{janakiraman2004lost} show that the average cost function is convex under \Cref{ass:main_assumption}.\ref{ass:positive_F}.

Existing work leveraging convexity in non-stationary settings requires access to richer feedback beyond zeroth-order information~\citep{besbes2015non,lugosi2024hardness,wang2025adaptivity}. The average cost depends, however, on an implicit, non-smooth distribution of inventory that changes discontinuously with the base-stock level, making differentiation intractable. Importantly, inventory control problems have a unique advantage where the feedback structure contains side information that is richer than standard zeroth-order observations.  As formalized in the following lemma, the cost associated with a given base-stock policy can be counterfactually inferred from trajectories generated under certain other base-stock policies, depending on the inventory model.  This phenomenon has been leveraged for stationary RL settings by \citet{zhang2025information}, and we extend their results to a sequence of time-varying base-stock levels as required in our analysis.

\begin{restatable}[Counterfactual Policy Evaluation]{lemma}{LSFLemma} \label{lemma:feedback_structure} 
    Suppose $(\tau_{t'})_{t' \in [s,t]}$ is the sequence of base-stock levels implemented over a time interval $[s,t]$, and $\tau$ a fixed base-stock level.
    Let ${\left(I_{t'}, Q_{t' - L}, \dots , Q_{t' - 1}\right)_{t' \in [s,t]}}$ and ${(I^{\tau}_{t'}, Q^{\tau}_{t' - L}, \dots , Q^{\tau}_{t' - 1})_{t' \in [s, t]}}$ denote the sequences of inventory vectors collected from policies $(\tau_{t'})_{t' \in [s,t]}$ and $\tau$, respectively, under a shared sequence of demands ${(D_{t'})_{t' \in [s,t]}}$.
    Then the costs $(C_{t'}(\tau))_{t' \in [s,t]}$ incurred by policy $\tau$ can be counterfactually evaluated in each of the following cases
    \begin{enumerate}
        \item Under demand backlogging, for any $\tau \in [0,U]$, \label{item:feedback_structure_backlogging} 
        \item Under lost-sales if $L=0$, for any ${\tau \leq \tau_{t'} \ {\forall t' \in [s, t]}}$, if $\max\{I_s,\tau\} \geq I^{\tau}_s$, \label{item:feedback_structure_LS_L0}
        \item Under lost-sales if $L>0$, for any ${\tau \leq \tau_{t'} \ {\forall t' \in [s, t]}}$, if the sequence $(\tau_{t'})_{t' \in [s,t]}$ is non-decreasing and
        \vspace{-4pt}
        \[
            \textstyle I_{s} \geq I^{\tau}_{s}, \qquad Q_{s - i} \geq Q^{\tau}_{s - i} \ \forall i=1, \dots , L, \qquad \tau_{s} - \Big(I_{s} + \sum_{i=1}^L Q_{s-i}\Big) \geq \tau - \Big(I^{\tau}_{s} + \sum_{i=1}^L Q^{\tau}_{s - i}\Big) \geq 0.
        \] \label{item:feedback_structure_LS_L_positive}
    \end{enumerate}
\end{restatable}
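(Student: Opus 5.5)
The plan is to show, case by case, that the counterfactual state trajectory $(I^\tau_{t'},Q^\tau_{t'-L},\dots,Q^\tau_{t'-1})$ and the counterfactual pseudo cost $C_{t'}(\tau)=h(I^\tau_{t'}+Q^\tau_{t'-L}-Y^\tau_{t'})-bY^\tau_{t'}$ are functions of observed quantities only. The observed quantities are the realized states, the realized sales $Y_{t'}$ (under lost-sales) or the demands $D_{t'}$ (under backlogging), and the known sequence $(\tau_{t'})$. In each case I will argue by induction on $t'\in[s,t]$ that the counterfactual state can be computed from the observations. I will then argue that the counterfactual sales $Y^\tau_{t'}=\min\{I^\tau_{t'}+Q^\tau_{t'-L},D_{t'}\}$ can be recovered, and this uses the key fact that the counterfactual on-hand level never exceeds the realized one.

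\emph{Backlogging.} Demand is never censored, since $D_{t'}=I_{t'}+Q_{t'-L}-I_{t'+1}$ is observed. The recursion \eqref{eq:inventory_balance} with base-stock level $\tau$ can therefore be simulated exactly from any initial state for any $\tau\in[0,U]$, and the cost follows directly.

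\emph{Lost-sales with $L=0$.} The counterfactual post-order level is $\max\{I^\tau_{t'},\tau\}$ and the realized one is $\max\{I_{t'},\tau_{t'}\}$. I will prove by induction that $\max\{I^\tau_{t'},\tau\}\le \max\{I_{t'},\tau_{t'}\}$. The base case is the hypothesis $\max\{I_s,\tau\}\ge I^\tau_s$ combined with $\tau\le\tau_s$. The inductive step uses that $x\mapsto[x-D]^+$ is monotone, so $I^\tau_{t'+1}\le I_{t'+1}$, and then $\tau\le\tau_{t'+1}$. Given this ordering, $Y^\tau_{t'}=\min\{\max\{I^\tau_{t'},\tau\},D_{t'}\}$ is determined by the observations. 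If the realized sale is not censored, i.e.\ $Y_{t'}<\max\{I_{t'},\tau_{t'}\}$, then $D_{t'}=Y_{t'}$. If it is censored, then $D_{t'}\ge\max\{I_{t'},\tau_{t'}\}\ge\max\{I^\tau_{t'},\tau\}$ and $Y^\tau_{t'}$ equals the counterfactual level. In both cases $Y^\tau_{t'}=\min\{\max\{I^\tau_{t'},\tau\},Y_{t'}\}$, which gives $I^\tau_{t'+1}$ and the cost.

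\emph{Lost-sales with $L>0$.} This is the main obstacle, because the ordering has to be maintained jointly over on-hand inventory, each pipeline order, and the new order. The plan is an induction with the invariant
\[
I_{t'}\ge I^\tau_{t'},\qquad Q_{t'-i}\ge Q^\tau_{t'-i}\ \ (i=1,\dots,L),\qquad \tau_{t'}-\textstyle\big(I_{t'}+\sum_i Q_{t'-i}\big)\ge \tau-\big(I^\tau_{t'}+\sum_i Q^\tau_{t'-i}\big)\ge 0,
\]
which holds at $t'=s$ by assumption. The last condition means the new orders satisfy $Q_{t'}\ge Q^\tau_{t'}\ge0$, without positive parts. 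Hence after the order both pipelines are ordered, and so is the arriving stock $I+Q_{t'-L}$. The same censoring argument as in the $L=0$ case then recovers $Y^\tau_{t'}=\min\{I^\tau_{t'}+Q^\tau_{t'-L},Y_{t'}\}$ and gives $I_{t'+1}\ge I^\tau_{t'+1}$.

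For the position inequality, note that both inventory positions after ordering equal their base-stock levels, $\tau_{t'}$ and $\tau$ respectively. After demand, the next positions are therefore $\tau_{t'}-Y_{t'}$ and $\tau-Y^\tau_{t'}$. The gaps at $t'+1$ become $\tau_{t'+1}-\tau_{t'}+Y_{t'}$ and $Y^\tau_{t'}$. Since $(\tau_{t'})$ is non-decreasing and $Y_{t'}\ge Y^\tau_{t'}\ge0$, the required inequality holds again. This is exactly where the monotonicity of the sequence is needed, and I expect the careful bookkeeping of this step to be the crux.

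With the invariant established, every counterfactual quantity is computable on $[s,t]$, and therefore so are the costs $C_{t'}(\tau)$.
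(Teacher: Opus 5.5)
Your proposal is correct and follows essentially the same argument as the paper: the same induction $\max\{I_{t'},\tau_{t'}\}\ge\max\{I^\tau_{t'},\tau\}$ for $L=0$, and for $L>0$ the same three-part invariant, where the position-gap step reduces, as in the paper, to $\tau_{t'+1}-\tau_{t'}+Y_{t'}\ge Y^\tau_{t'}$ via the non-decreasing base-stock sequence, followed by the same recovery $Y^\tau_{t'}=\min\{I^\tau_{t'}+Q^\tau_{t'-L},Y_{t'}\}$. Your explicit censored/uncensored case split only spells out the identity that the paper states in a single line.
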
 
\vspace{-25pt}
We present the proof of \Cref{lemma:feedback_structure} for lost-sales in \Cref{app:proof_counterfactual_policy_evaluation}. In the backlog setting, \cref{eq:on-hand_inventory_balance} implies that demand realizations are always observed. Since the system's evolution under any base-stock policy is fully determined by the initial state and the demand sequence, this yields a full-feedback structure which enables counterfactual evaluation of any base-stock policy, even when the implemented policy varies over time.
In lost-sales systems, higher base-stock levels dominate lower ones in on-hand inventory, enabling left-sided feedback that allows the sales and costs of lower policies to be inferred from data collected under higher policies despite demand censoring~\citep{gong2020provably,yuan2021marrying,gong2024bandits,zhang2025information,chen2025managing}. 
With positive lead times, however, valid counterfactual evaluation requires a non-decreasing base-stock level, which is highly restrictive. Under left-sided feedback, a large set of policies can be evaluated by starting from a high base-stock level and gradually reducing it as uncertainty declines. Any reduction forces an inventory reset to satisfy the initial state conditions, interrupting sample paths and degrading the statistical accuracy of cost estimates. 
\section{Challenges in Inventory Optimization under Non-Stationary Demand} \label{sec:problem}

Counterfactual feedback renders successive elimination algorithms particularly effective under stationarity. In non-stationary settings, however, change detection requires monitoring the entire set of policies, which fundamentally conflicts with the elimination mechanism. This section characterizes this tension and explains why adapting elimination-based methods to non-stationary inventory control is non-trivial.

\subsubsection*{Stationary Algorithms.}
Algorithms for learning optimal base-stock policies under stationary demand rely on accurate cost estimation and elimination of suboptimal policies using confidence bounds.
Let the costs associated with a base-stock level ${\tau \in [0, U]}$ be observed over a contiguous time interval $[s,t-1]$, either by directly sampling $\tau$, or via the counterfactual feedback in \cref{lemma:feedback_structure}.
We estimate the expected asymptotic cost of $\tau$ based on these observations by
\vspace{-5pt}
\begin{equation} \label{eq:cost_estimates}
    \hat{\mu}(\tau, s, t) \coloneq \frac{1}{t - s} \sum_{t' = s}^{t-1} C_{t'} (\tau).
\end{equation}
\Cref{lemma:concentration} bounds the difference between the finite horizon average cost and the expected infinite horizon~cost.
\vspace{-20pt}
\begin{restatable}[Concentration of Expected Empirical Cost and Expected Asymptotic Cost]{lemma}{ConcentrationLemma} \label{lemma:concentration}
    Let~${[s, t-1]}$ be a stationary time interval and ${\tau \in [0,U]}$.  Suppose that the (counterfactual) inventory position at time~$s$ with which costs for $\tau$ via \cref{eq:cost_function} are computed satisfies ${I_s + \sum_{i=1}^L Q_{s-i} \leq \tau}$ and, if demand is backlogged, that ${t - s \geq L}$.  Then for any ${\delta > 0}$ with probability at least ${1 - \delta}$ we have that
    \vspace{-2pt}
    \begin{equation} \label{eq:concentration_bound}
        \abs*{\hat{\mu}(\tau, s, t) - \mu_s(\tau)} \leq b_{s,t},
    \end{equation}
    where $b_{s,t}$ denotes the confidence radius of the cost estimates obtained from observations in $[s, t-1]$.
    In particular,
    \begin{align*}
        b_{s,t} = \begin{cases}        
            \Hbl \sqrt{\frac{2\log(4(L+1) / \delta)}{t-s}}
            & \text{with } \Hbl = 2\sqrt{2} \sigma \sqrt{(L+1)(Lh^2 + (h+b)^2(4L+5))} \text{ under backlogging} \\[6pt]
            \Hls \sqrt{\frac{2 \log(2 / \delta)}{t-s}}
            & \text{with } \Hls = 72(L+3)U \max\{h,b\} \text{ under lost-sales.}
        \end{cases}
    \end{align*}
\end{restatable}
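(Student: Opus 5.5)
We prove the two cases separately. In each case we reduce the deviation $\hat\mu(\tau,s,t)-\mu_s(\tau)$ to a deterministic bias term plus a stochastic fluctuation term, and then control the fluctuation with a concentration inequality suited to the model.

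\emph{Backlogging.} The key observation is that under a base-stock policy $\tau$ with starting position $I_s+\sum_{i=1}^L Q_{s-i}\le\tau$, the inventory position equals $\tau$ from time $s+1$ onward. The on-hand inventory after $L$ periods is then
\[
I_{t'+L}+Q_{t'}=\tau-\sum_{j=t'}^{t'+L-1}D_j,
\]
so every cost $C_{t'}(\tau)$ for $t'\ge s+L$ is an explicit piecewise-linear, $(h+b)$-Lipschitz function of the $L+1$ consecutive demands $D_{t'-L},\dots,D_{t'}$. Moreover $\mu_s(\tau)$ equals the expectation of this function under $F_s$. During the first $L$ periods, whose costs depend on the initial pipeline, the costs are still affine in at most $L+1$ demands; this transient contributes the $Lh^2$ term and requires $t-s\ge L$.

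To handle dependence, we split the indices $[s,t-1]$ into $L+1$ residue classes modulo $L+1$. Within each class the summands are independent, since they involve disjoint demand blocks. Each summand is sub-Gaussian with proxy of order $(h+b)^2(L+1)\sigma^2$, because it is a Lipschitz function of a sub-Gaussian vector; concretely, $[x]^+$ is $1$-Lipschitz, so each summand is bounded by a linear combination of the demands. We apply a sub-Gaussian Hoeffding bound to each class with confidence $\delta/(L+1)$, using two-sided tails, which produces the $4(L+1)/\delta$ in the logarithm. Recombining the $L+1$ class averages by the triangle inequality gives an extra factor $\sqrt{L+1}$. Tracking constants yields $\Hbl$.

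\emph{Lost-sales.} Here there is no closed form, so we follow \citet{agrawal2022learning}. We couple the process started from the given state with the process under the stationary distribution. Assumption 3, the bounded expected depletion time $\nu$, or the $L=0$ structure, ensures that the process regenerates: it reaches the state in which the inventory position equals $\tau$ with full pipeline structure within $O(L)$ steps of demand. This makes the bias $|\E\hat\mu-\mu_s(\tau)|$ of order $(L+1)U\max\{h,b\}/(t-s)$, because each per-period cost is bounded by $U\max\{h,b\}$ in pseudo-cost magnitude.

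For the fluctuation we use a martingale (Doob) decomposition. We form the Doob martingale $M_k=\E[\sum C\mid D_s,\dots,D_{k}]$. Changing a single demand $D_k$ alters the trajectory only until the coupled paths merge, and the paths merge within about $L+1$ periods because order-up-to resets the position. Each martingale increment is therefore bounded by $O((L+3)U\max\{h,b\})$. Azuma--Hoeffding then gives the $\sqrt{2\log(2/\delta)/(t-s)}$ rate. Absorbing the $1/(t-s)$ bias into the $1/\sqrt{t-s}$ term, with generous constants, gives $\Hls=72(L+3)U\max\{h,b\}$.

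\emph{Main obstacle.} The main difficulty is the bounded-differences (merging) argument for lost-sales with $L>0$. Perturbing one demand can change the sales censoring and hence future orders, so we must show that the two paths coincide after roughly $L+1$ steps. The plan is to use the fact that after any perturbation, both paths place orders restoring the position to $\tau$. The difference in positions is at most the perturbed amount, capped at $U$, and it propagates through only $L$ pipeline slots before it is absorbed. The $L=0$ case is immediate, since the state resets to $\tau$ every period.
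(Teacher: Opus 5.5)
\paragraph{Lost-sales with $L>0$: the merging step fails.}
Your backlogging argument follows the paper's route, and your $L=0$ lost-sales case (i.i.d.\ bounded costs plus Hoeffding) is fine. The problem is lost-sales with $L>0$. The paper does not reprove this case; it imports the bound from \citet[Lemma 4]{agrawal2022learning}. Your replacement needs the claim that, after one demand is perturbed, the two trajectories coincide within about $L+1$ periods. That claim is false.

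Take $L=1$ and let $x_t=I_t+Q_{t-1}$ be on-hand stock after arrival. Orders equal the previous period's sales and the position after ordering is $\tau$, so $x_{t+1}=\tau-\min\{x_t,D_t\}$. If both copies stock out, then $x_{t+1}-x'_{t+1}=-(x_t-x'_t)$: the discrepancy only flips sign. The copies merge only in a period with $D_t\le\min\{x_t,x'_t\}$.

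Now let $F_s$ have an atom of mass $\epsilon$ at $0$ and the rest at or above $U$; this satisfies $F_s(0)>0$. Merging then takes order $1/\epsilon$ periods. So any bounded-difference constant, or regeneration-based bias bound, derived from merging depends on $F_s$ and blows up, whereas $\Hls$ is distribution-free. That both copies restore the position to $\tau$ does not help. The positions are always equal; what differs is the split between on-hand stock and pipeline, and stockouts recycle an on-hand difference into a new order difference. The depletion constant $\nu$ also plays no role: it does not appear in $\Hls$ and forces no regeneration of that split.

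\paragraph{How to repair it.}
Your martingale architecture survives if you replace merging by cancellation. Recall $C_t=hx_t-(h+b)Y_t$. Set $g_t=Y_t-Y'_t$ and $G_t=\sum_{j\le t}g_j$, which is zero before the perturbed period $k$.
\begin{itemize}
\item Equal positions and $Q_{t+1}=Y_t$ give $x_t-x'_t=-(G_{t-1}-G_{t-L-1})$.
\item Since $\min\{\cdot,D_t\}$ is monotone and $1$-Lipschitz, $g_t$ lies between $0$ and $x_t-x'_t$.
\item Hence $G_t$ is a convex combination of $G_{t-1}$ and $G_{t-L-1}$, so $|G_t|\le|g_k|\le U$ for all later $t$.
\item The cumulative cost difference $\sum_{t\ge k}\bigl(h(x_t-x'_t)-(h+b)g_t\bigr)$ then telescopes to at most $(L+2)U\max\{h,b\}$ in absolute value, whether or not the paths ever meet.
\end{itemize}
This deterministic bound gives your bounded martingale increments. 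The same argument run between two arbitrary starting states gives an $\mathcal{O}((L+2)U\max\{h,b\})$ bias. It plays the role of the distribution-free span bound $36\max\{h,b\}LU$ of \citet{agrawal2022learning}, which the paper invokes elsewhere.

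\paragraph{Backlogging: comparison and two repairs.}
Your residue-class argument matches the paper's, except in how the classes are recombined. You union-bound over the $L+1$ classes and recombine by Cauchy--Schwarz. The paper instead multiplies the class moment generating functions via H\"older's inequality with exponent $L+1$ and applies a single Chernoff bound. Both give the $\sqrt{L+1}$ factor and the stated logarithm.

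Two repairs are needed here.
\begin{itemize}
\item Sub-Gaussianity of the nonlinear term should come from symmetrization for the $(h+b)$-Lipschitz map $x\mapsto[x-\tau]^+$ applied to the scalar $\sum_{i=0}^{L}D_{t-i}$. Domination by a linear combination of demands would put $\mathbb{E}[D]$ and $\tau$ into the proxy.
\item The first $L$ costs are not centered at $\mu_s(\tau)$. They carry a bias of order $\frac{L}{t-s}\max\{h,b\}(U+L\,\mathbb{E}[D])$, which a radius proportional to $\sigma$ cannot absorb. The paper's proof passes over this point in the same way.
\end{itemize}
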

\noindent The proof for the lost-sales model is presented in \citep[Lemma 4]{agrawal2022learning}, and the backlog case is proved in \Cref{app:proof_lipschitz_concentration_backlogging}.
We note that, by Lemma \ref{lemma:concentration}, obtaining high-confidence cost estimates, and thereby sublinear regret, requires sufficiently many {\em consecutive} observations under a fixed policy. 

Stationary successive elimination algorithms \citep{agrawal2022learning,zhang2025information} maintain an {\em active set} $\A_k$ of candidate base-stock levels that, with high probability, contains the optimum, and iteratively discard suboptimal levels across epochs. In epoch~$k$, the {\em most informative} policy in the active set ${\tau_k = \sup \mathcal{A}_k}$ is executed to estimate the costs of all base-stock levels in $\A_k$ (\cref{lemma:feedback_structure}). After collecting enough samples, the algorithm updates the active set to form $\A_{k+1} \subseteq \A_k$ by retaining only those base-stock levels whose estimated costs are within the confidence radius of the empirical optimum implied by the concentration bounds in \cref{lemma:concentration}, and the process repeats in the next epoch.

\subsubsection*{Extension to Non-Stationary Algorithms.}
Non-stationarity introduces a competing objective beyond learning optimal base-stock levels within stationary segments. To achieve regret sublinear in both time and the number of changes, the algorithm must detect distribution shifts quickly and discard outdated data, while avoiding overly frequent reinitializations.  Since the confidence bounds $b_{s,t}$ scale as $1 / \sqrt{t-s}$ (\cref{lemma:concentration}), frequent resets shorten sample paths and therefore degrade estimation accuracy.

A standard approach combines a stationary learning algorithm with change point detection and restarting \citep{auer2019adaptively,besson2022efficient,huang2025change}. After each round of data collection, the algorithm checks both whether suboptimal policies can be eliminated and whether a change has occurred. When a change is detected, all prior observations are discarded and the algorithm restarts. 
This requires monitoring the performance of all policies that could potentially become optimal, forcing occasional exploration of every policy, including those previously identified as suboptimal \citep{auer2019adaptively}. In inventory control, this would translate to periodically sampling base-stock levels outside the current active set $\A_k$.
Under demand backlogging, the feedback is full-information, and changes can be detected without any explicit exploration, since costs of all base-stock policies can be evaluated counterfactually.

Under lost-sales, however, counterfactual feedback is asymmetric, so explicit {\em forced exploration} of policies outside $\A_k$ is necessary. Executing the most informative policy ${\tau_k = \sup \A_k}$ reveals costs only for lower base-stock levels, hence changes affecting levels larger than $\tau_k$ remain undetectable unless those policies are executed.  Sampling the maximal level~$U$ exposes such changes and underlies our approach for lost-sales with zero lead time. In the presence of both demand censoring and lead times, this approach breaks down. Counterfactual evaluation is only valid along non-decreasing sequences of base-stock levels (\cref{lemma:feedback_structure}), thus switching between $\tau_k$ and $U$ interrupts the learning process. This requirement for sufficiently long runs of consecutive samples of non-decreasing base-stock levels creates a tension between estimation accuracy and exploration. As a result, without exploiting additional problem structure, any algorithm must either sacrifice fast detection, leading to delayed adaptation, or suffer from slow convergence to optimal policies.

Before introducing our algorithms, we establish a minimax lower bound for non-stationary inventory control. The lower bound applies to both demand backlogging with lead times and lost-sales systems with zero lead time. The proof (\cref{app:lower_bound}) builds on the stationary lost-sales lower bound construction of \citet{zhang2020closing}, adapted to a non-stationary environment.

\begin{restatable}{theorem}{LowerBound} \label{thm:lower_bound}
    The expected regret of any learning algorithm for the demand backlogging or the lost-sales model with zero lead time and ${T \geq 5}$ is lower bounded as ${\mathbb{E}[R(T)] = \Omega (\sqrt{ST})}$, even when $S$ is known.
\end{restatable}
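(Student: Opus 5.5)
The plan is the standard block-partition reduction from a stationary lower bound. I split the horizon $[T]$ into $S$ consecutive blocks $\mathcal{T}_1,\dots,\mathcal{T}_S$, each of length $\lfloor T/S\rfloor$ (the last block absorbs the remainder), so the piecewise stationary environment has at most $S$ stationary segments. Within each block, the demand distribution is drawn independently and uniformly from a two-point family $\{F^{+},F^{-}\}$ of the kind used in the stationary construction of \citet{zhang2020closing}: two nearby demand laws indexed by a perturbation $\epsilon$, whose optimal base-stock levels are separated. The key property is that for every $\tau$,
\[
\mu^{+}(\tau)+\mu^{-}(\tau)-\mu^{+}(\tau^{*}_{+})-\mu^{-}(\tau^{*}_{-})\;\geq\; c\,\epsilon,
\]
while the KL divergence between the observation laws (demands under backlogging, or censored sales under lost-sales with $L=0$) satisfies $\mathrm{KL}\leq C\epsilon^{2}$ per period. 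Choosing supports inside $[0,U]$ with positive mass at $0$ makes \Cref{ass:main_assumption} hold. The family also satisfies \cref{ass:sub-Gaussian_demand} in the backlogging case, since the distributions are bounded.

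For a single block of length $n=\lfloor T/S\rfloor$, I would reproduce the stationary argument, which is exactly \cref{lemma:lower_bound_stationary} together with \citet{zhang2020closing}. Fix the algorithm's behavior in the block, conditional on the history before it. Because the block's distribution is chosen independently of everything else, data from earlier blocks carries no information about it. By Le Cam's two-point method (or the Bretagnolle–Huber inequality), the expected block regret under a uniform prior is at least
\[
\tfrac{n}{2}\,c\,\epsilon\,\bigl(1-\sqrt{C n\epsilon^{2}/2}\bigr).
\]
I would count regret at the average-cost level $\mu_t(\tau_t)$, so the inventory state carried in from the previous block does not matter. Only the information available to the learner does, and that is controlled through the chain rule for KL over the $n$ observations in the block. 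Taking $\epsilon\asymp 1/\sqrt{n}$ gives block regret $\Omega(\sqrt{n})$.

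Summing over the $S$ blocks gives $\mathbb{E}[R(T)]\geq S\cdot\Omega(\sqrt{T/S})=\Omega(\sqrt{ST})$. The condition $T\geq 5$ and the assumption $S\leq T$ guarantee that $n\geq1$, so the constants are valid. Knowledge of $S$, and even of the block boundaries, does not help, because the lower bound is taken against the random in-block choice, which is independent across blocks. A uniform random prior lower-bounds the worst case, so some deterministic sequence $(F_t)$ achieves the bound.

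The main obstacle is the per-block two-point construction in the backlogging-with-lead-time case. The paper cites \citet{zhang2020closing} only for lost-sales, so I must check two things for backlogging. First, the gap between $\mu^{\pm}$ stays of order $\epsilon$ uniformly in $\tau$. Under backlogging, $\mu(\tau)=\mathbb{E}[h(\tau-D^{(L+1)})^{+}+b(D^{(L+1)}-\tau)^{+}]$ up to the pseudo-cost shift, where $D^{(L+1)}$ is the $(L+1)$-fold sum, so the separation must hold after convolution. Second, the KL bound must hold with full demand observation. I would first try Bernoulli-type demands on $\{0,1\}$ with success probabilities $\tfrac12\pm\epsilon$. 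Their $(L+1)$-fold sums are binomial, with medians, and hence newsvendor-type minimizers, that shift by an amount of order one. The per-period KL is $O(\epsilon^{2})$. A direct computation of $\mu^{+}+\mu^{-}$ near the critical fractile should give the $\Omega(\epsilon)$ gap, possibly with constants depending on $L$, which is harmless for the stated rate.
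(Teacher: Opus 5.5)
Your proposal is correct and follows the same skeleton as the paper's proof. Both cut $[T]$ into $S$ segments of length about $T/S$, apply a stationary two-point bound of the \citet{zhang2020closing} type in each segment, and sum $S\cdot\Omega(\sqrt{T/S})$. In the paper, the per-segment bound is \cref{lemma:lower_bound_stationary}, proved via Tsybakov's theorem for the continuous pair $F^a,F^b$.

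The differences lie in how the segments are glued together and in scope. The paper reveals the change points, lets inventory be cleared for free at each change, and applies the worst-case stationary lemma segment by segment. That leaves implicit why one segment's hard instance can be chosen independently of the learner's earlier behaviour. Your argument makes this explicit, and it is the cleaner justification for adding the per-block bounds:
\begin{itemize}
\item an independent uniform prior on each block's label;
\item regret measured through $\mu_t(\tau_t)$;
\item the in-block KL bounded by that of the uncensored demands, so carried-in inventory and censoring can only reduce information.
\end{itemize}
Your insistence on an atom at $0$ is also sound. The paper's $F^a,F^b$ have $F(0)=0$, so they do not literally satisfy \cref{ass:main_assumption}.

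Your extension to backlogging with $L>0$ goes beyond the paper, whose stationary lemma is stated and proved only for zero lead time. The binomial idea works, but not with the centring you wrote. Take $h=b$ and $L$ odd, so $L+1=2m$. For small $\epsilon$, both $\mathrm{Bin}(2m,\tfrac12\pm\epsilon)$ satisfy $F(m-1)<\tfrac12<F(m)$. Hence both newsvendor minimisers sit at $m$, and the gap $\mu^{+}(\tau)+\mu^{-}(\tau)-\mu^{+}(\tau^{*}_{+})-\mu^{-}(\tau^{*}_{-})$ vanishes at $\tau=m$. In this case the medians do not shift by order one.

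To fix it, centre the perturbation at a $p_0$, or choose $h,b$, so that $F_{\mathrm{Bin}(L+1,p_0)}(k)=b/(h+b)$ for some integer $0\le k\le L$. The binomial CDF is strictly decreasing in $p$. So the slopes $(h+b)F^{\pm}(k)-b$ of the piecewise linear costs on $(k,k+1)$ are $\mp\Theta_L(\epsilon)$, while the neighbouring slopes stay bounded away from zero. The minimisers are then $k+1$ and $k$, and you get separation $\Omega_L(\epsilon)$ with per-period KL $O(\epsilon^2)$, which is what your Le Cam step needs.
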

When $S=1$, this recovers the standard $\Omega(\sqrt{T})$ stationary lower bound (\cref{lemma:lower_bound_stationary} and \cite{zhang2020closing}). For larger $S$, the $\sqrt{S}$ factor captures the unavoidable cost of adapting to switching demand. The bound matches the minimax rate for the non-stationary multi-period newsvendor problem \citep[Theorem~3]{chen2025learning}, indicating that, in the zero-lead time setting, inventory carry-over does not increase the fundamental difficulty of learning under non-stationarity.

Related $\sqrt{T}$-type lower bounds have been established for episodic lost-sales models with cyclic demand \citep[Theorem~2]{gong2024bandits} and for settings with discrete switching demand \citep[Theorem~1]{chen2021data}. Our lower bound differs in that it is stated for the base-stock regret benchmark under piecewise stationary demand and covers both the backlogging and zero-lead time lost-sales models in our framework. To the best of our knowledge, analogous minimax lower bounds for non-stationary lost-sales inventory systems with positive lead times have not yet been established.
\section{Algorithms for Non-Stationary Inventory Control} \label{sec:algorithm}

Our algorithms for non-stationary inventory control integrate four key components: (i) discretization of the policy space, (ii) an active-set elimination rule for sampling base-stock policies, (iii) a cost-estimation step for updating empirical performance of sets of policies using the feedback structure, and (iv) change point detection for triggering restarts.
Together, these elements form a restarting-based approach that successively eliminates suboptimal base-stock levels and adaptively tracks the current optimal level.
Before specializing the algorithms to the inventory models considered, we provide a high-level overview of each component.

\paragraph{High-Level Algorithm Structure.}
Our algorithms proceed in a hierarchical time structure consisting of {\em episodes $v$} and {\em epochs $k$}.
Episodes correspond to intervals between detected change points in the demand distribution. Thus, each episode represents a period during which the environment is presumed stationary.  
At the start of a new episode, all historical data collected thus far is discarded, and learning restarts from a full candidate set of possibly optimal base-stock levels. Cost estimation and decision-making is thus performed solely based on the data collected in the current episode $v$, the first time step of which we denote as $t_v$.
Within each episode, the algorithm advances through a sequence of epochs ${k = 1,2,\ldots}$, each defined by the base-stock level $\tau_v^k$ currently being implemented. We denote the initial time step of epoch~$k$ of episode~$v$ by~$\alpha_{v,k}$, starting at ${\alpha_{v,1}=t_v}$.

\paragraph{Discretizing the Space of Base-Stock Levels.} Since the set of base-stock levels ${\tau \in [0,U]}$ is continuous, we first discretize the policy space into a $\gamma$-Net $\A_\gamma \coloneq \left\{0, \gamma, 2\gamma, \ldots, \lfloor U/\gamma\rfloor \gamma, U\right\}$. This serves as the candidate set of base-stock levels across all of the algorithms.~\footnote{In backlog systems and under lost-sales with zero lead time, this discretization is only for {\em computational} rather than {\em theoretical} purposes. However, with lost-sales and positive lead times, the discretization is needed to bound the number of epochs per episode.}  By \cref{lemma:lipschitz} the average cost function $\mu_t(\tau)$ is Lipschitz continuous, hence this discretization incurs at most ${\mathcal{O}(T \gamma)}$ regret. We further note that the algorithm only needs to detect changes larger than $\gamma$ in magnitude.

\paragraph{Active Set Elimination.} In each episode~$v$ and epoch~$k$ the algorithm maintains an {\em active set} $\Avk{v}{k}$ of near-optimal base-stock levels.  At the start of a new episode $v$ this is initialized to be the entire discrete set of base-stock levels ${\Avk{v}{1} = \A_\gamma}$.  However, across episodes the algorithm gradually refines this set by eliminating base-stock levels that appear statistically suboptimal.  At a high level, all base-stock levels ${\tau \in \Avk{v}{k}}$ are removed from the current active set for which the estimated cost is significantly larger than the empirically lowest cost.  The algorithm's {\em selection rule} is then to play the {\em most informative base-stock level} $\tau_v^k$ throughout epoch~$k$ of episode~$v$, i.e.,
\vspace{-5pt}
\begin{equation}
\label{eq:selection_rule}
    \tau_v^k = \sup \Avk{v}{k}.
\end{equation}
Note that, at the start of a new episode when ${\Avk{v}{1} = \A_\gamma}$, the selected base-stock level is set to be ${\tau_v^1 = U}$.

\paragraph{Updating Cost Estimates.} Choosing the largest plausibly optimal base-stock level allows the algorithm to fully exploit the feedback structure established in \cref{lemma:feedback_structure}. 
Specifically, from sampling the series of base-stock levels ${(\tau_v^k)_{k \geq 1}}$ in episode~$v$, the cost estimates $\hat{\mu}(\tau, s, t)$ can be evaluated
\begin{enumerate}
    \item Under backlogging, for all ${\tau \in \mathcal{A}_{\gamma}}$ and $s \geq t_v$, i.e., since the start of the current episode $v$,
    \item Under lost-sales if $L=0$, for all ${\tau \leq \tau_v^k}$ and $s \geq t_v$, i.e., since the start of the current episode $v$,
    \item Under lost-sales if $L>0$, for all ${\tau \leq \tau_v^k}$ and ${s \geq \bar{\alpha}_{v,k} \coloneq \inf \{t \geq \alpha_{v,k} \mid I_t + \sum_{i=0}^L Q_{t-i} \leq \tau_v^k\}}$, i.e., since the first time step after the most recent reduction of the base-stock level in $\alpha_{v,k}$ at which the total inventory pipeline falls below $\tau_v^k$.
\end{enumerate}

To determine these counterfactual costs, we define an inventory vector ${(I_{t}^{\tau}, Q_{t-L}^{\tau}, \dots , Q_{t-1}^{\tau})}$ for each base-stock level ${\tau \in \mathcal{A}_{\gamma}}$. When observing the demand~$D_t$ or the counterfactual sales ${Y^{\tau}_t \coloneq \min\{I^{\tau}_t + Q^{\tau}_{t-L}, D_t\}}$ under policy $\tau$, the corresponding vector is updated according to \cref{eq:inventory_balance}, and the cost evaluated using \cref{eq:cost_function}.

\paragraph{Change Point Detection.} While narrowing down the set of potentially optimal base-stock levels $\Avk{v}{k}$, the algorithm monitors for shifts in the underlying demand distribution.
At each step, it checks whether the expected cost of any base-stock level has changed relative to previous subintervals within the current episode.
Specifically, if the cost estimates across two time windows differ by more than a multiple of the confidence radii, this is interpreted as evidence of a change point. Consequently, the current episode is terminated and a new one initialized, all past data is discarded and learning restarts from the complete set of policies. 

With this high-level description in hand, we turn to the specific algorithms developed for the different inventory settings, detailing how each instantiates these components under its respective feedback structure and system dynamics.

\subsection{Demand Backlogging} \label{sec:backlogging}
In the setting where demand is fully backlogged, all unmet demand is carried over to subsequent periods, hence no sales are lost and the full realized demand is observed in every time step.
This full-feedback property greatly simplifies learning since every base-stock level can be evaluated directly from the observed costs.
We now describe how each component of the general framework is instantiated under this setting.  See \cref{algo:backlog} for the full algorithm pseudo code.

\paragraph{Active Set Elimination.}
At time step~$t$ in epoch~$k$ all base-stock levels ${\tau \in \Avk{v}{k}}$ are removed from the current active set for which there exists a time step ${s \in [t_v, t]}$ such that the estimated cost ${\hat{\mu}(\tau, s, t)}$ is significantly larger than the empirically lowest cost. More specifically, the active set in time step~$t$ in epoch~$k+1$ of episode~$v$ is defined as
\begin{align} \label{eq:elimination_condition_bl}
    \Avk{v}{k+1} &= \Avk{v}{k} \setminus \big\{ \tau \in \Avk{v}{k} \mid \exists s \in [t_v,t] \text{ with } t - s \geq L: \hat{\mu}(\tau, s, t) - \underset{\tau' \in \A_{\gamma}}{\min} \hat{\mu}(\tau', s, t) > 4 b_{s, t} \big\}.
\end{align}
Note that a new epoch is triggered whenever this condition is satisfied and a base-stock level is eliminated.
Based on this, the algorithm selects the base-stock level ${\tau_v^{k+1} = \sup \Avk{v}{k+1}}$ to play across each time step in the new epoch.~\footnote{The algorithm does not necessarily need to play the {\em most informative} policy, and can instead select ${\tau_v^{k+1} = \argmin_{\tau \in \Avk{v}{k+1}} \hat{\mu}(\tau, t_v, \alpha_{v,k+1})}$ since under demand backlogging all base-stock policies have the same informational value. We keep the selection rule in \cref{eq:selection_rule} for expository purposes.}

\paragraph{Updating Cost Estimates.} At the start, the counterfactual inventory states ${(I_1^\tau, Q_{1-L}^\tau, \ldots, Q^{\tau}_0)}$ for all base-stock levels ${\tau \in \A_\gamma}$ are initialized to the zero vector. This guarantees that the conditions in \cref{lemma:concentration} are satisfied, thereby enabling concentration of the empirical cost estimates $\hat{\mu}(\tau, s, t)$.
Once initialized, the counterfactual states evolve deterministically according to the base-stock update rule in \cref{eq:inventory_balance}, driven by the realized demand sequence. Because the inventory position under any fixed base-stock policy never exceeds its corresponding base-stock level, the conditions of \cref{lemma:concentration} continue to hold for all subsequent time steps.

\paragraph{Change Point Detection.}
At each step, our algorithm checks whether the expected cost of any base-stock level has shifted relative to earlier intervals in the current episode.
We identify a change in $\mu_t(\tau)$ if for any ${\tau \in \mathcal{A}_{\gamma}}$ and any pair of time intervals $[s_1, s_2]$ and $[s, t]$ with ${t_v \leq s_1 < s}$ and ${s_2 - s_1 \geq L}$ as well as ${t-s \geq L}$ the confidence intervals around the cost estimates are separated. Specifically, if
\begin{equation} \label{eq:change_condition_bl}
    \abs*{\hat{\mu}(\tau, s_1, s_2) - \hat{\mu}(\tau, s, t)} > b_{s_1, s_2} + b_{s,t}
\end{equation}
we conclude that the cost function must have changed. All information collected is discarded and the elimination procedure restarts with the next episode.
Indeed, if there was no change within time interval $[t_v, t]$, then inequality~\eqref{eq:change_condition_bl} and the concentration inequality~\eqref{eq:concentration_bound} are mutually exclusive.~\footnote{Upon detecting a change, it suffices to discard historical observations only for base-stock levels that exhibit statistically significant cost changes, and only prior to step $s$ in line~\ref{algo_line:change_detection_bl} of \cref{algo:backlog}, rather than the change detection time ${t_{v+1}}$.}
To characterize the performance of \NSBLIC, we state the following regret guarantee, with the proof of \Cref{thm:regret_backlogging} provided in \Cref{app:regret_backlogging}.

\begin{algorithm}[!t]
\caption{\NSBLIC: Non-Stationary Base-Stock Policy Optimization (Backlogging)}
 \label{algo:backlog}
\begin{algorithmic}[1]
\State \textbf{Inputs:} Time horizon $T$, discretization parameter $\gamma$, cost parameters $h$ and $b$, lead time $L$, upper bound \Statex \hspace{\algorithmicindent} 
on optimal base-stock levels $U$, upper bound on sub-Gaussian parameter $\sigma$, small probability $\delta$
\State \textbf{Initialize:} Discretize set of base-stock levels $\mathcal{A}_{\gamma}=\{ 0, \gamma, 2\gamma, \dots , \lfloor U/\gamma \rfloor \gamma, U\}$,
\Statex \hspace{\algorithmicindent} Set counterfactual states $(I^{\tau}_1, Q^{\tau}_{1-L}, \dots , Q^{\tau}_{0}) \gets (0, \dots , 0)\ \forall \tau \in \mathcal{A}_{\gamma},\, \text{episode count } v \gets 0, t \gets 0$
\Begin {\bf Start a new episode:}
    \State Episode count $v \gets v+1$, episode start time $t_v \gets t$, epoch count $k \gets 0$, active set $\Avk{v}{1} \gets \A_\gamma$  \label{algo_line:reset_Avk}
    \Begin {\bf Start a new epoch:}
        \State Epoch count $k \gets k +1$ and $\tau_v^k \gets \sup \Avk{v}{k}$
        \While{$t < T$}
            \State $t \gets t + 1$
            \State Play base-stock policy $\tau_v^k$ by ordering $Q_t = \left[\tau_v^k - I_t - \sum_{i=1}^L Q_{t-i}\right]^+$
            \State Observe demand $D_t$ and costs $C_t(\tau)$ for all $\tau \in \mathcal{A}_{\gamma}$ \Comment{Updating Cost Estimates}
            \State Update $(I^{\tau}_t, Q^{\tau}_{t-L}, \dots , Q^{\tau}_{t-1}) \ \forall \tau \in \mathcal{A}_{\gamma}$ by \cref{eq:inventory_balance}
            \If{$t \geq t_v + L$} 
                \If{$\exists \tau \in \mathcal{A}_{\gamma}$ and $s,s_1,s_2$ with $t_v \leq s_1 < s$ s.t. $s_2 - s_1 \geq L$, $t-s \geq L$, and \cref{eq:change_condition_bl} holds} 
                \label{algo_line:change_detection_bl}
                    \State \textbf{Start a new episode} \Comment{Change Point Detection}
                \EndIf
            
                \If{$\exists \tau \in \Avk{v}{k} \text{ and } s \in [t_v,t] \text{ with } t - s \geq L \text{ s.t. } \hat{\mu}(\tau, s, t) - \underset{\tau' \in \A_{\gamma}}{\min} \hat{\mu}(\tau', s, t) > 4 b_{s, t}$} 
                    \State Update $\Avk{v}{k}$ by \cref{eq:elimination_condition_bl} \Comment{Active Set Elimination}
                \State \textbf{Start a new epoch}
                \EndIf
            \EndIf
        \EndWhile
    \End
\End
\end{algorithmic}
\end{algorithm}


\begin{restatable}{theorem}{BackLogRegret} \label{thm:regret_backlogging}
For any $\delta \in (0,1)$, the dynamic regret of \NSBLIC (\Cref{algo:backlog}) is upper bounded with probability at least ${1 - \delta}$ via
\vspace{-8pt}
\begin{equation*}
    R(T) \leq 24 \Hbl \sqrt{ST \log (4(L+1)T^2U/\delta \gamma)} + \left(T\gamma + SLU\right) \max\{h,b\},
\end{equation*}
where ${\Hbl = 2\sqrt{2} \sigma \sqrt{(L+1)(Lh^2 + (h+b)^2(4L+5))}}$. As a result, by setting ${\gamma = \mathcal{O}(T^{-1/2})}$ we have ${R(T) = \tilde{\mathcal{O}}(\sigma (L+1)\sqrt{ST})}$.
\end{restatable}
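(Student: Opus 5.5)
We argue on a single good event $\mathcal{G}$ on which every confidence bound of \cref{lemma:concentration} holds simultaneously. The union is over all base-stock levels $\tau\in\A_\gamma$, at most $U/\gamma+2$ of them, and over all pairs $(s,t)$ with $t-s\ge L$ whose interval $[s,t-1]$ lies inside a stationary segment, at most $T^2$ pairs. Invoking \cref{lemma:concentration} with $\delta' = \delta\gamma/(T^2 U)$ (up to constants) makes $\Pr(\mathcal{G})\ge 1-\delta$. Every confidence radius then takes the form $\Hbl\sqrt{2\log(4(L+1)T^2U/\delta\gamma)/(t-s)}$, which is where the logarithm in the statement comes from.

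The hypothesis of \cref{lemma:concentration} holds for every counterfactual trajectory, because the inventory position under a fixed base-stock level never exceeds that level. One issue is that the counterfactual states are not reset at episode starts. Under backlogging, though, the position returns to exactly $\tau$ after one order, so the bound still applies from any $s$; the condition $t-s\ge L$ absorbs the pipeline transient. Any remaining transient is charged to the additive term $SLU\max\{h,b\}$: each of the at most $S$ true change points costs at most $L$ steps of Lipschitz-bounded excess, using \cref{lemma:lipschitz} and $\tau\in[0,U]$.

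On $\mathcal{G}$ we prove three facts. \textbf{No false alarms.} If $[t_v,t]$ contains no change point, then \cref{eq:change_condition_bl} cannot fire, since both estimates lie within their radii of the same $\mu(\tau)$. Hence the number of episodes is at most $S$. \textbf{The optimum is never eliminated within a stationary stretch.} If $\tau^*_\gamma$, the best grid point, satisfied \cref{eq:elimination_condition_bl}, then $\mu(\tau^*_\gamma) - \mu(\tau') > 2b_{s,t}$ for some $\tau'$, which contradicts optimality. \textbf{Surviving levels are good.} Fix an epoch inside a stationary stretch. If the played $\tau_v^k$ has not been eliminated at time $t$, then applying the elimination test with $s=t_v$ (or with $s$ equal to the last change point) gives $\mu_t(\tau_v^k)-\mu_t(\tau^*_\gamma)\le 6b_{s,t}$. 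The per-step gap is therefore $O(\Hbl\sqrt{\log/(t-s)})$, and summing over a stationary stretch of length $\ell$ gives $O(\Hbl\sqrt{\ell\log})$.

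The main obstacle is an episode that straddles a true change point, since detection is delayed. Let $c$ be a change point inside episode $v$, and split the episode into its maximal stationary pieces. On each piece $[c_j, c_{j+1})$ we apply the elimination argument with windows $s\ge c_j$. The problem is that $\tau_v^k$ may have been retained only because of pre-change data. We handle this by combining the two tests. Suppose at time $t>c_j$ the played level satisfies $\mu_t(\tau_v^k)-\mu_t(\tau^*_{\gamma,t}) > 8b_{c_j,t}$. The full-feedback elimination condition evaluated on the window $[c_j,t]$ then forces $\tau_v^k$ out; alternatively, if the current optimum had been wrongly eliminated earlier, the change-detection test on windows $[s_1,s_2]\subset[t_v,c_j)$ and $[c_j,t]$ must fire, because the cost of some grid level has moved by more than the sum of the radii. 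I expect this case analysis, which shows that either elimination or a restart happens before the gap exceeds a constant multiple of $b_{c_j,t}$, to be the most delicate step, and I would write it using the elimination rule's minimum over all of $\A_\gamma$ rather than only over the active set.

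With this in place, every stationary piece of length $\ell_j$ contributes $O(\Hbl\sqrt{\ell_j\log})$, the windows shorter than $L$ contribute $LU\max\{h,b\}$, and discretization contributes $T\gamma\max\{h,b\}$ by \cref{lemma:lipschitz}. Since there are at most $S$ pieces, Cauchy--Schwarz gives $\sum_j\sqrt{\ell_j}\le\sqrt{ST}$. Tracking constants yields the factor $24$, and choosing $\gamma=T^{-1/2}$ gives $\tilde{\mathcal{O}}(\sigma(L+1)\sqrt{ST})$, because $\Hbl=O(\sigma(L+1))$.
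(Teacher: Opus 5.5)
Your proposal is correct and is essentially the paper's proof: the same union-bound good event; no false alarms, giving at most $\hat S+S-1\le 2S-1$ stationary intervals inside episodes (this count, not $S$, is what yields the constant $24$ after Cauchy--Schwarz); and the per-step bound $\mu_t(\tau_v^k)-\mu_t(\taugammaopt)\le 6b_{s,t}$, obtained by applying the elimination test with $s$ equal to the most recent change point or episode start. The straddling episodes you call the main obstacle need no separate treatment: the elimination test is checked for every $s\in[t_v,t]$ against $\min_{\tau'\in\A_\gamma}$, so your first branch already covers every case, and you should drop both the ``optimum never eliminated'' fact and the change-detection branch, since its claim that some level's cost must move by more than the sum of the radii does not hold in general.
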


Notably, \NSBLIC achieves the regret guarantee in \cref{thm:regret_backlogging} without knowing the amount of non-stationarity~$S$. The upper bound aligns, up to logarithmic factors, with the $\Omega(\sqrt{ST})$ lower bound (\cref{thm:lower_bound}), and this rate cannot be improved even if $S$ was known in advance. In particular, when ${S = 1}$, corresponding to the stationary setting, our bound reduces to $\mathcal{O}(\sqrt{T})$, recovering the lower bound for stationary backlogging systems established in \cref{lemma:lower_bound_stationary}.  Thus, the use of an adaptive algorithm in inventory systems with full backlogging incurs no loss in performance when demand is stationary, even in the presence of deterministic lead times. Since base-stock policies are optimal in this setting \citep{karlin1958inventory,scarf1960optimality}, the algorithm effectively learns the optimal policy.
Finally, note that \NSBLIC does not leverage the convexity of the cost function and 
its regret exhibits no asymptotic dependence on the size of the policy space~$U$.
 

\subsection{Lost-Sales with Zero Lead Time}
Next, we consider the lost-sales model with zero lead time. Since demand is censored, the system does not benefit from the full feedback structure available under demand backlogging. Nonetheless, the feedback remains significantly richer than the observation of only the realized cost, due to left-sided information.

\paragraph{Active Set Elimination.}
The active set in time step $t$ in epoch~$k+1$ of episode~$v$ is defined as
\vspace{-2pt}
\begin{equation} \label{eq:elimination_condition_ls}
    \Avk{v}{k+1} = \Avk{v}{k} \setminus \big\{ \tau \in \Avk{v}{k} \mid \exists s \in [t_v,t] : \hat{\mu}(\tau, s, t) - \underset{\tau' \in \A_{\gamma}, \tau' \leq \tau_v^k}{\min} \hat{\mu}(\tau', s, t) > \Cb b_{s, t} \big\}.
\end{equation}
Note that this is the same strategy (up to a constant) as the active set elimination rule in the demand backlog setting (\cref{eq:elimination_condition_bl}).
Again, the algorithm selects the base-stock level ${\tau_v^{k+1} = \sup \Avk{v}{k+1}}$ to play across epoch~$k+1$ (\cref{eq:selection_rule}). 
For the base-stock levels ${\tau > \tau_v^{k+1}}$ that are eliminated at time~$t$ based on the realization of \cref{eq:elimination_condition_ls} by a previous time step ${s \in [t_v, t]}$ in episode~$v$, the estimated mean and the estimated suboptimality gap that led to this eviction are recorded
\vspace{-5pt}
\[
    \tilde{\mu}_v(\tau) \coloneq \hat{\mu}(\tau, s, t) \quad \text{and}\quad \tilde{\Delta}_v(\tau) \coloneq \hat{\mu}(\tau, s, t) - \underset{\tau' \in \A_{\gamma}, \tau' \leq \tau_v^k}{\min} \hat{\mu}(\tau', s, t).
\]
The gathered information about eliminated base-stock levels is used to monitor for changes among these discarded policies. 

The core difficulty in the lost-sales setting with left-sided information is change detection, which requires collecting sufficient subsequent samples at base-stock levels that have already been removed from the active set.
Obtaining these observations may incur high regret each time these policies are selected.
This challenge is illustrated in \Cref{fig:convex_functions}, which shows the graphs of two cost functions, before and after a change at time~$t+1$, with $\tau_t^{*}$ and $\tau_{t+1}^{*}$ denoting their respective optima. 
The average cost of all policies ${\tau \leq \tau_v^k}$ changes only slightly.  Hence, by sampling $\tau_v^k$ and approaching the old optimum $\tau_t^{*}$, it becomes increasingly difficult to detect the shift in the optima from ${\tau_t^* \leq \tau_v^k}$ to ${\tau_{t+1}^* > \tau_v^k}$ receiving only feedback for policies ${\tau \leq \tau_v^k}$.

\begin{figure}[t]
    \centering
    \input{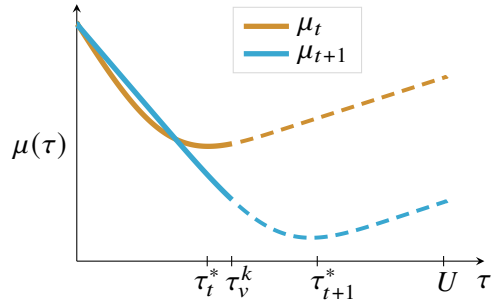}
    \caption{Illustration of the graphs of two expected cost functions, $\mu_t$ and $\mu_{t+1}$, that are difficult to distinguish under the feedback structure of the lost-sales model. Under left-sided feedback, executing policy~$\tau_v^k$ yields cost realizations only for policies $\tau \leq \tau_v^k$ (solid lines), but provides no information on any $\tau > \tau_v^k$ (dashed lines). Therefore, policies greater than $\tau_v^k$ need to be sampled to identify the change in the optimal base-stock level from $\tau_t^* \leq \tau_v^k$ to $\tau_{t+1}^* > \tau_v^k$.}
\label{fig:convex_functions}
\end{figure}
The example highlights the necessity of occasionally sampling base-stock levels greater than $\tau_v^k$ to avoid missing changes in the optimal base-stock level under censored feedback. \NSLSIC (\Cref{algo:lost_sales}) employs a forced exploration schedule inspired by the \ADSWITCH algorithm \citep{auer2019adaptively}. It leverages side information to selectively target exploration, rather than sampling suboptimal base-stock levels uniformly. In particular, the algorithm selects only the most informative policy~$U$, which provides feedback that can be used to infer costs across the entire policy set. 
The exploration schedule introduces a count $N$ for {\em sampling obligations} for base-stock level $U$. At each time step in episode~$v$, with probability ${p_i = \varepsilon_i \sqrt{v/UT \log(2T^2U/\delta\gamma)}}$, the algorithm increments $N$ by ${n_i = \lceil 2 \log(2T^2U/\delta\gamma)/\varepsilon_i^2 \rceil}$, which is sufficient to detect a potential change of size at least ${\varepsilon_i=2^{-i}}$ for ${1 \leq i \leq \log_2(1/\gamma)}$. Whenever $N$ is positive, base-stock level~$U$ is sampled and count $N$ decremented by one. This mechanism ensures the algorithm continues to gather sufficient subsequent full-feedback samples. See \citet{auer2019adaptively} for more details on how the exploration rate is selected.

\paragraph{Updating Cost Estimates.}
At the start at $t=1$, we initialize the counterfactual inventory $I_1^{\tau}$ for each ${\tau \in \A_{\gamma}}$ to zero.
In the following time steps, the stock levels evolve according to the base-stock update \Cref{eq:inventory_balance}.
We have two sets of base-stock policies we need to maintain estimates for, policies ${\tau \leq \tau_v^k}$ and policies ${\tau > \tau_v^k}$. For the first, we update the counterfactual inventory levels and observe costs in each time step, regardless of which level ${\tau_t \geq \tau_v^k}$ is selected. Base-stock policies greater than $\tau_v^k$ are updated only when ${\tau_t = U}$ because only in these time steps the counterfactual sales~$Y^{\tau}_t$ for ${\tau > \tau_v^k}$ are obtained.

For fixed base-stock policies with instantaneous replenishment the condition $I_t^\tau \leq \tau$ is always maintained for ${\tau \leq \tau_t}$.
This ensures we can invoke \cref{lemma:feedback_structure} and determine counterfactual costs via \cref{eq:cost_function} for evaluating all policies ${\tau \leq \tau_t}$. 
Moreover, we can apply \cref{lemma:concentration} and obtain concentration of the empirical cost estimates $\hat{\mu}(\tau,s,t)$ for stationary intervals $[s,t]$ (for ${\tau > \tau_v^k}$ with $[s,t]$ being a subset of a sampling block for policy~$U$).

\paragraph{Change Point Detection.}
Similar to \cref{eq:change_condition_bl} we identify changes in the costs of base-stock levels ${\tau \leq \tau_v^k}$ in episode~$v$ by testing if for a pair of time intervals $[s_1, s_2]$ and $[s, t]$ with ${t_v \leq s_1 < s}$ it holds that
\vspace{-8pt}
\begin{equation} \label{eq:change_condition_ls}
    \left|\hat{\mu}(\tau, s_1, s_2) - \hat{\mu}(\tau, s, t)\right| > b_{s_1, s_2} + b_{s,t}.
\end{equation}
Changes in the costs of previously discarded base-stock levels $\tau >\tau_v^k$ are identified if that level might have become optimal through the condition
\vspace{-12pt}
\begin{equation} \label{eq:change_condition_ls_bad}
    \left|\hat{\mu}(\tau, s, t) - \tilde{\mu}_v(\tau)\right| > \tilde{\Delta}_v(\tau)/4 + b_{s,t}
\end{equation}
for any ${t_v \leq s < t}$ such that ${\tau_{t'} = U}$ for all ${t' \in [s,t]}$.

\begin{algorithm}[!t]
\caption{\NSLSIC: Non-Stationary Base-Stock Policy Optimization (Lost-Sales, Lead Time $L=0$)}
 \label{algo:lost_sales}
\begin{algorithmic}[1]
        \State \textbf{Inputs:} Time horizon $T$, discretization parameter $\gamma$, cost parameters $h$ and $b$, 
        \Statex \hspace{\algorithmicindent} upper bound on optimal base-stock levels $U$, small probability $\delta$
        \State \textbf{Initialize:} Discretize set of base-stock levels $\mathcal{A}_{\gamma}=\{ 0, \gamma, 2\gamma, \dots , \lfloor U/\gamma \rfloor \gamma, U\}$,
        \Statex \hspace{\algorithmicindent} Set counterfactual inventory levels $I^{\tau}_1 \gets 0$ $\forall \tau \in \mathcal{A}_{\gamma},\, \text{episode count } v \gets 0,\, t \gets 0, N \gets 0$ \label{algo_line:initialize_counterfactual_inventory_ls}
        
        \Begin \textbf{Start a new episode:}
        \State Episode count $v \gets v+1$, episode start time $t_v \gets t$, epoch count $k \gets 0$, active set $\Avk{v}{1} \gets \A_{\gamma}$
        \Begin \textbf{Start a new epoch:}
        \State Epoch count $k \gets k + 1$ and $\tau_v^k \gets \sup \Avk{v}{k}$
        
        \While{$t < T$}
            \State $t \gets t + 1$
            \For{all $i \geq 1$ with $\varepsilon_i = 2^{-i} \geq \max\{\gamma, \tilde{\Delta}_v(U)/ 16\Hls\}$ with prob. $2^{-i}\sqrt{\frac{v}{UT\log(2T^2U/\delta\gamma)}}$}
                \State $N \gets N + \lceil 2^{2i+1} \log(2T^2U/\delta\gamma) \rceil$  \Comment{Sampling Obligations for Policy $U$}
            \EndFor
            \If{$N \geq 1$}
                $\tau_t \gets U$ and $N \gets N - 1$
            \Else
                $\ \tau_t \gets \tau_{v}^{k}$
            \EndIf
        
            \State Play base-stock policy $\tau_t$ by ordering $Q_t = \left[\tau_t - I_t\right]^+$
            \State Observe sales $Y^{\tau}_t$ and costs $C_t(\tau)$ for all $\tau \leq \tau_t$ \Comment{Updating Cost Estimates}
            \State Update $I^{\tau}_t \ \forall \tau \leq \tau_t$ by \cref{eq:inventory_balance}

            \If{$\exists \tau \leq \tau_v^k$ and $s,s_1,s_2$ with $t_v \leq s_1 < s$ s.t. \cref{eq:change_condition_ls} holds \Comment{Change Point Detection} \\
            \qquad \qquad \qquad or $\exists \tau > \tau_v^k$ and $t_v \leq s \leq t$ s.t. \cref{eq:change_condition_ls_bad} holds}
            \textbf{Start a new episode} 
            \EndIf
            \If{$\exists \tau \in \Avk{v}{k}$ and $s \in [t_v,t] \text{ s.t. } \hat{\mu}(\tau, s, t) - \min_{\tau' \in \A_{\gamma}, \tau' \leq \tau_v^k} \hat{\mu}(\tau', s, t) > \Cb b_{s, t}$}
                \State Update $\Avk{v}{k}$ by \cref{eq:elimination_condition_ls} \Comment{Active Set Elimination}
                \State Record $\tilde{\mu}_{v}(\tau) \text{ and } \tilde{\Delta}_{v}(\tau) \ \forall \tau \in \Avk{v}{k}\setminus \Avk{v}{k+1}$
            \If{$\tau_v^{k+1} \neq \tau_v^k$} \textbf{Start a new epoch} \EndIf
        \EndIf
        \EndWhile
        \End
    \End
\end{algorithmic}
\end{algorithm}


\cref{thm:regret_LS_L_zero} quantifies the regret performance of the \NSLSIC algorithm.

\begin{restatable}{theorem}{LSzeroLRegret} \label{thm:regret_LS_L_zero}
The expected dynamic regret of \NSLSIC (\Cref{algo:lost_sales}) is upper bounded via
\vspace{-5pt}
\begin{align*}
    \E \left[R(T)\right] < \left(1 - \delta\right) \Bigg(&\left(\frac{2456}{15}+\bigg(\frac{64}{3} + 384\max\{h,b\}\bigg)\sqrt{U} + \frac{256 \log_2(1/\gamma)}{7\sqrt{U}} \right)\Hls \sqrt{ST \log(2T^2U/\delta\gamma)} \\
    &\quad + \frac{64}{3} \Hls S + T\gamma \max\{h,b\} \Bigg) + \delta TU\max\{h,b\},
\end{align*}
where $\Hls = 216U \max\{h,b\}$ and $\delta \in (0,1)$. As a result, by setting ${\gamma = \mathcal{O}\left( T^{-1/2}\right)}$ and $\delta = \Theta(T^{-2})$ we have ${\E \left[R(T)\right] = \tilde{\mathcal{O}}\left( U^{3/2}\sqrt{ST}\right)}$.
\end{restatable}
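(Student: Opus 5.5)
We follow the \ADSWITCH analysis of \citet{auer2019adaptively}, adapted to left-sided feedback, where the only explored arm is $U$.

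\textbf{Good event and decomposition.} First define a good event $\mathcal{G}$. On $\mathcal{G}$, for every $\tau\in\A_\gamma$ and every stationary interval $[s,t-1]$ on which $\tau$ is counterfactually evaluable, the bound of \cref{lemma:concentration} holds with $\delta$ replaced by $\delta/(T^2|\A_\gamma|)$. For policies $\tau>\tau_v^k$, evaluability means the interval lies inside a block where $U$ is played. Since $L=0$ and $I^\tau_t\le\tau$ for all counterfactual states, \cref{lemma:feedback_structure} and \cref{lemma:concentration} apply directly. A union bound gives $\Pr(\mathcal{G})\ge 1-\delta$, and on $\mathcal{G}^c$ we use the trivial bound $TU\max\{h,b\}$ from \cref{lemma:lipschitz}. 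This produces the split $(1-\delta)(\cdot)+\delta TU\max\{h,b\}$.

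On $\mathcal{G}$ we establish three facts. (a) There are no false alarms: both \cref{eq:change_condition_ls} and \cref{eq:change_condition_ls_bad}, with the recorded $\tilde\mu_v$, are incompatible with concentration inside a stationary segment. Hence the number of episodes is at most $S$. (b) Within a stationary stretch, the $\gamma$-optimal level is never eliminated, because the elimination threshold is $6b_{s,t}\ge 2b_{s,t}$. (c) Any level eliminated at $(s,t)$ has gap $\Theta(\tilde\Delta_v(\tau))$ with $\tilde\Delta_v(\tau)\ge 6b_{s,t}$. Discretization adds $T\gamma\max\{h,b\}$.

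\textbf{Exploitation regret within a segment.} Fix an episode and a stationary segment inside it. When $\tau_v^k$ is played, every $\tau\in\Avk{v}{k}$ survived the test at all $s$. Convexity (\cref{lemma:convexity}) and the choice $\tau_v^k=\sup\Avk{v}{k}$ let us argue that the gap of the played level is at most $O(b_{s,t})$ measured from the segment start $s$. The minimum is taken over $\tau'\le\tau_v^k$, and this contains the optimum unless the optimum has moved above $\tau_v^k$, which is exactly the case handled in the next step. Summing $\Hls\sqrt{\log/(t-s)}$ over a segment of length $\ell$ gives $O(\Hls\sqrt{\ell\log})$. Cauchy--Schwarz over the at most $2S$ segment pieces (episodes intersected with stationary segments) then gives $O(\Hls\sqrt{ST\log})$.

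\textbf{Exploration cost and detection delay.} This is the main obstacle. Exploration of $U$ costs at most $O(U\max\{h,b\})$ per step relative to the optimum; more sharply, it costs the gap of $U$, which is $O(\tilde\Delta_v(U))$ until a change occurs. The expected number of exploration steps for scale $\varepsilon_i$ is $T p_i n_i\approx \varepsilon_i^{-1}\sqrt{vT\log/U}$. The cap $\varepsilon_i\ge\tilde\Delta_v(U)/16\Hls$ makes the cost per scale about $\sqrt{vT\log\, U}$, and summing over $\log_2(1/\gamma)$ scales and over episodes $v\le S$ yields the $\sqrt{U}$ and $\log_2(1/\gamma)/\sqrt U$ terms times $\Hls\sqrt{ST\log}$.

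For detection we follow the \ADSWITCH argument. Suppose that after a change some level above $\tau_v^k$ becomes better by a margin $\varepsilon\in[\varepsilon_i,2\varepsilon_i)$. Then the regret accrues at rate $\varepsilon$ per step until detection. Each step triggers a block of length $n_i$, long enough for \cref{eq:change_condition_ls_bad} to fire on $\mathcal{G}$. The trigger has probability $p_i$, so the expected delay is $1/p_i$ and the expected regret is $\varepsilon/p_i=O(\sqrt{UT\log/v})$. By convexity this single change of optimum above $\tau_v^k$ is witnessed at $U$ itself, either through a shift of $\mu(U)$ by at least $\tilde\Delta_v(U)/4$ or through a cost change at levels at most $\tau_v^k$, caught by \cref{eq:change_condition_ls}. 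Establishing this is the delicate convexity-plus-left-feedback step: I would use that for convex $\mu$ with minimizer above $\tau_v^k$, $\mu$ is decreasing on $[0,\tau_v^k]$, which contradicts the recorded gaps unless the costs on $[0,\tau_v^k]$ moved by $\Omega(b)$.

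Summing the expected delays over the at most $S$ changes and over the episodes with $\sum_v v^{-1/2}\le 2\sqrt S$ gives $O(\Hls\sqrt{ST\log})$. Detection-lag losses for changes that do not trigger a restart are bounded by the $O(b)$ guarantee, and each restart costs at most $O(\Hls)$, which gives the $\Hls S$ term. Collecting terms yields the stated bound, and the choices $\gamma=T^{-1/2}$ and $\delta=T^{-2}$ give $\tilde{\mathcal{O}}(U^{3/2}\sqrt{ST})$, since $\Hls\sqrt U=O(U^{3/2})$.
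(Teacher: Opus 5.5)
Your skeleton (good event with the $(1-\delta)(\cdot)+\delta TU\max\{h,b\}$ split, no false alarms, exploitation regret against the best level below $\tau_v^k$, and \ADSWITCH-style detection through $U$-blocks) matches the paper. The gap is in the detection step, which rests on a false claim. You assert that a change moving the optimum above $\tau_v^k$ is witnessed either by a shift of $\mu(U)$ or by an $\Omega(b)$ change of the costs on $[0,\tau_v^k]$. Here is a counterexample. Let the old cost have slope $-a$ on $[0,\tau_v^k]$, its minimum at $\tau_v^k$, and slope $c>0$ beyond. Let the new cost keep slope $-a$ up to some $\tau^\circ\in(\tau_v^k,U)$ and then rise linearly to the old value at $U$. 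Both are convex, and they coincide on all of $[0,\tau_v^k]$ and at $U$. Yet $\tau_v^k$ now incurs regret $a(\tau^\circ-\tau_v^k)$ per step. Your observation that the new cost is ``decreasing on $[0,\tau_v^k]$'' contradicts nothing, because the old cost was already decreasing there; this is the situation of \cref{fig:convex_functions}. On the good event, neither \eqref{eq:change_condition_ls} nor \eqref{eq:change_condition_ls_bad} at $U$ ever fires.

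The missing idea is to apply \eqref{eq:change_condition_ls_bad} at $\tau=\taugammaopt$ itself, whose cost is observed during $U$-blocks by left-sided feedback, and to split on $\taugammaopt$'s own recorded statistics, as the paper does in $R_{3.1}$ and $R_{3.2}$. If $\tilde\mu_v(\taugammaopt)-\mu_t(\taugammaopt)\le\tilde\Delta_v(\taugammaopt)/2$, then the empirical best $\tau'$ on the interval where $\taugammaopt$ was evicted satisfies $\hat\mu(\tau',s',t')\le\mu_t(\taugammaopt)-3b_{s',t'}$. Chaining non-firing of \eqref{eq:change_condition_ls} then bounds the per-step regret by $2b_{\beta_{v,i},t}$: directly at $\tau'$ if $\tau'\le\tau_v^k$, and otherwise through $\tau_t^g=\tau_v^k$, which survived the later eviction of $\tau'$, together with $2b_{s'',t''}<b_{s',t'}$. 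Otherwise the per-step regret is at most $\tfrac43\bigl(\tilde\mu_v(\taugammaopt)-\mu_t(\taugammaopt)\bigr)+2b_{\beta_{v,i},t}$. A $U$-block at the matching scale then makes \eqref{eq:change_condition_ls_bad} fire at $\taugammaopt$, and the \ADSWITCH backward induction applies.

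When you do this, also check that the matching scale is one the algorithm actually samples. That scale is only guaranteed to exceed $\tilde\Delta_v(\taugammaopt)/16\Hls$, but obligations are added only for $\varepsilon_i\ge\max\{\gamma,\tilde\Delta_v(U)/16\Hls\}$, a floor keyed to $U$'s recorded gap rather than $\taugammaopt$'s. In \ADSWITCH the floor uses the explored arm's own gap, so the check is automatic there. Here it is not, and the paper's $R_{3.2}$ takes it for granted as well.

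Second, your exploration-cost bound does not cover plays of $U$ after an undetected change. Before a change, the gap of $U$ is $O(\tilde\Delta_v(U))$ and cancels against the number of plays. Afterwards the gap can exceed $4\tilde\Delta_v(U)$, while the number of plays is still governed by the floor $\tilde\Delta_v(U)/16\Hls$, i.e.\ it is of order $\Hls\sqrt{vT\log(2T^2U/\delta\gamma)/U}/\tilde\Delta_v(U)$ in a long episode. With your crude bound of $U\max\{h,b\}$ per play, this approaches order $T$ when $U$ was evicted with a gap near the threshold $6b_{s,t}$ of a long interval.

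The paper handles plays with $\mu_t(U)-\mu_t(\tau_t^g)>4\tilde\Delta_v(U)$ by showing that the gap is then of the order of the current confidence radius. There are two cases:
\begin{itemize}
\item $\mu_t(U)$ rose above $\tilde\mu_v(U)$ by more than half the gap. Then non-firing of \eqref{eq:change_condition_ls_bad} at $U$ inside the current block gives gap $<\tfrac{32}{7}b_{t_{v,j},t}$; summed over blocks and scales, this produces the $\log_2(1/\gamma)/\sqrt U$ term.
\item $\mu_t(\tau_t^g)$ dropped. Then non-firing of \eqref{eq:change_condition_ls} at $\tau_t^g\le\tau_v^k$ gives gap $<\tfrac{48}{5}b_{\beta_{v,i},t}$.
\end{itemize}
Finally, you must weight the per-episode exploration count by episode length, using $\sum_v(t_{v+1}-t_v)\sqrt v\le\sqrt S\,T$. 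Bounding every episode by $Tp_in_i$ and summing over $v\le S$ yields $S^{3/2}$ rather than $\sqrt S$.
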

In contrast to \cref{thm:regret_backlogging}, the guarantee in here is for expected regret since we need to bound the expected number of sampling obligations for base-stock level $U$.
The proof of \Cref{thm:regret_LS_L_zero} is given in \Cref{app:regret_LS_L0}. 

With an optimal discretization, the regret bound matches, up to logarithmic factors, the ${\Omega(\sqrt{ST})}$ minimax lower bound, and this rate cannot be improved even when $S$ is known (\cref{thm:lower_bound}). In particular, when ${S=1}$, it recovers the $\Omega(\sqrt{T})$ lower bound on the static regret for zero-lead time lost-sales inventory systems with stationary demand \citep[Proposition~1]{zhang2020closing}. 
Since base-stock policies are optimal in this setting \citep{scarf1960optimality}, the algorithm learns the optimal policy.
These comparisons establish that \NSLSIC is order-optimal under both stationary and non-stationary demand. 
Moreover, in view of the $\Omega(\sqrt{ST})$ bound established for switching multi-armed bandits \citep{auer2002nonstochastic}, this result demonstrates that the deliberate exploitation of left-sided feedback offsets the complexity introduced by continuous and non-stationary demand, and inventory carry-over across time steps.
Unlike existing approaches \citep{chen2021data}, \NSLSIC requires no restrictions on the degree of non-stationarity, or on the discreteness of the demand space.
Finally, we find that learning with censored feedback requires a substantially more sophisticated method than the uncensored setting, yet the same order of regret remains achievable. In light of \Cref{thm:lower_bound,thm:regret_backlogging}, we conclude that in the absence of lead time, non-stationarity does not impede learnability, even when combined with demand censoring and an unknown frequency of change.


\subsection{Lost-Sales with Positive Lead Time}
Lastly, we extend the preceding analysis to positive lead times. In the lost-sales model with instantaneous replenishment we were able to occasionally switch between policy $\tau_v^k$ and exploratory policy~$U$ to obtain full-feedback samples required for change detection. With a positive lead time, however, we can only infer counterfactual costs as long as the policy played is non-decreasing (\Cref{lemma:feedback_structure}). This implies two constraints, on the update frequency of base-stock level $\tau_v^k$, and the capacity to occasionally explore levels larger than~$\tau_v^k$, since both involve downward switches of the policy pursued.

In fact, with lead time the conditions for left-sided feedback do not allow any {\em explicit} exploration, even when sampling only a single eliminated base-stock level occasionally. 
In \NSLSIC, the required frequency of switches between base-stock levels~$\tau_v^k$ and $U$ is of order~$\sqrt{T}$ (see bound on $R_{2.1}(T)$ in \cref{app:regret_LS_L0}). To illustrate how this affects the regret if the sample path of cost observations is interrupted after every decrease in policy, let an episode be of length~$T$ and consist of $\sqrt{T}$ epochs of length $\sqrt{T}$ each. Then, the regret solely of time steps when $\tau_v^k$ is sampled is ${\sqrt{T}\sum_{t=1}^{\sqrt{T}} b_{1,t} = \tilde{\mathcal{O}}(1) \cdot \sqrt{T} \sum_{t=1}^{\sqrt{T}} \sqrt{1/t}=\tilde{\mathcal{O}}(T^{3/4})}$. This shows that in this setting forced exploration for change detection is incompatible with maintaining algorithmic efficiency.

To overcome this, we leverage side information in combination with convexity. 
Crucially, we only need to change our change point detection strategy for changes after which the new optimal base-stock level is larger than $\tau_v^k$. All other changes can be detected quickly by comparing cost observations of both old and new optimum, which are gathered while implementing base-stock policy~$\tau_v^k$.
Due to convexity, it is possible to obtain the missing {\em right-sided} information about whether a change in the cost function has caused the optimal base-stock level to be larger than $\tau_v^k$, without ever sampling a level greater than $\tau_v^k$. 
Specifically, let $\tau^*_t$ and $\tau^*_{t+1}$ be optima of cost functions $\mu_t$ and $\mu_{t+1}$ before and after a change, respectively, with ${\tau^*_t < \tau_v^k < \tau^*_{t+1}}$. We leverage the fact that ${\mu_t(\tau_v^k)=\mu_{t+1}(\tau_v^k)=\mu_{t+1}(\tau^*_{t+1})}$, hence $\tau_v^k$ is optimal after the change and no reset necessary, or there exists some ${\tau \leq \tau_v^k}$ such that ${\mu_t(\tau) \neq \mu_{t+1}(\tau)}$. In particular, as depicted in \cref{fig:convex_functions_3}, the graphs cannot coincide at both points $\tau_t^*$ and $\tau_v^k$.
This leads to a useful simplification for detecting upward shifts in the optimal base-stock level. In order to determine whether ${\tau_t^* < \tau_v^k < \tau_{t+1}^*}$, it suffices to perform a change point check using only cost observations at base-stock levels ${\tau \leq \tau_v^k}$. These cost observations are directly available from left-sided feedback. A statistically significant difference between ${\mu_t(\tau_t^*)}$ and ${\mu_{t+1}(\tau_t^*)}$, or between ${\mu_t(\tau_v^k)}$ and ${\mu_{t+1}(\tau_v^k)}$, is then enough to certify that a change has occurred.

However, as illustrated in \cref{fig:convex_functions}, change detection may become difficult if the difference in costs is only marginal for all ${\tau \leq \tau_v^k}$. For such cases, we need to ensure that old and new function values are sufficiently different at least at base-stock level~$\tau_v^k$. This, in turn, requires that $\tau_v^k$ is not chosen too close to the optimal base-stock level~$\tau_t^*$, since otherwise high-confidence detection of changes in the underlying cost function may become impossible. Because we cannot rely on strong convexity of the cost function to guarantee such separation, we need to enforce a minimum suboptimality gap of order $\gamma$ at~$\tau_v^k$.

\begin{figure}[!t]
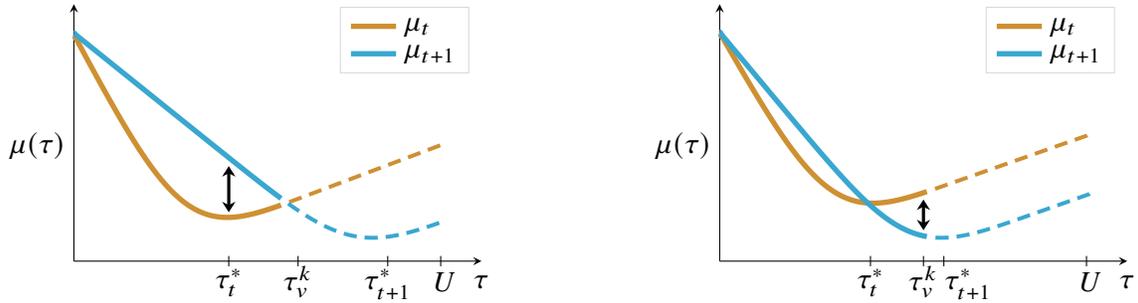

    \centering
    \begin{subfigure}{0.48\textwidth}
        \centering
        \input{figures/convex_functions_3}
        \caption{If $\mu_t(\tau_v^k) = \mu_{t+1}(\tau_v^k)$, then $\mu_t(\tau_{t}^*) \neq \mu_{t+1}(\tau_t^*)$.}
    \end{subfigure}
    \hfill
    \begin{subfigure}{0.48\textwidth}
        \centering
        \input{figures/convex_functions_4}
        \caption{If $\mu_t(\tau_t^*) = \mu_{t+1}(\tau_t^*)$, then $\mu_t(\tau_v^k) \neq \mu_{t+1}(\tau_v^k)$.}
        \label{fig:convex_functions_3_2}
    \end{subfigure}
    \caption{The graphs of two convex functions $\mu_t$ and $\mu_{t+1}$, before and after a change. Their minima shift from $\tau_t^*$ to $\tau_{t+1}^*$, with ${\tau_t^* < \tau_v^k < \tau_{t+1}^*}$. By convexity, the function values cannot coincide at both points $\tau_t^*$ and $\tau_v^k$. We can exploit this property because the costs of both policies are observed (solid lines), as opposed to policies ${\tau > \tau_v^k}$ (dashed lines).}
    \label{fig:convex_functions_3}
\end{figure}

\paragraph{Active Set Elimination.}
To implement these insights into the algorithm, we introduce an additional condition on the elimination rule for the active set. Let ${\bar{\alpha}_{v,k} \coloneq \inf \{t \geq \alpha_{v,k} \mid I_t + \sum_{i=0}^L Q_{t-i} \leq \tau_v^k\}}$ be the first time step after the start of epoch~$k$ in episode~$v$ in which the sum of inventory on-hand and in transit is at most $\tau_v^k$. The active set is updated according to 
\begin{equation} \label{eq:active_set_elimination_condition_lsl}
    \Avk{v}{k+1} = 
            \Avk{v}{k} \setminus \Bigg\{ \tau \in \Avk{v}{k} \Biggm\vert
        \begin{array}{l} \exists s \in [\bar{\alpha}_{v,k},t] : \hat{\mu}(\tau, s, t) - \min_{\tau' \in \A_{\gamma}, \tau' \leq \tau_v^k} \hat{\mu}(\tau', s, t) > 4 b_{s, t}, \\[5pt]
        \hat{\mu}(\tau - \gamma, \bar{\alpha}_{v,k}, t) - \min_{\tau' \in \A_{\gamma}, \tau' \leq \tau_v^k} \hat{\mu}(\tau', \bar{\alpha}_{v,k}, t) > 2b_{\bar{\alpha}_{v,k}, t} + \max \{ h, b\} \gamma
        \end{array}
        \Bigg\}.
\end{equation}
We refer to the first inequality in \cref{eq:active_set_elimination_condition_lsl} as the {\em elimination condition}
\vspace{-3pt}
\begin{equation} \label{eq:elimintation_condition_lsl}
    \hat{\mu}(\tau, s, t) - \min_{\tau' \in \A_{\gamma}, \tau' \leq \tau_v^k} \hat{\mu}(\tau', s, t) > 4 b_{s, t} \text{ for some } s \in [\bar{\alpha}_{v,k},t].
\end{equation}
The second condition in \cref{eq:active_set_elimination_condition_lsl} guarantees a minimum estimated suboptimality gap of the base-stock level in $\A_{\gamma}$ that is the next-smaller level relative to the one being eliminated. In particular, when applied to the smallest eliminated level that exceeds all levels in $\Avk{v}{k+1}$, it enforces that the largest base-stock remaining in the active set ${\tau = \sup \Avk{v}{k+1}}$ satisfies the {\em separation condition}
\vspace{-3pt}
\begin{align} \label{eq:separation_condition}
    \hat{\mu}(\tau, \bar{\alpha}_{v,k}, t) - \min_{\tau' \in \A_{\gamma}, \tau' \leq \tau_v^k} \hat{\mu}(\tau', \bar{\alpha}_{v,k}, t) > 2b_{\bar{\alpha}_{v,k}, t} + \max \{ h, b\} \gamma.
\end{align}

The base-stock policy implemented throughout epoch~$k+1$ is, following \cref{eq:selection_rule}, the largest base-stock level in the active set ${\tau_v^{k+1} = \sup \Avk{v}{k+1}}$.


\paragraph{Updating Cost Estimates.}
By \cref{lemma:feedback_structure} we can counterfactually obtain cost estimates ${\hat{\mu}(\tau,s,t)}$ for all policies ${\tau \leq \tau_v^k}$ over intervals ${[s,t]}$ within an epoch~$k$, provided that ${I_s + \sum_{i=1}^L Q_{s-i} \leq \tau_v^k}$ and the counterfactual states at time~$s$ satisfy the conditions of \cref{lemma:feedback_structure}.
This is satisfied in the first time step at $t=1$ when the inventory vector is the zero vector and all counterfactual vectors are initialized as the zero vectors as well. In the following, the inventory states for all policies ${\tau \leq \tau_v^k}$ evolve according to the base-stock update \Cref{eq:inventory_balance}, with costs determined by \cref{eq:cost_function} (see lines~\ref{algo_line:observe_costs_L} and \ref{algo_line:update_states_L} of \cref{algo:lost_sales_lead_time}).

At the start of each epoch~${k>1}$ within an episode~$v$, inventory is reduced from a total of $\tau_v^{k-1}$ to the new target base-stock level $\tau_v^{k}$. The data collected in these intermediate steps ${\alpha_{v,k}, \dots , \bar{\alpha}_{v,k}-1}$ is not used for estimation. From time $\bar{\alpha}_{v,k}$ the condition ${I_{\bar{\alpha}_{v,k}} + \sum_{i=1}^L Q_{\bar{\alpha}_{v,k}-i} \leq \tau_v^k}$ is satisfied. In this time step the counterfactual inventory states are reset in line~\ref{algo_line:reset} of \cref{algo:lost_sales_lead_time} to state vectors that satisfy the conditions for left-sided feedback stated in \cref{lemma:feedback_structure}.
For a given factual inventory vector ${(I_t, Q_{t-L}, \dots , Q_{t})}$ with ${\tau_v^k \geq I_t + \sum_{i=1}^L Q_{t-i}}$ and any ${\tau \leq \tau_v^k}$, these conditions are, for instance, satisfied by the counterfactual inventory vector defined by
\begin{equation}  \label{eq:counterfactual_inventory_vector_reset}
    I^{\tau}_t = \min \{ \tau, I_t\}, \quad
    Q^{\tau}_{t-L} = \min \big\{ Q_{t-L}, \tau - I^{\tau}_t \big\}, \quad
    \textstyle Q^{\tau}_{t-i} = \min \Big\{ Q_{t-i}, \tau - \Big(I^{\tau}_t + \sum_{j=i+1}^{L} Q^{\tau}_{t-j} \Big) \Big\} , i=L-1,\dots , 1.
\end{equation}

For all subsequent time steps in the epoch ${\bar{\alpha}_{v,k}+1, \dots , \alpha_{v,k+1}-1}$ base-stock level $\tau_v^k$ is constant, and the conditions in \cref{lemma:feedback_structure} remain to be satisfied. 
The same condition ${\tau \geq I^{\tau}_{\bar{\alpha}_{v,k}} + \sum_{i=1}^L Q^{\tau}_{\bar{\alpha}_{v,k}-i}}$ ensures we obtain concentration of the empirical cost estimates $\hat{\mu}(\tau,s,t)$ for all policies ${\tau \leq \tau_v^k}$ and stationary intervals ${[s, t] \subseteq [\bar{\alpha}_{v,k}, \alpha_{v,k+1}-1]}$ (\cref{lemma:concentration}).
Lastly, note that the regret bound for \Cref{algo:lost_sales_lead_time} does not require a minimum separation between change points to ensure stationary segments do not end before the inventory process reaches $\bar{\alpha}_{v,k}$. Our analysis controls the cumulative cost incurred during such steps by upper bounding the total number of episodes, the amount by which the inventory level is reduced within an episode, the number of steps required to deplete one unit of inventory via \Cref{ass:main_assumption}.\ref{ass:bounded_depletion_steps}, and the cost per time step via Lipschitz continuity of the cost function.

\paragraph{Change Point Detection.}
We identify a change at time~$t$ in an epoch~$k$ of episode~$v$ by testing 
\begin{equation} \label{eq:change_condition_lsl}
    \left|\hat{\mu}(\tau, s_1, s_2) - \hat{\mu}(\tau, s, t)\right| > b_{s_1, s_2} + b_{s,t}
\end{equation}
for all base-stock levels ${\tau \leq \tau_v^k}$ and all combinations of time steps $s_1, s_2$ and $s$ with ${\bar{\alpha}_{v,k} \leq s_1 < s < t}$ and ${[s_1, s_2]}$ being a subinterval of any epoch at most $k$ in episode~$v$.~\footnote{As mentioned by \cite{auer2019adaptively}, the time complexity for checking change point condition \eqref{eq:change_condition_bl}, \eqref{eq:change_condition_ls} or \eqref{eq:change_condition_lsl} in step~$t$ is ${\mathcal{O} \left(t^3/\gamma \right)}$ and can be reduced to ${\mathcal{O}\left(\log (T)^2/\gamma \right)}$ per time step by only checking intervals of a certain minimal length. We further emphasize that these computations can be done in parallel.}

\begin{algorithm}[!t]
\caption{\NSLSICL: Non-Stationary Base-Stock Policy Optimization (Lost-Sales, Lead Time $L > 0$)}
 \label{algo:lost_sales_lead_time}
\begin{algorithmic}[1]
        \State \textbf{Inputs:} Time horizon $T$, discretization parameter $\gamma$, cost parameters $h$ and $b$, lead time $L$, 
        \Statex \hspace{\algorithmicindent} upper bound on optimal base-stock levels $U$, small probability $\delta$
        \State \textbf{Initialize:} Discretize set of base-stock levels $\mathcal{A}_{\gamma}=\{ 0, \gamma, 2\gamma, \dots , \lfloor U/\gamma \rfloor \gamma, U\}$,
        \Statex \hspace{\algorithmicindent} Set counterfactual states $(I^{\tau}_1, Q^{\tau}_{1-L}, \dots , Q^{\tau}_{0}) \gets (0, \dots , 0)$ $\forall \tau \in \mathcal{A}_{\gamma},\, v \gets 0,\, t \gets 0$
        
        \Begin \textbf{Start a new episode:}
        \State Episode count $v \gets v+1$, episode start time $t_v \gets t$, epoch count $k \gets 0$, active set $\Avk{v}{1} \gets \A_{\gamma}$
        \Begin \textbf{Start a new epoch:}
        \State Epoch count $k \gets k + 1$ and $\tau_v^k \gets \sup \Avk{v}{k}$
        
        \While{$t < T$}
            \State $t \gets t + 1$
            \State Play base-stock policy $\tau_v^k$ by ordering $Q_t = \left[\tau_v^k - I_t - \sum_{i=1}^L Q_{t-i}\right]^+$
            \State Observe sales $Y^{\tau}_t$ and costs $C_t(\tau)$ for all $\tau \leq \tau_v^k$ \Comment{Updating Cost Estimates} \phantomsection \label{algo_line:observe_costs_L}
            \State Update $(I^{\tau}_t, Q^{\tau}_{t-L}, \dots , Q^{\tau}_{t-1}) \ \forall \tau \leq \tau_v^k$ by \cref{eq:inventory_balance} \phantomsection \label{algo_line:update_states_L}

            \If{$t = \bar{\alpha}_{v,k}$}
                Reset $(I^{\tau}_t, Q^{\tau}_{t-L}, \dots , Q^{\tau}_{t-1}) \ \forall \tau \in \mathcal{A}_{\gamma}$ to \cref{eq:counterfactual_inventory_vector_reset} \phantomsection \label{algo_line:reset} 
            \EndIf
                   
            \If{$\exists \tau \leq \tau_v^k$ and $s,s_1,s_2$ with $\bar{\alpha}_{v,k} \leq s_1 < s < t$ and $[s_1,s_2] \subseteq [\bar{\alpha}_{v,k'}, \alpha_{v,k'+1} -1]$ \\
                \qquad \qquad \qquad for a $k'\leq k$ s.t. \cref{eq:change_condition_lsl} holds} \textbf{Start a new episode} \Comment{Change Point Detection}
            \EndIf      
            
            \If{$\exists \tau \in \Avk{v}{k} \text{ and } s \in [\bar{\alpha}_{v,k},t] \text{ s.t. } \hat{\mu}(\tau, s, t) - \min_{\tau' \in \A_{\gamma}, \tau' \leq \tau_v^k} \hat{\mu}(\tau', s, t) > 4 b_{s, t}$ and \\
                \qquad \qquad \qquad $\hat{\mu}(\tau - \gamma, \bar{\alpha}_{v,k},t) - \min_{\tau' \in \A_{\gamma}, \tau' \leq \tau_v^k} \hat{\mu}(\tau', \bar{\alpha}_{v,k},t) > 2b_{\bar{\alpha}_{v,k},t} +\max \{h,b\}\gamma$}
                    \State Update $\Avk{v}{k}$ by \cref{eq:active_set_elimination_condition_lsl} \Comment{Active Set Elimination}
                    \If{$\tau_v^{k+1} \neq \tau_v^k$} \textbf{Start a new epoch} \EndIf
            \EndIf
        \EndWhile
        \End
    \End
\end{algorithmic}
\end{algorithm}
%

The following theorem establishes the regret bound for our algorithm in the non-stationary lost-sales setting with positive lead time, thereby extending existing theory to this previously uncharacterized regime.
\begin{restatable}{theorem}{LSpositiveLRegret} \label{thm:regret_LS_L_positive}
The expected dynamic regret of \NSLSICL (\Cref{algo:lost_sales_lead_time}) is upper bounded via
\begin{align*}
    \E \left[R(T)\right] < \textstyle \left(1 - \delta\right) &\Bigg(24 \Hls {\textstyle \sqrt{ST \log\left(\frac{2T^2U}{\delta\gamma}\right)\left(\frac{U}{\gamma} + 2\right)}} + \frac{16\Hls U}{\gamma}\min\bigg\{\sqrt{ST {\textstyle\log\left(\frac{2T^2U}{\delta\gamma}\right)}}, \frac{8\Hls S {\textstyle\log\left(\frac{2T^2U}{\delta\gamma}\right)}}{\max \{ h, b\} \gamma}\bigg\} \\
    & \textstyle \qquad + \left(3T\gamma + LSU\lceil U/\gamma \rceil + SU^2 \nu \right) \max\{h,b\}\Bigg) + \delta T U \max\{h,b\},
\end{align*}
where ${\Hls = 72(L+3)U \max\{h,b\}}$ and ${\delta \in (0,1)}$. Thus, setting ${\delta = \Theta\left(T^{-2}\right)}$ and ${\gamma = \Theta(U (L+1)^{2/3} T^{-1/4})}$ yields $\E \left[R(T)\right] = \tilde{\mathcal{O}}\left(U(L+1)^{2/3} S^{1/2}T^{3/4}\right)$ whereas choosing ${\gamma = \Theta(U (L+1)^{2/3} T^{-1/3})}$ gives $\E \left[R(T)\right] = \tilde{\mathcal{O}}\left(U(L+1)^{2/3} ST^{2/3}\right)$. Therefore, it holds that $\E \left[R(T)\right] = \tilde{\mathcal{O}}\left(U(L+1)^{2/3} \min\{S^{1/2}T^{3/4}, ST^{2/3}\}\right)$.
Moreover, if $S$ is known, we obtain an upper bound of ${\tilde{\mathcal{O}} \left(U (L+1)^{2/3} S^{1/3}T^{2/3} \right)}$ by setting ${\gamma = \Theta\left(U(L+1)^{2/3}S^{1/3}T^{-1/3}\right)}$.
\end{restatable}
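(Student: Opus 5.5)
We condition on a good event $\mathcal{G}$ on which every confidence bound of \cref{lemma:concentration} holds simultaneously. The bound is taken over all $\tau\in\A_\gamma$, all pairs $s<t\le T$ of stationary intervals, and all reset times, with the counterfactual states built via \cref{eq:counterfactual_inventory_vector_reset}; \cref{lemma:feedback_structure}.\ref{item:feedback_structure_LS_L_positive} guarantees these states are valid. A union bound over at most $T^2 U/\gamma$ triples, with $\delta$ rescaled accordingly, gives $\Pr(\mathcal{G})\ge 1-\delta$. Off $\mathcal{G}$ we pay at most $TU\max\{h,b\}$ by \cref{lemma:lipschitz}, which produces the term $\delta TU\max\{h,b\}$. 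On $\mathcal{G}$ we decompose the regret into four parts: (i) the discretization error $T\gamma\max\{h,b\}$, plus a further $2T\gamma$ caused by the separation margin $\max\{h,b\}\gamma$; (ii) transition steps $[\alpha_{v,k},\bar\alpha_{v,k})$ and episode starts; (iii) exploitation regret inside epochs while no change goes undetected; and (iv) regret accumulated between a true change point and its detection.

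\textbf{Step 1 (no false alarms, optimum retained).} On $\mathcal{G}$, \cref{eq:change_condition_lsl} cannot fire inside a stationary stretch. So the number of episodes is at most $S$, and each episode contains at most one true change before it is reset. Within a stationary stretch, the optimum of $\A_\gamma$ (up to $\gamma$) is never eliminated, since the elimination condition \cref{eq:elimintation_condition_lsl} with threshold $4b_{s,t}$ would contradict concentration. The epoch count per episode is at most $U/\gamma+2$, because $\tau_v^k$ strictly decreases on the grid.

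\textbf{Step 2 (transitions).} At each epoch switch the pipeline must drain from $\tau_v^{k-1}$ down to $\tau_v^k$. Summed over an episode, the total reduction is at most $U$. Draining one unit takes $\nu$ expected steps by \Cref{ass:main_assumption}.\ref{ass:bounded_depletion_steps}, and each drained unit costs $U\max\{h,b\}$; this contributes $SU^2\nu\max\{h,b\}$. The lead-time delay of $L$ steps per epoch contributes $LSU\lceil U/\gamma\rceil\max\{h,b\}$.

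\textbf{Step 3 (exploitation).} Within epoch $k$ the played level $\tau_v^k$ survived elimination, so its gap is at most $8b_{\bar\alpha_{v,k},t}$. It survived either because the elimination condition failed, giving gap $O(b)$ directly, or because the separation condition failed for the level $\tau-\gamma$. In the latter case, convexity (\cref{lemma:convexity}) and Lipschitz continuity transfer the bound back to $\tau$, with an extra $2b+2\max\{h,b\}\gamma$. Summing $b_{\bar\alpha,t}\propto 1/\sqrt{t-\bar\alpha}$ over epochs and applying Cauchy--Schwarz over at most $S(U/\gamma+2)$ epochs yields the first term $24\Hls\sqrt{ST\log(\cdot)(U/\gamma+2)}$.

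\textbf{Step 4 (detection delay; the main obstacle).} Suppose a change occurs at time $c$ inside epoch $k$, and the optimal level moves. If the new optimum is at most $\tau_v^k$, the change is visible among observed levels, and the standard argument applies: either it is detected after the confidence radius shrinks below the gap, or the gap is small and the undetected regret is absorbed into Step 3. The hard case is $\tau_{c}^*>\tau_v^k$. Here we use convexity as in \cref{fig:convex_functions_3}. The separation condition \cref{eq:separation_condition}, which was certified before the change, gives $\mu_{\text{old}}(\tau_v^k)-\mu_{\text{old}}(\tau^*_{\text{old}})\ge\max\{h,b\}\gamma$. After the change $\tau_v^k$ lies left of the new optimum, so $\mu_{\text{new}}$ is non-increasing on $[\tau^*_{\text{old}},\tau_v^k]$. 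Hence at one of the two observed points the cost must shift by at least $\max\{h,b\}\gamma/2$, or else the suboptimality of $\tau_v^k$ under $\mu_{\text{new}}$ is $O(\gamma)$ and the regret is negligible.

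A shift of size $\Delta\gtrsim\max\{h,b\}\gamma$ is detected via \cref{eq:change_condition_lsl} within $O(\Hls^2\log(\cdot)/\Delta^2)$ steps. The regret incurred per step is at most $U\max\{h,b\}$ in the worst case, or it is proportional to the gap $U\Delta/\gamma$ via the convexity slope over the grid. Summing per change gives $\frac{16\Hls U}{\gamma}\cdot\frac{8\Hls S\log(\cdot)}{\max\{h,b\}\gamma}$. Alternatively, we bound each delay with the per-step radius, summed and combined by Cauchy--Schwarz over segments, which gives $\frac{16\Hls U}{\gamma}\sqrt{ST\log(\cdot)}$; the regret is at most the minimum of the two. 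Making this convexity-transfer argument rigorous is the delicate point. The comparison windows $[s_1,s_2]$ must lie in the valid reset intervals $[\bar\alpha_{v,k'},\alpha_{v,k'+1}-1]$, and the pre-change estimate of $\mu_{\text{old}}(\tau^*_{\text{old}})$ must be accurate enough; this is exactly why the separation threshold is $2b+\max\{h,b\}\gamma$.

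Adding the four parts and taking expectations over $\mathcal{G}$ gives the stated bound. Substituting the stated choices of $\gamma$ and $\delta=\Theta(T^{-2})$, with $\Hls=\Theta((L+1)U)$, gives the three rates by balancing $T\gamma$ against $U^2\sqrt{ST}/\gamma$, against $US/\gamma^2$, or, with $S$ known, against the $S$-dependent term.
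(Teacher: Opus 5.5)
You have the same overall decomposition as the paper: the good event, $T\gamma$ discretization plus $2T\gamma$ from the separation margin, the depletion term, the two survival cases when $\tau_t^{*_\gamma}\le\tau_v^k$, and a $\min$ of two bounds when $\tau_t^{*_\gamma}>\tau_v^k$. The gap is in Step~4, which is the heart of the theorem, and it does not go through as set up.

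\textbf{Why the true-mean argument fails.} You argue in true means: separation gives $\mu_{\mathrm{old}}(\tau_v^k)-\mu_{\mathrm{old}}(\tau^*_{\mathrm{old}})\ge\max\{h,b\}\gamma$, so one of the two points shifts by $\max\{h,b\}\gamma/2$, and you claim \cref{eq:change_condition_lsl} detects this in $O(\Hls^2\log(\cdot)/(\max\{h,b\}\gamma)^2)$ steps. But the test compares against a pre-change window whose radius $b_{\mathrm{old}}=b_{\bar\alpha_{v,k-1},\alpha_{v,k}}$ is frozen. A true shift $\Delta$ is therefore only guaranteed to fire once $\Delta>2(b_{\mathrm{old}}+b_{\mathrm{new}})$.

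The separation certificate does not control $b_{\mathrm{old}}$. It only requires an empirical gap above $2b_{\mathrm{old}}+\max\{h,b\}\gamma$, which is compatible with $b_{\mathrm{old}}\gg\max\{h,b\}\gamma$ and a true gap of barely $\max\{h,b\}\gamma$. In that case your argument gives no detection time at all. The same defect breaks the $\sqrt{ST}$ branch: in true means, undetected shifts are bounded only by $O(b_{\mathrm{old}}+b_{\mathrm{new}})$, so your per-step regret $\frac{U}{\gamma}\,O(b_{\mathrm{old}}+b_{\mathrm{new}})$ does not decay in $t$. Saying the pre-change estimate ``must be accurate enough'' is not a fix, since nothing makes it accurate.

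\textbf{The missing idea.} Stay with empirical quantities on the old window, and pivot at $\tau'$, the empirical minimizer over epoch $k-1$'s window. This $\tau'$ satisfies $\tau'\le\tau_v^k$, because its own empirical gap is $0$ and so it cannot be eliminated. Combine three facts:
\begin{itemize}
\item the non-detection inequality at $\tau'$;
\item the non-detection inequality at $\tau_v^k$;
\item $\hat\mu(\tau_v^k,\bar\alpha_{v,k},t)-\hat\mu(\tau',\bar\alpha_{v,k},t)\le 2b_{\bar\alpha_{v,k},t}$, from convexity in the new regime plus concentration on the current window.
\end{itemize}
Chaining these against the empirical separation $>2b_{\mathrm{old}}+\max\{h,b\}\gamma$ forces $4b_{\bar\alpha_{v,k},t}>\max\{h,b\}\gamma$ at every undetected step. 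The $2b_{\mathrm{old}}$ slack cancels exactly; that is the real role of the threshold in \cref{eq:separation_condition}. This yields $\abs{I_{v,i}}\le 32\Hls^2\log(\cdot)/(\max\{h,b\}\gamma)^2$.

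Chaining the same three facts, using only the $2b_{\mathrm{old}}$ part of separation, gives $\mu_t(\tau')-\mu_t(\tau_v^k)<4b_{\bar\alpha_{v,k},t}$. Plug this into the three-point convexity inequality $\mu_t(\tau_v^k)-\mu_t(\tau_t^{*_\gamma})\le\frac{\tau_t^{*_\gamma}-\tau_v^k}{\tau_v^k-\tau'}\bigl(\mu_t(\tau')-\mu_t(\tau_v^k)\bigr)$ with $\tau_v^k-\tau'\ge\gamma$. This gives the decaying per-step bound $\frac{U}{\gamma}\,4b_{\bar\alpha_{v,k},t}$ needed for both branches of the $\min$.

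\textbf{Further fixes.} You also need that $\tau_v^k$ is never eliminated while $\tau_t^{*_\gamma}>\tau_v^k$: convexity and Lipschitz continuity make the separation test fail for $\tau_v^k-\gamma$. This lets a single window $[\bar\alpha_{v,k},t]$ cover the whole piece. Finally, your claim that each episode contains at most one true change is false. Several changes can go undetected within an episode; the right count is the at most $2S-1$ stationary pieces $I_{v,i}$. This is another reason to avoid $\mu_{\mathrm{old}}$ and $\tau^*_{\mathrm{old}}$, which presuppose that epoch $k-1$'s window was stationary.
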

The guarantee in \Cref{thm:regret_LS_L_positive} is for expected regret since we need to bound the expected number of steps needed to reduce inventory via \Cref{ass:main_assumption}.\ref{ass:bounded_depletion_steps}.
The proof of \Cref{thm:regret_LS_L_positive} is given in \Cref{app:regret_LS_Lpositive}.

It remains an open question whether the regret rates stated in \cref{thm:regret_LS_L_positive} are optimal with respect to $S$ and $T$. Nonetheless, applying the ${\Omega(V^{1/3}T^{2/3})}$ lower bound for drifting multi-armed bandits from \citet{besbes2014stochastic} with a variation budget ${V = \Theta(S)}$ suggests near-optimality for the case when $S$ is known. Moreover, recent minimax lower bounds for drifting linear bandits \citep[Theorem~1]{cheung2022hedging} and MDPs \citep[Theorems~1 \& 2]{zhou2020nonstationary}, for non-stationary Lipschitz bandits \citep[Theorem~1]{nguyen2025non}, and the lower bound for undiscounted non-stationary MDPs by \citet[Proposition 1]{mao2025model} exhibit this same scaling.
Furthermore, if the $T^{2/3}$-type bound cannot be improved, then the ${\tilde{\mathcal{O}}\left( \min\{S^{1/2}T^{3/4}, ST^{2/3}\}\right)}$ regret when $S$ is unknown is likewise near-optimal with respect to $T$.
Note further that, although not central to our results, when tuning the discretization parameter optimally, the regret scales even sublinearly in the lead time, further improving over the linear rate achieved by \citet{agrawal2022learning}. 

Comparing these results with \cref{thm:regret_LS_L_zero} and the ${\Omega(\sqrt{T})}$ lower bound for stationary demand settings \cite[Proposition 1]{zhang2020closing} together with the $\tilde{\mathcal{O}}((L+1)\sqrt{T})$ upper bound obtained by stationary algorithms \citep{agrawal2022learning,zhang2025information}, we observe that demand censoring leads to a learning loss under non-stationarity only when coupled with lead time. Moreover, only in this setting, knowledge of the degree of non-stationarity confers an informational advantage for our algorithm.


\section{Experimental Study}  \label{sec:experiments}
We evaluate the proposed algorithms through a series of computational experiments.
The implementation is available at \href{https://github.com/NeleAmiri/nonstationary-inventory.git}{https://github.com/NeleAmiri/nonstationary-inventory.git}.

\subsection{Benchmark Algorithms and Performance Metrics}

We evaluate our proposed algorithms, \NSBLIC, \NSLSIC, and \NSLSICL (without tuning the discretization parameter based on $S$), relative to the following algorithms from the literature.
The first two benchmark algorithms are designed for non-stationary inventory control in lost-sales settings with discrete demand. Accordingly, demand realizations are projected to the nearest integer.
\begin{itemize}
    \item \LAIS: Learning Algorithm For Inventory Control With Shifting Demand \citep{chen2021data}, which is tailored to problems with zero lead time and switching demand with $S = \mathcal{O}(\log(T))$.
    \item \SWUCRL: Sliding Window Upper Confidence Bound For Reinforcement Learning With Confidence-Widening \citep{cheung2023nonstationary}.  This algorithm is designed for gradually changing demand constrained by a variation budget~$V$.  While this algorithm can in principle accommodate lead times, the state space grows exponentially in $L$, making it computationally infeasible even for moderate values. We therefore report its results only for the case $L = 0$.
\end{itemize}
Furthermore, we consider stationary algorithms equipped with a predetermined restarting schedule. After every $T/S$ time steps, the learning procedure is restarted and all past data are discarded. This grants the algorithms access to the value of $S$, but not to the locations of the change points.
\begin{itemize}
    \item \SMDPC: Learning Algorithm For MDP With Convex Cost Function (\MDPC) \citep{agrawal2022learning} with restarting schedule based on clairvoyant knowledge of~$S$.
    \item \SIOPEA: Information-Ordered Epoch Based Policy Elimination Algorithm (\IOPEA) \citep{zhang2025information} with restarting schedule based on clairvoyant knowledge of~$S$.
\end{itemize}
Lastly, we consider oracle variants of the previous two algorithms that obtain information not only of the frequency of changes, but also of the exact change points. These methods serve as reference baselines that isolate the impact of imperfect change point information.
\begin{itemize}
    \item \OMDPC: \MDPC restarting precisely at change points.
    \item \OIOPEA: \IOPEA restarting precisely at change points.
\end{itemize}
We do not benchmark against Meta-HQL or Mimic-QL \citep{gong2024bandits}, as these algorithms are designed for cyclic demand. Their mechanisms are not directly comparable to those tailored to piecewise stationary demand, and any observed performance differences would primarily reflect a mismatch in modeling assumptions.

We compare the algorithms based on dynamic regret~$R(T)$, as well as relative regret at the final time step
\[
    R^{\text{rel}}(T) \coloneq \frac{R(T)}{\sum_{t=1}^T \mu_t(\tau_t^*)} \cdot 100 \%.
\]
All performance measures are computed with the true expected costs rather than the pseudo cost. For notational convenience, however, we express them here via the pseudo cost~$\mu$.

\subsection{Experimental Setup}
We consider the following families of demand distributions.
Within each family, the parameters defining the demand distribution after a change point are independently sampled from the ranges specified below.
\begin{itemize}
    \item \textit{Normal:} Demand is $[D]^+$ where $D \sim \mathcal{N}(\mu, \sigma^2)$. The mean $\mu$ is sampled uniformly from $[1,100]$, and the standard deviation is fixed at $\sigma=20$.
    \item \textit{Uniform:} Demand is drawn from a continuous uniform distribution $\text{Uniform}(a, a+\lambda)$ with $a$ and $\lambda$ sampled independently and uniformly from $[1, 100]$ and $[0, 50]$, respectively.~\footnote{This distribution does not satisfy the condition $F_t(0)>0$ used in our analysis. Nonetheless, we include it in the numerical experiments to assess empirical robustness to violations of the assumption.}
\end{itemize}
In addition, we conducted simulations using Poisson and exponential distributions. The results for the latter are qualitatively similar to those obtained under the normal and uniform distributions and therefore omitted.

Following prior work, change points are drawn uniformly and independently from the time horizon $[T]$~\citep{chen2021data}.
Across all problem instances, the time horizon is fixed at $T=10^4$, with cost parameters set to $h=1$ and $b=49$ and lead times $L \in \{0,2,5\}$. 
For our algorithms as well as the \MDPC and \IOPEA variants, we set the upper bound on the policy space to ${U = 1.2 \cdot \max_{t \in [T]} \tau_t^*}$, and use $\lceil U \rceil$ discrete, integer-valued base-stock levels or inventory levels and order quantities in \LAIS and \SWUCRL, respectively. While this choice of $U$ uses oracle knowledge of the demand distributions, in practice $U$ can instead be chosen sufficiently large using coarse prior information about the scale of demand.
We run $500$ replications for each experiment, and use Monte-Carlo simulation over a horizon of $5 \cdot 10^3$ time steps to estimate the optimal base-stock policies for each stationary time interval.

\subsection{Results} \label{sec:results_experiments}
We investigate the empirical performance of our algorithms and the impact of (knowledge of) the amount of non-stationarity $S$, lead time $L$, demand censoring, as well as the use of counterfactual feedback.

\paragraph{Comparison with Non-Stationary Benchmark Algorithms.}
\cref{table:simulation_results_relative_regret_normal} reports the relative regret of all algorithms of interest, here for demand sampled from the truncated normal distribution. Results for the uniform distribution are similar and hence omitted. 
For the lost-sales setting with zero lead time, we observe that our algorithm \NSLSIC consistently outperforms \LAIS and \SWUCRL by a wide margin, even when ${S = \mathcal{O}(\log(T))}$, and in particular under stationary conditions. 
The higher regret of \SWUCRL is expected, as it is an RL algorithm designed for gradually drifting environments. In our regime with abrupt demand switches, the overhead of learning transition dynamics and maintaining conservative confidence bounds leads to slower adaptation compared to bandit-style approaches.

In general, our algorithms consistently outperform the \MDPC benchmarks, including both the Oracle and Schedule variants, particularly in the backlogging setting. While the \IOPEA benchmarks achieve lower regret overall due to access to both side and oracle information, our algorithms remain comparatively close to them especially for large values of $S$.

\begin{table*}[!t]
\scriptsize
\centering

\setlength{\tabcolsep}{3pt}
\renewcommand{\arraystretch}{0.8}

\begin{subtable}{0.55\textwidth}
\centering
\begin{tabular}{l r r r r r r}
\hline\up 
\textbf{$S$} & $1$ & $\mathcal{O}(1)$ & $\log(T)$ & $T^{1/3}$ & $T^{1/2}$ & $T^{2/3}$ \\ \hline\up 
\OIOPEA & 3.78 & 6.40 & 9.82 & 17.65 & 56.41 & 140.24 \\
\OMDPC & 49.51 & 103.76 & 164.29 & 281.43 & 792.07 & 1719.46 \\
\specialrule{0.1pt}{\aboverulesep}{\belowrulesep}
\SIOPEA & 3.79 & 136.68 & 146.72 & 154.95 & 165.42 & 167.94 \\
\SMDPC & 49.29 & 155.78 & 209.02 & 344.03 & 883.39 & 2010.87 \\
\specialrule{0.1pt}{\aboverulesep}{\belowrulesep}
\NSBLIC / \NSLSIC & 6.09 & 98.02 & 110.14 & 117.52 & 127.04 & 139.63 \\
\LAIS & - & - & - & - & - & - \\
\SWUCRL & - & - & - & - & - & -\down \\ \hline
\end{tabular}
\caption{Backlog, $L=0$.}
\end{subtable}
\hfill
\begin{subtable}{0.42\textwidth}
\centering
\begin{tabular}{r r r r r r}
\hline\up 
$1$ & $\mathcal{O}(1)$ & $\log(T)$ & $T^{1/3}$ & $T^{1/2}$ & $T^{2/3}$ \\ \hline\up 
3.19 & 6.50 & 10.17 & 18.29 & 58.01 & 140.72 \\
48.93 & 105.48 & 162.65 & 283.06 & 796.50 & 1719.50 \\
\specialrule{0.1pt}{\aboverulesep}{\belowrulesep}
3.18 & 131.30 & 143.62 & 156.57 & 164.71 & 167.35 \\
49.94 & 159.61 & 206.59 & 350.52 & 899.78 & 2016.29 \\
\specialrule{0.1pt}{\aboverulesep}{\belowrulesep}
6.74 & 97.65 & 113.17 & 123.21 & 133.00 & 142.35 \\
119.04 & 137.79 & 153.19 & 196.54 & 376.18 & 556.63 \\
1364.36 & 1477.18 & 1197.38 & 1358.95 & 1270.08 & 1263.52\down \\ \hline 
\end{tabular}
\caption{Lost-sales, $L=0$.}
\end{subtable}

\vspace{1em}

\begin{subtable}{0.55\textwidth}
\centering
\begin{tabular}{l r r r r r r}
\hline\up 
\OIOPEA & 4.96 & 15.41 & 20.12 & 30.45 & 86.39 & 208.66 \\
\OMDPC & 51.92 & 173.70 & 282.11 & 504.29 & 1421.99 & 2985.15 \\
\specialrule{0.1pt}{\aboverulesep}{\belowrulesep}
\SIOPEA & 4.79 & 223.29 & 230.77 & 248.90 & 245.09 & 248.47 \\
\SMDPC & 51.57 & 320.97 & 408.49 & 647.67 & 1586.79 & 3482.29 \\
\specialrule{0.1pt}{\aboverulesep}{\belowrulesep}
\NSBLIC / \NSLSICL & 5.96 & 161.03 & 182.17 & 199.51 & 217.99 & 223.52\down \\ \hline 
\end{tabular}
\caption{Backlog, $L=2$.}
\end{subtable}
\hfill
\begin{subtable}{0.42\textwidth}
\centering
\begin{tabular}{r r r r r r}
\hline\up 
4.46 & 11.97 & 19.39 & 33.75 & 98.23 & 225.41 \\
29.63 & 94.12 & 139.77 & 224.55 & 538.39 & 1059.99 \\
\specialrule{0.1pt}{\aboverulesep}{\belowrulesep}
4.34 & 190.40 & 218.87 & 246.62 & 265.69 & 267.88 \\
31.05 & 146.06 & 188.26 & 280.91 & 598.04 & 1216.25 \\
\specialrule{0.1pt}{\aboverulesep}{\belowrulesep}
8.46 & 70.76 & 96.60 & 172.89 & 195.23 & 208.54\down \\ \hline 
\end{tabular}
\caption{Lost-sales, $L=2$.}
\end{subtable}

\vspace{1em}

\begin{subtable}{0.55\textwidth}
\centering
\begin{tabular}{l r r r r r r}
\hline\up 
\OIOPEA & 5.51 & 40.43 & 47.28 & 60.67 & 135.26 & 241.71 \\
\OMDPC & 78.30 & 279.60 & 451.45 & 801.56 & 2245.44 & 3680.65 \\
\specialrule{0.1pt}{\aboverulesep}{\belowrulesep}
\SIOPEA & 5.59 & 416.81 & 379.40 & 354.06 & 354.04 & 285.62 \\
\SMDPC & 79.07 & 601.30 & 713.32 & 1050.85 & 2498.82 & 4256.72 \\
\specialrule{0.1pt}{\aboverulesep}{\belowrulesep}
\NSBLIC / \NSLSICL & 6.23 & 177.35 & 258.96 & 246.83 & 270.20 & 266.79\down \\ \hline 
\end{tabular}
\caption{Backlog, $L=5$.}
\end{subtable}
\hfill
\begin{subtable}{0.42\textwidth}
\centering
\begin{tabular}{r r r r r r}
\hline\up 
5.42 & 39.09 & 59.30 & 92.11 & 200.56 & 363.89 \\
27.60 & 95.75 & 141.71 & 217.04 & 486.89 & 846.52 \\
\specialrule{0.1pt}{\aboverulesep}{\belowrulesep}
5.52 & 253.86 & 343.75 & 382.51 & 413.36 & 414.36 \\
28.07 & 173.14 & 201.06 & 285.52 & 536.85 & 960.93 \\
\specialrule{0.1pt}{\aboverulesep}{\belowrulesep}
9.70 & 78.94 & 117.53 & 188.55 & 251.48 & 264.71\down \\ \hline 
\end{tabular}
\caption{Lost-sales, $L=5$.}
\end{subtable}

\caption{Relative regret $R^{\text{rel}}(T)$ (in \%) in the backlogging (left) and the lost-sales (right) setting, for different values of $L$ and $S$. Demand is drawn from the truncated normal distribution.}
\label{table:simulation_results_relative_regret_normal}
\end{table*}

\paragraph{Impact of $S$.}
Next, we investigate how the empirical performance of our algorithms is impacted by the frequency of change. 
\cref{fig:simulations_regret_vs_S} compares the scaling of the dynamic regret with respect to $S$ across the different lead times and under the backlogging and lost-sales demand model. 
Across all settings, the regret exhibits sub-power-law growth in $S$, indicating diminishing marginal sensitivity to additional changes. When $S$ is large, the environment effectively approaches a continuously drifting regime, in which the marginal cost of an additional change decreases. This behavior suggests that all three algorithms are robust not only to abrupt demand shifts but also adapt naturally to more gradual, continuous forms of non-stationarity, consistent with variation-budget adaptivity.

Finally, while our theoretical guarantees predict less favorable dependence on $S$ and larger regret for fixed $T$ under lost-sales with lead time, this effect is not observed empirically. In our experiments, performance under backlogging and lost-sales settings is nearly indistinguishable.  This lies in stark contrast to \MDPC variants that perform significantly worse under backlogging than lost-sales if lead time is positive (\cref{table:simulation_results_relative_regret_normal}).

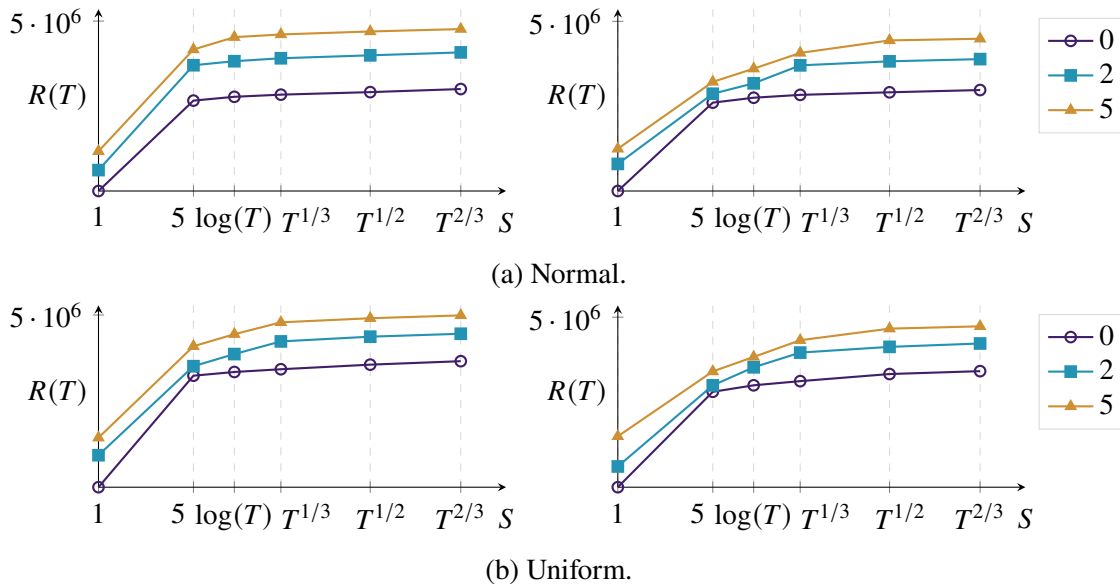
\begin{figure}[!thb]
    \centering
    \begin{subfigure}{0.9\textwidth}
        \centering

\begin{tikzpicture}

\begin{axis}[
    name=plot_normal,
    width=6.5cm,
    height=3.7cm,
    xmajorgrids=true, 
    grid style={dashed, gray!30},
    xmode=log,
    ymode=log,
    ymax=7.2e6,
    ytick={5e6},
    yticklabels={$5 \cdot 10^6$},
    axis x line=bottom,
    axis y line=middle,
    xmax=1000,
    enlargelimits=false,
    xtick={1,5,10,22,100,464},
    xticklabels={
                {$1$},
                {\hspace{-4mm}$5$},
                {\hspace{-0mm}$\log(T)$},
                {\hspace{7mm}$T^{1/3}$},
                {\hspace{3mm}$T^{1/2}$},
                {$T^{2/3}$}
                },
    xlabel={$S$},
    ylabel={$R(T)$},
    xlabel style={
    at={(axis description cs:1,-0.06)},
    anchor=north
    },
    ylabel style={
    at={(axis description cs:-0.1,0.65)},
    anchor=north
    },
    clip=false
]

\addplot[thick, mark=o, color=nupurple] 
coordinates {
    (1, 26634.19)
    (5, 431401.69)
    (10, 485030.31)
    (22, 518219.44)
    (100, 560226.61)
    (464, 615913.70)
};

\addplot[thick, mark=square*, color=kulblue] 
coordinates {
    (1, 50738.84)
    (5, 1282553.17)
    (10, 1451829.26)
    (22, 1590315.40)
    (100, 1740099.34)
    (464, 1904491.68)
};

\addplot[thick, mark=triangle*, color=darkorange] 
coordinates {
    (1, 90585.03)
    (5, 2083121.36)
    (10, 3045527.14)
    (22, 3310691.24)
    (100, 3629397.87)
    (464, 3907818.183)
};
\end{axis}

\hspace{0.5cm}
\begin{axis}[
    name=plot_normal_LS,
    at={(plot_normal.right of south east)},
    xshift=0.75cm,
    width=6.5cm,
    height=3.7cm,
    xmajorgrids=true, 
    grid style={dashed, gray!30},
    xmode=log,
    ymode=log,
    ymax=7.2e6,
    ytick={5e6},
    yticklabels={$5 \cdot 10^6$},
    axis x line=bottom,
    axis y line=middle,
    xmax=1000,
    enlargelimits=false,
    xtick={1,5,10,22,100,464},
    xticklabels={
                {$1$},
                {\hspace{-4mm}$5$},
                {\hspace{-0mm}$\log(T)$},
                {\hspace{7mm}$T^{1/3}$},
                {\hspace{3mm}$T^{1/2}$},
                {$T^{2/3}$}
                },    
    xlabel={$S$},
    ylabel={$R(T)$},
    xlabel style={
    at={(axis description cs:1,-0.06)},
    anchor=north
    },
    ylabel style={
    at={(axis description cs:-0.1,0.65)},
    anchor=north
    },
    clip=false
]

\addplot[thick, mark=o, color=nupurple] 
coordinates {
    (1, 29811.11)
    (5, 429249.98)
    (10, 497989.17)
    (22, 542792.40)
    (100, 586529.88)
    (464, 627968.01)
};

\addplot[thick, mark=square*, color=kulblue] 
coordinates {
    (1, 67533.12)
    (5, 561262.19)
    (10, 769127.60)
    (22, 1323109.54)
    (100, 1494553.89)
    (464, 1597085.75)
};

\addplot[thick, mark=triangle*, color=darkorange] 
coordinates {
    (1, 106564.18)
    (5, 803340.69)
    (10, 1197800.43)
    (22, 1925142.86)
    (100, 2809526.07)
    (464, 2954686.35)
};
\end{axis}

\begin{axis}[
  hide axis,
  name=dummy_axis,
  xmin=0, xmax=1,
  ymin=0, ymax=1,
  legend columns=1,
  legend cell align=left,
  at={(plot_normal_LS.north east)},
  anchor=north east,                 
  xshift=1.5cm,                     
  yshift=-0.0cm,  
  legend style={
    draw=gray!30,
  }
]
\addlegendimage{thick, mark=o, color=nupurple}
\addlegendentry{0}

\addlegendimage{thick, mark=square*, color=kulblue}
\addlegendentry{2}

\addlegendimage{thick, mark=triangle*, color=darkorange}
\addlegendentry{5}
\end{axis}

\end{tikzpicture}
        \caption{Normal.}
        \label{fig:simulations_regret_vs_S_normal}
    \end{subfigure}
    \begin{subfigure}{0.9\textwidth}
        \centering

\begin{tikzpicture}

\begin{axis}[
    name=plot_uniform,
    width=6.5cm,
    height=3.7cm,
    xmajorgrids=true, 
    grid style={dashed, gray!30},
    xmode=log,
    ymode=log,
    ymax=7.2e6,
    ytick={5e6},
    yticklabels={$5 \cdot 10^6$},
    axis x line=bottom,
    axis y line=middle,
    xmax=1000,
    enlargelimits=false,
    xtick={1,5,10,22,100,464},
    xticklabels={
                {$1$},
                {\hspace{-4mm}$5$},
                {\hspace{-0mm}$\log(T)$},
                {\hspace{7mm}$T^{1/3}$},
                {\hspace{3mm}$T^{1/2}$},
                {$T^{2/3}$}
                },    
    xlabel={$S$},
    ylabel={$R(T)$},
    xlabel style={
    at={(axis description cs:1,-0.06)},
    anchor=north
    },
    ylabel style={
    at={(axis description cs:-0.1,0.65)},
    anchor=north
    },
    clip=false
]

\addplot[thick, mark=o, color=nupurple] 
coordinates {
    (1, 7151.95)
    (5, 497065.80)
    (10, 572589.85)
    (22, 635360.55)
    (100, 756809.46)
    (464, 863202.01)
};

\addplot[thick, mark=square*, color=kulblue] 
coordinates {
    (1, 24146.70)
    (5, 713272.52)
    (10, 1131786.28)
    (22, 1831645.54)
    (100, 2192970.95)
    (464, 2450610.19)
};

\addplot[thick, mark=triangle*, color=darkorange] 
coordinates {
    (1, 46958.83)
    (5, 1523254.51)
    (10, 2410266.59)
    (22, 3784808.11)
    (100, 4423824.656)
    (464, 4929485.674)
};
\end{axis}

\hspace{0.5cm}
\begin{axis}[
    name=plot_uniform_LS,
    at={(plot_uniform.right of south east)},
    xshift=0.75cm,
    width=6.5cm,
    height=3.7cm,
    xmajorgrids=true, 
    grid style={dashed, gray!30},
    xmode=log,
    ymode=log,
    ymax=7.2e6,
    ytick={5e6},
    yticklabels={$5 \cdot 10^6$},
    axis x line=bottom,
    axis y line=middle,
    xmax=1000,
    enlargelimits=false,
    xtick={1,5,10,22,100,464},
    xticklabels={
                {$1$},
                {\hspace{-4mm}$5$},
                {\hspace{-0mm}$\log(T)$},
                {\hspace{7mm}$T^{1/3}$},
                {\hspace{3mm}$T^{1/2}$},
                {$T^{2/3}$}
                },
    xlabel={$S$},
    ylabel={$R(T)$}, 
    xlabel style={
    at={(axis description cs:1,-0.06)},
    anchor=north
    },
    ylabel style={
    at={(axis description cs:-0.1,0.65)},
    anchor=north
    },
    clip=false
]

\addplot[thick, mark=o, color=nupurple] 
coordinates {
    (1, 25837.07)
    (5, 495881.06)
    (10, 606571.17)
    (22, 690054.71)
    (100, 859860.73)
    (464, 940366.68)
};

\addplot[thick, mark=square*, color=kulblue] 
coordinates {
    (1, 49105.25)
    (5, 605369.95)
    (10, 1058977.13)
    (22, 1670659.95)
    (100, 1994451.45)
    (464, 2214828.11)
};

\addplot[thick, mark=triangle*, color=darkorange] 
coordinates {
    (1, 124887.29)
    (5, 932498.12)
    (10, 1459254.87)
    (22, 2450997.94)
    (100, 3507540.38)
    (464, 3769577.238)
};
\end{axis}

\begin{axis}[
  hide axis,
  name=dummy_axis,
  xmin=0, xmax=1,
  ymin=0, ymax=1,
  legend columns=1,
  legend cell align=left,
  at={(plot_uniform_LS.north east)},
  anchor=north east,                 
  xshift=1.5cm,                     
  yshift=-0.0cm,  
  legend style={
    draw=gray!30,
  }
]
\addlegendimage{thick, mark=o, color=nupurple}
\addlegendentry{0}

\addlegendimage{thick, mark=square*, color=kulblue}
\addlegendentry{2}

\addlegendimage{thick, mark=triangle*, color=darkorange}
\addlegendentry{5}
\end{axis}

\end{tikzpicture}
        \caption{Uniform.}
        \label{fig:simulations_regret_vs_S_uniform}
    \end{subfigure}
    \caption{Dynamic regret $R(T)$ vs. $S$ of \NSBLIC under backlogging (left), and \NSLSIC or \NSLSICL under lost-sales (right), shown on a log-log scale and for different values of lead time~$L$.  In \cref{fig:simulations_regret_vs_S_normal} demand is drawn from the truncated normal distribution and in \cref{fig:simulations_regret_vs_S_uniform} from the uniform distribution.}
    \label{fig:simulations_regret_vs_S}
\end{figure}

\paragraph{Impact of Lead Time.} We examine the performance of the proposed algorithms as $L$ grows large. 
\cref{fig:simulations_regret_vs_L} illustrates the dynamic regret against the lead time $L$ for the different values of $S$. 
As is to be expected from the theoretical performance guarantees, we observe that the regret attains the same order of magnitude under backlogging and lost-sales, and grows approximately linear in $L$. This is in contrast to discrete algorithms which, when applied to problems with positive lead times, attain regret (and computation) that scales exponentially with respect to $L$ \citep{cheung2023nonstationary,mao2025model}.

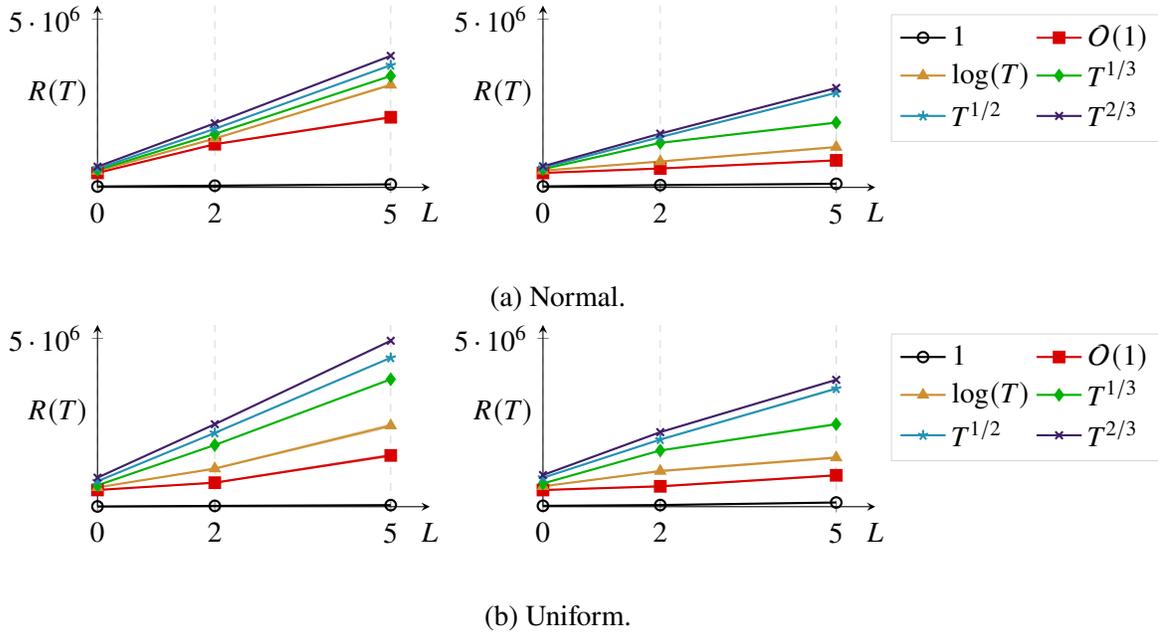
\begin{figure}[!thb]
    \centering
    \begin{minipage}{\textwidth}
    \centering
    \begin{subfigure}{0.9\textwidth}
        \begin{tikzpicture}

\begin{axis}[
  name=plot_normal,
  axis x line=bottom,
  axis y line=middle,
  width=5.5cm,
  height=3.7cm,
  xmajorgrids=true, 
  grid style={dashed, gray!30},
  xmin=0,
  xmax=0.85,
  ymin=0,
  ymax=5.4e6,
  ytick={5e6},
  yticklabels={$5 \cdot 10^6$},
  scaled y ticks=false,
  enlargelimits=false,
  xtick={0,0.3,0.75},
  xticklabels={$0$,$2$,$5$},
  xlabel={$L$},
  ylabel={$R(T)$},
  xlabel style={
  at={(axis description cs:1,-0.03)},
  anchor=north
  },
  ylabel style={
  at={(axis description cs:-0.12,0.65)},
  anchor=north
  },
  clip=false
]

\addplot[name path=lower1, draw=none] 
coordinates {(0,21260.05) (0.3,41277.19) (0.75,73451.70)};
\addplot[name path=upper1, draw=none] 
coordinates {(0,32008.32) (0.3,60200.49) (0.75,107718.35)};
\addplot[fill=black!50, fill opacity=0.3, forget plot] 
fill between[of=lower1 and upper1];

\addplot[thick, mark size=2pt, color=black, mark=o, solid, forget plot]
coordinates {(0,26634.19) (0.3,50738.84) (0.75,90585.03)};

\addplot[name path=lower2, draw=none] 
coordinates {(0,423923.7984) (0.3,1254995.732) (0.75,2037283.266)};
\addplot[name path=upper2, draw=none] 
coordinates {(0,438879.5841) (0.3,1310110.603) (0.75,2128959.451)};
\addplot[fill=red!85!black, fill opacity=0.3, forget plot] 
fill between[of=lower2 and upper2];

\addplot[thick, mark size=2pt, color=red!85!black, mark=square*, solid, forget plot]
coordinates {(0,431401.69) (0.3,1282553.17) (0.75,2083121.36)};

\addplot[name path=lower3, draw=none] 
coordinates {(0,479939.4709) (0.3,1431961.074) (0.75,3002699.652)};
\addplot[name path=upper3, draw=none] 
coordinates {(0,490121.1548) (0.3,1471697.445) (0.75,3088354.622)};
\addplot[fill=darkorange, fill opacity=0.3, forget plot] 
fill between[of=lower3 and upper3];

\addplot[thick, mark size=2pt, color=darkorange, mark=triangle*, solid, forget plot]
coordinates {(0,485030.31) (0.3,1451829.26) (0.75,3045527.14)};

\addplot[name path=lower4, draw=none] 
coordinates {(0,514877.8233) (0.3,1577234.334) (0.75,3281158.989)};
\addplot[name path=upper4, draw=none] 
coordinates {(0,521561.0592) (0.3,1603396.465) (0.75,3340223.482)};
\addplot[fill=green!50, fill opacity=0.3, forget plot] 
fill between[of=lower4 and upper4];

\addplot[thick, mark size=2pt, color=green!70!black, mark=diamond*, solid, forget plot]
coordinates {(0,518219.44) (0.3,1590315.40) (0.75,3310691.24)};

\addplot[name path=lower5, draw=none] 
coordinates {(0,558173.2906) (0.3,1733200.398) (0.75,3612659.028)};
\addplot[name path=upper5, draw=none] 
coordinates {(0,562279.9209) (0.3,1746998.278) (0.75,3646136.702)};
\addplot[fill=kulblue, fill opacity=0.3, forget plot] 
fill between[of=lower5 and upper5];

\addplot[thick, mark size=2pt, color=kulblue, mark=star, forget plot]
coordinates {(0,560226.61) (0.3,1740099.34) (0.75,3629397.87)};

\addplot[name path=lower6, draw=none] 
coordinates {(0,613907.8274) (0.3,1898276.763) (0.75,3896096.439)};
\addplot[name path=upper6, draw=none] 
coordinates {(0,617919.5809) (0.3,1910706.597) (0.75,3919539.927)};
\addplot[fill=nupurple, fill opacity=0.3, forget plot] 
fill between[of=lower6 and upper6];

\addplot[thick, mark size=2pt, color=nupurple, mark=x, forget plot]
coordinates {(0,615913.70) (0.3,1904491.68) (0.75,3907818.183)};

\end{axis}

\begin{axis}[
  name=plot_normal_LS,
  at={(plot_normal.right of south east)},
  xshift=1.25cm,
  axis x line=bottom,
  axis y line=middle,
  width=5.5cm,
  height=3.7cm,
  xmajorgrids=true, 
  grid style={dashed, gray!30},
  xmin=0,
  xmax=0.85,
  ymin=0,
  ymax=5.4e6,
  ytick={5e6},
  yticklabels={$5 \cdot 10^6$},
  scaled y ticks=false,
  enlargelimits=false,
  xtick={0,0.3,0.75},
  xticklabels={$0$,$2$,$5$},
  ylabel={$R(T)$},
  xlabel={$L$},
  xlabel style={
  at={(axis description cs:1,-0.03)},
  anchor=north
  },
  ylabel style={
  at={(axis description cs:-0.12,0.65)},
  anchor=north
  },
  clip=false
]

\addplot[name path=lower1, draw=none] 
coordinates {(0,26758.17) (0.3,58326.12) (0.75,89141.425)};
\addplot[name path=upper1, draw=none] 
coordinates {(0,32864.05) (0.3,76740.11) (0.75,123986.93)};
\addplot[fill=black!50, fill opacity=0.3, forget plot] 
fill between[of=lower1 and upper1];

\addplot[thick, mark size=2pt, color=black, mark=o, solid, forget plot]
coordinates {(0,29811.11) (0.3,67533.12) (0.75,106564.18)};

\addplot[name path=lower2, draw=none] 
coordinates {(0,418806.0217) (0.3,539338.0388) (0.75,779041.7952)};
\addplot[name path=upper2, draw=none] 
coordinates {(0,439693.9473) (0.3,583186.3314) (0.75,827639.5924)};
\addplot[fill=red!85!black, fill opacity=0.3, forget plot] 
fill between[of=lower2 and upper2];

\addplot[thick, mark size=2pt, color=red!85!black, mark=square*, solid, forget plot]
coordinates {(0,429249.98) (0.3,561262.19) (0.75,803340.69)};

\addplot[name path=lower3, draw=none] 
coordinates {(0,489165.9847) (0.3,743331.33) (0.75,1164397.316)};
\addplot[name path=upper3, draw=none] 
coordinates {(0,506812.3543) (0.3,794923.87) (0.75,1231203.55)};
\addplot[fill=darkorange, fill opacity=0.3, forget plot] 
fill between[of=lower3 and upper3];

\addplot[thick, mark size=2pt, color=darkorange, mark=triangle*, solid, forget plot]
coordinates {(0,497989.17) (0.3,769127.60) (0.75,1197800.43)};

\addplot[name path=lower4, draw=none] 
coordinates {(0,535575.8615) (0.3,1309247.078) (0.75,1879049.52)};
\addplot[name path=upper4, draw=none] 
coordinates {(0,550008.9373) (0.3,1336971.996) (0.75,1971236.207)};
\addplot[fill=green!50, fill opacity=0.3, forget plot] 
fill between[of=lower4 and upper4];

\addplot[thick, mark size=2pt, color=green!70!black, mark=diamond*, solid, forget plot]
coordinates {(0,542792.40) (0.3,1323109.54) (0.75,1925142.86)};

\addplot[name path=lower5, draw=none] 
coordinates {(0,581453.4071) (0.3,1483140.728) (0.75,2785486.262)};
\addplot[name path=upper5, draw=none] 
coordinates {(0,591606.3541) (0.3,1505967.06) (0.75,2833565.885)};
\addplot[fill=kulblue, fill opacity=0.3, forget plot] 
fill between[of=lower5 and upper5];

\addplot[thick, mark size=2pt, color=kulblue, mark=star, forget plot]
coordinates {(0,586529.88) (0.3,1494553.89) (0.75,2809526.07)};

\addplot[name path=lower6, draw=none] 
coordinates {(0,624228.3899) (0.3,1590466.368) (0.75,2942426.196)};
\addplot[name path=upper6, draw=none] 
coordinates {(0,631707.6344) (0.3,1603705.13) (0.75,2966946.504)};
\addplot[fill=nupurple, fill opacity=0.3, forget plot] 
fill between[of=lower6 and upper6];

\addplot[thick, mark size=2pt, color=nupurple, mark=x, forget plot]
coordinates {(0,627968.01) (0.3,1597085.75) (0.75,2954686.35)};

\end{axis}

\begin{axis}[
  hide axis,
  name=dummy_axis,
  xmin=0, xmax=1,
  ymin=0, ymax=1,
  legend columns=2,
  legend cell align=left,
  at={(plot_normal_LS.north east)}, 
  anchor=north east,                  
  xshift=5.2cm,                    
  yshift=-0.0cm,    
  legend style={
    draw=gray!30,
  }
]

\addlegendimage{thick, mark=o, color=black}
\addlegendentry{1}

\addlegendimage{thick, mark=square*, color=red!85!black}
\addlegendentry{$\mathcal{O}(1)$}

\addlegendimage{thick, mark=triangle*, color=darkorange}
\addlegendentry{$\log(T)$}

\addlegendimage{thick, mark=diamond*, color=green!70!black}
\addlegendentry{$T^{1/3}$}

\addlegendimage{thick, mark=star, color=kulblue}
\addlegendentry{$T^{1/2}$}

\addlegendimage{thick, mark=x, color=nupurple}
\addlegendentry{$T^{2/3}$}
\end{axis}

\end{tikzpicture}
        \caption{Normal.}
        \label{fig:simulations_regret_vs_L_normal}
    \end{subfigure}
    \begin{subfigure}{0.9\textwidth}
        \begin{tikzpicture}

\begin{axis}[
  name=plot_uniform,
  axis x line=bottom,
  axis y line=middle,
  width=5.5cm,
  height=3.7cm,
  xmajorgrids=true, 
  grid style={dashed, gray!30},
  xmin=0,
  xmax=0.85,
  ymin=0,
  ymax=5.4e6,
  ytick={5e6},
  yticklabels={$5 \cdot 10^6$},
  scaled y ticks=false,
  enlargelimits=false,
  xtick={0,0.3,0.75},
  xticklabels={$0$,$2$,$5$},
  xlabel={$L$},
  ylabel={$R(T)$},
  xlabel style={
  at={(axis description cs:1,-0.03)},
  anchor=north
  },
  ylabel style={
  at={(axis description cs:-0.12,0.65)},
  anchor=north
  },
  clip=false,
]

\addplot[name path=lower1, draw=none] 
coordinates {(0,7046.62) (0.3,22150.41) (0.75,45061.49)};
\addplot[name path=upper1, draw=none] 
coordinates {(0,7257.27) (0.3,26143.0) (0.75,48856.16)};
\addplot[fill=black!50, fill opacity=0.3, forget plot] 
fill between[of=lower1 and upper1];

\addplot[thick, mark size=2pt, color=black, mark=o, solid, forget plot]
coordinates {(0,7151.95) (0.3,24146.70) (0.75,46958.83)};

\addplot[name path=lower2, draw=none] 
coordinates {(0,487848.7276) (0.3,686163.4108) (0.75,1470193.678)};
\addplot[name path=upper2, draw=none] 
coordinates {(0,506282.8642) (0.3,740381.6384) (0.75,1576315.331)};
\addplot[fill=red!85!black, fill opacity=0.3, forget plot] 
fill between[of=lower2 and upper2];

\addplot[thick, mark size=2pt, color=red!85!black, mark=square*, solid, forget plot]
coordinates {(0,497065.80) (0.3,713272.52) (0.75,1523254.51)};

\addplot[name path=lower3, draw=none] 
coordinates {(0,565891.1529) (0.3,1098623.675) (0.75,2339642.172)};
\addplot[name path=upper3, draw=none] 
coordinates {(0,579288.5497) (0.3,1164948.878) (0.75,2480891.001)};
\addplot[fill=darkorange, fill opacity=0.3, forget plot] 
fill between[of=lower3 and upper3];

\addplot[thick, mark size=2pt, color=darkorange, mark=triangle*, solid, forget plot]
coordinates {(0,572589.85) (0.3,1131786.28) (0.75,2410266.59)};

\addplot[name path=lower4, draw=none] 
coordinates {(0,630320.9351) (0.3,1815422.604) (0.75,3748579.404)};
\addplot[name path=upper4, draw=none] 
coordinates {(0,640400.1635) (0.3,1847868.485) (0.75,3821036.814)};
\addplot[fill=green!50, fill opacity=0.3, forget plot] 
fill between[of=lower4 and upper4];

\addplot[thick, mark size=2pt, color=green!70!black, mark=diamond*, solid, forget plot]
coordinates {(0,635360.55) (0.3,1831645.54) (0.75,3784808.109)};

\addplot[name path=lower5, draw=none] 
coordinates {(0,752111.2868) (0.3,2178736.791) (0.75,4399370.369)};
\addplot[name path=upper5, draw=none] 
coordinates {(0,761507.6303) (0.3,2207205.117) (0.75,4448278.943)};
\addplot[fill=kulblue, fill opacity=0.3, forget plot] 
fill between[of=lower5 and upper5];

\addplot[thick, mark size=2pt, color=kulblue, mark=star, forget plot]
coordinates {(0,756809.46) (0.3,2192970.95) (0.75,4423824.656)};

\addplot[name path=lower6, draw=none] 
coordinates {(0,859468.9024) (0.3,2440977.728) (0.75,4912228.029)};
\addplot[name path=upper6, draw=none] 
coordinates {(0,866935.1118) (0.3,2460242.645) (0.75,4946743.319)};
\addplot[fill=nupurple, fill opacity=0.3, forget plot] 
fill between[of=lower6 and upper6];

\addplot[thick, mark size=2pt, color=nupurple, mark=x, forget plot]
coordinates {(0,863202.01) (0.3,2450610.19) (0.75,4929485.674)};

\end{axis}

\begin{axis}[
  name=plot_uniform_LS,
  at={(plot_uniform.right of south east)},
  xshift=1.25cm,
  axis x line=bottom,
  axis y line=middle,
  width=5.5cm,
  height=3.7cm,
  xmajorgrids=true, 
  grid style={dashed, gray!30},
  xmin=0,
  xmax=0.85,
  ymin=0,
  ymax=5.4e6,
  ytick={5e6},
  yticklabels={$5 \cdot 10^6$},
  scaled y ticks=false,
  enlargelimits=false,
  xtick={0,0.3,0.75},
  xticklabels={$0$,$2$,$5$},
  xlabel={$L$},
  ylabel={$R(T)$},
  xlabel style={
  at={(axis description cs:1,-0.03)},
  anchor=north
  },
  ylabel style={
  at={(axis description cs:-0.12,0.65)},
  anchor=north
  },
  clip=false
]

\addplot[name path=lower1, draw=none] 
coordinates {(0,23299.05) (0.3,42232.41) (0.75,121397.0)};
\addplot[name path=upper1, draw=none] 
coordinates {(0,28375.1) (0.3,55978.10) (0.75,128377.57)};
\addplot[fill=black!50, fill opacity=0.3, forget plot] 
fill between[of=lower1 and upper1];

\addplot[thick, mark size=2pt, color=black, mark=o, solid, forget plot]
coordinates {(0,25837.07) (0.3,49105.25) (0.75,124887.29)};

\addplot[name path=lower2, draw=none] 
coordinates {(0,482990.8399) (0.3,575552.3935) (0.75,900226.4042)};
\addplot[name path=upper2, draw=none] 
coordinates {(0,508771.2725) (0.3,635187.4994) (0.75,964769.8312)};
\addplot[fill=red!85!black, fill opacity=0.3, forget plot] 
fill between[of=lower2 and upper2];

\addplot[thick, mark size=2pt, color=red!85!black, mark=square*, solid, forget plot]
coordinates {(0,495881.06) (0.3,605369.95) (0.75,932498.12)};

\addplot[name path=lower3, draw=none] 
coordinates {(0,594155.9296) (0.3,1020983.284) (0.75,1416984.672)};
\addplot[name path=upper3, draw=none] 
coordinates {(0,618986.4092) (0.3,1096970.977) (0.75,1501525.06)};
\addplot[fill=darkorange, fill opacity=0.3, forget plot] 
fill between[of=lower3 and upper3];

\addplot[thick, mark size=2pt, color=darkorange, mark=triangle*, solid, forget plot]
coordinates {(0,606571.17) (0.3,1058977.13) (0.75,1459254.87)};

\addplot[name path=lower4, draw=none] 
coordinates {(0,680226.8752) (0.3,1646940.564) (0.75,2394536.443)};
\addplot[name path=upper4, draw=none] 
coordinates {(0,699882.551) (0.3,1694379.327) (0.75,2507459.439)};
\addplot[fill=green!50, fill opacity=0.3, forget plot] 
fill between[of=lower4 and upper4];

\addplot[thick, mark size=2pt, color=green!70!black, mark=diamond*, solid, forget plot]
coordinates {(0,690054.71) (0.3,1670659.95) (0.75,2450997.94)};

\addplot[name path=lower5, draw=none] 
coordinates {(0,851767.011) (0.3,1973153.309) (0.75,3472207.61)};
\addplot[name path=upper5, draw=none] 
coordinates {(0,867954.4415) (0.3,2015749.595) (0.75,3542873.14)};
\addplot[fill=kulblue, fill opacity=0.3, forget plot] 
fill between[of=lower5 and upper5];

\addplot[thick, mark size=2pt, color=kulblue, mark=star, forget plot]
coordinates {(0,859860.73) (0.3,1994451.45) (0.75,3507540.38)};

\addplot[name path=lower6, draw=none] 
coordinates {(0,934383.762) (0.3,2192644.187) (0.75,3756545.864)};
\addplot[name path=upper6, draw=none] 
coordinates {(0,946349.595) (0.3,2237012.022) (0.75,3782608.605)};
\addplot[fill=nupurple, fill opacity=0.3, forget plot] 
fill between[of=lower6 and upper6];

\addplot[thick, mark size=2pt, color=nupurple, mark=x, forget plot]
coordinates {(0,940366.68) (0.3,2214828.11) (0.75,3769577.23)};

\end{axis}

\begin{axis}[
  hide axis,
  name=dummy_axis,
  xmin=0, xmax=1,
  ymin=0, ymax=1,
  legend columns=2,
  legend cell align=left,
  at={(plot_uniform_LS.north east)},
  anchor=north east,                  
  xshift=5.2cm,                     
  yshift=-0.0cm,                     
  legend style={
    draw=gray!30,        
  }
]

\addlegendimage{thick, mark=o, color=black}
\addlegendentry{1}

\addlegendimage{thick, mark=square*, color=red!85!black}
\addlegendentry{$\mathcal{O}(1)$}

\addlegendimage{thick, mark=triangle*, color=darkorange}
\addlegendentry{$\log(T)$}

\addlegendimage{thick, mark=diamond*, color=green!70!black}
\addlegendentry{$T^{1/3}$}

\addlegendimage{thick, mark=star, color=kulblue}
\addlegendentry{$T^{1/2}$}

\addlegendimage{thick, mark=x, color=nupurple}
\addlegendentry{$T^{2/3}$}
\end{axis}

\end{tikzpicture}
        \caption{Uniform.}
        \label{fig:simulations_regret_vs_L_uniform}
    \end{subfigure}
    \end{minipage}
    
    \caption{Dynamic regret $R(T)$ vs. lead time~$L$ of \NSBLIC under backlogging (left), and \NSLSIC or \NSLSICL under lost-sales (right), shown for different values of~$S$. In \cref{fig:simulations_regret_vs_L_normal} demand is drawn from the truncated normal distribution and in \cref{fig:simulations_regret_vs_L_uniform} from the uniform distribution. }
    \label{fig:simulations_regret_vs_L}
\end{figure}

\paragraph{Impact of the Use of Counterfactual Feedback and Unknown Frequency of Change.}
The variants of \MDPC and \IOPEA are included as comparison methods designed to isolate the role of counterfactual information and prior knowledge of non-stationarity, in contrast to the active change detection used by our algorithms.  We note that \IOPEA is essentially identical to our algorithms when ignoring the change detection components, while \MDPC relies solely on the convexity and smoothness of the cost function.

The results in \cref{table:simulation_results_relative_regret_normal} show a substantial performance gap of our algorithms relative to both \SMDPC and even \OMDPC.  This highlights the substantial value of leveraging counterfactual feedback.  Moreover, our algorithms often nearly match the performance of the strongest benchmark, \OIOPEA.  This is particularly notable given that \OIOPEA restarts exactly at the true change points, incurring neither detection delay nor unnecessary resets, whereas our algorithms have no prior knowledge on the level of non-stationarity $S$ or the location of the change points.  We note that our methods may occasionally restart unnecessarily, leading to a modest performance gap when $S = 1$. This gap becomes more pronounced as $S$ increases and the informational advantage of known change points grows.  Nevertheless, across all regimes, our algorithms consistently outperform scheduled-restart methods and \OMDPC, supporting our theoretical finding that near-optimal performance of \NSBLIC and \NSLSIC does not rely on prior knowledge of $S$, and even \NSLSICL surpasses algorithms with such knowledge. Overall, our active change detection mechanisms prove more beneficial than passive adaptation via predetermined restarts, even when side information is incorporated.
\section{Conclusions}  \label{sec:conclusions}
We studied non-stationary inventory control and showed that the difficulty of learning is fundamentally governed by the information structure of the system.  Non-stationarity creates a tension between change detection and cost estimation, which is exacerbated by delayed and censored feedback under lead times and lost-sales. By exploiting counterfactual information inherent to inventory dynamics, we resolve this tension and design algorithms that respond to unknown and repeated demand shifts.  For backlogging and lost-sales models without lead times, our algorithms achieve near-optimal dynamic regret and incur no additional cost when demand is stationary. In contrast, for lost-sales with positive lead times, our results indicate a genuine loss of information, making non-stationary base-stock policy optimization fundamentally harder.

From a practical perspective, our results show that effective adaptation to structural demand changes is possible without prior knowledge of demand variability. This holds even in highly dynamic environments with lost-sales and lead times, where our algorithms reduce operational complexity and support reliable decision-making in modern supply chains.
An important direction for further research is to determine whether the performance guarantees for lost-sales systems with strictly positive lead times are tight, as well as extending our results to alternative notions of non-stationarity (e.g., gradually drifting demand) and to settings with stochastic lead times. Finally, an interesting open question is whether it is possible to obtain regret bounds for positive lead time settings under the transient expected cost regret that retain the same dependence on $T$ while achieving a sublinear dependence on $S$ without requiring knowledge of the latter.

\clearpage

\bibliographystyle{informs2014} 
\bibliography{bibliography} 

@misc{Adobe,
  author = {{Adobe}},
  year    = {2025},
  month   = {12},
  day     = {02},
  title   = {Adobe: Cyber Monday Hits Record \$14.25 Billion in Online Spending with Over \$1 Billion Driven by Buy Now Pay Later},
  url     = {https://news.adobe.com/news/2025/12/adobe-cyber-monday-hits-record}, 
  urldate = {2025-12-09},
  note = {Accessed July 30, 2026}
}

@article{agarwal2011stochastic,
  title={Stochastic convex optimization with bandit feedback},
  author={Agarwal, Alekh and Foster, Dean P and Hsu, Daniel J and Kakade, Sham M and Rakhlin, Alexander},
  journal={Advances in Neural Information Processing Systems},
  volume={24},
  year={2011}
}

@article{agarwal2019reinforcement,
  title={Reinforcement learning: Theory and algorithms},
  author={Agarwal, Alekh and Jiang, Nan and Kakade, Sham M and Sun, Wen},
  journal={CS Dept., UW Seattle, Seattle, WA, USA, Tech. Rep},
  volume={32},
  pages={96},
  year={2019}
}

@article{agrawal2022learning,
  title={Learning in Structured MDPs with Convex Cost Functions: Improved Regret Bounds for Inventory Management},
  author={Agrawal, Shipra and Jia, Randy},
  journal={Operations Research},
  volume={70},
  number={3},
  pages={1646--1664},
  year={2022},
  publisher={INFORMS}
}

@misc{AmazonDelivery,
  author = {{Amazon.com}, {Inc.}},
  year    = {2025},
  month   = {06},
  day     = {11},
  title   = {Amazon announces 3 AI-powered innovations to get packages to customers faster},
  url     = {https://www.aboutamazon.com/news/operations/amazon-ai-innovations-delivery-forecasting-robotics}, 
  urldate = {2025-12-09},
  note = {Accessed July 30, 2026}
}

@misc{AmazonInventory,
  author = {{Amazon.com}, {Inc.}},
  year    = {2021},
  month   = {10},
  day     = {01},
  title   = {The evolution of Amazon’s inventory planning system},
  journal = {Amazon Science},
  url     = {https://www.amazon.science/latest-news/the-evolution-of-amazons-inventory-planning-system}, 
  urldate = {2025-12-09},
  note = {Accessed July 30, 2026}
}

@article{an2025nonstationary,
  title={The nonstationary newsvendor with (and without) predictions},
  author={An, Lin and Li, Andrew A and Moseley, Benjamin and Ravi, R},
  journal={Manufacturing \& Service Operations Management},
  year={2025},
  publisher={INFORMS}
}

@article{auer2002nonstochastic,
  title={The nonstochastic multiarmed bandit problem},
  author={Auer, Peter and Cesa-Bianchi, Nicolo and Freund, Yoav and Schapire, Robert E},
  journal={SIAM journal on computing},
  volume={32},
  number={1},
  pages={48--77},
  year={2002},
  publisher={SIAM}
}

@article{auer2008near,
  title={Near-optimal regret bounds for reinforcement learning},
  author={Auer, Peter and Jaksch, Thomas and Ortner, Ronald},
  journal={Advances in neural information processing systems},
  volume={21},
  year={2008}
}

@inproceedings{auer2019adaptively,
  title={Adaptively tracking the best bandit arm with an unknown number of distribution changes},
  author={Auer, Peter and Gajane, Pratik and Ortner, Ronald},
  booktitle={Conference on Learning Theory},
  pages={138--158},
  year={2019},
  organization={PMLR}
}

@article{babai2025fifty,
  title={Fifty Years of Inventory Research from a Forecasting Perspective},
  author={Babai, M Zied and Syntetos, Aris A and Teunter, Ruud H},
  journal={European Journal of Operational Research},
  year={2025},
  publisher={Elsevier}
}

@book{bertsekas2019reinforcement,
  title={Reinforcement learning and optimal control},
  author={Bertsekas, Dimitri},
  volume={1},
  year={2019},
  publisher={Athena Scientific}
}

@article{besbes2014stochastic,
  title={Stochastic multi-armed-bandit problem with non-stationary rewards},
  author={Besbes, Omar and Gur, Yonatan and Zeevi, Assaf},
  journal={Advances in neural information processing systems},
  volume={27},
  year={2014}
}

@article{besbes2015non,
  title={Non-stationary stochastic optimization},
  author={Besbes, Omar and Gur, Yonatan and Zeevi, Assaf},
  journal={Operations research},
  volume={63},
  number={5},
  pages={1227--1244},
  year={2015},
  publisher={INFORMS}
}

@article{besson2022efficient,
  title={Efficient change-point detection for tackling piecewise-stationary bandits},
  author={Besson, Lilian and Kaufmann, Emilie and Maillard, Odalric-Ambrym and Seznec, Julien},
  journal={Journal of Machine Learning Research},
  volume={23},
  number={77},
  pages={1--40},
  year={2022}
}

@article{bu2023asymptotic,
  title={Asymptotic optimality of base-stock policies for perishable inventory systems},
  author={Bu, Jinzhi and Gong, Xiting and Chao, Xiuli},
  journal={Management Science},
  volume={69},
  number={2},
  pages={846--864},
  year={2023},
  publisher={INFORMS}
}

@inproceedings{cao2019nearly,
  title={Nearly optimal adaptive procedure with change detection for piecewise-stationary bandit},
  author={Cao, Yang and Wen, Zheng and Kveton, Branislav and Xie, Yao},
  booktitle={The 22nd International Conference on Artificial Intelligence and Statistics},
  pages={418--427},
  year={2019},
  organization={PMLR}
}

@article{cao2019quantile,
  title={Quantile forecasting and data-driven inventory management under nonstationary demand},
  author={Cao, Ying and Shen, Zuo-Jun Max},
  journal={Operations Research Letters},
  volume={47},
  number={6},
  pages={465--472},
  year={2019},
  publisher={Elsevier}
}

@article{chen2021data,
  title={Data-driven inventory control with shifting demand},
  author={Chen, Boxiao},
  journal={Production and Operations Management},
  volume={30},
  number={5},
  pages={1365--1385},
  year={2021},
  publisher={SAGE Publications Sage CA: Los Angeles, CA}
}

@article{chen2024learning,
  title={Learning to order for inventory systems with lost sales and uncertain supplies},
  author={Chen, Boxiao and Jiang, Jiashuo and Zhang, Jiawei and Zhou, Zhengyuan},
  journal={Management Science},
  volume={70},
  number={12},
  pages={8631--8646},
  year={2024},
  publisher={INFORMS}
}

@article{chen2025learning,
  title={Learning When to Restart: Nonstationary Newsvendor from Uncensored to Censored Demand},
  author={Chen, Xin and Lyu, Jiameng and Yuan, Shilin and Zhou, Yuan},
  journal={arXiv preprint arXiv:2509.18709},
  year={2025}
}

@article{chen2025managing,
  title={Managing Lost-Sale Inventory Systems under Unknown Demand and Return Distributions},
  author={Chen, Boxiao and Jasin, Stefanus and Luo, Qi and Zhang, Mengxiao},
  journal={Available at SSRN 5624311},
  year={2025}
}

@article{cheung2022hedging,
  title={Hedging the drift: Learning to optimize under nonstationarity},
  author={Cheung, Wang Chi and Simchi-Levi, David and Zhu, Ruihao},
  journal={Management Science},
  volume={68},
  number={3},
  pages={1696--1713},
  year={2022},
  publisher={INFORMS}
}

@article{cheung2023nonstationary,
  title={Nonstationary reinforcement learning: The blessing of (more) optimism},
  author={Cheung, Wang Chi and Simchi-Levi, David and Zhu, Ruihao},
  journal={Management Science},
  volume={69},
  number={10},
  pages={5722--5739},
  year={2023},
  publisher={INFORMS}
}

@article{dehaybe2024deep,
  title={Deep reinforcement learning for inventory optimization with non-stationary uncertain demand},
  author={Dehaybe, Henri and Catanzaro, Daniele and Chevalier, Philippe},
  journal={European Journal of Operational Research},
  volume={314},
  number={2},
  pages={433--445},
  year={2024},
  publisher={Elsevier}
}

@inproceedings{domingues2021kernel,
  title={A kernel-based approach to non-stationary reinforcement learning in metric spaces},
  author={Domingues, Omar Darwiche and M{\'e}nard, Pierre and Pirotta, Matteo and Kaufmann, Emilie and Valko, Michal},
  booktitle={International Conference on Artificial Intelligence and Statistics},
  pages={3538--3546},
  year={2021},
  organization={PMLR}
}

@misc{duchi2023lecture,
  title={Lecture notes on statistics and information theory},
  author={Duchi, John},
  url={https://www.web.stanford.edu/class/stats311/lecture-notes.pdf},
  year={2023}
}

@article{EUChamberCommerceChina,
 author  = {{European Union Chamber of Commerce in China}},
 year    = {2025},
 month   = {12},
 day     = {01},
 title   = {European Chamber survey finds one in three looking to divert sourcing away from China to mitigate impact of export controls},
 journal = {European Union Chamber of Commerce in China},
 url     = {https://www.europeanchamber.com.cn/en/press-releases/3757},
 urldate = {2025-12-09},
 note = {Accessed July 30, 2026}
}

@article{foussoul2023mnl,
  title={Mnl-bandit in non-stationary environments},
  author={Foussoul, Ayoub and Goyal, Vineet and Gupta, Varun},
  journal={arXiv preprint arXiv:2303.02504},
  year={2023}
}

@article{flaxman2004online,
  title={Online convex optimization in the bandit setting: gradient descent without a gradient},
  author={Flaxman, Abraham D and Kalai, Adam Tauman and McMahan, H Brendan},
  journal={arXiv preprint cs/0408007},
  year={2004}
}

@article{gajane2018sliding,
  title={A sliding-window algorithm for markov decision processes with arbitrarily changing rewards and transitions},
  author={Gajane, Pratik and Ortner, Ronald and Auer, Peter},
  journal={arXiv preprint arXiv:1805.10066},
  year={2018}
}

@inproceedings{garivier2011upper,
  title={On upper-confidence bound policies for switching bandit problems},
  author={Garivier, Aur{\'e}lien and Moulines, Eric},
  booktitle={International conference on algorithmic learning theory},
  pages={174--188},
  year={2011},
  organization={Springer}
}

@article{goltsos2022inventory,
  title={Inventory--forecasting: Mind the gap},
  author={Goltsos, Thanos E and Syntetos, Aris A and Glock, Christoph H and Ioannou, George},
  journal={European journal of operational research},
  volume={299},
  number={2},
  pages={397--419},
  year={2022},
  publisher={Elsevier}
}

@article{gong2020provably,
  title={Provably More Efficient Q-Learning in the One-Sided-Feedback/Full-Feedback Settings},
  author={Gong, Xiao-Yue and Simchi-Levi, David},
  journal={arXiv preprint arXiv:2007.00080},
  year={2020}
}

@article{gong2024bandits,
  title={Bandits atop reinforcement learning: Tackling online inventory models with cyclic demands},
  author={Gong, Xiao-Yue and Simchi-Levi, David},
  journal={Management Science},
  volume={70},
  number={9},
  pages={6139--6157},
  year={2024},
  publisher={INFORMS}
}

@article{graves1999single,
  title={A single-item inventory model for a nonstationary demand process},
  author={Graves, Stephen C},
  journal={Manufacturing \& Service Operations Management},
  volume={1},
  number={1},
  pages={50--61},
  year={1999},
  publisher={INFORMS}
}

@inproceedings{hartland2007change,
  title={Change point detection and meta-bandits for online learning in dynamic environments},
  author={Hartland, C{\'e}dric and Baskiotis, Nicolas and Gelly, Sylvain and Sebag, Michele and Teytaud, Olivier},
  booktitle={CAp 2007: 9{\`e} Conf{\'e}rence francophone sur l'apprentissage automatique},
  pages={237--250},
  year={2007}
}

@article{huang2025change,
  title={Change Detection-Based Procedures for Piecewise Stationary MABs: A Modular Approach},
  author={Huang, Yu-Han and Gerogiannis, Argyrios and Bose, Subhonmesh and Veeravalli, Venugopal V},
  journal={arXiv preprint arXiv:2501.01291},
  year={2025}
}

@article{huh2009adaptive,
  title={An adaptive algorithm for finding the optimal base-stock policy in lost sales inventory systems with censored demand},
  author={Huh, Woonghee Tim and Janakiraman, Ganesh and Muckstadt, John A and Rusmevichientong, Paat},
  journal={Mathematics of Operations Research},
  volume={34},
  number={2},
  pages={397--416},
  year={2009},
  publisher={INFORMS}
}

@article{huh2009asymptotic,
  title={Asymptotic optimality of order-up-to policies in lost sales inventory systems},
  author={Huh, Woonghee Tim and Janakiraman, Ganesh and Muckstadt, John A and Rusmevichientong, Paat},
  journal={Management Science},
  volume={55},
  number={3},
  pages={404--420},
  year={2009},
  publisher={INFORMS}
}

@article{huh2011adaptive,
  title={Adaptive data-driven inventory control with censored demand based on Kaplan-Meier estimator},
  author={Huh, Woonghee Tim and Levi, Retsef and Rusmevichientong, Paat and Orlin, James B},
  journal={Operations Research},
  volume={59},
  number={4},
  pages={929--941},
  year={2011},
  publisher={INFORMS}
}

@article{janakiraman2004lost,
  title={Lost-sales problems with stochastic lead times: Convexity results for base-stock policies},
  author={Janakiraman, Ganesh and Roundy, Robin O},
  journal={Operations Research},
  volume={52},
  number={5},
  pages={795--803},
  year={2004},
  publisher={INFORMS}
}

@article{karlin1958inventory,
  title={Inventory models of the Arrow-Harris-Marschak type with time lag},
  author={Karlin, Samuel},
  journal={Studies in the mathematical theory of inventory and production},
  year={1958},
  publisher={Stanford University Press}
}

@article{keskin2023nonstationary,
  title={The nonstationary newsvendor: Data-driven nonparametric learning},
  author={Keskin, N Bora and Min, Xu and Song, Jing-Sheng Jeannette},
  journal={Available at SSRN 3866171},
  year={2023}
}

@inproceedings{kocsis2006discounted,
  title={Discounted ucb},
  author={Kocsis, Levente and Szepesv{\'a}ri, Csaba},
  booktitle={2nd PASCAL Challenges Workshop},
  volume={2},
  pages={51--134},
  year={2006}
}

@book{lattimore2020bandit,
  title={Bandit algorithms},
  author={Lattimore, Tor and Szepesv{\'a}ri, Csaba},
  year={2020},
  publisher={Cambridge University Press}
}

@article{levine2017rotting,
  title={Rotting bandits},
  author={Levine, Nir and Crammer, Koby and Mannor, Shie},
  journal={Advances in neural information processing systems},
  volume={30},
  year={2017}
}

@article{lindstaahl2023change,
  title={Change point detection with adaptive measurement schedules for network performance verification},
  author={Lindst{\aa}hl, Simon and Proutiere, Alexandre and Johnsson, Andreas},
  journal={Proceedings of the ACM on Measurement and Analysis of Computing Systems},
  volume={7},
  number={3},
  pages={1--30},
  year={2023},
  publisher={ACM New York, NY, USA}
}

@inproceedings{liu2018change,
  title={A change-detection based framework for piecewise-stationary multi-armed bandit problem},
  author={Liu, Fang and Lee, Joohyun and Shroff, Ness},
  booktitle={Proceedings of the AAAI Conference on Artificial Intelligence},
  volume={32},
  number={1},
  year={2018}
}

@article{lugosi2024hardness,
  title={On the hardness of learning from censored and nonstationary demand},
  author={Lugosi, G{\'a}bor and Markakis, Mihalis G and Neu, Gergely},
  journal={INFORMS Journal on Optimization},
  volume={6},
  number={2},
  pages={63--83},
  year={2024},
  publisher={INFORMS}
}

@article{mao2025model,
  title={Model-free nonstationary reinforcement learning: Near-optimal regret and applications in multiagent reinforcement learning and inventory control},
  author={Mao, Weichao and Zhang, Kaiqing and Zhu, Ruihao and Simchi-Levi, David and Ba{\c{s}}ar, Tamer},
  journal={Management Science},
  volume={71},
  number={2},
  pages={1564--1580},
  year={2025},
  publisher={INFORMS}
}

@misc{Mastercard,
  author  = {Mastercard},
  year    = {2025},
  month   = {11},
  day     = {29},
  title   = {Mastercard SpendingPulse: {U.S.} Black Friday retail sales up +4.1\% year over year as holiday momentum builds},
  url     = {https://www.mastercard.com/us/en/news-and-trends/press/2025/november/mastercard-spendingpulse--us-black-friday-retail-sales-up--4-1--.html},
  urldate = {2025-12-09},
  note = {Accessed July 30, 2026}
}

@article{nguyen2025non,
  title={Non-Stationary Lipschitz Bandits},
  author={Nguyen, Nicolas and Gaucher, Solenne and Vernade, Claire},
  journal={arXiv preprint arXiv:2505.18871},
  year={2025}
}

@inproceedings{ortner2012regret,
  title={Regret bounds for restless markov bandits},
  author={Ortner, Ronald and Ryabko, Daniil and Auer, Peter and Munos, R{\'e}mi},
  booktitle={International conference on algorithmic learning theory},
  pages={214--228},
  year={2012},
  organization={Springer}
}

@inproceedings{ortner2020variational,
  title={Variational regret bounds for reinforcement learning},
  author={Ortner, Ronald and Gajane, Pratik and Auer, Peter},
  booktitle={Uncertainty in Artificial Intelligence},
  pages={81--90},
  year={2020},
  organization={PMLR}
}

@inproceedings{peng2024complexity,
  title={The complexity of non-stationary reinforcement learning},
  author={Peng, Binghui and Papadimitriou, Christos},
  booktitle={International Conference on Algorithmic Learning Theory},
  pages={972--996},
  year={2024},
  organization={PMLR}
}

@article{porteus1990stochastic,
  title={Stochastic inventory theory},
  author={Porteus, Evan L},
  journal={Handbooks in operations research and management science},
  volume={2},
  pages={605--652},
  year={1990},
  publisher={Elsevier}
}

@book{puterman2014markov,
  title={Markov decision processes: discrete stochastic dynamic programming},
  author={Puterman, Martin L},
  year={2014},
  publisher={John Wiley \& Sons}
}

@article{ren2024data,
  title={Data-driven inventory policy: Learning from sequentially observed non-stationary data},
  author={Ren, Ke and Bidkhori, Hoda and Shen, Zuo-Jun Max},
  journal={Omega},
  volume={123},
  pages={102942},
  year={2024},
  publisher={Elsevier}
}

@article{rivasplata2012subgaussian,
  title={Subgaussian random variables: An expository note},
  author={Rivasplata, Omar},
  journal={Internet publication, PDF},
  volume={5},
  year={2012}
}

@article{sethi1997optimality,
  title={Optimality of (s, S) policies in inventory models with Markovian demand},
  author={Sethi, Suresh P and Cheng, Feng},
  journal={Operations Research},
  volume={45},
  number={6},
  pages={931--939},
  year={1997},
  publisher={INFORMS}
}

@inproceedings{seznec2020single,
  title={A single algorithm for both restless and rested rotting bandits},
  author={Seznec, Julien and Menard, Pierre and Lazaric, Alessandro and Valko, Michal},
  booktitle={International Conference on Artificial Intelligence and Statistics},
  pages={3784--3794},
  year={2020},
  organization={PMLR}
}

@article{scarf1960optimality,
  title={The optimality of (S, s) policies in the dynamic inventory problem},
  author={Scarf, Herbert},
  year={1960},
  publisher={Stanford University Press Stanford, CA}
}

@inproceedings{slivkins2008adapting,
  title={Adapting to a Changing Environment: the Brownian Restless Bandits.},
  author={Slivkins, Aleksandrs and Upfal, Eli},
  booktitle={COLT},
  pages={343--354},
  year={2008}
}

@article{strijbosch2011interaction,
  title={On the interaction between forecasting and stock control: The case of non-stationary demand},
  author={Strijbosch, Leo WG and Syntetos, Aris A and Boylan, John E and Janssen, Elleke},
  journal={International Journal of Production Economics},
  volume={133},
  number={1},
  pages={470--480},
  year={2011},
  publisher={Elsevier}
}

@book{sutton1998reinforcement,
  title={Reinforcement learning: An introduction},
  author={Sutton, Richard S and Barto, Andrew G and others},
  volume={1},
  year={1998},
  publisher={MIT press Cambridge}
}

@article{temizoz2025zero,
  title={Zero-shot generalization in inventory management: Train, then estimate and decide},
  author={Temiz{\"o}z, Tarkan and Imdahl, Christina and Dijkman, Remco and Lamghari-Idrissi, Douniel and Van Jaarsveld, Willem},
  journal={European Journal of Operational Research},
  year={2025},
  publisher={Elsevier}
}

@article{theodorou2025forecast,
  title={Forecast accuracy and inventory performance: Insights on their relationship from the M5 competition data},
  author={Theodorou, Evangelos and Spiliotis, Evangelos and Assimakopoulos, Vassilios},
  journal={European Journal of Operational Research},
  volume={322},
  number={2},
  pages={414--426},
  year={2025},
  publisher={Elsevier}
}

@article{ban2020confidence,
  title={Confidence intervals for data-driven inventory policies with demand censoring},
  author={Ban, Gah-Yi},
  journal={Operations Research},
  volume={68},
  number={2},
  pages={309--326},
  year={2020},
  publisher={INFORMS}
}

@article{treharne2002adaptive,
  title={Adaptive inventory control for nonstationary demand and partial information},
  author={Treharne, James T and Sox, Charles R},
  journal={Management Science},
  volume={48},
  number={5},
  pages={607--624},
  year={2002},
  publisher={INFORMS}
}

@book{tsybakov2009nonparametric,
  title={Introduction to Nonparametric Estimation},
  author={Tsybakov, Alexandre B},
  year={2009},
  publisher={Springer}
}

@article{tunc2011cost,
  title={The cost of using stationary inventory policies when demand is non-stationary},
  author={Tunc, Huseyin and Kilic, Onur A and Tarim, S Armagan and Eksioglu, Burak},
  journal={Omega},
  volume={39},
  number={4},
  pages={410--415},
  year={2011},
  publisher={Elsevier}
}

@misc{Walmart,
  author = {{Walmart Inc.}},
  year    = {2025},
  month   = {7},
  day     = {17},
  title   = {Walmart’s U.S. Supply Chain Playbook Goes Global — and It’s Reinventing Retail at Scale},
  url     = {https://corporate.walmart.com/news/2025/07/17/walmarts-us-supply-chain-playbook-goes-global-and-its-reinventing-retail-at-scale}, 
  urldate = {2025-12-09},
  note = {Accessed July 30, 2026}
}

@article{wang2025adaptivity,
  title={On adaptivity in nonstationary stochastic optimization with bandit feedback},
  author={Wang, Yining},
  journal={Operations Research},
  volume={73},
  number={2},
  pages={819--828},
  year={2025},
  publisher={INFORMS}
}

@inproceedings{wei2021non,
  title={Non-stationary reinforcement learning without prior knowledge: An optimal black-box approach},
  author={Wei, Chen-Yu and Luo, Haipeng},
  booktitle={Conference on learning theory},
  pages={4300--4354},
  year={2021},
  organization={PMLR}
}

@article{whittle1988restless,
  title={Restless bandits: Activity allocation in a changing world},
  author={Whittle, Peter},
  journal={Journal of applied probability},
  volume={25},
  number={A},
  pages={287--298},
  year={1988},
  publisher={Cambridge University Press}
}

@article{xie2026deepstock,
  title={Deepstock: Reinforcement learning with policy regularizations for inventory management},
  author={Xie, Yaqi and Hao, Xinru and Liu, Jiaxi and Ma, Will and Xin, Linwei and Cao, Lei and Zhang, Yidong},
  journal={arXiv preprint arXiv:2603.19621},
  year={2026}
}

@article{yuan2021marrying,
  title={Marrying stochastic gradient descent with bandits: Learning algorithms for inventory systems with fixed costs},
  author={Yuan, Hao and Luo, Qi and Shi, Cong},
  journal={Management Science},
  volume={67},
  number={10},
  pages={6089--6115},
  year={2021},
  publisher={Informs}
}

@article{yuan2024asymptotic,
  title={Asymptotic optimality of base-stock policies for lost-sales inventory systems with stochastic lead times},
  author={Yuan, Shilin and Lyu, Jiameng and Xie, Jinxing and Zhou, Yuan},
  journal={Operations Research Letters},
  volume={57},
  pages={107196},
  year={2024},
  publisher={Elsevier}
}

@article{zhang2020closing,
  title={Closing the gap: A learning algorithm for lost-sales inventory systems with lead times},
  author={Zhang, Huanan and Chao, Xiuli and Shi, Cong},
  journal={Management Science},
  volume={66},
  number={5},
  pages={1962--1980},
  year={2020},
  publisher={INFORMS}
}

@misc{zhang2025information,
      title={Reinforcement Learning in MDPs with Information-Ordered Policies}, 
      author={Zhongjun Zhang and Shipra Agrawal and Ilan Lobel and Sean R. Sinclair and Christina Lee Yu},
      year={2025},
      eprint={2508.03904},
      archivePrefix={arXiv},
      primaryClass={stat.ML},
      url={https://arxiv.org/abs/2508.03904}, 
}

@article{zhao2021bandit,
  title={Bandit convex optimization in non-stationary environments},
  author={Zhao, Peng and Wang, Guanghui and Zhang, Lijun and Zhou, Zhi-Hua},
  journal={Journal of Machine Learning Research},
  volume={22},
  number={125},
  pages={1--45},
  year={2021}
}

@article{zhou2020nonstationary,
  title={Nonstationary reinforcement learning with linear function approximation},
  author={Zhou, Huozhi and Chen, Jinglin and Varshney, Lav R and Jagmohan, Ashish},
  journal={arXiv preprint arXiv:2010.04244},
  year={2020}
}

@article{zipkin2000foundations,
  title={Foundations of inventory management},
  author={Zipkin, Paul Herbert},
  journal={McGraw-Hill},
  year={2000}
}


%
%
%

\newpage

\AtBeginEnvironment{APPENDICES}{%
  \renewcommand{\theHsection}{appendix.\Alph{section}}%
  \renewcommand{\theHsubsection}{\theHsection.\arabic{subsection}}%
  \renewcommand{\theHsubsubsection}{\theHsubsection.\arabic{subsubsection}}%
  \renewcommand{\theHfigure}{\theHsection.\arabic{figure}}%
  \renewcommand{\theHtable}{\theHsection.\arabic{table}}%
  \renewcommand{\theHequation}{\theHsection.\arabic{equation}}%
  \renewcommand{\theHtheorem}{\theHsection.\arabic{theorem}}%
}
\makeatother

\crefalias{section}{appendix}
\begin{APPENDICES}
\OneAndAHalfSpacedXI 
\section{\texorpdfstring{Proofs of Lemmas in \Cref{sec:model,sec:problem}}{Proofs of Lemmas in Section 3 and 4}} 

\subsection{\texorpdfstring{Proof of \Cref{lemma:lipschitz} (Lipschitz Property) and \Cref{lemma:concentration} (Concentration Inequality) for the Backlog Setting}{Proof of Concentration Inequality for Backlog}} \label[appendix]{app:proof_lipschitz_concentration_backlogging}

We provide the proofs of the Lipschitz property and the concentration inequality of the cost function under demand backlogging. 
All results in this subsection hold under stationary conditions, that is, ${F \coloneq F_t = F_{t'} \  \forall t, t' \in [T]}$.
We begin by introducing
\[
    \textstyle \mathcal{S}^{\tau} \coloneq \big\{\mathbf{s} \coloneq (s(0), \dots , s(L)) \in \mathbb{R}^{L+1} \bigm\vert \sum_{i=0}^{L} s(i) = \tau\big\}
\] 
as the set of vectors of dimension $L+1$ whose elements sum to ${\tau \in [0, U]}$.
On this state space, we define a Markov reward process (MRP) formulation of our inventory control system.

\begin{definition}[Markov Reward Process in the Inventory System with Backlogging] \label{def:MRP_backlogging}
    Consider the {\bf stationary backlogging inventory system} with lead time ${L \geq 0}$ under base-stock policies as a Markov reward process. The stochastic process with starting state ${\mathbf{s}_{1} = (s_1(0), \ldots, s_1(L)) \in \S^\tau}$ is given by the state transitions for $t \geq 2$:
    \begin{equation} \label{eq:definition_s}
        \textstyle \mathbf{s}_{t} \coloneq \left(s_{t-1}(0) - D_{t-1} + s_{t-1}(1), s_{t-1}(2), \dots , s_{t-1}(L), \tau - \sum_{i=0}^{L-1} s_{t}(i)\right)
    \end{equation}
    governed by demand realizations $D_{t-1} \sim F$. The {\bf immediate cost} is
        $C_t(\tau) \coloneq h \cdot (s_t(0) - Y_t) - b \cdot Y_t$,
    where ${Y_t = \min\{s_t(0), D_t\}}$ for $t \geq 1$.
\end{definition}

We note that this representation of the backlog system is different (but equivalent) to the model defined in \cref{eq:inventory_balance}.
The states in this MRP correspond to the system immediately after replenishment and the placement of a new order, but before demand is realized
    $\mathbf{s}_t = \left(I_t + Q_{t-L}, Q_{t-L+1}, \dots , Q_t\right)$.
This choice of notation is convenient for the subsequent proofs, since in this formulation the total amount of the inventory position coincides with the base-stock level.

Unlike the lost-sales setting where the impact of heavy-tailed demand is naturally truncated, the backlogging model allows unmet demand to accumulate over time. As a result, the state space of the system is unbounded, since large deviations in demand can propagate into arbitrarily high backlogs. To bound backlog costs with high probability, we need control over demand fluctuations. Therefore, \Cref{ass:main_assumption}.\ref{ass:sub-Gaussian_demand} states that $F$ is sub-Gaussian with variance proxy $\sigma^2$, that is, 
    $\mathbb{E}_{F}[e^{\lambda(D-\mathbb{E}_{F}[D])}]\leq e^{\frac{\lambda^2\sigma^2}{2}}$ for all $\lambda \in \mathbb{R}$.
We show that this condition guarantees exponential concentration of demand, which in turn allows us to derive high-probability bounds on costs. 

We start off with the following result, that states that in our MRP the amount ordered is always equal to the most recently observed demand.  We leverage this to establish that the inventory vector converges to its stationary state after at most $L$ time steps. Hence, starting from any state ${\mathbf{s}_1 \in \S^\tau}$ there is a simple representation for its average cost function.

\begin{lemma} \label{lemma:stationary_distribution_backlogging}
    Consider the stochastic process in \cref{def:MRP_backlogging} with a fixed starting state $\bfs_1 \in \S^\tau$.
    Then, for all $t \geq 2$ and demand realizations $D_{t-1}$, the state $\mathbf{s}_t \in \S^\tau$ and the order quantity is $Q_t = s_t(L) = D_{t-1}$.  Moreover, the stochastic process of states and costs is $(L+1)$-dependent, that is, states or costs separated by more than $L+1$ time steps are independent.
    For $t > L$, the states follow the stationary distribution
    \begin{equation} \label{eq:definition_stationary_s}
        \textstyle \bfs_t \stackrel{d}{=} \left(\tau - \sum_{i=1}^L D_{t-i}, D_{t-L}, \ldots, D_{t-1}\right),
    \end{equation}
    where $D_{t-L}, \ldots, D_{t-1} \sim F$. Furthermore, the average cost satisfies
    \begin{equation} \label{eq:stationary_distribution_cost_function_backlogging}
        \textstyle \mu(\tau \mid \bfs_1) \coloneqq \mathbb{E}_{F} \left[ \lim_{T \rightarrow \infty} \frac{1}{T} \sum_{t=1}^T C_{t}(\tau) \Bigm\vert D \sim F, \bfs_1 \right] = \E_F[C(\tau)],
    \end{equation}
    where ${C_t(\tau) = h \cdot (s_t(0) - Y_t) - b \cdot Y_t}$ with ${Y_t = \min\{s_t(0), D_t\}}$ and $\bfs_t$ is distributed as in \cref{eq:definition_stationary_s}.
\end{lemma}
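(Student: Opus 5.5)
The proof goes by induction on $t$ directly from the transition rule \eqref{eq:definition_s}, followed by a short unrolling argument.

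\textbf{Membership in $\S^\tau$ and the order quantity.} Fix $\bfs_1 \in \S^\tau$. For $t \geq 2$, the last coordinate is defined as $s_t(L) = \tau - \sum_{i=0}^{L-1} s_t(i)$, so $\sum_{i=0}^L s_t(i) = \tau$ holds automatically. Hence $\bfs_t \in \S^\tau$ for every $t$.

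Next, sum the first $L$ coordinates of $\bfs_t$:
\[
\textstyle \sum_{i=0}^{L-1} s_t(i) = s_{t-1}(0) - D_{t-1} + \sum_{i=1}^{L} s_{t-1}(i) = \tau - D_{t-1},
\]
where the second equality uses the induction hypothesis $\bfs_{t-1} \in \S^\tau$. It follows that $Q_t = s_t(L) = D_{t-1}$. This is the familiar fact that, under backlogging, a base-stock policy orders exactly the last period's demand. Backlogging allows $s_t(0) < 0$, so no positive-part truncation is ever triggered, and the order is nonnegative whenever $D \geq 0$. The case $L = 0$ is degenerate: there $s_t(0) = \tau$ for all $t$.

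\textbf{Unrolling the state.} Iterating the shift structure of \eqref{eq:definition_s}, for $t > L$ the coordinates $1, \dots, L$ of $\bfs_t$ are the orders placed in the last $L$ periods. Each of these equals a previous demand, so $s_t(i) = D_{t-L-1+i}$ for $i = 1, \dots, L$. The sum constraint then gives
\[
\textstyle s_t(0) = \tau - \sum_{i=1}^L D_{t-i}.
\]
None of this involves $\bfs_1$, which has been completely flushed out. (The index bookkeeping matches \eqref{eq:definition_stationary_s} up to the paper's labeling of which demand has arrived.)

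\textbf{Stationarity, dependence, and the average cost.} Because the $D$'s are i.i.d.\ from $F$, the law of $\bfs_t$ is the same for all $t > L$; this gives \eqref{eq:definition_stationary_s}. The cost $C_t(\tau)$ is a deterministic function of $(\bfs_t, D_t)$, hence of $D_{t-L}, \dots, D_t$ alone. Two costs or states whose time indices differ by more than $L+1$ therefore depend on disjoint blocks of i.i.d.\ demands and are independent, which is the $(L+1)$-dependence claim.

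For the average cost, the first $L$ terms contribute at most $o(T)$ after dividing by $T$, so they do not affect the limit. The remaining sequence $(C_t)_{t > L}$ is a stationary $m$-dependent sequence. Integrability follows because $|C_t| \leq (h+b)\bigl(|\tau| + \sum_i D_{t-i}\bigr)$ and $F$ has a finite mean (sub-Gaussian, in fact). We can therefore apply the strong law of large numbers. Concretely, split the indices into $L+2$ residue classes modulo $L+2$; within each class the terms are i.i.d., so the SLLN applies class by class. This gives
\[
\textstyle \frac{1}{T}\sum_{t=1}^T C_t(\tau) \to \E_F[C(\tau)] \quad \text{a.s.}
\]
The limit is a constant, so taking expectations yields \eqref{eq:stationary_distribution_cost_function_backlogging}. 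The limit is independent of $\bfs_1$.

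\textbf{Main obstacle.} The only delicate point is justifying the limit and the expectation, since the state space is unbounded. This is handled by the $m$-dependent SLLN together with the integrability bound above. The rest is bookkeeping with the indices.
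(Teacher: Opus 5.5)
Your proposal is correct and follows essentially the same route as the paper: induction to get $Q_t = s_t(L) = D_{t-1}$, then unrolling to show that for $t > L$ the state $\bfs_t$ depends only on $D_{t-L},\dots,D_{t-1}$, then dropping the first $L$ costs from the average. Your residue-class SLLN argument, together with the integrability bound, is actually a more careful justification of the final limit than the paper's, which calls the post-transient costs i.i.d.\ when they are only identically distributed and $(L+1)$-dependent.
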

\begin{proof}
    \noindent {\bf States and order quantities.}\quad We show that $\bfs_t \in \S^\tau$ and $Q_t = s_t(L) = D_{t-1}$ for $t \geq 2$ by induction. The base case $\bfs_1 \in \S^\tau$ is true by assumption. For the inductive step we have that
    \vspace{-7pt}
    \[
        \textstyle Q_t = s_t(L) = \tau - \sum_{i=0}^{L-1} s_t(i) = \tau - \sum_{i=0}^{L} s_{t-1}(i) + D_{t-1} = D_{t-1},
        \vspace{-5pt}
    \]
    where in the last step we used the induction hypothesis that ${\bfs_{t-1} \in \S^\tau}$. To see that ${\bfs_t \in \S^\tau}$ as well, note that
    \vspace{-5pt}
    \[
        \textstyle \sum_{i=0}^{L} s_t(i) = \sum_{i=0}^{L} s_{t-1}(i) - D_{t-1} + s_t(L) = \tau - D_{t-1} + D_{t-1} = \tau
        \vspace{-5pt}
    \] 
    since $\bfs_{t-1} \in \S^\tau$ and $s_t(L) = D_{t-1}$.
    
    \medskip
    \noindent {\bf Stationary distribution of the stochastic process and $(L+1)$-dependence.}\quad Next we characterize the stationary distribution of the stochastic process.  By the definition of the process in \cref{eq:definition_s}, the linearity of the pipeline shift, and the fact that $s_t(L) = D_{t-1}$ for all $t \geq 2$, we see that after $L$ time steps we have ${\bfs_{L+1} = \left(\tau - \sum_{i=1}^L D_{L+1-i}, D_{1}, \ldots, D_{L} \right)}$, where each of the $D \sim F$. This is clearly independent of the starting state $\bfs_1$. Moreover, for any ${t > L}$ we have ${\bfs_t \stackrel{d}{=} \left(\tau - \sum_{i=1}^L D_{t-i}, D_{t-L}, \ldots, D_{t-1}\right)}$.
    
    This shows that $\bfs_t$ follows the stationary distribution of the stochastic process. Moreover, by inspection one can observe that when the system is in steady state, the distribution of states and costs is time-invariant as determined entirely by the most recent $L+1$ demand realizations. Consequently, the transition kernel does not depend on $t$ and the MRP is $(L+1)$-dependent.
    
    \medskip
    \noindent {\bf Average cost.}\quad 
    Using the above definition of the stationary distribution of the stochastic process, we have
    \vspace{-5pt}
    \begin{align*}
        \mu(\tau \mid \bfs) &\coloneqq \textstyle \mathbb{E}_{F} \left[ \lim_{T \rightarrow \infty} \frac{1}{T} \sum_{t=1}^T C_{t}(\tau) \Big| D \sim F, \mathbf{s}_1 = \mathbf{s} \right] \\
        & = \textstyle \mathbb{E}_{F} \left[ \lim_{T \rightarrow \infty} \frac{1}{T} \left(\sum_{t=1}^{L} C_t(\tau) + \sum_{t=L+1}^T C_t(\tau) \right)\right]
        = \mathbb{E}_{F}[C_{L+1}(\tau)],
    \vspace{-5pt}
    \end{align*}
    where we use the fact that the limit of the first $L$ time steps goes to zero, and the remaining time steps are i.i.d.\ under stationarity and equal in distribution to $C_{L+1}(\tau)$.
\end{proof}

\Cref{lemma:stationary_distribution_backlogging} applies under the condition that the initial inventory position equals the base-stock level. While the condition may not hold for the implemented policy after a reduction in the base-stock level, it remains satisfied for the counterfactual inventory vectors, as each is defined with a fixed base-stock level.

Next, we show that the average cost function is Lipschitz continuous with respect to the base-stock level. Since \Cref{lemma:stationary_distribution_backlogging} states that the average cost is independent of the starting state as long as $\bfs_1 \in \S^\tau$, we abuse notation slightly and omit its dependence.

\LipschitzLemma*
 
\begin{proof}
By definition of the average cost and \cref{lemma:stationary_distribution_backlogging}, under a base-stock policy with parameter $\tau$, the inventory on-hand before demand is ${s_t(0) = \tau - \sum_{i=1}^L D_{t-i}}$ and the immediate cost is
\vspace{-5pt}
\[
    C_t(\tau) = h \cdot (s_t(0) - \min\{ s_t(0), D_t\}) - b \cdot \min\{s_t(0), D_t\}.
    \vspace{-5pt}
\]
Similarly, under base-stock level $\tau'$ we have $s_t'(0) = \tau'- \sum_{i=1}^L D_{t-i}$.
Using the identity $\min\{s_t(0), D_t\} = \frac{s_t(0) + D_t - \abs*{s_t(0)-D_t}}{2}$, the cost function can be written as
\vspace{-5pt}
\begin{align} \label{eq:cost_in_lipschitz_proof}
    C_t(\tau) &= h \cdot s_t(0) - (h + b) \cdot \frac{s_t(0) + D_t -\abs*{s_t(0)-D_t}}{2} \notag \\
    & = \frac{h+b}{2} \abs*{s_t(0)-D_t} + \frac{h-b}{2} s_t(0) - \frac{h+b}{2} D_t.
\end{align}

\noindent Using $\abs*{\abs*{s_t(0) - D_t} - \abs*{s_t'(0) - D_t}} \leq \abs*{s_t(0) - s_t'(0)}$ and $s_t(0) - s_t'(0) = \tau - \tau'$, it follows 
that
\begin{align*}
    \abs*{C_t(\tau) - C_t(\tau')} &= \frac{h + b}{2} \abs*{\left(\abs*{s_t(0) - D_t} - \abs*{s_t'(0) - D_t}\right)} + \frac{\abs*{h-b}}{2} \abs*{\left(s_t(0) - D_t \right) - \left(s_t'(0) - D_t\right)} \\
    & \leq \frac{h + b}{2} \abs*{s_t(0) - s_t'(0)} + \frac{\abs*{h-b}}{2} \abs*{s_t(0) - s_t'(0)} \\ 
    & = \left(\frac{h + b}{2} + \frac{\abs*{h - b}}{2}\right) \cdot \abs*{\tau - \tau'} \\
    & \leq \max\{h, b\} \cdot \abs*{\tau - \tau'}.
\end{align*}
Taking expectation over the demand $D_t$ yields the result.
%
\end{proof}

Next, we establish \cref{lemma:concentration} for the demand backlog setting.  We begin by leveraging the fact that demand is sub-Gaussian to show that the induced cost function is similarly sub-Gaussian.

\begin{lemma} \label{lemma:costs_sub-Gaussian}
    Given that the demand $F$ is $\sigma^2$-sub-Gaussian, for any ${\tau \in [0,U]}$ the centered costs ${C_t(\tau) - \mathbb{E}_F[C(\tau)]}$ are sub-Gaussian with variance proxy ${\sigma_C^2 \coloneq 2\sigma^2(Lh^2 + (h+b)^2(4L+5))}$.
\end{lemma}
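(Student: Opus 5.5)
Using \cref{lemma:stationary_distribution_backlogging} and the representation \eqref{eq:cost_in_lipschitz_proof}, I will write the stationary cost as an explicit function of the $L+1$ most recent demands. With $s_t(0)=\tau-\sum_{i=1}^L D_{t-i}$,
\[
C_t(\tau)=\frac{h+b}{2}\abs*{s_t(0)-D_t}+\frac{h-b}{2}s_t(0)-\frac{h+b}{2}D_t .
\]
Equivalently, $C_t(\tau)=h\,[s_t(0)-D_t]^+ + b\,[D_t-s_t(0)]^+ - bD_t$, which becomes $h(s_t(0)-D_t)+(h+b)[D_t-s_t(0)]^+$ after regrouping. I will therefore decompose the cost as $C_t(\tau)=A_t+B_t$.
\begin{itemize}
\item The first term is $A_t\coloneq h\,(\tau-\sum_{i=1}^{L}D_{t-i}) - hD_t$.
\item The second term is $B_t\coloneq (h+b)\,[\sum_{i=0}^{L}D_{t-i}-\tau]^+$.
\end{itemize}
Both are functions of the independent demands $D_{t-L},\dots,D_t$. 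The term $A_t$ is affine, while $B_t$ is a $(h+b)$-Lipschitz function of the sum $\sum_{i=0}^L D_{t-i}$.

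Next I will bound the sub-Gaussian proxy of each centered part. For the affine term, $A_t-\E A_t=-h\sum_{i=0}^{L}(D_{t-i}-\E D)$ is a sum of $L+1$ independent $\sigma^2$-sub-Gaussian variables. Its proxy is therefore $h^2(L+1)\sigma^2$, which is at most of the $Lh^2$-type term in the target. If needed, I will split off $hD_t$ and absorb it into the $(h+b)^2$ part instead.

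For the Lipschitz term, $\Sigma_t\coloneq\sum_{i=0}^L D_{t-i}$ is sub-Gaussian with proxy $(L+1)\sigma^2$. Any $c$-Lipschitz function $g$ of a sub-Gaussian variable is again sub-Gaussian, with proxy $O(c^2)$ times the original. I will get this via the standard symmetrization argument: take an independent copy $\Sigma'$, use $\abs{g(\Sigma)-g(\Sigma')}\le c\abs{\Sigma-\Sigma'}$, and apply Jensen to $\E e^{\lambda(g-\E g)}\le \E e^{\lambda(g(\Sigma)-g(\Sigma'))}$. The centered $\Sigma-\Sigma'$ is symmetric, and even moments dominate, so this costs a factor of about $4$. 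The result is a proxy of order $4(h+b)^2(L+1)\sigma^2$, which is where the $(h+b)^2(4L+5)$ shape should come from.

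Finally I will combine the two parts using $\E e^{\lambda(X+Y)}\le (\E e^{2\lambda X})^{1/2}(\E e^{2\lambda Y})^{1/2}$ (Cauchy--Schwarz). For dependent summands this gives proxy at most $2(\sigma_A^2+\sigma_B^2)$, which accounts for the leading factor $2$ in $\sigma_C^2=2\sigma^2(Lh^2+(h+b)^2(4L+5))$.

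The main obstacle is the constant bookkeeping in the Lipschitz step: the positive part is non-smooth, and the symmetrization inequality for a symmetric variable needs care to get the exact constant. If the exact constant does not match, I would instead bound $\abs{B_t-\E B_t}$ through $\abs{\Sigma_t-\E\Sigma_t}+\E\abs{\Sigma_t-\E\Sigma_t}$. Since only the order of the proxy matters downstream in \cref{lemma:concentration}, I would then adjust constants to whatever that derivation gives.
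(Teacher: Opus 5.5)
Your strategy matches the paper's: handle the affine part by independence, handle the $(h+b)$-Lipschitz positive part of $\sum_{i=0}^L D_{t-i}$ by symmetrization, and combine the two dependent pieces at a factor-$2$ cost. Your Cauchy--Schwarz step gives the same $2(\sigma_A^2+\sigma_B^2)$ the paper gets from $(\sigma_1+\sigma_2)^2\le 2(\sigma_1^2+\sigma_2^2)$. However, the decomposition you wrote down is wrong. Regrouping $h[s-D]^+ + b[D-s]^+ - bD$ via $[s-D]^+=(s-D)+[D-s]^+$ gives $h(s-D)+(h+b)[D-s]^+ - bD$, and you dropped the $-bD_t$. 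So $A_t+B_t=C_t(\tau)+bD_t$, and as written you prove sub-Gaussianity of the wrong variable. Your numbers happen to fit under the target, which hides the slip. The correct affine part is $h\tau-h\sum_{i=1}^L D_{t-i}-(h+b)D_t$, whose centered version has proxy $\sigma^2(Lh^2+(h+b)^2)$. Adding $4(h+b)^2(L+1)\sigma^2$ from the Lipschitz part and doubling gives exactly $2\sigma^2(Lh^2+(h+b)^2(4L+5))$. The extra $(h+b)^2$ in $4L+5$ therefore comes from the coefficient of $D_t$, not from the symmetrization, and no fallback constant adjustment is needed. The symmetrization step you flagged as the main obstacle is also not delicate. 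Both $Z=g(\Sigma)-g(\Sigma')$ and $W=\Sigma-\Sigma'$ are symmetric, and $|Z|\le (h+b)|W|$ pointwise, so $\E e^{\lambda Z}=\E\cosh(\lambda Z)\le\E\cosh(\lambda(h+b)W)=\E e^{\lambda(h+b)W}$. This gives proxy $2(h+b)^2(L+1)\sigma^2$, which is even smaller than the paper's factor $4$.

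Second, you only treat $t>L$, where \cref{lemma:stationary_distribution_backlogging} gives $s_t(0)=\tau-\sum_{i=1}^L D_{t-i}$. The paper's proof also covers $t\le L$, where $s_t(0)=\sum_{i=0}^{t-1}s_1(i)-\sum_{i=1}^{t-1}D_i$ depends on the starting state. This case is needed because \cref{lemma:concentration} applies the bound at every step $t'\ge 1$, including the first $L$ transient steps. The same argument works there. The affine part involves $t-1\le L-1$ demands with coefficient $h$ plus $D_t$ with coefficient $h+b$. The positive part is a Lipschitz function of $\sum_{i=1}^t D_i$, which has proxy $t\sigma^2\le L\sigma^2$. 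Both proxies are thus dominated by the stationary ones, provided you center at the transient mean $\E_F[C_t(\tau)]$, as the paper does, rather than at the stationary mean.
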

\begin{proof}
By \cref{lemma:stationary_distribution_backlogging}, the system attains the steady state in \cref{eq:definition_stationary_s} for ${t > L}$. We analyze the costs during the transient phase, in which the system has not yet reached stationarity, and in the steady state separately by decomposing the costs into two terms. 

\noindent {\bf First $L$ time steps.}\quad
Given initial state $\mathbf{s}_1$, the inventory on-hand or backlog at time ${1 \leq t \leq L}$ is given by
    $s_t(0) = \sum_{i=0}^{t-1} s_1(i) - \sum_{i=1}^{t-1} D_i$,
where for $t = 1$ we implicitly assume the second sum is zero, and the costs by
\vspace{-2pt}
\begin{align*}
    C_t(\tau) &= h \cdot (s_t(0) - \min\{s_t(0), D_t\}) - b \cdot \min\{s_t(0), D_t\} \\
    &= h s_t(0) - (h+b) \min\{s_t(0), D_t\} \\
    &= \textstyle h \left(\sum_{i=0}^{t-1} s_1(i) - \sum_{i=1}^{t-1} D_i\right) - (h+b) \min\big\{\sum_{i=0}^{t-1} s_1(i) - \sum_{i=1}^{t-1} D_i, D_t\big\}.
\end{align*}
Using the identity 
\vspace{-5pt}
\begin{align*}
    \textstyle \min\big\{\sum_{i=0}^{t-1} s_1(i) - \sum_{i=1}^{t-1} D_i, D_t\big\} = D_t - \left[D_t - \sum_{i=0}^{t-1} s_1(i) + \sum_{i=1}^{t-1} D_i\right]^+
    = D_t - \left[\sum_{i=1}^{t} D_i - \sum_{i=0}^{t-1} s_1(i)\right]^+
\end{align*}
in the previous expression for $C_t(\tau)$ gives
\vspace{-6pt}
\[
    \textstyle C_t(\tau) = h \sum_{i=0}^{t-1} s_1(i) - h\sum_{i=1}^{t-1} D_i - (h+b) D_t + (h+b) \left[\sum_{i=1}^{t} D_i - \sum_{i=0}^{t-1} s_1(i)\right]^+.
    \vspace{-5pt}
\]
We decompose the centered cost function as ${C_t(\tau) - \mathbb{E}_F[C_t(\tau)] = \bar{C}^1_t + \bar{C}^2_t}$ with
\vspace{-6pt}
\begin{align*}
    \bar{C}^1_t &\coloneq \textstyle -h\sum_{i=1}^{t-1} \left(D_{i} - \mathbb{E}_F[D_{i}]\right) - (h+b) \left(D_t - \mathbb{E}_F[D_t]\right), \\
    \bar{C}^2_t &\coloneq \textstyle (h+b)\left(\left[\sum_{i=1}^t D_{i} - \sum_{i=0}^{t-1} s_1(i) \right]^+ - \mathbb{E}_F \left[\left[\sum_{i=1}^t D_{i} - \sum_{i=0}^{t-1} s_1(i) \right]^+\right]\right).
\end{align*}
The terms $\bar{C}^1_t$ are centered linear combinations of independent sub-Gaussian random variables $(D_{i})_{i=1, \dots , t}$, each with variance proxy $\sigma^2$, and are therefore sub-Gaussian with variance proxy
    ${\bar{\sigma}_1^2 = \sigma^2 (\max\{L-1, 0\}h^2 +(h+b)^2)}$.
The non-linear terms $\bar{C}^2_t$ are scalar multiples of the centered positive part of ${\sum_{i=1}^t D_{i} - \sum_{i=0}^{t-1} s_1(i)}$.
By assumption, the demands are $\sigma^2$-sub-Gaussian, hence the sums $\sum_{i=1}^t D_{i}$ are sub-Gaussian with variance proxy $t\sigma^2$. Moreover, it is easy to see that the function ${\phi(x) \coloneq (h+b)[x - \sum_{i=0}^{t-1} s_1(i)]^+}$ is $(h+b)$-Lipschitz. By standard symmetrization arguments (see \citep[Section 4]{duchi2023lecture} for details), we have that $\bar{C}^2_t$ are sub-Gaussian with parameter at most 
    ${\bar{\sigma}_2^2 = 4(h+b)^2L\sigma^2}$.

\medskip
\noindent {\bf Time steps $t \geq L+1$.}\quad
By \cref{lemma:stationary_distribution_backlogging}, the inventory from time step ${t = L+1}$ can be expressed as ${s_t(0) = \tau - \sum_{i=1}^L D_{t-i}}$. 
Therefore, we can rewrite the stationary cost function as
\vspace{-7pt}
\begin{align*}
    C_t(\tau) &= h \cdot s_t(0) - (h+b) \cdot \min\{s_t(0), D_t\} \\
    &= \textstyle h \tau - h \sum_{i=1}^L D_{t-i} - (h+b) D_t + (h+b) [D_t - s_t(0)]^+ \\
    &= \textstyle h \tau - h \sum_{i=1}^L D_{t-i} - (h+b) D_t + (h+b) \left[\sum_{i=0}^L D_{t-i} - \tau \right]^+.
\end{align*}
Following the same steps as in the case above, we decompose the centered cost function as ${C_t(\tau) - \mathbb{E}_F[C(\tau)] = C^1 + C^2}$ with
\vspace{-7pt}
\begin{align*}
    C^1 &\coloneq \textstyle -h\sum_{i=1}^L \left(D_{t-i} - \mathbb{E}_F[D_{t-i}]\right) - (h+b) \left(D_t - \mathbb{E}_F[D_t]\right), \\
    C^2 &\coloneq \textstyle (h+b)\left(\left[\sum_{i=0}^L D_{t-i} - \tau \right]^+ - \mathbb{E}_F \left[\left[\sum_{i=0}^L D_{t-i} - \tau\right]^+\right]\right)
\end{align*}
and see that $C^1$ is sub-Gaussian with variance proxy $\sigma_1^2 = \sigma^2(Lh^2 + (h+b)^2)$.

Next, we control the non-linear term $C^2$. Let $D^{(L)} \coloneq \sum_{i=0}^L D_{t-i}$ and $\phi(x) \coloneq (h+b)[x-\tau]^+$. The function~$\phi$ is $(h+b)$-Lipschitz, and $D^{(L)}$ is sub-Gaussian with variance proxy $(L+1)\sigma^2$. By the same argument as above, we have that $C^2$ is sub-Gaussian with parameter
$\sigma_2^2 = 4(h+b)^2 (L+1)\sigma^2$.

Finally, we consider the total centered costs ${C_t(\tau) - \E_F[C_t(\tau)]}$. In both cases when ${t \leq L}$ and ${t > L}$ we decompose the cost into two terms ($(\bar{C}_t^1, \bar{C}_t^2)$ and $(C_1, C_2)$, respectively).  The two components share the same demand realizations in both cases and are thus dependent. However, using \citep[Theorem~2.7]{rivasplata2012subgaussian} their sums are ${(\bar{\sigma}_1 + \bar{\sigma}_2)^2}$ and ${(\sigma_1 + \sigma_2)^2}$-sub-Gaussian respectively. Noting that ${\bar{\sigma}_1^2 \leq \sigma_1^2}$ and ${\bar{\sigma}_2^2 \leq \sigma_2^2}$, we have that for all $t$ the centered costs ${C_t(\tau) - \E_F[C(\tau)]}$ are sub-Gaussian with parameter
\[
    (\sigma_1 + \sigma_2)^2 \leq 2(\sigma_1^2 + \sigma_2^2) = 2\sigma^2(Lh^2 + (h+b)^2(4L+5)) =: \sigma_C^2.
    \vspace{-0.9cm}
\]
\end{proof}

\ConcentrationLemma*

\begin{proof}
    First, note that the result for the lost-sales inventory model is shown directly in \citep[Lemma 4]{agrawal2022learning}. Therefore, we focus on proving the result for the backlog model.
    Let ${\tau \in [0,U]}$ be a fixed base-stock level and $[s,t-1]$ a stationary time interval of length ${N \coloneq t-s}$.     
    Without loss of generality, we write $s=1$ and $t-1=N$ for notational convenience. We aim to control the deviation of the empirical average cost $\hat{\mu}(\tau, s, t) \coloneq \frac{1}{N}\sum_{t'=1}^N C_{t'}(\tau)$ from its stationary mean ${\mu(\tau) = \mathbb{E}_F[C(\tau)]}$.

    We start off by noting the equivalence between the states in the inventory process defined in \cref{eq:inventory_balance} to the MRP in \cref{eq:definition_s}.  Indeed, since the starting state satisfies ${I_s + \sum_{i=1}^L Q_{s-i} \leq \tau}$, under base-stock policy $\tau$, we have that ${Q_s = \tau - (I_s + \sum_{i=1}^L Q_{s-i})}$ and that ${I_s + \sum_{i=1}^L Q_{s-i} + Q_s = \tau}$.  Thus, we note that the inventory process is equivalent to the MRP with a starting state ${\bfs_1 = (I_s + Q_{s-L}, Q_{s-L+1}, \ldots, Q_s) \in \S^\tau}$ and demand distribution $F_s$.
    By \cref{lemma:stationary_distribution_backlogging}, the system will reach the steady state given in \cref{eq:definition_stationary_s} in at most $L$ steps. We handle the first $L$ and the remaining $N-L$ time steps separately. 

    \medskip
    \noindent {\bf First $L$ time steps.}\quad 
    For the case ${L \geq 1}$, we start by deriving a high-probability bound for the cumulative costs in the first $L$ steps.
    By \Cref{lemma:costs_sub-Gaussian}, the centered costs are $\sigma^2_C$-sub-Gaussian with ${\sigma_C^2 = 2\sigma^2(Lh^2 + (h+b)^2(4L+5))}$. We have for any ${t' \geq 1}$ and ${\varepsilon \geq 0}$ that
    \[
        \Pr_{F}\left(\abs*{C_{t'}(\tau) - \mathbb{E}_F[C(\tau)]} \geq \varepsilon\right) \leq \textstyle 2 \exp \left( -\varepsilon^2/\sigma_C^2\right).
    \]
    Setting $\varepsilon = \sigma_C \sqrt{ \log(4L/\delta)}$ yields $\Pr_{F}\left(\abs*{C_{t'}(\tau) - \mathbb{E}_F[C(\tau)]} \geq \sigma_C \sqrt{\log(4L/\delta)}\right) \leq \textstyle \frac{\delta}{2L}$.
    With a union bound over the first $L+1$ steps, we get
        $\textstyle \Pr_{F}\left(\abs*{\sum_{t'=1}^{L+1} \left(C_{t'}(\tau) - \mathbb{E}_F[C(\tau)]\right)} \geq L\sigma_C \sqrt{\log(4L/\delta)}\right) \leq \frac{\delta}{2}$,
    or equivalently
    \begin{equation} \label{eq:bound_first_L_steps}
        \textstyle \Pr_{F}\left(\frac{1}{N} \abs*{\sum_{t'=1}^{L+1} \left(C_{t'}(\tau) - \mathbb{E}_F[C(\tau)]\right)} \geq \frac{L}{N} \sigma_C \sqrt{\log(4L/\delta)}\right) \leq \frac{\delta}{2}.
    \end{equation}

    \medskip
    \noindent {\bf Time steps $L+1$ to $N$.}\quad
    Next, we derive the concentration bound for the stationary time steps ${t' \in [L+1, N]}$ if $N>L$.
    Let the sum of centered costs be denoted as
        $\textstyle S_N \coloneq \sum_{t'=L+1}^N \left(C_{t'}(\tau) - \mathbb{E}_F[C(\tau)]\right)$.
    We decompose the sum into the sum of $L+1$ different components, where each sub-component is the sum of independent random variables. Indeed, for ${j \in \{0,1,\dots , L\}}$, we define residue classes of the time steps in $[L+1,N]$ by partitioning them into blocks with spacing $L+1$, that is,
        $G_j \coloneq \{t' \in [L+1,N]:t' \equiv j \pmod{L+1}\}$.
    Accordingly, we decompose the sum
        $\textstyle S_N = \sum_{j=0}^{L}S_{N_j}$ with $S_{N_j} \coloneq \sum_{t' \in G_j} \left(C_{t'}(\tau) - \mathbb{E}_F[C(\tau)]\right)$.
    By \cref{lemma:stationary_distribution_backlogging}, the costs incurred from time step ${t'=L}$ are ${(L+1)}$-dependent and thus for each ${j \in \{0,1, \dots , L\}}$, the partial sum $S_{N_j}$ is a sum of at most $N_j \coloneq \abs*{G_j} \leq \lceil (N-L)/(L+1) \rceil$ mutually independent sub-Gaussian variables.
    By \Cref{lemma:costs_sub-Gaussian}, the centered costs $C_{t'}(\tau) - \mathbb{E}_F[C(\tau)]$ are $\sigma_C^2$-sub-Gaussian, that is, 
    \[
        \textstyle \mathbb{E}_F\left[\exp \left(\varepsilon \left(C_{t'}(\tau) - \mathbb{E}_F[C(\tau)]\right)\right)\right] \leq \exp \left(\varepsilon^2 \sigma_C^2 /2\right) \quad \forall \varepsilon \in \mathbb{R}.
    \]
    It follows from the independence of the summands of the $S_{N_j}$ that the latter are sub-Gaussian. It holds that
    \begin{equation} \label{eq:sub-Gaussion_block_concentration}
        \textstyle \mathbb{E}_F\left[\exp \left(\varepsilon S_{N_j}\right)\right]
        \leq \exp \left(\varepsilon^2 \sigma_C^2 N_j /2\right) \quad \forall \varepsilon \in \mathbb{R}.
    \end{equation}
    
    Next, we bound the moment generating function of the total sum. We apply Hölder's inequality for expectations with equal conjugate exponents $p=L+1$ to the non-negative random variables $\exp (\lambda S_{N_j})$. This gives the following bound for the total sum $S_N$,
    \begin{align*}
        \textstyle \mathbb{E}_F \left[\exp \left( \lambda S_{N}\right)\right] = \mathbb{E}_F \left[\exp \left( \lambda \sum_{j=0}^L S_{N_j}\right)\right] 
        = \mathbb{E}_F \left[ \prod_{j=0}^L \exp \left( \lambda S_{N_j}\right)\right] 
        \leq \prod_{j=0}^L\left( \mathbb{E}_F \left[ \exp \left( p\lambda S_{N_j}\right)\right]\right)^{1/p} \quad \forall \lambda \in \mathbb{R}.
    \end{align*}
    Using \cref{eq:sub-Gaussion_block_concentration} with ${\varepsilon = p\lambda}$, and ${p=L+1}$ as well as ${\sum_{j=0}^L N_j = N-L}$, we obtain
    \begin{align*}
        \textstyle \prod_{j=0}^L\left( \mathbb{E}_F \left[ \exp \left( p\lambda S_{N_j}\right)\right]\right)^{1/p} \leq \textstyle \prod_{j=0}^L\left( \exp \Big( \frac{p^2 \lambda^2 \sigma_C^2 N_j}{2}\Big)\right)^{1/p}
        = \textstyle \exp \Big( \frac{p \lambda^2 \sigma_C^2 \sum_{j=0}^L N_j}{2} \Big)
        = \textstyle \exp \Big( \frac{(L+1) \lambda^2 \sigma_C^2 (N-L)}{2} \Big).
    \end{align*}
    With the standard Chernoff bound argument, we have for any $u>0$ and $\lambda>0$ that
    \[
        \Pr_{F}(S_N \geq u) \leq \exp (-\lambda u) \cdot \mathbb{E}_F [\exp (\lambda S_N)] \leq \textstyle \exp \Big(-\lambda u + \frac{(L+1) \lambda^2 \sigma_C^2 (N-L)}{2}\Big).
    \]
    Setting ${\lambda = u/(L+1)\sigma_C^2 (N-L)}$, this gives $\Pr_{F}(S_N \geq u) \leq \exp\Big( -\frac{u^2}{2(L+1) \sigma_C^2 (N-L)} \Big)$.
    By symmetry and with ${\varepsilon = \sigma_C \sqrt{\frac{2(L+1)(N-L)\log(4/\delta)}{N^2}}}$, it follows the bound on the absolute difference between the empirical and the expected cost that
    \begin{equation}
        \textstyle \Pr_{F}\left( \frac{1}{N} \abs*{\sum_{t'=L+1}^N \left( C_{t'}(\tau) - \mathbb{E}_F[C(\tau)] \right)} \geq \varepsilon \right)
        = \Pr_{F}\left(\frac{1}{N}\abs*{S_N} \geq \varepsilon \right) \leq \frac{\delta}{2}.
    \end{equation}

    \medskip
    \noindent {\bf Combining both sets of time steps.}\quad
    We combine the above bound with that for the first $L$ time steps in \cref{eq:bound_first_L_steps}. By a union bound, we obtain with probability at least $1 - \delta$ that
    \[
        \textstyle \frac{1}{N} \abs*{\sum_{t'=1}^N \left( C_{t'}(\tau) - \mathbb{E}_F[C(\tau)]\right)} \leq \textstyle \sigma_C \left(\frac{L}{N} \sqrt{\log(4L/\delta)} + \sqrt{\frac{2(L+1)\log(4/\delta)}{N}}\right).
    \]
    Finally, under the stated assumption ${L \leq N}$ it holds that $\frac{L}{N} \leq \sqrt{\frac{L+1}{N}}$, and the above bound implies that $\abs{\hat{\mu}(\tau, s,t) - \mu_s(\tau)} \leq b_{s,t}$, where $b_{s,t} = \Hbl \sqrt{\frac{2\log(4(L+1)/\delta)}{N}}$ with $\Hbl = 2\sqrt{L+1} \sigma_C =  2\sqrt{2} \sigma \sqrt{(L+1)(Lh^2 + (h+b)^2(4L+5))}$.
\end{proof}

\subsection{\texorpdfstring{Proof of \Cref{lemma:feedback_structure} (Counterfactual Policy Evaluation)}{Proof of Counterfactual Policy Evaluation}} \label[appendix]{app:proof_counterfactual_policy_evaluation}

We establish the left-sided feedback in the lost-sales regime under a time-varying base-stock level as stated in \cref{lemma:feedback_structure}.
The lemma claims that we can evaluate the cost estimates $\hat{\mu}(\tau, s, t)$ of any base-stock policy with level $\tau$ when a sequence of base-stock levels $(\tau_{t'})_{t' \in [s,t]}$ with $\tau \leq \tau_{t'}$ for all steps $t'$ in a contiguous interval $[s,t]$ is implemented. If lead time is positive, this holds only for non-decreasing sequences $(\tau_{t'})_{t' \in [s,t]}$.

\LSFLemma*
\begin{proof}
    As mentioned in the discussion of \cref{lemma:feedback_structure} in \cref{sec:model}, the result for the backlogging setting follows directly from the fact that demand is always observed and the cost function is fully specified given the demand realizations.
    To show the result for the lost-sales setting, we first prove that, under the stated conditions on the starting inventory vectors, the inventory level after replenishment under $\tau_{t'}$ always exceeds that of the policy parameterized by the lower level $\tau$.
    More precisely, it will be proved by induction over the time step $t'$ that for all ${t' \in [s, t]}$ it holds that $I_{t'} + Q_{t'-L} \geq I^{\tau}_{t'} + Q^{\tau}_{t'-L}$.
    After proving that this condition holds, we show that it is sufficient for full counterfactual inference.
    We begin by considering the setting with lead time $L=0$. \\
    \noindent \textbf{Base case} $t'=s$: \
    By assumption we have $\max\{I_s,\tau\} \geq I^{\tau}_s$ and $\tau_s \geq \tau$. Since
    \[
         I_{s} + Q_{s} = I_s + [\tau_s - I_s]^+ = \max\{I_s, \tau_s\} \quad \text{ and } \quad I^{\tau}_{s} + Q^{\tau}_{s} = I^{\tau}_s + [\tau - I^{\tau}_s]^+ = \max\{I^{\tau}_s, \tau\}
    \]
    and as the maximum is monotone non-decreasing, we get $I_{s} + Q_{s} =\max\{I_s, \tau_s\} \geq \max\{I^{\tau}_s, \tau\} = I^{\tau}_{s} + Q^{\tau}_{s}$.

    \noindent \textbf{Inductive step}: \
    Suppose the induction hypothesis $I_{t'} + Q_{t'} \geq I^{\tau}_{t'} + Q^{\tau}_{t'}$ holds. This implies for the inventory level in the next time step $I_{t'+1} = [I_{t'} + Q_{t'} - D_{t'}]^+ \geq [I^{\tau}_{t'} + Q^{\tau}_{t'} - D_{t'}]^+ = I^{\tau}_{t'+1}$.
    By the assumption $\tau_{t'+1} \geq \tau$, it follows that
        $I_{t'+1} + [\tau_{t'+1} - I_{t'+1}]^+ = \max\{I_{t'+1}, \tau_{t'+1}\} \geq \max\{I^{\tau}_{t'+1}, \tau\} = I^{\tau}_{t'+1} + [\tau - I^{\tau}_{t'+1}]^+$.
    Thus, the stated result holds for all $t' \in [s,t]$.
    
    \medskip
    Next, we show the result for the case $L>0$.
    We show by induction over $t'$ that, if the starting inventory vectors satisfy the conditions specified in \Cref{lemma:feedback_structure}.\ref{item:feedback_structure_LS_L_positive}, then they do so for all ${t' \in [s,t]}$ in which $\tau_{t'}$ does not decrease. The stated result follows immediately from inequalities ${I_{t'} \geq I^{\tau}_{t'}}$ and ${Q_{t'-L} \geq Q^{\tau}_{t'-L}}$.
    
    \noindent \textbf{Base case}: \
    For $t'=s$, this is trivially satisfied by assumption. 
    
    \noindent \textbf{Inductive step}: \
    Suppose the induction hypothesis 
    \vspace{-5pt}
    \[
        I_{t'} \geq I^{\tau}_{t'} \qquad
        Q_{t'-i} \geq Q^{\tau}_{t'-i} \ \forall i=1, \dots , L, \qquad
        \tau_{t'} - \left(I_{t'} + {\textstyle \sum_{i=1}^{L} Q_{t'-i}}\right) \geq \tau - \left(I^{\tau}_{t'} + {\textstyle \sum_{i=1}^{L} Q^{\tau}_{t'-i}}\right) \geq 0
    \]
    holds. Recall that the inventory balance equations are given by
    \vspace{-5pt}
    \begin{align*}
        \left(I_{t'+1}, Q_{t'-L+1}, \dots , Q_{t'}\right) &= \Big([I_{t'} + Q_{t'-L} - D_{t'}]^+, Q_{t'-L+1}, \dots , Q_{t'-1}, [\tau_{t'} - (I_{t'} + {\textstyle \sum_{i=1}^L Q_{t'-i}})]^+\Big), \\
        \left(I_{t'+1}^{\tau}, Q_{t'-L+1}^{\tau}, \dots , Q_{t'}^{\tau}\right) &= \Big([I_{t'}^{\tau} + Q_{t'-L}^{\tau} - D_{t'}]^+, Q_{t'-L+1}^{\tau}, \dots , Q^{\tau}_{t'-1}, [\tau - (I^{\tau}_{t'} + {\textstyle \sum_{i=1}^L Q^{\tau}_{t'-i}})]^+\Big).
    \end{align*} 
    Under the induction hypothesis, we have for the first and the last elements
    \vspace{-5pt}
    \begin{align*}
        I_{t'+1} = [I_{t'} + Q_{t'-L} - D_{t'}]^+ &\geq [I_{t'}^{\tau} + Q_{t'-L}^{\tau} - D_{t'}]^+ = I_{t'+1}^{\tau}, \\
        \textstyle Q_{t'} = [\tau_{t'} - (I_{t'} + \sum_{i=1}^L Q_{t'-i})]^+ &\geq \textstyle [\tau - (I^{\tau}_{t'} + \sum_{i=1}^L Q^{\tau}_{t'-i})]^+ = Q_{t'}^{\tau}
    \end{align*}
    and for $i=1, \dots , L-1$ that $Q_{t'-i} \geq Q^{\tau}_{t'-i}$.
    What remains to be shown is
    \[
        \textstyle \tau_{t'+1} - \left(I_{t'+1} + \sum_{i=0}^{L-1} Q_{t'-i}\right) \geq \tau - \left(I^{\tau}_{t'+1} + \sum_{i=0}^{L-1} Q^{\tau}_{t'-i}\right) \geq 0.
    \]
    The second inequality is evident from the induction hypothesis and the inventory dynamics under base-stock level $\tau$.
    To show the first inequality, recall that the sequence of base-stock levels implemented is assumed to be non-decreasing. Therefore, it holds that
    \vspace{-5pt}
    \begin{align*}
        \textstyle \tau_{t'+1} - \left(I_{t'+1} + \sum_{i=0}^{L-1} Q_{t'-i}\right) &\geq \textstyle \tau_{t'} - \left(I_{t'+1} + \sum_{i=0}^{L-1} Q_{t'-i}\right) \\
        & = \textstyle \tau_{t'} - \left([I_{t'} + Q_{t'-L} - D_{t'}]^+ + \sum_{i=1}^{L-1} Q_{t'-i} + [\tau_{t'} - (I_{t'} + \sum_{i=1}^L Q_{t'-i})]^+\right) \\
        & = \textstyle \tau_{t'} - \left([I_{t'} + Q_{t'-L} - D_{t'}]^+ + \sum_{i=1}^{L-1} Q_{t'-i} + \tau_{t'} - (I_{t'} + \sum_{i=1}^L Q_{t'-i})\right) \\
        & = I_{t'} + Q_{t'-L} - [I_{t'} + Q_{t'-L} - D_{t'}]^+,
    \end{align*}
    where the second to last equality follows from the induction hypothesis. Accordingly, under the policy with fixed level $\tau$, it holds the equality
        $\textstyle \tau - \left(I^{\tau}_{t'+1} + \sum_{i=0}^{L-1} Q^{\tau}_{t'-i}\right) = I^{\tau}_{t'} + Q^{\tau}_{t'-L} - \left[I^{\tau}_{t'} + Q^{\tau}_{t'-L} - D_{t'}\right]^+$.
    Therefore, the inequality to be shown reduces to 
    \vspace{-5pt}
    \[
        I_{t'} + Q_{t'-L} - [I_{t'} + Q_{t'-L} - D_{t'}]^+ \geq I^{\tau}_{t'} + Q^{\tau}_{t'-L} - \left[I^{\tau}_{t'} + Q^{\tau}_{t'-L} - D_{t'}\right]^+,
    \]
    which is equivalent to $\min\{I_{t'} + Q_{t'-L}, D_{t'}\} \geq \min\{I_{t'}^{\tau} + Q^{\tau}_{t'-L}, D_{t'}\}$ 
    and satisfied by the induction hypothesis.

    \noindent {\bf Sufficiency of Larger On-Hand Inventory for Counterfactual Inference.}\quad 
    It follows from the above that the on-hand inventory after replenishment under policy $\tau_{t'}$ is always at least as large as that under policy $\tau$. Specifically, if $I_{t'} + Q_{t'-L} \geq I^{\tau}_{t'} + Q^{\tau}_{t'-L}$, then the counterfactual sales under $\tau$, 
    \vspace{-5pt}
    \[
        Y^{\tau}_{t'} \coloneq \min \{ I^{\tau}_{t'} + Q^{\tau}_{t'-L}, D_{t'}\} = \min\{ I^{\tau}_{t'} + Q^{\tau}_{t'-L}, Y_{t'}\},
    \]
    and consequently the leftover inventory 
    \vspace{-5pt}
    \[
        I^{\tau}_{t'+1} = I^{\tau}_{t'} + Q^{\tau}_{t'-L} - Y^{\tau}_{t'} = [I^{\tau}_{t'} + Q^{\tau}_{t'-L} - Y_{t'}]^+
    \]
    can be evaluated directly using the sales $Y_{t'}$ observed by the implementation of $\tau_{t'}$.
    As a result, for known cost parameters $h$ and $b$, the cost function $C_{t'}(\tau) = h \cdot I^{\tau}_{t'+1} - b \cdot Y^{\tau}_{t'}$ for any ${\tau \leq \tau_{t'}}$ is measurable with respect to the data generated under $\tau_{t'}$. 
\end{proof}

\section{\texorpdfstring{Proof of \cref{thm:lower_bound}}{Proof of the Lower Bound}} \label[appendix]{app:lower_bound}

Before establishing the lower bound for the non-stationary problem, we first derive a lower bound under stationary conditions. The proof closely follows that of \citet[Proposition 1]{zhang2020closing}, who establish the lower bound for the stationary demand lost-sales model. For completeness, we slightly generalize the result to apply for both backlogging and lost-sales models.

\begin{lemma}[Static Regret Lower Bound Under Stationary Demand] \label{lemma:lower_bound_stationary}
Consider the inventory control problem with stationary demand and zero lead time, under either demand backlogging or lost-sales.  For any learning algorithm and time horizon $T \geq 5$, the expected regret is lower bounded by $\Exp{R(T)} = \Omega(\sqrt{T})$.
\end{lemma}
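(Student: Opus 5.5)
We reduce the problem to a two-point hypothesis test, following the construction behind \citet[Proposition 1]{zhang2020closing}. With $L=0$, the backlogging and lost-sales systems coincide under a base-stock policy as long as inventory never exceeds the chosen level. Under a base-stock policy $\tau$ started from zero inventory, the on-hand level after ordering equals $\tau$ in both models, apart from transient steps where leftover inventory exceeds $\tau$. We pick demand distributions supported on a bounded set, so this transient contributes at most $\mathcal{O}(1)$ per change of level. As a result, $\mu(\tau)=\E_F[h(\tau-D)^+ + b(D-\tau)^+] - b\E_F[D]$ is the newsvendor cost in both models. The regret against $\min_\tau\mu(\tau)$ is therefore bounded below by a newsvendor regret. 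The feedback (sales $\min\{\tau_t,D_t\}$ in lost-sales, full demand in backlogging) is at most full demand observation in both cases, so a lower bound for an algorithm that observes $D_t$ exactly covers both models.

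Next we construct two demand distributions $F_+$ and $F_-$ that are $\epsilon$-perturbations of a common base distribution. For example, take demand supported on $\{0,1\}$, or a two-point law with an atom at $0$ so that $F(0)>0$, with $\Pr(D=1)=\tfrac{b}{h+b}\pm\epsilon$. With this choice the critical fractile places the two optimal base-stock levels on opposite sides of a threshold. For any $\tau$ we then establish the separation inequality
\[
(\mu_+(\tau)-\mu_+^*)+(\mu_-(\tau)-\mu_-^*)\ \ge\ c\,\epsilon,
\]
which says that no single action is near-optimal for both instances. Meanwhile the per-sample KL divergence satisfies $\mathrm{KL}(F_+\|F_-)\le c'\epsilon^2$. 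By Le Cam's two-point method, or Bretagnolle--Huber applied to the $T$-fold product laws of the observations, one of the two instances forces expected regret at least $c\,\epsilon\,T\exp(-c'T\epsilon^2)/4$. Choosing $\epsilon=T^{-1/2}$ gives $\Omega(\sqrt T)$. The condition $T\ge5$ is only needed so that $\epsilon$ lies in the admissible range, keeping probabilities in $(0,1)$ and the optimal levels in $[0,U]$.

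The main obstacle is the separation inequality for a general (randomized, history-dependent) $\tau_t$. We handle it by splitting on whether $\tau_t$ lies below or above a midpoint threshold $\bar\tau$. On one side the instance-$+$ gap is at least $c\epsilon$, and on the other side the instance-$-$ gap is. This follows from the explicit piecewise-linear form of $\mu_\pm$ on the two-point support: $\mu_\pm'(\tau)=h-(h+b)\Pr_\pm(D>\tau)$, so the derivative has magnitude of order $\epsilon$ with opposite signs. The regret is then at least $c\epsilon\,(\E_+[N_{\le}]+\E_-[N_{>}])$, where $N_{\le}$ and $N_{>}$ count rounds with $\tau_t\le\bar\tau$ and $\tau_t>\bar\tau$. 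The standard change-of-measure bound $\E_+[N_\le]+\E_-[N_>]\ge \tfrac T2 e^{-T\,\mathrm{KL}}$ completes the argument. We must also check that replacing the per-step cost by the stationary average cost $\mu_t(\tau_t)$ from \cref{def:regret} introduces no extra terms. This holds because the regret is defined directly through $\mu_t$, so the transient inventory effects never appear.
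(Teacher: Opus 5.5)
Your argument is essentially the paper's proof. Both use a two-point test between demand laws that differ by $\Theta(T^{-1/2})$ and whose optimal base-stock levels lie on opposite sides of a threshold. Both then show an $\Omega(T^{-1/2})$ per-step gap whenever $\tau_t$ falls on the wrong side, and combine a KL bound linear in $t/T$ with a Tsybakov/Bretagnolle--Huber inequality. The differences are minor. You use a Bernoulli instance (which also satisfies $F(0)>0$) instead of the paper's piecewise-uniform $F^a,F^b$ from \citet{zhang2020closing}. You also state explicitly the reduction from censored to full-demand feedback, which the paper uses only implicitly through $\mathrm{KL}$ of $(D_1,\dots,D_t)$.

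One correction is needed. By your own formula $\mu'_\pm(\tau)=h-(h+b)\Pr_\pm(D>\tau)$, the perturbation must be centred at $\Pr(D=1)=\frac{h}{h+b}$, not $\frac{b}{h+b}$. As written, when $h\neq b$ the sign of $\mu'_\pm$ on $(0,1)$ is the same under both instances. The optima then coincide and the separation inequality fails. Fix this by centring at $\frac{h}{h+b}$, or simply take $h=b$ as the paper does.
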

\begin{proof}
    Consider the following problem instance.  Demand is either fully backlogged or lost, the cost parameters satisfy $h = b = 1$, and the time horizon is $T \geq 5$.  In each period, demand $D$ is drawn i.i.d.\ from one of the following two continuous distributions
    \[
    \resizebox{\linewidth}{!}{$
    \begin{aligned}
        F^a(x) &=
        \begin{cases}
        \left(\frac{1}{8} + \frac{1}{4\sqrt{T}}\right)x, & 0 \le x < 4 \\
        \frac{1}{2} + \frac{1}{\sqrt{T}}, & 4 \le x < 400 \\
        \left(\frac{1}{8} - \frac{1}{4\sqrt{T}}\right)(x-400) + \frac{1}{2} + \frac{1}{\sqrt{T}}, & 400 \le x < 404 \\
        1, & x \ge 404,
        \end{cases}
        \quad
        F^b(x) =
        \begin{cases}
        \left(\frac{1}{8} - \frac{1}{4\sqrt{T}}\right)x, & 0 \le x < 4 \\
        \frac{1}{2} - \frac{1}{\sqrt{T}}, & 4 \le x < 400 \\
        \left(\frac{1}{8} + \frac{1}{4\sqrt{T}}\right)(x-400) + \frac{1}{2} - \frac{1}{\sqrt{T}}, & 400 \le x < 404 \\
        1, & x \ge 404.
        \end{cases}
    \end{aligned}
    $}
    \]
    The distributions $F^a$ and $F^b$ are constructed such that their optimal base-stock levels are far apart.  Consequently, any learning algorithm must correctly identify which of the two distributions governs demand in order to select a near-optimal policy.  However, the two distributions differ only by perturbations of order ${1 / \sqrt{T}}$. As a result, distinguishing between $F^a$ and $F^b$ requires on the order of $T$ samples.  Moreover, when the algorithm selects a base-stock level corresponding to the wrong region, it incurs an expected per-period regret of order ${1 / \sqrt{T}}$.  Since the algorithm must make such mistakes with constant probability at each time step, the cumulative regret is lower bounded by $\Omega(\sqrt{T})$.
       
    We first show that under $F^a$, whenever the base-stock level selected $\tau_t$ is larger than $200$, the instantaneous regret is bounded from below as $\mu(\tau_t) - \mu(\tau^*_a) \geq \frac{h+b}{\sqrt{T}} \left(\tau_t - \tau^*_a\right)$, where $\tau^*_a = \argmin_{\tau \in [0,U]} \mu(\tau)$ denotes the optimal base-stock level when $D \sim F^a$. 
    Similarly, under $F^b$ when ${\tau_t \leq 200}$, the cost is ${\mu(\tau_t) - \mu(\tau^*_b) \geq \frac{h+b}{\sqrt{T}} \left(\tau_t - \tau^*_b\right)}$ where ${\tau^*_b = \argmin_{\tau \in [0,U]} \mu(\tau)}$ is the optimal base-stock level when $D \sim F^b$.  

    We start by assuming that the demand follows distribution $F^a$. With zero lead time and i.i.d.\ demand, the expected asymptotic pseudo cost in {\em both} the lost-sales and demand backlog model can be expressed as
    \vspace{-4pt}
    \begin{align*}
        \mu(\tau) &\coloneq \textstyle \E_{F^a} \left[\lim_{T \rightarrow \infty} \frac{1}{T} \sum_{t=1}^T C_t(\tau) \right]
                  = \E_{F^a} \left[C_t(\tau) \right]  \\
                  &= h \cdot \E_{F^a}[[\tau - D]^+] + b \cdot \E_{F^a}[[D - \tau]^+] - b \cdot \E_{F^a}[D].
        \vspace{-5pt}
    \end{align*}
    The derivative with respect to the base-stock level is given by 
    \vspace{-5pt}
    \[
        \mu'(\tau) = h \cdot \Pr_{F^a}(D \leq \tau) - b \cdot \Pr_{F^a}(D > \tau) = (h + b)F^a (\tau) - b.
        \vspace{-5pt}
    \]
    Note that the optimal base-stock level in both the lost-sales and the backlogging model satisfies ${\mu'(\tau^*_a)=(h+b)F^a(\tau^*_a) - b=0}$.  Hence when $b = h = 1$ this gives ${F^a(\tau) = \frac{b}{h+b} = \frac{1}{2}}$. 
    Since $F^a(x)$ increases on $[2,4]$ linearly from $\frac{1}{4} + \frac{1}{2 \sqrt{T}} < \frac{1}{2}$ to $\frac{1}{2} + \frac{1}{\sqrt{T}}$, the optimal base-stock level under $F^a$ satisfies $\tau^*_a \in (2,4)$. 
    
    Using the definition of $F^a$, for $\tau \in [4,400)$ we have that 
        ${\textstyle \mu'(\tau) = (h+b)\left(\frac{1}{2} + \frac{1}{\sqrt{T}}\right) - b > 0}$.
    With the convexity of the cost function (\cref{lemma:convexity}), we obtain for any $\tau_t \geq 4$ that
    \vspace{-4pt}
    \begin{align*}
        \mu(\tau_t) - \mu(\tau^*_a) &= \textstyle \int_{\tau^*_a}^{\tau_t} \mu'(u) du = \int_{\tau^*_a}^{4} \mu'(u) du + \int_{4}^{\tau_t} \mu'(u) du \\
        &\geq \textstyle \int_{4}^{\tau_t} \left( (h+b)\left(\frac{1}{2} + \frac{1}{\sqrt{T}}\right) - b\right) du = \left( (h+b)\left(\frac{1}{2} + \frac{1}{\sqrt{T}}\right) - b\right) \left( \tau_t - 4\right) \\
        &= \textstyle \frac{h+b}{\sqrt{T}} \left( \tau_t - 4\right).
    \end{align*}
    Consequently, if $\tau_t > 200$, then inventory is overstocked by more than $196$ units, and the cost per time step is at least $196 (h+b)/\sqrt{T}$.
    Analogously, under the cumulative demand distribution $F^b$, the equation $F^b(\tau^*_b) = \frac{1}{2}$ has the solution $\tau^*_b \in (400, 402)$. For $\tau \in [4,400)$, the derivative of the cost function is ${\mu'(\tau) = (h+b)\left(\frac{1}{2} - \frac{1}{\sqrt{T}}\right) - b < 0}$ and 
    \vspace{-4pt}
    \begin{align*}
        \mu(\tau_t) - \mu(\tau^*_b) &=\textstyle  -\int_{\tau_t}^{\tau^*_b} \mu'(u) du = -\int_{\tau_t}^{400} \mu'(u) du - \int_{400}^{\tau^*_b} \mu'(u) du \\
        &\geq \textstyle -\int_{\tau_t}^{400} \left( (h+b)\left(\frac{1}{2} - \frac{1}{\sqrt{T}}\right) - b\right) du = -\left( (h+b)\left(\frac{1}{2} - \frac{1}{\sqrt{T}}\right) - b\right) \left( 400 - \tau_t\right) \\
        &= \textstyle \frac{h+b}{\sqrt{T}} \left( 400 - \tau_t\right).
    \end{align*}
    Therefore, whenever $\tau_t \leq 200$, the inventory is understocked by more than $200$ units, and the instantaneous cost is at least $200 (h+b)/\sqrt{T}$. 

    Taken together, the (static) expected regret is lower bounded as
    \vspace{-5pt}
    \begin{align} \label{eq:regret_lower_bound_Pa_Pb}
        \Exp{R(T)} &\geq \textstyle 196 \frac{h+b}{\sqrt{T}} \max \big\{ \sum_{t=1}^T \Pr_{F^a}(\tau_t > 200), \sum_{t=1}^T \Pr_{F^b}(\tau_t \leq 200) \big\} \notag \\
        &\geq \textstyle 196 \frac{h+b}{2\sqrt{T}} \sum_{t=1}^T \max \big\{ \Pr_{F^a}(\tau_t > 200), \Pr_{F^b}(\tau_t \leq 200) \big\}.
    \end{align} 
    Using Tsybakov's Theorem \citep[Theorem~2.2]{tsybakov2009nonparametric}, it holds that
    \[
        \max \Big\{ \Pr_{F^a}(\tau_t > 200), \Pr_{F^b}(\tau_t \leq 200) \Big\} \geq \frac{1}{4} \exp\left(-\text{KL}_{t-1}(\Pr_{F^a},\Pr_{F^b}) \right),
    \]
    where $\text{KL}_t(\Pr_{F^a},\Pr_{F^b})=\E_{F^a}\left[ \log \left( \frac{\Pr_{F^a}(D_1, \dots , D_t)}{\Pr_{F^b}(D_1, \dots , D_t)}\right)\right]$ is the Kullback-Leibler divergence, and we have that $\text{KL}_t(\Pr_{F^a},\Pr_{F^b}) \leq 7t/T$ \citep{zhang2020closing}.
    It follows that
    \[
        \max \Big\{ \Pr_{F^a}(\tau_t > 200), \Pr_{F^b}(\tau_t \leq 200) \Big\} \geq \frac{1}{4} \exp\left(-7(t-1)/T\right) \geq \frac{1}{4} \exp(-7),
    \]
    and finally with \cref{eq:regret_lower_bound_Pa_Pb} that
        $\Exp{R(T)} \geq 196 \frac{h+b}{2\sqrt{T}} \sum_{t=1}^{T}\frac{1}{4} \exp(-7) = \frac{49}{2} (h+b) \exp(-7) \sqrt{T} = \Omega (\sqrt{T})$.
\end{proof}

\LowerBound* 
\begin{proof}
    The proof proceeds by constructing an instance such that any algorithm must incur ${\mathcal{O}(\sqrt{ST})}$ regret.
    We assume, without loss of generality, that the algorithm is given exact knowledge of change points. Moreover, we allow any inventory held at a change point to be cleared instantaneously at zero cost.  These assumptions strictly simplify the problem, and therefore any lower bound established applies to the original problem, in which change points are unknown and inventory can only be depleted through stochastic demand.

    Let the change points be placed uniformly over the time horizon~$T$, resulting in $S$ stationary segments each of length approximately~${T/S}$.  Within each segment, demand is i.i.d.\ and drawn from a continuous distribution. By \cref{lemma:lower_bound_stationary}, the worst-case expected regret incurred over a single stationary segment of length~${T/S}$ is lower bounded by $\Omega(\sqrt{T/S})$.  Summing over the $S$ segments yields $\Exp{R(T)} = \sum_{s=1}^S \Omega(\sqrt{T/S}) = \Omega(\sqrt{ST})$.
\end{proof}
\section{Regret Analysis} \label[appendix]{app:regret_analysis}

In this section we provide the proofs of the regret upper bounds for our algorithms. It is organized as follows.
\begin{itemize}
    \item \cref{app:common_regret_analysis}: Additional notation, general regret decomposition, and auxiliary results
    \item \cref{app:regret_backlogging}: Proof of \cref{thm:regret_backlogging} for \NSBLIC (\cref{algo:backlog})
    \item \cref{app:regret_LS_L0}: Proof of \cref{thm:regret_LS_L_zero} for \NSLSIC (\cref{algo:lost_sales})
    \item \cref{app:regret_LS_Lpositive}: Proof of \cref{thm:regret_LS_L_positive} for \NSLSICL (\cref{algo:lost_sales_lead_time})
\end{itemize}

\subsection{Notation and Auxiliary Results}
\label[appendix]{app:common_regret_analysis}

We start off by providing a unified regret analysis for all of the algorithms that we develop. The regret naturally decomposes into two components, the discretization error, which accumulates since the algorithms only employ base-stock levels from the finite grid $\A_\gamma$, as well as the estimation error, which arises because the algorithm selects base-stock levels based on finite-sample estimates rather than the true expected costs.

\begin{lemma}[General Regret Decomposition] \label{lemma:regret_decomposition}
    For \NSBLIC, \NSLSIC, and \NSLSICL (\cref{algo:backlog,algo:lost_sales,algo:lost_sales_lead_time}), the dynamic regret can be decomposed as $R(T) = R_{\gamma}(T) + R_{\tau}(T)$, where 
    \begin{align*}
        R_{\gamma}(T) \coloneq \sum_{t \in [T]} \left( \mu_t(\taugammaopt) - \mu_t(\tau_t^*) \right) \ \text{ and } \
        R_{\tau}(T) \coloneq \sum_{ t \in [T]} \left( \mu_t(\tau_t) - \mu_t(\taugammaopt) \right).
    \end{align*}
    Moreover, the regret due to discretization satisfies 
    \(
        R_{\gamma}(T) \leq T \gamma \max\{h,b\}. 
    \)
\end{lemma}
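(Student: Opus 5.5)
The plan is to define $\taugammaopt$ as the best grid point, $\taugammaopt \in \argmin_{\tau \in \A_\gamma} \mu_t(\tau)$, with ties broken arbitrarily. The decomposition $R(T) = R_{\gamma}(T) + R_{\tau}(T)$ is then a telescoping identity. For each $t$ we add and subtract $\mu_t(\taugammaopt)$ inside the summand $\mu_t(\tau_t) - \min_{\tau\in[0,U]}\mu_t(\tau)$ of \cref{def:regret}. By \cref{ass:main_assumption}.\ref{ass:bounded_action_space} the minimum over $[0,U]$ is attained at $\tau_t^* \in [0,U]$. Because $\mu_t$ is Lipschitz by \cref{lemma:lipschitz}, it is continuous, so the minimum exists and we may write $\min_{\tau}\mu_t(\tau) = \mu_t(\tau_t^*)$. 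Summing over $t\in[T]$ gives the two sums exactly. No algorithm-specific property is used here, beyond the fact that all three algorithms play $\tau_t$ and that the grid is common to them.

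For the discretization bound, we use that $\A_\gamma = \{0,\gamma,\dots,\lfloor U/\gamma\rfloor\gamma, U\}$ is a $\gamma$-net of $[0,U]$. Concretely, for $\tau_t^* \in [0,U]$ the point $\tilde\tau = \gamma\lfloor \tau_t^*/\gamma\rfloor$ lies in $\A_\gamma$ whenever $\tau_t^* \le \lfloor U/\gamma\rfloor\gamma$. Otherwise we take $\tilde\tau = U$. In either case $|\tilde\tau - \tau_t^*| \le \gamma$. By minimality of $\taugammaopt$ over the grid, and then \cref{lemma:lipschitz},
\[
\mu_t(\taugammaopt) - \mu_t(\tau_t^*) \le \mu_t(\tilde\tau) - \mu_t(\tau_t^*) \le \max\{h,b\}\,|\tilde\tau-\tau_t^*| \le \gamma\max\{h,b\}.
\]
Summing over the $T$ periods yields $R_\gamma(T) \le T\gamma\max\{h,b\}$.

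The only point needing care is that \cref{lemma:lipschitz} applies to each $\mu_t$ under the relevant model. For lost-sales this is inherited from \citet{agrawal2022learning}. For backlogging it was shown in \Cref{app:proof_lipschitz_concentration_backlogging}, using the stationary representation in \cref{lemma:stationary_distribution_backlogging}. That representation holds for any fixed $F_t$, since $\mu_t$ is defined as the stationary average cost under $F_t$. I expect no real obstacle: the argument is a direct net-plus-Lipschitz step. The main thing to check is that the boundary point $U$ is included in $\A_\gamma$, so that the covering radius is at most $\gamma$ even when $U/\gamma$ is not an integer.
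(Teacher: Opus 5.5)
Your proposal is correct and follows the paper's route: the decomposition is the add-and-subtract identity, and the discretization bound follows from the Lipschitz continuity of each $\mu_t$ (\cref{lemma:lipschitz}) together with the grid covering $[0,U]$ at resolution $\gamma$. You make explicit two steps the paper leaves implicit: that some grid point (the one just below $\tau_t^*$, or $U$) lies within $\gamma$ of $\tau_t^*$, and that minimality of $\taugammaopt$ over the grid lets you bound through that point.
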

\begin{proof}
One readily verifies, following the definitions of each component, that the terms $R_{\gamma}(T)$ and $\Rb(T)$ together constitute the total regret ${R(T) = \sum_{t \in [T]} \left( \mu_t(\tau_t) - \mu_t(\tau_t^*) \right)}$.
By the Lipschitz continuity of the cost function (\Cref{lemma:lipschitz}), the discretization error accumulated over the time horizon is upper bounded as ${R_{\gamma}(T) = \sum_{t \in [T]} ( \mu_t(\taugammaopt) - \mu_t(\tau_t^*)) \leq T \gamma \max\{ h, b \}}$.
\end{proof}

To analyze the estimation error, we partition the horizon into intervals defined by true and detected change points. Within each interval, the demand distribution is fixed, hence our concentration inequality (\cref{eq:concentration_bound}) can be employed since costs are stationary.
Moreover, within each interval the algorithm operates without restarts. This allows us to bound the suboptimality gap of policy $\tau_v^k$ via the selection criteria (\cref{eq:elimination_condition_bl,eq:elimination_condition_ls,eq:active_set_elimination_condition_lsl}) and use the fact that there has not been a change point detected, i.e., \cref{eq:change_condition_bl,eq:change_condition_ls,eq:change_condition_ls_bad,eq:change_condition_lsl} are not satisfied. Towards this, we define some additional notation for the rest of the analysis as follows.

\begin{figure}[!t]
    \centering
    \begin{tikzpicture}[
rewnode/.style={}, 
obsnode/.style={rectangle, draw=black!20, fill=blue!5, thick, minimum size=5mm},
actionnode/.style={rectangle, draw=black!20, fill=black!5, thick, minimum size=5mm},
smallnode/.style={text width=0.3cm},
mediumnode/.style={text width=0.7cm},
widenode/.style={text width=2.5cm},
]

    \node[smallnode] at (0, 0) (t_v)    {$t_v$};
    \node[mediumnode]          (t_v1)   [right=65mm of t_v] {$t_{v+1}$};
    
    \node[smallnode]           (tau_v0)  [above=1mm of t_v] {$\beta_{v,1}$};
    \node[smallnode]           (tau_v)   [above right=1mm and 37mm of t_v] {$\beta_{v,2}$};
    \node[mediumnode]          (tau_v1)  [above right=1mm and 48mm of t_v] {$\beta_{v,3}$};
    \node[widenode]            (tau_v2)  [above right=1mm and -10mm of t_v1] {$\beta_{v,4}=\beta_{v+1,1}$};

    \node[mediumnode]           (alpha_0)  [below right=0.5mm and -5.3mm of t_v] {$\alpha_{v,1}$};
    \node[mediumnode]           (alpha_1)  [below right=0.5mm and 4mm of t_v] {$\alpha_{v,2}$};
    \node[mediumnode]           (alpha_2)  [below right=0.5mm and 22mm of t_v] {$\alpha_{v,3}$};
    \node[mediumnode]           (alpha_3)  [below right=0.5mm and 44mm of t_v] {$\alpha_{v,4}$};
    \node[widenode]             (alpha_4)  [below right=0.5mm and -10mm of t_v1] {$\alpha_{v,5}=\alpha_{v+1,1}$};


    \draw[thick , ->] ($(t_v.north)+(0,0.1)$) -- ($(t_v1.north)+(0.3,0.1)$); 
    \draw (t_v.north) ++ (0,0.2) -- ++ (0,-0.2);
    \draw (t_v1.north) ++ (0,0.2) -- ++ (0,-0.2);
    \draw (tau_v.south) ++ (0,0.1) -- ++ (0,-0.2);
    \draw (tau_v1.south) ++ (0,0.1) -- ++ (0,-0.2);

    \draw[dashed] (alpha_1.north) ++ (0,0.1) -- ++ (0,0.6);
    \draw[dashed] (alpha_2.north) ++ (0,0.1) -- ++ (0,0.6);
    \draw[dashed] (alpha_3.north) ++ (0,0.1) -- ++ (0,0.6);

    \draw [decorate,decoration={brace,amplitude=5pt,raise=4ex}] (0.15,0.4) -- (4.1,0.4) node[midway,yshift=3em]{$I_{v,1}$};
    \draw [decorate,decoration={brace,amplitude=5pt,raise=4ex}] (4.4,0.4) -- (5.4,0.4) node[midway,yshift=3em]{$I_{v,2}$};
    \draw [decorate,decoration={brace,amplitude=5pt,raise=4ex}] (5.7,0.4) -- (7.1,0.4) node[midway,yshift=3em]{$I_{v,3}$};

    

    \fill[pattern=north east lines, pattern color=gray!70] 
    ($(alpha_1.north)+(0.03,0.43)$) rectangle ++(0.3,0.2);

    \fill[pattern=north east lines, pattern color=gray!70] 
    ($(alpha_2.north)+(0.03,0.43)$) rectangle ++(0.4,0.2);

    \fill[pattern=north east lines, pattern color=gray!70] 
    ($(alpha_3.north)+(0.03,0.43)$) rectangle ++(0.25,0.2);
    \node[text=black] at (-1.4,0.08) {\textit{episode}};
    \node[text=black] at (-1.4,-0.5) {\textit{epochs}};
    
\end{tikzpicture}
    \caption{An episode $[t_v , t_{v+1} - 1]$ containing $S_v=3$ stationary segments and consisting of four epochs. The change at $\beta_{v,3}$ follows an undetected change at $\beta_{v,2}$ and causes the beginning of the next episode at $t_{v+1}$. The shaded areas indicate time steps ${t \in [\alpha_{v,k}, \bar{\alpha}_{v,k} - 1]}$ when the sum of inventory on-hand and in transit is reduced from $\tau_v^{k-1}$ to $\tau_v^k$ by suspending orders.}
\label{fig:episode}
\end{figure}
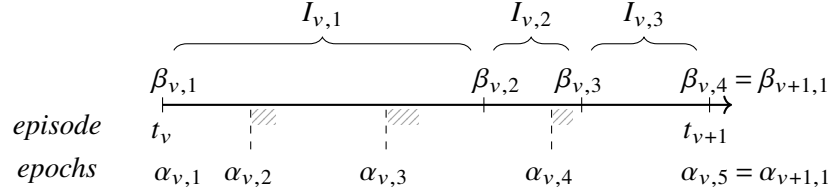

We use $\hat{S}$ to denote the total number of episodes, which is equivalent to one plus the total number of detected changes. 
Moreover, let the actual number of stationary segments within an episode~$v$ be denoted by $S_v$ and the $i$'th change point within episode~$v$ by $\beta_{v,i}$ with 
    ${t_v = \beta_{v,1} \leq \beta_{v,2} < \dots < \beta_{v, S_v} \leq \beta_{v,S_v + 1} = t_{v+1}}$
and ${t_{\hat{S}+1} \coloneq T}$.
We use ${I_{v,i} \coloneq [\beta_{v,i}, \beta_{v, i+1} - 1]}$ with $v=1,\dots , \hat{S}$ and $i=1, \dots , S_v$ to denote the stationary time intervals without detected or actual changes. 
See \cref{fig:episode} for an illustration of the variables introduced by means of the example of one episode.

Next we establish that for each algorithm the construction of the estimates for the mean cost function $\hat{\mu}(\tau,s,t)$ are accurate for $\mu_s(\tau)$ with high probability.
\begin{lemma} \label{lemma:probability_good_event}
    For any $\delta \in (0,1)$ let $\delta' = \delta \gamma /T^2U$. Consider estimates $\hat{\mu}(\tau,s,t)$ constructed from the cost observations collected while executing the sequence of policies $(\tau_{t'})_{t'\in [s,t]}$ via \cref{algo:backlog,algo:lost_sales,algo:lost_sales_lead_time} together with confidence terms 
    $b_{s,t} = H_{\bl} \sqrt{\frac{2\log(4(L+1)/\delta')}{t-s}}$ or $b_{s,t} = H_{\ls} \sqrt{\frac{2\log(2/\delta')}{t-s}}$, respectively.
    Define the events
    \vspace{-5pt}
    \begin{align*}
        \calE_{\bl} & = \left\{\abs*{\hat{\mu}(\tau,s,t) - \mu_s(\tau)} \leq b_{s,t} \ \forall \tau \in \mathcal{A}_{\gamma} \text{ and time steps } s, t \text{ with } t -s \geq L, \text{ and } F_{t'} = F_s \ \forall t' \in [s,t-1] \right\}, \\[4pt]
        \calE_{\ls} &=
        \left\{
        \abs*{\hat{\mu}(\tau,s,t) - \mu_s(\tau)} \le b_{s,t}
        \;\middle|\;
        \begin{array}{l}
        \forall \tau \leq \tau_v^k \text{ and time steps }
        s,t \text{ with } F_{t'} = F_s\ \forall t' \in [s,t-1], \\[2pt]
        \forall \tau \in \mathcal{A}_{\gamma}
        \text{ and time steps } s,t \text{ with } F_{t'} = F_s \text{ and } \tau_{t'} = U\ \forall t' \in [s,t-1]
        \end{array}
        \right\}, \\[6pt]
        \calE_{\lsl} &= \left\{\abs*{\hat{\mu}(\tau,s,t) - \mu_s(\tau)} \leq b_{s,t} \;\middle|\;
        \begin{array}{l}
        \forall \tau \leq \tau_v^k \text{ and time steps } s, t \text{ with } [s,t] \subseteq [\bar{\alpha}_{v,k},\alpha_{v,k+1}-1] \\[2pt] \text{ for some episode } v \text{ and epoch } k \text{ and } F_{t'} = F_s \ \forall t' \in [s,t-1] 
        \end{array}
        \right\}.   
    \end{align*}
    For any $\calE \in  \{\calE_{\bl}, \calE_{\ls}, \calE_{\lsl}\}$ we have that $\Pr(\calE) \geq 1 - \delta$.
\end{lemma}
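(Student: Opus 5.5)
The plan is a union bound over all estimates that enter each event, with \cref{lemma:concentration} applied to each one at confidence level $\delta'$. The first task is to verify that every estimate $\hat{\mu}(\tau,s,t)$ named in $\calE$ meets the hypotheses of \cref{lemma:concentration}. We need a stationary window $[s,t-1]$, the condition $t-s\geq L$ in the backlog case, and a (counterfactual) inventory position at time $s$ that is at most $\tau$. The estimate must also coincide with the cost sequence generated by a fixed base-stock policy $\tau$ under the realized demands.

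The per-setting checks are as follows.
\begin{itemize}
\item \textbf{Backlogging.} Every counterfactual state is initialized at zero and then follows \cref{eq:inventory_balance} with fixed level $\tau$. Hence the inventory position never exceeds $\tau$, and by \cref{lemma:feedback_structure}.\ref{item:feedback_structure_backlogging} the computed costs are exactly those of policy $\tau$.
\item \textbf{Lost-sales with $L=0$.} For $\tau\le\tau_v^k$, \cref{lemma:feedback_structure}.\ref{item:feedback_structure_LS_L0} applies: the played levels never fall below $\tau_v^k\ge\tau$, and $I^\tau_t\le\tau$ holds throughout. For general $\tau\in\A_\gamma$ on windows with $\tau_{t'}=U$, the same lemma applies with played level $U\ge\tau$.
\item \textbf{Lost-sales with $L>0$.} Within $[\bar\alpha_{v,k},\alpha_{v,k+1}-1]$ the played level is constant, so the sequence is trivially non-decreasing. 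The reset \cref{eq:counterfactual_inventory_vector_reset} at $\bar\alpha_{v,k}$ yields state vectors satisfying the conditions of \cref{lemma:feedback_structure}.\ref{item:feedback_structure_LS_L_positive}: componentwise domination, the order-gap inequality, and counterfactual position at most $\tau$. This gives both valid counterfactual evaluation and the inventory-position hypothesis of \cref{lemma:concentration}. Checking the order-gap inequality for the reset vector is a short computation from the nested minima.
\end{itemize}

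Given these checks, each fixed triple $(\tau,s,t)$ fails its bound with probability at most $\delta'$. This holds once we use the radii $b_{s,t}$ with $\delta'$ in place of $\delta$, as in the statement. The number of triples is at most $|\A_\gamma|\cdot T^2$, where $|\A_\gamma|\le U/\gamma+2$. A union bound then gives failure probability at most $T^2(U/\gamma+2)\,\delta'=\delta(1+2\gamma/U)$. To obtain exactly $\delta$, I will absorb the constant by noting $|\A_\gamma|\le U/\gamma$ up to constants. The sum over pairs $s<t\le T$ also contributes a factor of only $T^2/2$, so the bound closes cleanly: $\frac{T^2}{2}(U/\gamma+2)\delta'\le\delta$ whenever $\gamma\le U/2$.

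The main obstacle is that the triples are selected adaptively: which windows lie inside an epoch, and which $\tau\le\tau_v^k$ are evaluated, both depend on past data. I will resolve this by defining, for \emph{every} deterministic triple $(\tau,s,t)$, the cost sequence $C_{t'}(\tau)$ of the fixed policy $\tau$ started from the relevant state. I then show that on the event where the algorithm uses that triple, its estimate equals this sequence.

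The delicate point is the starting state, which is history-dependent at $s$: it is the reset state, or the state accumulated since time $1$. Two facts handle it. First, \cref{lemma:concentration} holds for any starting position at most $\tau$. Second, future demands within a stationary window are independent of the past, so the bound holds conditionally on $\mathcal{F}_{s-1}$. Taking expectations and then a union bound over deterministic $(\tau,s,t)$ completes the argument. Since the demand distributions are chosen obliviously, stationarity of a window is itself deterministic, which justifies treating those windows as fixed.
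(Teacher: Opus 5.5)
Your proposal is correct and follows essentially the same route as the paper. In each setting you verify that every estimate is a valid counterfactual cost sequence of a fixed policy $\tau$ started from an inventory position at most $\tau$, using zero initialization, left-sided feedback, and the reset at $\bar\alpha_{v,k}$; you then apply \cref{lemma:concentration} at level $\delta'$ and take a union bound over $(\tau,s,t)$. The paper's proof leaves two details implicit that you handle explicitly: the exact count $|\A_\gamma|\le U/\gamma+2$, and the adaptively selected triples, which you treat by conditioning on $\mathcal{F}_{s-1}$ and using that the $F_t$ are chosen obliviously.
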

In $\calE_{\bl}$ we require stationary time intervals for which $t-s \geq L$ in order to satisfy the requirements of \cref{lemma:concentration}.  We are able to derive estimates for all possible base-stock levels ${\tau \in \A_\gamma}$ using the feedback structure (\cref{lemma:feedback_structure}). In $\calE_{\ls}$ this is also the case when playing the base-stock policy $\tau_{t'} = U$ due to the left-sided feedback.  Moreover, by selecting the {\em largest} base-stock level via the selection rule (\cref{eq:selection_rule}) the concentration holds for all base-stock levels $\tau \leq \tau_v^k$.
Lastly, in $\calE_{\lsl}$ besides stationarity we require the inventory position to not exceed the implemented base-stock level $\tau_v^k$ to allow for left-sided feedback as per \cref{lemma:feedback_structure}. With the construction of $\bar{\alpha}_{v,k}$ as the first time step in epoch~$k$ in which this condition is met, we obtain concentration for all base-stock levels ${\tau \leq \tau_v^k}$ over stationary subintervals of ${[\bar{\alpha}_{v,k}, \alpha_{v,k+1}-1]}$.

\begin{proof}
    For each event, we provide a separate proof depending on the algorithm employed.
    
    \medskip
    \noindent {\bf \NSBLIC Algorithm.}\quad
    This algorithm implements base-stock policy $\tau_v^k$ in epoch~$k$ of episode~$v$. By \cref{lemma:feedback_structure}, the observations collected under policy $\tau_v^k$ allow us to determine the costs ${(C_{t'}(\tau))_{t' \in [s,t]}}$ and hence the estimates $\hat{\mu}(\tau, s,t)$ for any ${\tau \in \mathcal{A}_{\gamma}}$ and any ${[s,t] \subseteq [T]}$. 
    Notably, this also holds for the time steps right after the base-stock level is decreased at the beginning of the next epoch $k+1$. In these steps, the algorithm suspends orders over a sequence of time steps in order to bring the inventory position from $\tau_v^{k}$ to $\tau_v^{k+1}$. Despite the fact that no orders are being placed to reduce inventory, demands and therefore the counterfactual costs remain to be observed.
    
    Using \cref{lemma:concentration}, the concentration inequalities ${\abs*{\hat{\mu}(\tau,s,t) - \mu_s(\tau)} \leq b_{s,t}}$ hold with probability $1-\delta'$ for any stationary interval $[s,t]$ with $t-s \geq L$. The condition on the initial inventory position stated in \cref{lemma:concentration} is always satisfied because the costs used to construct the estimators $\hat{\mu}(\tau,s,t)$ are computed via \cref{eq:cost_function} from the counterfactual states $(I^{\tau}_{t'}, Q^{\tau}_{t'-L}, \dots , Q^{\tau}_{t'-1})$, which always satisfy $I^{\tau}_s + \sum_{i=0}^L Q^{\tau}_{s-i} \leq \tau$. This follows from their initialization and the fact that the base-stock level according to which each of the states is updated is fixed.
    Thus we have via a union bound over the $T$ possible values of $s$ and $t$ and the $U / \gamma$ different values of $\tau$ in $\A_\gamma$ we have that $\Pr(\calE_{\bl}) \geq 1 - \delta$ with $\delta = \delta' T^2 U/\gamma$.
    
    \medskip
    \noindent {\bf \NSLSIC Algorithm.}\quad
    This algorithm alternates between playing base-stock policy $\tau_v^k$ with sampling blocks of policy $U$.
    We start off by considering an interval $[s,t]$ for which the algorithm plays base-stock policy $\tau_t = U$.  Here we obtain estimates $\hat{\mu}(\tau, s, t)$ over the stationary interval $[s,t]$ by using the left-sided feedback from \cref{lemma:feedback_structure} so long as $I_{s}^\tau \leq \tau$ for all $\tau \in \mathcal{A}_{\gamma}$.  
    However, this condition is satisfied at all time steps~$t'$ when playing base-stock policy $U$.  Indeed, initially at $t' = 0$ the counterfactual on-hand inventory $I_{t'}^\tau = 0$ by line~\ref{algo_line:initialize_counterfactual_inventory_ls} of \cref{algo:lost_sales}.  Moreover, the condition that $I_{t'}^\tau \leq \tau$ then holds by definition of the inventory dynamics in \cref{eq:inventory_balance}. Then applying \cref{lemma:concentration} we obtain that the concentration inequality $|\hat{\mu}(\tau, s,t) - \mu_s(\tau)| \leq b_{s,t}$ holds with probability $1 - \delta'$.

    Next we consider the time steps where the algorithm implements the base-stock policy $\tau_v^k$.  Again we obtain estimates $\hat{\mu}(\tau, s, t)$ for all $\tau \leq \tau_v^k$ by using the left-sided feedback so long as $I_{s}^\tau \leq \tau$.  This condition is satisfied by the same reasoning as above. 
    Again, applying \cref{lemma:concentration} we obtain that the concentration inequality $|\hat{\mu}(\tau, s,t) - \mu_s(\tau)| \leq b_{s,t}$ holds with probability $1 - \delta'$.
    Then we have via a union bound over the $T$ possible values of $s$ and $t$ and the $U/\gamma$ different values of $\tau$ in $\A_\gamma$ that $\Pr(\calE_{\ls}) \geq 1 - \delta$ with $\delta = \delta' T^2 U/\gamma$.    

    \medskip
    \noindent {\bf \NSLSICL Algorithm.}\quad
    In this algorithm, the only base-stock level implemented is $\tau_v^k$ and that is decreased over the course of each episode~$v$. By \cref{lemma:feedback_structure}, playing $\tau_v^k$ allows us to observe costs for all ${\tau \in \tau_v^k}$ over intervals $[s,t]$, in which (i) the sequence of base-stock policies $\tau_v^k$ is non-decreasing, and (ii) the condition on factual and counterfactual inventory vectors stated in \cref{lemma:feedback_structure}.\ref{item:feedback_structure_LS_L_positive} is satisfied. Both conditions hold for all intervals $[s,t] \subseteq [\bar{\alpha}_{v,k}, \alpha_{v,k+1} - 1]$. 
    In step $\bar{\alpha}_{v,k}$, the first time step after which the algorithm has suspended orders until the inventory position is reduced from a total of $\tau_v^{k-1}$ to at most the new target base-stock level $\tau_v^{k}$, the counterfactual inventory states are reset according to \cref{eq:counterfactual_inventory_vector_reset} in line~\ref{algo_line:reset} of \cref{algo:lost_sales_lead_time}. From $\bar{\alpha}_{v,k}$ on, until the end of the epoch, $\tau_v^k$ is constant and the condition in \cref{lemma:feedback_structure}.\ref{item:feedback_structure_LS_L_positive} on the starting inventory states is satisfied for all subsequent time steps in the same epoch (see proof of \cref{lemma:feedback_structure} in \cref{app:proof_counterfactual_policy_evaluation}).
    
    Therefore, in epoch~$k$ of episode~$v$ we can evaluate ${\hat{\mu}(\tau,s,t)}$ for all ${\tau \leq \tau_v^k}$ and all ${[s,t] \subseteq [\bar{\alpha}_{v,k}, \alpha_{v,k+1} - 1]}$.
    By \cref{lemma:concentration}, the concentration inequality ${\abs*{\hat{\mu}(\tau, s,t) - \mu_s(\tau)} \leq b_{s,t}}$ for stationary subintervals of ${[s,t] \subseteq [\bar{\alpha}_{v,k}, \alpha_{v,k+1} - 1]}$ holds with probability ${1 - \delta'}$. With a union bound over the possible values of $s,t$ and $\tau$, we obtain with ${\delta = \delta' T^2 U/\gamma}$ that ${\Pr(\calE_{\lsl}) \geq 1 - \delta}$.
\end{proof}

For the rest of our analysis we will condition on the events $\calE_{\bl}, \calE_{\ls}$ or $\calE_{\lsl}$ holding. For ease of notation we will omit the specific algorithm and just denote the good-event as $\calE$, where the specific algorithm will be clear from the context.

We next show in \cref{lemma:false_alarms} that the algorithms only trigger restarts if there has been a change since the last restart. This allows us to bound the number of episodes with respect to $S$ (the actual total number of stationary segments). 
The proof of \cref{lemma:false_alarms} for \NSLSIC is adapted from \citep[Lemma 7]{auer2019adaptively}. 
\begin{lemma}[No False Positive Change Detections] \label{lemma:false_alarms}
    Under event $\mathcal{E}$, the number of episodes $\hat{S}$ of \NSBLIC, \NSLSIC and \NSLSICL (\cref{algo:backlog,algo:lost_sales,algo:lost_sales_lead_time}) is upper bounded by the number of stationary segments $S$. Furthermore, the total number of intervals satisfies ${\sum_v \sum_i I_{v,i} \leq 2S-1}$.
\end{lemma}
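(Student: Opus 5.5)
The argument has two parts. First, on the good event $\calE$, no restart is ever triggered inside a truly stationary stretch of an episode. Second, counting episodes and intervals is then pure combinatorics.

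\textbf{No false alarms.} Suppose that at time $t$ in episode $v$ a restart is triggered. I will show that $F$ changed at some step in $[t_v,t]$.

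For \NSBLIC, \NSLSIC (condition \eqref{eq:change_condition_ls}) and \NSLSICL, a restart means that for some $\tau$ and intervals $[s_1,s_2]$, $[s,t]$ we have $|\hat\mu(\tau,s_1,s_2)-\hat\mu(\tau,s,t)|>b_{s_1,s_2}+b_{s,t}$. Assume for contradiction that $F_{t'}$ is constant on $[t_v,t]$. Both intervals are admissible for the event, by the constraints the algorithms impose:
\begin{itemize}
\item lengths at least $L$ in the backlog case;
\item $\tau\le\tau_v^k$ and, for \NSLSICL, containment in some $[\bar\alpha_{v,k'},\alpha_{v,k'+1}-1]$ with $\tau\le\tau_v^{k}\le\tau_v^{k'}$.
\end{itemize}
Hence $\calE$ gives $|\hat\mu(\tau,s_1,s_2)-\mu(\tau)|\le b_{s_1,s_2}$ and $|\hat\mu(\tau,s,t)-\mu(\tau)|\le b_{s,t}$ with a common $\mu$. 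The triangle inequality then contradicts the detection inequality.

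For \NSLSIC, condition \eqref{eq:change_condition_ls_bad} needs the argument of \citep[Lemma 7]{auer2019adaptively}. The recorded value $\tilde\mu_v(\tau)=\hat\mu(\tau,s',t'')$ was computed on an admissible interval, and so was $\hat\mu(\tau,s,t)$ (on a $U$-sampling block). Under stationarity both are within their radii of $\mu(\tau)$. At elimination time, the elimination inequality $\tilde\Delta_v(\tau)>6b_{s',t''}$ gives $b_{s',t''}<\tilde\Delta_v(\tau)/6\le\tilde\Delta_v(\tau)/4$. Therefore
\[
|\hat\mu(\tau,s,t)-\tilde\mu_v(\tau)|\le b_{s,t}+b_{s',t''}<b_{s,t}+\tilde\Delta_v(\tau)/4,
\]
so \eqref{eq:change_condition_ls_bad} cannot fire. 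This is the step needing the most care: one must check that the recorded estimate was formed from costs valid under $\calE_{\ls}$, i.e.\ with $\tau\le\tau_v^k$ at recording time, and that the constant $\Cb=6$ dominates $4$.

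\textbf{Counting.} Each restart ending episode $v$ certifies a true change point in $(t_v,t_{v+1}]$. These half-open intervals are disjoint across episodes, so distinct episodes consume distinct change points, of which there are $S-1$. Hence $\hat S-1\le S-1$, i.e.\ $\hat S\le S$.

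For the number of intervals $I_{v,i}$, note that they are delimited by the union of true change points ($S-1$ of them) and detected restart times ($\hat S-1\le S-1$ of them). Cutting $[T]$ at at most $2S-2$ points yields at most $2S-1$ pieces, so $\sum_v S_v\le 2S-1$. I read the statement's $\sum_v\sum_i I_{v,i}$ as counting these intervals.
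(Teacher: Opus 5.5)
Your proposal is correct and follows the paper's proof essentially step for step. It uses the same triangle-inequality contradiction under $\calE$ for the two-window tests, and the same bound $\tilde\Delta_v(\tau) > 6\, b_{s',t'}$ to rule out the second \NSLSIC test; you phrase that step directly rather than by contradiction, which is equivalent. Your interval count, $\hat S + S - 1 \le 2S-1$, is also the paper's. The extra checks you make, that each window is admissible under $\calE$ (length at least $L$, $\tau \le \tau_v^{k} \le \tau_v^{k'}$, and $\tau \le \tau_v^k$ at recording time), fill in details the paper leaves implicit.
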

\begin{proof}
    We first show by contradiction that in each episode but the last there is at least one change. Assume that the algorithm has detected a change in time step $t > t_v$ of episode $v$, but there has not been a change in the demand distribution in time interval $[t_v, t]$.

\noindent {\bf \NSBLIC Algorithm.}\quad
    Under this algorithm, there must be a base-stock level ${\tau \in \A_\gamma}$ and time steps $s_1, s_2, s$ with ${t_v \leq s_1 < s_2 < t}$ and ${t_v \leq s < t}$, as well as ${t-s\geq L}$ and ${s_2 - s_1 \geq L}$ such that the {\em change point condition} (\cref{eq:change_condition_bl}) is satisfied, that is, 
        \begin{equation} \label{eq:backlogging_change_detection_contradiction}
            \abs*{\hat{\mu}(\tau, s_1, s_2) - \hat{\mu}(\tau, s, t)} > b_{s_1, s_2} + b_{s,t}.
        \end{equation}
    Under event $\mathcal{E}$, the concentration inequalities hold, that is,
    \begin{align*}
        \abs*{\hat{\mu}(\tau, s_1, s_2) - \mu_{s_1}(\tau)} \leq b_{s_1,s_2} \quad \text{and}\quad
        \abs*{\hat{\mu}(\tau, s, t) - \mu_s(\tau)} \leq b_{s,t}.
    \end{align*}
    However, since there was no change, it holds that $\mu_{s_1}(\tau) = \mu_s(\tau)$ and by the triangle inequality, we have
        $\abs*{\hat{\mu}(\tau, s_1, s_2) - \hat{\mu}(\tau, s, t)} \leq b_{s_1, s_2} + b_{s,t}$,
    which contradicts with \cref{eq:backlogging_change_detection_contradiction}.

    \medskip
    \noindent {\bf \NSLSIC Algorithm.}\quad
    There are two possible {\em change point conditions} that could trigger a restart in \NSLSIC. In the first case, there is a base-stock level $\tau \leq \tau_v^k$, which meets the first {\em change point condition} (\cref{eq:change_condition_ls}), i.e., there exist time steps $s_1, s_2, s$ such that ${t_v \leq s_1 < s_2 < t}$, ${t_v \leq s < t}$ and ${\abs*{\hat{\mu}(\tau, s_1, s_2) - \hat{\mu}(\tau, s, t)} > b_{s_1, s_2} + b_{s,t}}$.
    Following the same argument as above for \NSBLIC, under the good-event $\mathcal{E}$ this contradicts with the concentration inequalities (\cref{lemma:concentration}). 
    
    In the other case, the termination of episode~$v$ was triggered by the detection of a change in the mean cost of a base-stock level ${\tau > \tau_v^k}$ that meets the second {\em change point condition} (\cref{eq:change_condition_ls_bad}).
    Then there must exist a time step $s < t$ such that ${\tau_{t'} = U}$ for all ${t' \in [s,t]}$ and $\abs*{\hat{\mu}(\tau, s, t) - \tilde{\mu}_v(\tau)} > \tilde{\Delta}_v(\tau)/4 + b_{s,t}$.
    Here we note that $\tilde{\mu}_v(\tau) = \hat{\mu}(\tau, s', t')$ as well as ${\tilde{\Delta}_v(\tau) > \Cb b_{s',t'}}$ where $[s',t'] \subseteq [t_v, t]$ is the time interval in episode~$v$ based on which $\tau$ was eliminated in an earlier epoch by the {\em active set elimination condition} of the algorithm (\cref{eq:elimination_condition_ls}).
    Therefore, we have that
    \vspace{-7pt}
    \[
        \abs*{\hat{\mu}(\tau, s, t) - \tilde{\mu}_v(\tau)} = \abs*{\hat{\mu}(\tau, s, t) - \hat{\mu}(\tau, s', t')} \geq b_{s,t} + \frac{3}{2} b_{s', t'} > b_{s,t} + b_{s', t'},
        \vspace{-7pt}
    \]
    which again contradicts with the concentration guarantees (\cref{lemma:concentration}) since we assume there is no change in the current episode, so $\mu_{s_1}(\tau) = \mu_{t}(\tau)$ for all $s_1 \in [t_v,t]$.    

\medskip
\noindent {\bf \NSLSICL Algorithm.}\quad
    In this algorithm, the change detection must have been caused by a base-stock policy $\tau \leq \tau_v^k$ and time steps $s_1, s_2, s$ with ${\bar{\alpha}_{v,k} \leq s < t}$ as well as ${t_v \leq s_1 < s}$ and $[s_1,s_2]$ being a subinterval of any interval ${[\bar{\alpha}_{v,k'}, \alpha_{v,k'}-1]}$ of a previous epoch~${k'\leq k}$ in episode~$v$ and $\abs*{\hat{\mu}(\tau, s_1, s_2) - \hat{\mu}(\tau, s, t)} > b_{s_1, s_2} + b_{s,t}$, leading to a contradiction, conditioned on $\calE$, analogous to the ones above.

\medskip
    To conclude the proof, recall that there are $S-1$ changes in total, which directly implies that ${\hat{S} \leq S}$. Moreover, it follows that any episode~$v$ consists of ${S_v \geq 2}$ intervals~$I_{v,i}$, and the total number of intervals~$I_{v,i}$ is upper bounded by $\sum_{v=1}^{\hat{S}} S_v = {\hat{S} + S -1 \leq 2S - 1}$.
\end{proof}

\subsection{\texorpdfstring{Proof of \cref{thm:regret_backlogging}}{Proof of Theorem 1}} \label[appendix]{app:regret_backlogging}
\BackLogRegret*

\begin{proof}
    We show the result by conditioning on the good-event $\calE$, which holds with probability at least $1 - \delta$ via \cref{lemma:probability_good_event}.
    Applying \cref{lemma:regret_decomposition}, the analysis reduces to bounding $\Rb(T)$, the cumulative difference between $\mu(\tau_v^k)$ and $\mu(\taugammaopt)$ over all time steps in $[T]$.
    To derive this bound, we use the fact that by the construction of the active set in \NSBLIC, the difference between the cost estimates of $\tau_v^k$ to that of $\taugammaopt$ can be bounded by the confidence terms.

    Let $t \in I_{v,i}$ for some interval ${I_{v,i}=[\beta_{v,i}, \beta_{v, i+1}-1]}$ in episode $v$. Since our concentration inequalities require that the number of observations is at least $L$, we analyze the first $L$ time steps of any episode separately.
    Indeed, consider the first interval in episode $v$ with $i=1$ and hence $\beta_{v,i}=t_v$. By Lipschitzness of $\mu_t(\cdot)$ (\cref{lemma:lipschitz}), in each time step $t \in [t_v, t_v + L - 1]$ a regret of at most $U \max \{ h, b \}$ is incurred. Since there are at most $S$ episodes via \cref{lemma:false_alarms}, this implies
    \vspace{-5pt}
    \[
        \textstyle \sum_{v=1}^{\hat{S}} \sum_{t \in [t_v, t_v + L - 1]} \left( \mu_t(\tau_t) - \mu_t(\taugammaopt) \right) \leq SLU\max\{h,b\}.
    \]
    
    To analyze the regret in all other time steps $t$, let $k$ be the epoch of time step $t$. We partition the regret per time step as
    \vspace{-5pt}
    \[
        \mu_t(\tau_v^k) - \mu_t(\taugammaopt) = \left(\mu_t(\tau_v^k) - \hat{\mu}(\tau^k_v, \beta_{v,i}, t)\right) + \left(\hat{\mu}(\tau_v^k, \beta_{v,i}, t) - \hat{\mu}(\taugammaopt, \beta_{v,i}, t)\right) + \left(\hat{\mu}(\taugammaopt, \beta_{v,i}, t) - \mu_t(\taugammaopt)\right).
    \]
    By \Cref{lemma:concentration,lemma:probability_good_event} and since each interval $I_{v,i}$ exhibits stationarity, conditioned on event $\mathcal{E}$ we can bound the estimation errors as
        $\abs*{\hat{\mu}(\tau^k_v, \beta_{v,i}, t) - \mu_t(\tau_v^k)} \leq b_{\beta_{v,i},t} \text{ and }
        \abs*{\hat{\mu}(\taugammaopt, \beta_{v,i}, t) - \mu_t(\taugammaopt)} \leq b_{\beta_{v,i},t}$.
    
    Base-stock policy $\tau_v^k$ is updated in time steps ${t \geq t_v + L}$ according to the active set elimination rule in \cref{eq:elimination_condition_bl}. Therefore, we have that $\hat{\mu}(\tau_v^k, s, t) - \underset{\tau \in \mathcal{A}_{\gamma}}{\min} \hat{\mu}(\tau, s, t) \leq 4 b_{s, t}$ for any $s \in [t_v,t-1]$, and in particular for $s=\beta_{v,i}$. Together with ${\underset{\tau \in \mathcal{A}_{\gamma}}{\min} \hat{\mu}(\tau, \beta_{v,i}, t) \leq \hat{\mu}(\taugammaopt, \beta_{v,i}, t)}$ it follows that ${\hat{\mu}(\tau_v^k, \beta_{v,i}, t) - \hat{\mu}(\taugammaopt, \beta_{v,i}, t) \leq 4 b_{\beta_{v,i}, t}}$ and hence that $\mu_t(\tau_v^k) - \mu_t(\taugammaopt) \leq 6 b_{\beta_{v,i},t}$.
        
    Summing over the time steps in $I_{v,i}$, by Jensen's inequality we get the following upper bound for the regret per interval. By \cref{lemma:probability_good_event} we have with probability at least $1-\delta$ that
    \vspace{-5pt}
    \begin{align*}
        \sum_{t \in I_{v,i}} \left( \mu_{t}(\tau_t) - \mu_{t}(\taugammaopt) \right) &\leq 6 \Hbl \sqrt{2 \log \left(4(L+1)T^2U/\delta\gamma\right)} \sum_{t=1}^{\abs*{I_{v,i}}} \sqrt{\frac{1}{t}} \notag \\
        & \leq 6 \Hbl \sqrt{2 \log  \left(4(L+1)T^2U/\delta\gamma\right)} \left(2\sqrt{\abs*{I_{v,i}}}-1\right) \notag \\
        & \leq 12 \Hbl \sqrt{2 \log \left(4(L+1)T^2U/\delta\gamma\right) \abs*{I_{v,i}}}.
    \end{align*}
    Since there are at most $2S-1$ intervals $I_{v,i}$ of length at most $T$ it holds that
    \vspace{-5pt}
    \begin{equation*}
        \Rb(T) = \sum_{v=1}^{\hat{S}} \sum_{i=1}^{S_v} \sum_{t \in I_{v,i}} \left( \mu_t(\tau_t) - \mu_t(\taugammaopt) \right) \leq 24 \Hbl \sqrt{ST \log  \left(4(L+1)T^2U/\delta\gamma\right)} + SLU\max\{h,b\}.
    \end{equation*}

    Finally, from the preceding analysis together with \cref{lemma:regret_decomposition}, we conclude that under event $\mathcal{E}$ the dynamic regret of \NSBLIC (\cref{algo:backlog}) is given by
    \[
        R(T) = R_{\gamma}(T) + \Rb(T) \leq 24 \Hbl \sqrt{ST \log  \left(4(L+1)T^2U/\delta\gamma\right)} + \left(T\gamma + SLU\right) \max\{h,b\}.
    \]
    Setting $\gamma = \Theta\left(T^{-1/2}\right)$ and with $\Hbl = 2\sqrt{2}\sigma \sqrt{(L+1)\left(Lh^2 + (h+b)^2(4L+5)\right)}$ this yields
    \[
        R(T) \leq 24 \Hbl \sqrt{ST \log  \left(4(L+1)T^{5/2}U/\delta\right)} + \left(\sqrt{T} + SLU\right) \max\{h,b\} = \tilde{\mathcal{O}}\left(\sigma (L+1) \sqrt{ST} \right).
    \]
\end{proof}
\subsection{\texorpdfstring{Proof of \Cref{thm:regret_LS_L_zero}}{Regret Analysis Lost-Sales with L Zero}} \label[appendix]{app:regret_LS_L0}

\LSzeroLRegret*

\cref{thm:regret_LS_L_zero} states an upper bound on the {\em expected} dynamic regret. This is because in our algorithm, the regret depends on the random number of times base-stock level $U$ is sampled. The sampling obligations for $U$ are not determined deterministically by the algorithm’s history, but triggered with a certain probability. As a consequence, the frequency of such obligations grows only in expectation, and cannot be upper bounded with high probability.
Thus, the final performance guarantee is stated in expectation, however, we still analyze regret conditional on the event $\calE$.

Before presenting the proof we start by providing a regret decomposition complementary to that in \cref{lemma:regret_decomposition}. Specifically, we break down $\Rb(T)$ into components defined by ${\tau_t^g \coloneq \argmin_{\tau \in \A_{\gamma}, \tau \leq \tau_v^k} \mu_t(\tau)}$, which denotes the best base-stock level in $\A_{\gamma}$ that is no greater than $\tau_v^k$ in time step~$t$ within epoch~$k$ of episode~$v$. In the subsequent analyses of the individual components we exploit the fact that either ${\taugammaopt \leq \tau_v^k}$ and hence the costs of ${\tau_t^g = \taugammaopt}$ are always observed, or ${\taugammaopt > \tau_v^k}$ and by convexity $\tau_t^g$ coincides with $\tau_v^k$.

\begin{lemma} \label{lemma:regret_decomposition_LS_L0}
    For \NSLSIC (\cref{algo:lost_sales}), the dynamic regret can be decomposed as
    \vspace{-5pt}
    \begin{align*}
        R(T) &= R_{\gamma}(T) + R_1(T) + R_2(T) + R_3(T), \text{ where} \\[10pt]
        R_1(T) &= \sum_{\mathclap{\substack{t \in [T] \\ \tau_t = \tau_v^k, \taugammaopt < \tau^k_v}}} \ \left( \mu_t(\tau_t) - \mu_t\left(\tau^g_t\right) \right), \quad 
        R_2(T) = \sum_{\substack{t \in [T] \\ \tau_t = U}} \left( \mu_t(\tau_t) - \mu_t\left(\tau^g_t\right) \right), \quad
        R_3(T) = \sum_{\substack{t \in [T] \\ \tau_{v}^{k} < \taugammaopt}} \left( \mu_t\left(\tau^g_t\right) - \mu_t(\taugammaopt) \right)
    \end{align*}
    and $\tau_t^g \coloneq \underset{\tau \in \A_{\gamma}, \tau \leq \tau_v^k}{\argmin} \mu_t(\tau)$ for any time step~$t$ in an epoch~$k$ of episode~$v$.
\end{lemma}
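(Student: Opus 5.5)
The plan is to prove the decomposition pointwise in time: split each summand of $\Rb(T)$ into two differences and then place each difference into exactly one of $R_1, R_2, R_3$ by a case analysis. By \cref{lemma:regret_decomposition} we already have $R(T) = R_{\gamma}(T) + \Rb(T)$. Fix a time step $t$ in epoch $k$ of episode $v$. We have $\tau_t \in \{\tau_v^k, U\}$, and $\taugammaopt$ denotes the minimizer of $\mu_t$ over $\A_\gamma$. I would write
\[
\mu_t(\tau_t) - \mu_t(\taugammaopt) = \big(\mu_t(\tau_t) - \mu_t(\tau_t^g)\big) + \big(\mu_t(\tau_t^g) - \mu_t(\taugammaopt)\big).
\]
It then suffices to show that, summed over $t$, the first bracket equals $R_1(T)+R_2(T)$ and the second bracket equals $R_3(T)$.

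\emph{Second bracket.} Consider first the case $\taugammaopt \leq \tau_v^k$. Then $\taugammaopt$ is feasible in the minimization defining $\tau_t^g$ and is optimal over all of $\A_\gamma$, so $\mu_t(\tau_t^g) = \mu_t(\taugammaopt)$ and the bracket vanishes. In the case $\tau_v^k < \taugammaopt$, the bracket is kept as is. This is exactly the summand of $R_3(T)$, which collects all such $t$ regardless of whether $\tau_t = U$ or $\tau_t = \tau_v^k$.

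\emph{First bracket.} If $\tau_t = U$, the bracket is precisely a summand of $R_2(T)$, whatever the position of $\taugammaopt$. If $\tau_t = \tau_v^k$ and $\taugammaopt < \tau_v^k$, it is a summand of $R_1(T)$. Two cases remain.
\begin{itemize}
\item If $\tau_t = \tau_v^k = \taugammaopt$, then $\tau_t^g = \tau_v^k$ by the first case above, so the bracket is zero.
\item If $\tau_t = \tau_v^k < \taugammaopt$, I would invoke \cref{lemma:convexity}. The function $\mu_t$ is convex and its minimum over the grid is attained at $\taugammaopt > \tau_v^k$. Hence $\mu_t$ is non-increasing on the grid points up to $\taugammaopt$. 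In particular $\mu_t(\tau_v^k) \le \mu_t(\tau)$ for every $\tau \in \A_\gamma$ with $\tau \le \tau_v^k$, so $\mu_t(\tau_t^g) = \mu_t(\tau_v^k)$ and the bracket is again zero.
\end{itemize}

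Summing over $t \in [T]$ and adding $R_\gamma(T)$ yields the claim. The only non-bookkeeping step is the convexity argument in the last case. There one must be careful about ties in the $\argmin$: since only the value $\mu_t(\tau_t^g)$ enters the decomposition, I would argue at the level of values rather than identities of minimizers. The monotonicity claim follows because, for grid points $\tau < \tau' \le \taugammaopt$, convexity gives $\mu_t(\tau') \le \max\{\mu_t(\tau), \mu_t(\taugammaopt)\} = \mu_t(\tau)$.
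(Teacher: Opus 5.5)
Your proposal is correct and follows the paper's proof essentially step for step. It uses the same split of each summand of $R_{\tau}(T)$ through $\tau_t^g$: feasibility of $\taugammaopt$ kills the second bracket when $\taugammaopt\le\tau_v^k$, and convexity kills the first bracket when $\tau_t=\tau_v^k<\taugammaopt$. Your value-level treatment also avoids the tie issues hidden in the paper's phrase ``$\tau_v^k=\tau_t^g$ if and only if $\tau_v^k\le\taugammaopt$''.

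One bookkeeping caveat, which you share with the paper. Whenever $\tau_v^k=U$ (for example throughout the first epoch of every episode), every step has $\tau_t=\tau_v^k=U$. So the index sets of $R_1(T)$ and $R_2(T)$ overlap, and your case split counts the steps with $\taugammaopt<U$ twice. For an exact equality you must assign these steps to a single sum; the paper implicitly assigns them to $R_1(T)$, as its remark at the start of the $R_2(T)$ bound shows. Otherwise you only obtain $R(T)\le R_\gamma(T)+R_1(T)+R_2(T)+R_3(T)$. That inequality is harmless for the later upper bounds, since the double-counted terms are nonnegative.
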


\begin{proof}
    By applying \cref{lemma:regret_decomposition}, the regret can be decomposed as $R(T) = R_{\gamma}(T) + \Rb(T)$.
    To analyze $R_{\tau}(T)$, recall that at any given time the algorithm selects either base-stock level $U$ or level $\tau_v^k$. Therefore, we write        
    \begin{align*}
        \Rb(T) &= \sum_{t \in [T]} \left(\mu_t(\tau_t) - \mu_t(\taugammaopt)\right) \\
        &= \sum_{t \in [T]} \left( \mu_t(\tau_t) - \mu_t\left(\tau^g_t\right) \right) + \sum_{t \in [T]} \left( \mu_t\left(\tau^g_t\right) - \mu_t(\taugammaopt) \right) \\
        &= \ \underbrace{\sum_{\mathclap{t \in [T]: \tau_t = \tau_v^k}} \ \left( \mu_t(\tau_t) - \mu_t\left(\tau^g_t\right) \right)}_{R_1(T)} + \ \underbrace{\sum_{\mathclap{t \in [T]: \tau_t = U}} \ \left( \mu_t(\tau_t) - \mu_t\left(\tau^g_t\right) \right)}_{R_2(T)} + \underbrace{\sum_{t \in [T]} \left( \mu_t\left(\tau^g_t\right) - \mu_t(\taugammaopt) \right)}_{R_3(T)}.
    \end{align*}
    We next show that $R_1(T)$ and $R_3(T)$ can be expressed as claimed in the statement of the lemma.

    \noindent\textbf{Term $R_1(T)$.}\quad
    Due to convexity (\cref{lemma:convexity}), we have that $\tau_{v}^{k} = \tau_t^g$ if and only if $\tau_v^k \leq \taugammaopt$. Thus, when $\tau_v^k$ is played, regret with respect to $\tau_t^g$ is only incurred when $\taugammaopt < \tau_v^k$.  Hence
    ${R_{1}(T) \coloneq \sum_{\substack{t \in [T] \\ \tau_t = \tau_v^k, \taugammaopt < \tau^k_v}} \left( \mu_t(\tau_t) - \mu_t\left(\tau^g_t\right) \right)}$.

    

    \noindent\textbf{Term $R_3(T)$.}\quad
    Now note that the difference between $\mu_t\left(\tau^g_t\right)$ and $\mu_t(\taugammaopt)$ is positive whenever $\tau_v^k < \taugammaopt$, since otherwise $\taugammaopt = \tau_t^g$. 
    Hence we can rewrite this as ${R_{3}(T) \coloneq \sum_{t \in [T]: \tau_{v}^{k} < \taugammaopt} \left( \mu_t\left(\tau^g_t\right) - \mu_t(\taugammaopt) \right)}$.
\end{proof}

In the following, we provide a further decomposition and upper bounds for the terms $R_1(T), R_2(T)$ and $R_3(T)$ in the regret decomposition (\cref{lemma:regret_decomposition_LS_L0}). The analysis is inspired by that in \citet{auer2019adaptively}.
We analyze the decomposition in terms of $\Exp{\cdot \mid \calE}$, however, we just write this as $\cdot$ for expositional purposes, leaving the expectation unwritten.

\begin{rproofof}{\Cref{thm:regret_LS_L_zero}}
    \noindent\textbf{Bound of $R_1(T)$.}\quad
    The term $R_1(T)$ represents the regret with respect to $\tau_t^g$, the best base-stock level in $\A_{\gamma}$ less than or equal to $\tau_v^k$, in time steps when ${\tau_t = \tau_v^k > \taugammaopt}$.
    Let ${t \in I_{v,i}}$ for some stationary interval ${I_{v,i}=[\beta_{v,i}, \beta_{v, i+1}-1]}$ in episode~$v$ with ${\tau_t = \tau_{v}^{k}}$ for some epoch~$k$.
    Similar to the analysis of $R_1(T)$ for \NSBLIC in \cref{app:regret_backlogging}, we bound this term using the fact that cost observations of ${\tau_t^g = \taugammaopt}$ are collected in each time step since ${\tau_t^g \leq \tau_v^k}$ for all epochs~$k$ and episodes~$v$. 
    
    By the definition of $\tau_v^k$ in the policy selection and elimination rules (\cref{eq:selection_rule,eq:elimination_condition_ls}), we have that $\hat{\mu}(\tau_v^k,\beta_{v,i},t) - \min_{\tau \in \A_{\gamma}, \tau \leq \tau_v^k}\, \hat{\mu}(\tau,\beta_{v,i},t) \leq \Cb b_{\beta_{v,i},t}$. 
    Using the fact that 
    $\min_{\tau \in \A_{\gamma}, \tau \leq \tau_v^k}\, \hat{\mu}(\tau, \beta_{v,i}, t) \leq \hat{\mu}(\tau^g_t, \beta_{v,i}, t)$, this implies
        ${\hat{\mu}(\tau_v^k, \beta_{v,i}, t) - \hat{\mu}(\tau^g_t, \beta_{v,i}, t) \leq \Cb b_{\beta_{v,i}, t}}$.
    Since $[\beta_{v,i}, t]$ is stationary and $\tau_t^g \leq \tau_v^k$, from the good-event $\calE$ (\cref{lemma:probability_good_event}) it follows that
    \begin{equation*}
        \mu_{t}(\tau_v^k) - \mu_{t}(\tau^g_t) \leq \hat{\mu}(\tau_v^k, \beta_{v,i}, t) - \hat{\mu}(\tau_t^g, \beta_{v,i}, t) + 2b_{\beta_{v,i}, t} \leq 8 b_{\beta_{v,i}, t}.
    \end{equation*}
    Summing over the time steps in interval $I_{v,i}$, we obtain
    \begin{align*}
        \sum_{\substack{t \in I_{v,i}: \tau_t = \tau_v^k, \\ \taugammaopt < \tau^k_v}} \left( \mu_{t}(\tau_v^k) - \mu_{t}(\tau^g_t) \right) & \leq 8 \Hls \sqrt{2 \log(2T^2U/\delta\gamma)} \sum_{t=1}^{\abs*{I_{v,i}}} \sqrt{\frac{1}{t}}         \leq 16 \Hls \sqrt{2 \log(2T^2U/\delta\gamma)|I_{v,i}|}.
    \end{align*}
    Since there are at most $2S-1$ intervals $I_{v,i}$ of length at most $T$ (\cref{lemma:false_alarms}), it follows that
    \begin{equation*}
        R_{1}(T) = \sum_{v=1}^{\hat{S}} \sum_{i=1}^{S_v}\sum_{\substack{t \in I_{v,i}: \tau_t = \tau_v^k, \\ \taugammaopt < \tau^k_v}} \left( \mu_t(\tau_t) - \mu_t(\tau^g_t) \right) \leq 32 \Hls \sqrt{ST \log(2T^2U/\delta\gamma)}.
    \end{equation*}

\noindent\textbf{Bound of $R_2(T)$.}\quad
    This term measures the cumulative difference between the cost of base-stock level $U$ and that of level $\tau_t^g$ over time steps with $\tau_t = U$. 
    Note that we only need to consider time steps in which $U$ has already been eliminated from the active set.  Otherwise, we would have $\tau_v^k = U$ and the corresponding regret contribution is already captured by $R_1(T)$.
    
    We further subdivide the regret per step with respect to $\tau^g_t$ based on the following three disjoint sets of time steps $R_2(T) = R_{2.1}(T) + R_{2.2}(T) + R_{2.3}(T)$ with 
    \begin{align*}
    R_{2.1}(T) &\coloneq \sum_{t \in \Tb_{2.1}} \bigl(\mu_t(\tau_t) - \mu_t (\tau_t^g)\bigr), 
    & \Tb_{2.1} &= \{t \in [T] : \tau_t = U,\ \mu_t(U) - \mu_t(\tau^g_t) \leq 4 \tilde{\Delta}_v(U)\}, \\
    R_{2.2}(T) &\coloneq \sum_{t \in \Tb_{2.2}} \bigl(\mu_t(\tau_t) - \mu_t (\tau_t^g)\bigr), 
    & \Tb_{2.2} &= \left\{ \begin{array}{l}
                         t \in [T] : \tau_t = U, \ \mu_t(U) - \mu_t(\tau^g_t) > 4 \tilde{\Delta}_v(U), \\ 
                        \qquad\quad \mu_t(U) - \tilde{\mu}_v(U) > \tfrac{\mu_t(U) - \mu_t(\tau_t^g)}{2}
                        \end{array}\right\}, \\[8pt]
    R_{2.3}(T) &\coloneq \sum_{t \in \Tb_{2.3}} \bigl(\mu_t(\tau_t) - \mu_t (\tau_t^g)\bigr), 
    & \Tb_{2.3} &= \left\{ \begin{array}{l}
                         t \in [T]: \tau_t = U, \ \mu_t(U) - \mu_t(\tau^g_t) > 4 \tilde{\Delta}_v(U), \\
                        \qquad\quad \mu_t(U) - \tilde{\mu}_v(U) \leq \tfrac{\mu_t(U) - \mu_t(\tau_t^g)}{2}
                        \end{array}\right\}.
    \end{align*}
    The first term corresponds to time steps where policy $U$ has a low per-time step regret relative to the frequency of sampling obligations for that base-stock level. The other terms correspond to sets of time steps in which the difference in costs between base-stock level $U$ and level $\tau_t^g$ is large in comparison to the estimated suboptimality gap of $U$. In $\Tb_{2.2}$ this is because the cost for $U$ has increased compared to the time when that base-stock level was evicted, and in $\Tb_{2.3}$ the cost of level $U$ has changed little, hence the expected cost of $\tau_t^g$ must have decreased significantly.

\noindent\textbf{Bound of $R_{2.1}(T)$.}\quad
    To analyze this term, we use the fact that the instantaneous regret of base-stock level~$U$ with respect to $\tau_t^g$ is small relative to the estimated suboptimality gap at the time when $U$ was removed from the active set. Hence, by the construction of the sampling obligations for $U$, the number of times that base-stock policy is selected is small compared to the regret incurred per time step.
    
    We analyze this term separately over its contribution across different episodes.
    Towards this, for an episode~$v$, let ${t \in [t_v, t_{v+1}-1]}$ be in $\Tb_{2.1}$, i.e., ${\tau_t=U}$ and ${\mu_t(U) - \mu_t(\tau_t^{g}) \leq 4\tilde{\Delta}_v(U)}$.
    Recall that for each ${i \geq 1}$ with ${\varepsilon_i = 2^{-i} \geq \max\{\gamma, \tilde{\Delta}_v(U)/16\Hls\}}$, with probability ${p_i = \varepsilon_i\sqrt{v/UT \log(2T^2U/\delta\gamma)}}$ \NSLSIC adds a sampling obligation for base-stock policy $U$ of length ${n_i = \ceil*{2\log(2T^2U/\delta\gamma)/\varepsilon_i^2}}$.
    Using the fact that ${\varepsilon_i = 2^{-i} \leq 1/2}$, the expected number of times policy $U$ is selected in episode~$v$ for a specific $\varepsilon_i$ is therefore
    \begin{align*}
        (t_{v+1} - t_v) p_i n_i &=
        (t_{v+1} - t_v) \varepsilon_i \sqrt{\frac{v}{UT \log(2T^2U/\delta\gamma)}} \ceil*{\frac{2 \log(2T^2U/\delta\gamma)}{\varepsilon_i^2}} \\
        &\leq (t_{v+1} - t_v) \sqrt{\frac{v}{UT \log(2T^2U/\delta\gamma)}} \left(\frac{2 \log(2T^2U/\delta\gamma)}{\varepsilon_i} + \frac{1}{2}\right).
    \end{align*}
    Since ${\varepsilon_i \leq 1/2}$, and with a small value of $\delta$, we have that ${\log(2T^2U/\delta\gamma) \geq 1/4}$ and consequently ${2 \log(2T^2U/\delta\gamma)/\varepsilon_i + 1/2 \leq 3 \log(2T^2U/\delta\gamma)/\varepsilon_i}$. Moreover, since ${\varepsilon_i \geq \tilde{\Delta}_v(U)/16\Hls}$ and ${t \in \Tb_{2.1}}$, it holds that ${\varepsilon_i \geq \left( \mu_t(U) - \mu_t(\tau_t^g)\right) / 64\Hls}$.
    Therefore, summing over all possible values of $i$ we can bound the expected plays of $U$ in episode~$v$ by
    \begin{align*}
        &\sum_{\mathclap{\substack{i \geq 1: 2^{-i} = \varepsilon_i \\ \varepsilon_i \geq \left(\mu_t(U) - \mu_t\left(\tau_t^g\right)\right)}}} \ \ (t_{v+1} - t_v) \sqrt{\frac{v}{UT \log(2T^2U/\delta\gamma)}} \left(\frac{2 \log(2T^2U/\delta\gamma)}{\varepsilon_i} + \frac{1}{2} \right) \\
        &\leq (t_{v+1} - t_v) \sqrt{\frac{v}{UT \log(2T^2U/\delta\gamma)}} \quad \ \ \sum_{\mathclap{\substack{i \geq 1: 2^{-i} = \varepsilon_i \\ \varepsilon_i \geq \left(\mu_t(U) - \mu_t\left(\tau_t^g\right)\right)}}} \ \ \frac{3 \log(2T^2U/\delta\gamma)}{\varepsilon_i} \\
         & \leq (t_{v+1} - t_v) \sqrt{\frac{v}{UT \log(2T^2U/\delta\gamma)}} \frac{384\Hls \log(2T^2U/\delta\gamma)}{\mu_t(U) - \mu_t(\tau_t^{g})}.
    \end{align*}
    Note that the difference in expected costs per time step between any two base-stock levels is at most ${U\max\{h,b\}}$ (\cref{lemma:lipschitz}). 
    Using the fact that ${v \leq \hat{S} \leq S}$ (\cref{lemma:false_alarms}) we have that
    \begin{align*}
        R_{2.1}(T) = \sum_{t \in \Tb_{2.1}} \left( \mu_t(\tau_t) - \mu_t(\tau^g_t) \right) &\leq \textstyle \sum_{v=1}^{\hat{S}} (t_{v+1} - t_v) \sqrt{\frac{v}{UT \log(2T^2U/\delta\gamma)}} 384 \Hls \log(2T^2U/\delta\gamma) U \max \{ h, b\} \\
        &\leq \textstyle 384\Hls\sqrt{SU\log(2T^2U/\delta\gamma)/T} \max \{ h, b\} \sum_{v=1}^{\hat{S}} (t_{v+1} - t_v) \\
        &\leq 384 \Hls \sqrt{STU\log(2T^2U/\delta\gamma)}\max\{h,b\}.
    \end{align*}

\noindent\textbf{Bound of $R_{2.2}(T)$.}\quad
    To analyze this term we use the fact that the cost for $U$ has significantly increased compared to when it was evicted.  Hence, the base-stock level can only be selected a small number of time steps before the change is detected and a new episode starts.
    We again analyze this term separately across each episode.
    For an episode~$v$, let ${t \in [t_v, t_{v+1}-1]}$ be in $\Tb_{2.2}$ so that ${\tau_t=U}$, ${\mu_t(U) - \mu_t(\tau_t^{g}) > 4\tilde{\Delta}_v(U)}$ and ${\mu_t(U) - \tilde{\mu}_v(U) > \left(\mu_t(U) - \mu_t(\tau_t^g) \right)/2}$.
    
    Let $t_{v,j}$ denote the first time step of the $j$'th sampling block for $U$ in episode $v$ and ${t \geq t_{v,j}}$. Since the next episode has not been triggered at time $t$, {\em change point condition}~\eqref{eq:change_condition_ls_bad} is not satisfied, and so
    \begin{equation}
    \label{eq:2.2_step}
            \hat{\mu}(U, t_{v,j}, t) - \tilde{\mu}_v(U) \leq \tilde{\Delta}_v(U)/4 + b_{t_{v,j}, t}.
    \end{equation}

    Together with the definition of $t \in \Tb_{2.2}$ and the good-event $\calE$ (\cref{lemma:probability_good_event}), we have that
    \begin{align*}
        \mu_t(U) - \mu_t(\tau_t^g) &< 2\left( \mu_t(U) - \tilde{\mu}_v(U) \right) \quad \text{ since $t \in \Tb_{2.2}$}  \\
        &\leq 2 \left( \hat{\mu}(U, t_{v,j}, t) + b_{t_{v,j}, t} - \tilde{\mu}_v(U) \right)  \quad \text{by definition of $\calE$} \\
        &\leq 2 \left( \tilde{\Delta}_v(U)/4 + 2b_{t_{v,j}, t} \right) \quad \text{ by \cref{eq:2.2_step}}\\
        &< \left( \mu_t(U) - \mu_t(\tau_t^g) \right) / 8 + 4b_{t_{v,j}, t} \quad \text{ since $t \in \Tb_{2.2}$}.
    \end{align*}
    By rearranging this inequality we have ${\mu_t(U) - \mu_t(\tau_t^g) < \frac{32}{7} b_{t_{v,j},t}}$.

    In each time step, for a given ${1 \leq i \leq \log_2(1/\gamma)}$ the algorithm triggers a sampling block for $U$ of length ${n_i = \ceil*{2\log(2T^2U/\delta\gamma)/\varepsilon_i^2}}$ with probability ${p_i = \varepsilon_i \sqrt{v/UT \log(2T^2U/\delta\gamma)}}$.
    The expected regret incurred over this block is
    \begin{align*}
        p_i \sum_{t=t_{v,j}+1}^{t_{v,j} + n_i}(\mu_t(U) - \mu_t(\tau_t^g)) &< 
        p_i \frac{32}{7}\sum_{t=t_{v,j}+1}^{t_{v,j} + n_i} b_{t_{v,j},t} \\
        &= p_i \frac{32}{7} \Hls\sqrt{2\log(2T^2U/\delta\gamma)} \sum_{t=1}^{n_i} \sqrt{\frac{1}{t}} \\
        &\leq p_i \frac{64}{7} \Hls\sqrt{2\log(2T^2U/\delta\gamma)n_i} \\
        &\leq \frac{64}{7} \Hls\sqrt{2\log(2T^2U/\delta\gamma)} \varepsilon_i \sqrt{v/UT \log(2T^2U/\delta\gamma)} \sqrt{\ceil*{2\log(2T^2U/\delta\gamma)/\varepsilon_i^2}} \\
        &\leq \frac{128}{7} \Hls \sqrt{v(\log(2T^2U/\delta\gamma)+1)/UT}.
    \end{align*}
    Summing up this term over the $\log_2(1/\gamma)$ different values of $i$ and the number of time steps of the episode ${(t_{v+1} - t_v)}$, and using the fact that ${1 + \log(\cdot) \leq 4 \log(\cdot)}$, we obtain an upper bound for the expected regret of episode~$v$ via
    \[
        \frac{256}{7} \Hls \sqrt{\frac{v\log(2T^2U/\delta\gamma)}{UT}} \log_2(1/\gamma)(t_{v+1} - t_v).
    \]
    The expected dynamic regret accumulated over up to $S$ episodes, each of length at most $T$, is
    \begin{align*}
        R_{2.2}(T) = \sum_{t \in \Tb_{2.2}} \left( \mu_t(\tau_t) - \mu_t(\tau^g_t) \right) &< \sum_{v=1}^{\hat{S}} \frac{256}{7} \Hls \sqrt{\frac{v\log(2T^2U/\delta\gamma)}{UT}} \log_2\left(1/\gamma\right) \left(t_{v+1} - t_v\right) \\
        &\leq  \frac{256}{7} \Hls \sqrt{\frac{S\log(2T^2U/\delta\gamma)}{UT}} \log_2\left(1/\gamma\right) \sum_{v=1}^{\hat{S}} \left(t_{v+1} - t_v\right) \\
        &\leq \frac{256}{7} \Hls \sqrt{\frac{ST\log(2T^2U/\delta\gamma)}{U}} \log_2\left(1/\gamma\right).
    \end{align*}

\noindent\textbf{Bound of $R_{2.3}(T)$.}\quad
    To analyze this term we use the fact that the expected cost of $\tau_t^g$ has significantly decreased, causing a large regret each time base-stock level $U$ is selected.  However, we show that the change is detected quickly since the costs of $\tau_t^g$ are observed in each time step in $\Tb_{2.3}$. We again analyze the regret components separately across episodes.
    For an episode~$v$, let ${t \in I_{v,i} = [\beta_{v,i}, \beta_{v,i+1} - 1]}$ be in $\Tb_{2.3}$, i.e., ${\tau_t=U}$, ${\mu_t(U) - \mu_t(\tau_t^g) > 4\tilde{\Delta}_v(U)}$ and ${\mu_t(U) - \tilde{\mu}_v(U) \leq \left(\mu_t(U) - \mu_t(\tau_t^g) \right)/2}$.

    Recall that it suffices to consider time steps in which $U$ has already been eliminated from the active set.  Hence, let ${[s', t']}$ be the interval and $\tau'$ the base-stock level through which level $U$ is eliminated in an epoch~$k$ of episode~$v$, i.e., ${\tau' = \argmin_{\tau \in \A_{\gamma}, \tau \leq \tau_v^k} \hat{\mu}(\tau, s', t')}$.
    Using the fact that ${\hat{\mu}(\tau^g_t, s', t') \geq \hat{\mu}(\tau', s',t')}$ together with the identity ${\tilde{\Delta}_v(U) = \tilde{\mu}_v(U) - \hat{\mu}(\tau', s', t')}$ and the event $\calE$ (\cref{lemma:probability_good_event}) we have
    \begin{align*}
        \hat{\mu}(\tau_t^g, s', t') - \hat{\mu}(\tau_t^g, \beta_{v,i}, t) &\geq \hat{\mu}(\tau', s',t') -\mu_t(\tau^g_t) - b_{\beta_{v,i}, t} \\
        &= \tilde{\mu}_v(U) - \mu_t(\tau^g_t) - \tilde{\Delta}_v(U) - b_{\beta_{v,i}, t}.
    \end{align*}
    However, by definition of $\Tb_{2.3}$ we have that
    \[
    \tilde{\mu}_v(U) - \mu_t(\tau_t^g) - \tilde{\Delta}_v(U) \geq \frac{\mu_t(\tau_t^g) - \mu_t(U)}{2} - \tilde{\Delta}_v(U) \geq \frac{\mu_t(\tau_t^g) - \mu_t(U)}{2} - \frac{\mu_t(\tau_t^g) - \mu_t(U)}{4} = \frac{\mu_t(\tau_t^g) - \mu_t(U)}{4}
    \]
    and it holds that
    \begin{equation}
    \label{eq:2.3_step_1}
    \hat{\mu}(\tau_t^g, s', t') - \hat{\mu}(\tau_t^g, \beta_{v,i}, t) \geq \left(\mu_t(U) - \mu_t(\tau_t^g) \right)/4 - b_{\beta_{v,i}, t}.
    \end{equation}

    As base-stock level $U$ has been eliminated from the active set based on the observations in $[s',t']$, we have ${\tilde{\Delta}_v(U) > \Cb b_{s',t'}}$. 
    Moreover, since there has not been a change point detected at ${t \in I_{v,i}}$, applying the {\em change detection condition} \eqref{eq:change_condition_ls} to $\tau_t^g$, it follows that
    \begin{align}
         \hat{\mu}(\tau_t^g, s', t') - \hat{\mu}(\tau_t^g, \beta_{v,i}, t) &\leq b_{s',t'} + b_{\beta_{v,i}, t} \nonumber
         < \tilde{\Delta}_v(U)/\Cb + b_{\beta_{v,i}, t} \nonumber \\
         & < \left( \mu_t(U) - \mu_t(\tau_t^g) \right) /24 + b_{\beta_{v,i}, t} \quad \text{ since $t \in \Tb_{2.3}$}. \label{eq:2.3_step_2}
    \end{align}
    Using both \cref{eq:2.3_step_1} and \cref{eq:2.3_step_2} we have
    \[
    \left(\mu_t(U) - \mu_t(\tau_t^g) \right)/4 - b_{\beta_{v,i}, t} \leq \hat{\mu}(\tau_t^g, s', t') - \hat{\mu}(\tau_t^g, \beta_{v,i}, t) < \left( \mu_t(U) - \mu_t(\tau_t^g) \right) /24 + b_{\beta_{v,i}, t}.
    \]
    Rearranging the inequality and solving for ${\mu_t(U) - \mu_t(\tau_t^g)}$ gives $\mu_t(U) - \mu_t(\tau_t^g) < \frac{48}{5} b_{\beta_{v,i},t}$.
    The sum over all time steps in $I_{v,i}$ gives
    \[
        \sum_{t \in I_{v,i} \cap \Tb_{2.3}} \left(\mu_t(U) - \mu_t(\tau_t^g)\right) < \frac{48}{5} \Hls\sqrt{2\log(2T^2U/\delta\gamma)} \sum_{t=1}^{\abs*{I_{v,i} \cap \Tb_{2.3}}} \sqrt{\frac{1}{t}} \leq \frac{96}{5} \Hls\sqrt{2\log(2T^2U/\delta\gamma)\abs*{I_{v,i}}}
    \]
    and finally
    \[
        R_{2.3}(T) = \sum_{t \in \Tb_{2.3}} \left( \mu_t(\tau_t) - \mu_t(\tau^g_t) \right) < \sum_{v=1}^{\hat{S}} \sum_{i=1}^{S_v} \frac{96}{5} \Hls\sqrt{2\log(2T^2U/\delta\gamma)\abs*{I_{v,i}}} \leq \frac{192}{5} \Hls \sqrt{ST\log(2T^2U/\delta\gamma)}.
    \]

\noindent\textbf{Bound of $R_3(T)$.}\quad
    This regret contribution measures the difference in expected costs of the optimal base-stock policy $\taugammaopt$ with respect to $\tau_t^g$, accumulated over time steps in which $\taugammaopt$ is larger than $\tau_v^k$. Since ${\sup \Avk{v}{k} = \tau_v^k < \taugammaopt}$, this implies that $\taugammaopt$ has been eliminated from the active set and hence the difference ${\mu_t(\tau_t^g) - \mu_t(\taugammaopt)}$ is non-zero over these time steps. 
    Again, we only consider time steps, in which $U$ has been eliminated from the active set since otherwise we would have ${\taugammaopt \leq \tau_v^k = U}$.
    Note that by convexity, whenever ${\taugammaopt \geq \tau_v^k}$ it holds that ${\tau_v^k = \tau_t^g}$.
    
    To analyze this source of regret, we further distinguish between the case that the mean costs of previously near-optimal policies have significantly increased, as well as the case that the mean costs of previously costly policies have significantly decreased. We write $R_3(T) = R_{3.1}(T) + R_{3.2}(T)$ with
    \begin{align*}
        R_{3.1}(T) &\coloneq \sum_{t \in \Tb_{3.1}} \left( \mu_t(\tau_t^g) - \mu_t(\taugammaopt) \right), \quad \Tb_{3.1} \coloneq \{ t \in [T]: \tau_{v}^{k} < \taugammaopt, \tilde{\mu}_v(\taugammaopt) - \mu_t(\taugammaopt) \leq \tilde{\Delta}_v (\taugammaopt)/2 \}, \\
        R_{3.2}(T) &\coloneq \sum_{t \in \Tb_{3.2}} \left( \mu_t(\tau_t^g) - \mu_t(\taugammaopt) \right), \quad \Tb_{3.2} \coloneq \{t \in [T]: \tau_{v}^{k} < \taugammaopt, \tilde{\mu}_v(\taugammaopt) - \mu_t(\taugammaopt) > \tilde{\Delta}_v (\taugammaopt)/2 \}.
    \end{align*}
    
    Before deriving the upper bounds for these two terms we include a brief digression to give some intuition on the subdivision into the sets $\Tb_{3.1}$ and $\Tb_{3.2}$.
    In both sets of time steps the optimal base-stock level is larger than the highest level in the active set, hence the optimal level has increased since the time of its elimination.
    In the first case, the cost of the new optimal base-stock level has decreased by a significantly smaller amount than its estimated suboptimality gap at the time this level was removed from the active set. That means, the cost of the previously optimal base-stock policy must have increased considerably to make $\taugammaopt$ the new optimum. This can be detected quickly by playing $\tau_v^k$ as the cost realizations of the previously optimal base-stock policy are observed in each time step.
    
    In the second case, the cost of the new optimal base-stock policy has decreased significantly. Here, this does not necessarily imply a substantial change in the costs of policies ${\tau \leq \tau_v^k}$. Hence, in order for this change to be identified, it may be necessary to sample base-stock levels that are greater than $\tau_v^k$, i.e., level~$U$.
    The condition of minimum reduction in costs of base-stock policy~$\taugammaopt$ ensures we do not need too many samples of policy~$U$ in relation to the potentially large regret per time step to detect the change fast.

\noindent\textbf{Bound of $R_{3.1}(T)$.}\quad
    In these time steps, it holds that ${\tilde{\mu}_v(\taugammaopt) - \mu_t(\taugammaopt) \leq \tilde{\Delta}_v(\taugammaopt)/2}$.
    Consider an episode~$v$ and time step~$t \in I_{v,i}$ for a time interval without change ${I_{v,i}=[\beta_{v,i}, \beta_{v,i+1}-1]}$. Let $k$ be the epoch of time step~$t$. Since ${\tau_{v}^{k} < \taugammaopt}$, there is a time interval $[s',t']$ in some earlier epoch $k'<k$ of the same episode as well as a base-stock policy ${\tau' = \argmin_{\tau \in \A_{\gamma}, \tau \leq \tau_v^{k'}} \hat{\mu}(\tau, s', t')}$ which have caused $\taugammaopt$, the optimal base-stock policy in $I_{v,i}$, to be identified as suboptimal at time~$t'$ via the {\em active set elimination condition}~(\cref{eq:elimination_condition_ls}). Therefore, we have
    ${\hat{\mu}(\taugammaopt, s', t') - \hat{\mu}(\tau', s', t') > \Cb b_{s',t'}}$.
    Moreover, it holds that 
    \begin{equation} \label{eq:tau_hat_R_3_1_ls}
        \hat{\mu}(\tau',s',t')=\tilde{\mu}_v(\taugammaopt) - \tilde{\Delta}_v(\taugammaopt) \leq \mu_t(\taugammaopt) - \frac{\tilde{\Delta}_v(\taugammaopt)}{2}
         \leq \mu_t(\taugammaopt) - 3b_{s',t'},
    \end{equation}
    where the first inequality follows from the definition of $\tilde{\Delta}_v(\taugammaopt) = \tilde{\mu}_v(\taugammaopt) - \hat{\mu}(\tau', s', t')$, the second is due to the construction of the set $\Tb_{3.1}$, and the third due to the fact that ${\tilde{\Delta}_v(\taugammaopt) > \Cb b_{s',t'}}$ by the {\em active set elimination condition}~(\cref{eq:elimination_condition_ls}).
    %
    %
    In the following, we separate into whether or not ${\tau' > \tau_v^k}$, and derive for both cases that ${\mu_t(\tau_t^g) - \mu_t(\taugammaopt) \leq 2 b_{\beta_{v,i},t}}$.

    \noindent\textbf{Case ${\tau' > \tau_{v}^{k}}$.}\quad
    In this case we know that base-stock policy $\tau'$ is eliminated from the active set in some epoch~$k''$ with ${k' < k'' < k}$ (recall that ${\tau_v^k = \sup \Avk{v}{k}}$ and ${\Avk{v}{k} \subset \Avk{v}{k'}}$). 
    Hence, there was a time interval ${[s'',t'']}$ with ${t' < t'' < t}$ and a base-stock level ${\tau'' < \tau'}$ with ${\tau'' = \argmin_{\tau \in \A_{\gamma}, \tau \leq \tau_v^k} \hat{\mu}(\tau, s'', t'')}$ that has caused the elimination of policy $\tau'$ via \cref{eq:elimination_condition_ls}. This yields
    \begin{equation} \label{eq:6b_R3_1_ls}
        \hat{\mu}(\tau',s'',t'') - \hat{\mu}(\tau'', s'',t'') > \Cb b_{s'',t''}.
    \end{equation}
    As no change has been detected in $t \in I_{v,i}$, following the {\em change point condition}~(\cref{eq:change_condition_ls}) it must hold that
    \[
    \hat{\mu}(\tau', s'', t'') - \hat{\mu}(\tau', s', t') \leq b_{s', t'} + b_{s'', t''} \hfill \text{ and } \hfill \hat{\mu}(\tau'', s'', t'') - \hat{\mu}(\tau'', s', t') \leq b_{s', t'} + b_{s'', t''}.
    \]
    Thus we have
         ${\hat{\mu}(\tau',s'',t'') - \hat{\mu}(\tau'', s'',t'') \leq \hat{\mu}(\tau', s', t') - \hat{\mu}(\tau'', s', t') + 2b_{s', t'} + 2b_{s'',t''} 
         \leq 2b_{s', t'} + 2b_{s'',t''}}$,
    where we have used that ${\hat{\mu}(\tau', s', t') \leq \hat{\mu}(\tau'', s', t')}$ since ${\tau' = \argmin_{\tau \in \A_{\gamma}, \tau \leq \tau_v^{k'}}\, \hat{\mu}(\tau, s', t')}$. Consequently, with \cref{eq:6b_R3_1_ls} we have that ${6b_{s'',t''} < 2b_{s',t'} + 2b_{s'',t''}}$ and hence 
    \begin{equation} \label{eq:2b_b_R_3_1_ls}
        2b_{s'',t''} < b_{s',t'}.
    \end{equation}
    Furthermore, we know that when $\tau'$ was evicted based on the observations in $[s'',t'']$ in epoch $k''$, base-stock level ${\tau^g_t = \tau_v^k}$ was not evicted since it remained in the active set until epoch ${k > k''}$. Thus the cost estimate of $\tau_v^k$ over ${[s'',t'']}$ must be smaller than that of $\tau'$, i.e.,
    \begin{equation} \label{eq:tau_g_t_tau_R_3_1_ls}
        \hat{\mu}(\tau^g_t, s'',t'') \leq \hat{\mu}(\tau', s'',t'').
    \end{equation}
        
    Together with the good-event $\calE$ (\cref{lemma:probability_good_event}) and {\em change point condition}~\eqref{eq:change_condition_ls} this gives
    \begin{align*}
        \mu_t(\tau^g_t) &\leq \hat{\mu}(\tau_t^g, \beta_{v,i}, t) + b_{\beta_{v,i}, t} \quad \text{ by definition of $\calE$} \notag \\
        & \leq \hat{\mu}(\tau_t^g, s'',t'') + 2b_{\beta_{v,i}, t} + b_{s'',t''} \quad \text{ by change point condition (\cref{eq:change_condition_ls})}  \notag\\
        & \leq \hat{\mu}(\tau', s'',t'') + 2b_{\beta_{v,i}, t} + b_{s'',t''} \quad \text{ by \cref{eq:tau_g_t_tau_R_3_1_ls}}  \notag\\
        & \leq \hat{\mu}(\tau', s',t') + 2b_{\beta_{v,i}, t} + 2b_{s'',t''} + b_{s',t'} \quad \text{ by change point condition (\cref{eq:change_condition_ls})} \notag \\
        & < \hat{\mu}(\tau', s',t') + 2b_{\beta_{v,i}, t} + 2 b_{s', t'}  \quad \text{ by \cref{eq:2b_b_R_3_1_ls}}.
    \end{align*}
    Using inequality \eqref{eq:tau_hat_R_3_1_ls}, we get that the regret per time step is bounded as $\mu_t(\tau^g_t) - \mu_t(\taugammaopt) < 2b_{\beta_{v,i},t}$.
    
    \noindent\textbf{Case ${\tau' \leq \tau_{v}^{k}}$.}\quad
    By convexity we have that ${\mu_t(\tau^g_t) \leq \mu_t(\tau')}$ since ${\tau_v^k = \tau_t^g < \taugammaopt}$. Again by the good-event $\calE$ (\cref{lemma:probability_good_event}) and {\em change point condition}~\eqref{eq:change_condition_ls}, since the change has not been detected at time $t$, it holds that
    \begin{equation*}
        \mu_t(\tau^g_t) \leq \mu_t(\tau') \leq \hat{\mu}(\tau', \beta_{v,i}, t) + b_{\beta_{v,i},t} \leq \hat{\mu}(\tau', s', t') + 2b_{\beta_{v,i},t} + b_{s',t'}.
    \end{equation*}
    With inequality \eqref{eq:tau_hat_R_3_1_ls} we obtain the same bound as in the first case, $\mu_t(\tau^g_t) - \mu_t(\taugammaopt) \leq 2b_{\beta_{v,i},t}$.
    
\noindent\textbf{Final Bound of $R_{3.1}(T)$.}\quad
    In either of the above cases, summing over all time steps in $I_{v,i}$ yields
    \begin{align*}
        \textstyle \sum_{t \in I_{v,i} \cap \Tb_{3.1}} \left( \mu_t(\tau_t^g) - \mu_t(\taugammaopt) \right) &\leq \textstyle 2\Hls\sqrt{2 \log(2T^2U/\delta\gamma)} \sum_{t=1}^{\abs*{I_{v,i} \cap \Tb_{3.1}}}\sqrt{\frac{1}{t}} \\
        & \leq 4 \Hls \sqrt{2 \log(2T^2U/\delta\gamma) |I_{v,i}|}.
    \end{align*}
    Accumulated over up to $2S - 1$ intervals, we get for this regret contribution
    \begin{align*}
        R_{3.1}(T) = \sum_{t \in \Tb_{3.1}} \left( \mu_t(\tau_t^g) - \mu_t(\taugammaopt) \right) = \sum_{v=1}^{\hat{S}} \sum_{i=1}^{S_v} \sum_{t \in I_{v,i} \cap \Tb_{3.1}} \left( \mu_t(\tau_t^g) - \mu_t(\taugammaopt) \right) \leq 8 \Hls \sqrt{ST\log(2T^2U/\delta\gamma)}.
    \end{align*}

\noindent\textbf{Bound of $R_{3.2}(T)$.}\quad
    In these time steps, it holds that ${\tau_v^k < \taugammaopt}$ and ${\tilde{\mu}_v(\taugammaopt) - \mu_t(\taugammaopt) \geq \tilde{\Delta}_v(\taugammaopt)/2}$. The cost of the new optimal base-stock level has decreased significantly. We show that base-stock policy $U$ is selected (hence full counterfactual feedback collected) with sufficient frequency to detect the change quickly. 
    Let ${t \in I_{v,i}}$ for some interval ${I_{v,i} = [\beta_{v,i}, \beta_{v,i+1} - 1]}$, and let $[s',t']$ be the interval in the same episode~$v$ through which $\taugammaopt$ has been identified as suboptimal according to \cref{eq:elimination_condition_ls}. As there are no change detections within $I_{v,i}$, at any time step the instantaneous regret of base-stock policy $\tau^g_t$ with respect to the optimal base-stock policy is bounded via
    \begin{align*}
        \mu_t(\tau^g_t) - \mu_t(\taugammaopt) & \leq \tilde{\mu}_v(\taugammaopt) - \mu_t(\taugammaopt) + \hat{\mu}(\tau^g_t, \beta_{v,i}, t) - \tilde{\mu}_v(\taugammaopt) + b_{\beta_{v,i}, t} \quad \text{by definition of $\calE$ (\cref{lemma:probability_good_event})} \\
        & \leq \tilde{\mu}_v(\taugammaopt) - \mu_t(\taugammaopt) + \hat{\mu}(\tau^g_t, s', t') - \hat{\mu}(\taugammaopt, s',t') + b_{s', t'} + 2b_{\beta_{v,i}, t} \quad \text{by change cond. \eqref{eq:change_condition_ls}}.
    \end{align*}
    Since ${\tau_v^k < \taugammaopt}$ and by convexity (\cref{lemma:convexity}), it holds that ${\tau_t^g = \tau_v^k}$. Moreover, since $\tau_v^k$ remained in the active set while $\taugammaopt$ has been evicted, we have ${\hat{\mu}(\tau_t^g, s',t') = \hat{\mu}(\tau_v^k, s',t') \leq \hat{\mu}(\taugammaopt, s',t')}$ and
    \begin{align} \label{eq:regret_taug_taugammaopt}
        \mu_t(\tau^g_t) - \mu_t(\taugammaopt) & \leq \tilde{\mu}_v(\taugammaopt) - \mu_t(\taugammaopt) + b_{s', t'} + 2b_{\beta_{v,i}, t} \notag \\
        & < \tilde{\mu}_v(\taugammaopt) - \mu_t(\taugammaopt) + \tilde{\Delta}_v(\taugammaopt)/6 + 2b_{\beta_{v,i}, t}  \quad \text{ with $\tilde{\Delta}_v(\taugammaopt) > \Cb b_{s',t'}$} \notag \\
        & \leq \frac{4}{3}\left( \tilde{\mu}_v(\taugammaopt) - \mu_t(\taugammaopt) \right) + 2b_{\beta_{v,i}, t}  \quad \text{ by definition of $\Tb_{3.2}$}.
    \end{align}
    
    Let $\varepsilon$ be the largest ${\varepsilon_i=2^{-i} \geq \gamma}$ that satisfies ${\varepsilon_i \leq \left( \tilde{\mu}_v(\taugammaopt) - \mu_t(\taugammaopt) \right)/4\Hls}$, and let ${n=\lceil 2\log(2T^2U/\delta\gamma)/\varepsilon^2\rceil}$.
    Then ${\tilde{\mu}_v(\taugammaopt) - \mu_t(\taugammaopt) \leq 8 \Hls \varepsilon}$ and with \cref{eq:regret_taug_taugammaopt} it follows that
    \begin{equation} \label{eq:Rt_Bv2}
        \mu_t(\tau^g_t) - \mu_t(\taugammaopt) \leq \frac{32}{3}\Hls \varepsilon + 2b_{\beta_{v,i}, t}.
    \end{equation}
    
    Given there are at most $2S-1$ intervals $I_{v,i}$ with a total length of $T$ (\cref{lemma:false_alarms}), the contribution of the second term on the right hand side of inequality~\eqref{eq:Rt_Bv2} is
    \begin{equation*}
        \sum_{v=1}^{\hat{S}} \sum_{i=1}^{S_v} \sum_{t \in I_{v,i} \cap \Tb_{3.2}} 2b_{\beta_{v,i}, t} = \sum_{v=1}^{\hat{S}} \sum_{i=1}^{S_v} 2\Hls \sqrt{2\log(2T^2U/\delta\gamma)} \sum_{t = 1}^{\abs*{I_{v,i} \cap \Tb_{3.2}}} \sqrt{\frac{1}{t}} \leq 8\Hls\sqrt{ST \log(2T^2U/\delta\gamma)}.
    \end{equation*}
    Therefore, is suffices to bound the number of steps in $I_{v,i}$, and the regret per time step by the first term on the right hand size of inequality~\eqref{eq:Rt_Bv2} conditioned on the size of $\varepsilon$ through which the change can be detected.
    
    The instantaneous regret accumulates over the time steps until either the next change occurs or the current change is detected, whichever comes first. The change might not be detected sufficiently fast through only the cost observations of base-stock level ${\tau \leq \tau_v^k}$. By the construction of the set $\Tb_{3.2}$, however, the change in cost of $\taugammaopt$ is significant compared to the time of its eviction from the active set. Since ${\tau_v^k < \taugammaopt}$, the change is therefore detected when a sampling obligation for $U$ of sufficient length is added and {\em change point condition}~\eqref{eq:change_condition_ls_bad} is satisfied by any $\tau \in \A_{\gamma}$. In the following, we consider possible change magnitudes $\varepsilon$ and interval lengths $\abs{I_{v,i}}$, in relation to the length $n$ of the sampling obligation for base-stock level $U$ necessary to detect the change of size $\varepsilon$.    
    
    If ${\varepsilon \leq \sqrt{2 \log(2T^2U/\delta\gamma)/\abs{I_{v,i}}}}$, then with \cref{eq:Rt_Bv2} the regret is bounded as 
    \begin{equation*}
        \sum_{v=1}^{\hat{S}} \sum_{i=1}^{S_v} \frac{32}{3} \Hls \varepsilon \abs{I_{v,i}} \leq \frac{32}{3} \Hls \sum_{v=1}^{\hat{S}} \sum_{i=1}^{S_v} \sqrt{2\log(2T^2U/\delta\gamma)\abs{I_{v,i}}} = \frac{64}{3} \Hls \sqrt{ST \log(2T^2U/\delta\gamma)}.
    \end{equation*} 
    Likewise, if $\varepsilon > \sqrt{2 \log(2T^2U/\delta\gamma)/\abs{I_{v,i}}}$ and $\abs{I_{v,i}} < 2n$, then the regret contribution is 
    \begin{align} \label{eq:regret_short_intervals}
        \sum_{v=1}^{\hat{S}} \sum_{i=1}^{S_v} \frac{32}{3} \Hls \varepsilon \abs{I_{v,i}} &< \sum_{v=1}^{\hat{S}} \sum_{i=1}^{S_v} \frac{64}{3} \Hls \varepsilon n \quad \text{ using $\abs{I_{v,i}} < 2n$} \notag \\ 
        &\leq \textstyle \frac{64}{3} \Hls \sum_{v=1}^{\hat{S}} \sum_{i=1}^{S_v} \varepsilon \left( 2\log\left(\frac{2T^2U}{\delta\gamma}\right) / \varepsilon^2 + 1 \right) \quad \text{using $n=\lceil 2\log\left(\frac{2T^2U}{\delta\gamma}\right)/\varepsilon^2\rceil$} \notag \\
        &< \textstyle \frac{64}{3} \Hls \sum_{v=1}^{\hat{S}} \sum_{i=1}^{S_v} \left( \sqrt{2 \log\left(\frac{2T^2U}{\delta\gamma}\right)\abs{I_{v,i}}} + \varepsilon \right) \quad \text{since $\varepsilon > \sqrt{2 \log\left(\frac{2T^2U}{\delta\gamma}\right)/\abs{I_{v,i}}}$} \notag \\
        &\leq \textstyle \frac{128}{3} \Hls\sqrt{ST \log\left(\frac{2T^2U}{\delta\gamma}\right)} + \frac{64}{3} \Hls S \ \ \ \text{with Jensen's inequality and $\varepsilon = 2^{-i} < \frac{1}{2}$}.
    \end{align}
    
    Therefore, in the following we consider intervals of length $\abs{I_{v,i}} \geq 2 n$. We show that for these intervals, the change can be detected before the next change happens as the time difference between the two changes is larger than the number of samples of base-stock level $U$ required to detect the previous change.
    Let $t_{v,j}$ denote the first time step of the $j$'th sampling block for $U$ in episode $v$. After completing a sampling block of length $n$, with $\sqrt{\frac{2\log(2T^2U/\delta\gamma)}{n}} \leq \varepsilon \leq \tilde{\mu}_v(\taugammaopt) - \mu_t(\taugammaopt)/8\Hls$ we have that
    \begin{align*}
        \tilde{\mu}_v(\taugammaopt) - \hat{\mu}(\taugammaopt, t_{v,j}, t) &> \tilde{\mu}_v(\taugammaopt) - \mu_t(\taugammaopt) - \Hls\sqrt{\frac{2\log(2T^2U/\delta\gamma)}{n}}  \quad \text{by definition of $\calE$ (\cref{lemma:probability_good_event})} \\
        & \geq \tilde{\mu}_v(\taugammaopt) - \mu_t(\taugammaopt) + \Hls\sqrt{\frac{2\log(2T^2U/\delta\gamma)}{n}} - 2\Hls\varepsilon \ \text{with $\sqrt{\frac{2\log(2T^2U/\delta\gamma)}{n}} \leq \varepsilon$}\\
        & \geq \frac{3}{4} \left(\tilde{\mu}_v(\taugammaopt) - \mu_t(\taugammaopt) \right) + \Hls\sqrt{\frac{2\log(2T^2U/\delta\gamma)}{n}} \ \text{with $\varepsilon \leq \tilde{\mu}_v(\taugammaopt) - \mu_t(\taugammaopt)/8\Hls$} \\
        & \geq \tilde{\Delta}_v(\taugammaopt)/4 + \Hls\sqrt{\frac{2\log(2T^2U/\delta\gamma)}{n}} \quad \text{by definition of $\Tb_{3.2}$} 
    \end{align*}
    and hence the {\em change point condition}~\eqref{eq:change_condition_ls_bad} is satisfied. If the sampling block is not completed, this is captured by the above case $\abs{I_{v,i}} < 2n$.
        
    The remainder of the proof closely follows the arguments presented in \citet[B.2]{auer2019adaptively}.
    Let $\xi_l$ for $l=2, \dots , S$ denote the actual change points and $\xi_1 = 1$ as well as $\xi_{S+1}=T+1$. Then, $I_{v,i}$ is a subinterval of an interval without change $[\xi_l, \xi_{l+1}-1]$ defined by change detections that may occur between change points $\xi_l$ and $\xi_{l+1}$. Since we assume $\abs{I_{v,i}} \geq 2n$, it follows $\xi_{l+1} - \beta_{v,i} \geq 2n$.
    As defined in \NSLSICL, the sampling obligation that detects the change of size at least $\varepsilon$ is added with probability $p_v \varepsilon$ with ${p_v=\sqrt{v/UT \log(2T^2U/\delta\gamma)}}$ and, once added, triggers a restart after ${n=\lceil 2\log(2T^2U/\delta\gamma)/\varepsilon^2\rceil}$ steps.
    The regret contribution ${\bar{R}_{v,i} \coloneq \sum_{t \in I_{v,i} \cap \Tb_{3.2}} \left( \mu_t(\tau_t^g) - \mu_t(\taugammaopt) \right)}$ over interval $I_{v,i}$ with ${\abs{I_{v,i}} \geq 2n}$ is therefore using \cref{eq:Rt_Bv2} bounded as
    \begin{align*}
        \bar{R}_{v,i} & \leq \frac{32}{3} \Hls\varepsilon \; \left( \sum_{j=1}^{\xi_{l+1}-\beta_{v,i}-n} \left(1-p_v \varepsilon \right)^j + n \right) \\
        & \leq \frac{32}{3}\Hls\varepsilon \; \left( \frac{1 - \left( 1 - p_v \varepsilon \right)^{\xi_{l+1}-\beta_{v,i}-n}}{p_v \varepsilon} + n \right) \\
        & = \frac{32\Hls}{3p_v} \; \left( 1 - \left( 1 - p_v \varepsilon \right)^{\xi_{l+1}-\beta_{v,i}-n} \right) + \frac{32}{3}\Hls n\varepsilon \\
        & = \frac{32\Hls}{3 p_v} \; \left( 1 - \left( 1 - p_v \varepsilon \right)^{\xi_{l+1}-\beta_{v,i}-n} \right) + \frac{128}{3} \Hls \log(2T^2U/\delta\gamma)/\varepsilon.
    \end{align*}
    
    Let $q_{v,i}$ denote the probability that $I_{v,i}$ does not end with the detection of a change but an actual change, that is, ${\beta_{v,i+1} = \xi_{l+1}}$. Recall that we need at least $n$ samples to detect the change of size  $\varepsilon$. Hence, if no sampling obligation of at least this length is added for at least ${\abs{I_{v,i}} - n}$ steps, the change will not be detected and interval $I_{v,i}$ ends with an actual change at $\xi_{l+1}$. Since with probability ${p_v \varepsilon}$ the required sampling obligation is added, we have that $q_{v,i} \leq \left( 1 - p_v \varepsilon \right)^{\xi_{l+1}-\beta_{v,i}-n}$.
    
    Let $R_{v,i}$ denote the total future regret contribution of steps ${t \in \Tb_{3.2}}$ in intervals $I_{v,i}$ with ${\abs{I_{v,i}} \geq 2n}$ and ${I_{v,i} \subseteq [\xi_l, \xi_{l+1} - 1]}$, starting from $\beta_{v,i}$. We show by backward induction that this quantity is bounded as
    \begin{equation*}
        R_{v,i} \leq \sum_{v'=v}^{\hat{S}} \frac{32\Hls}{3 p_{v'}} + \frac{128}{3} \Hls \sqrt{\log(2T^2U/\delta\gamma) (2S - l - v)(T+1-\beta_{v,i})}.
    \end{equation*}
    This is trivially satisfied for ${\beta_{v,i} = T+1}$.
    The future expected regret from intervals that start at ${\beta_{v,i} \leq T}$ is the sum of the regret in the current interval and the future regret from the next interval. By induction, this is 
    \begin{align*}
        R_{v,i} & \leq \bar{R}_{v,i} + q_{v,i} R_{v,i+1} + \left( 1 - q_{v,i} \right) R_{v+1,1} \\
                & \leq \frac{32\Hls}{3 p_v} \left( 1 - q_{v,i} \right) + q_{v,i} \sum_{v'=v}^{\hat{S}} \frac{32\Hls}{3 p_{v'}} + (1 - q_{v,i}) \sum_{v'=v+1}^{\hat{S}} \frac{32\Hls}{3 p_{v'}} \\
                &\quad+ \frac{128}{3}\Hls q_{v,i}\sqrt{\log(2T^2U/\delta\gamma)(\xi_{l+1} - \beta_{v,i})} + \frac{128}{3}\Hls (1 - q_{v,i})\sqrt{\log(2T^2U/\delta\gamma)(\beta_{v+1,1} - \beta_{v,i})} \\
                &\quad + \frac{128}{3}\Hls q_{v,i}\sqrt{\log(2T^2U/\delta\gamma)(2S - l - v - 1)(T + 1 - \xi_{l+1})} \\
                &\quad + \frac{128}{3}\Hls (1-q_{v,i})\sqrt{\log(2T^2U/\delta\gamma)(2S - l - v - 1)(T + 1 - \beta_{v+1,1})} \\
                & = \frac{32\Hls}{3 p_v} \left( 1 - q_{v,i} \right) + \frac{32 \Hls}{3 p_v} q_{v,i} + \sum_{v'=v+1}^{\hat{S}} \frac{32\Hls}{3 p_{v'}} + \frac{128 \Hls}{3} q_{v,i} \sqrt{\log\left(\frac{2T^2U}{\delta\gamma}\right)(2S - l - v)(T + 1 - \beta_{v,i})} \\
                &\quad + \frac{128 \Hls}{3} \left(1 - q_{v,i}\right) \sqrt{\log(2T^2U/\delta\gamma)(2S - l - v)(T + 1 - \beta_{v,i})}\\
                & = \sum_{v'=v}^{\hat{S}} \frac{32\Hls}{3 p_{v'}} + \frac{128\Hls}{3}\sqrt{\log(2T^2U/\delta\gamma)(2S - l - v)(T + 1 - \beta_{v,i})},
    \end{align*}
    where in the second to last step we have used that 
    \begin{align*}
        \sqrt{\xi_{l+1} - \beta_{v,i}} + \sqrt{(2S - l - v - 1)(T + 1 - \xi_{l+1})} &\leq \sqrt{(2S - l - v)(T + 1 - \beta_{v,i})} \ \text{ and } \\
        \sqrt{\beta_{v+1,1} - \beta_{v,i}} + \sqrt{(2S - l - v - 1)(T + 1 - \beta_{v+1,1})} &\leq \sqrt{(2S - l - v)(T + 1 - \beta_{v,i})}.
    \end{align*}
    With the following bound on total regret accumulated over all long intervals 
    \[
        R_{1,1} \leq \sum_{v=1}^{\hat{S}} \frac{32\Hls}{3}\sqrt{\frac{UT \log(2T^2U/\delta\gamma)}{v}} + \frac{128\Hls}{3} \sqrt{2ST\log(2T^2U/\delta\gamma)} \leq \frac{256 + 64\sqrt{U}}{3} \Hls\sqrt{ST\log\left(\frac{2T^2U}{\delta\gamma}\right)}
    \]
    and the bound on the regret incurred over short intervals in \cref{eq:regret_short_intervals}, it follows for the total regret contribution
    \[
        R_{3.2}(T) = \sum_{t \in \Tb_{3.2}} \left(\mu_t(\tau_t^g) - \mu_t(\taugammaopt) \right)
        \leq \frac{256 + 64\sqrt{U}}{3} \Hls\sqrt{ST\log(2T^2U/\delta\gamma)} + \frac{64}{3} \Hls S.
    \]

\noindent\textbf{Final Regret Bound.}\quad
    We conclude that, conditioned on the event $\calE$, we obtain the following bounds for the regret components defined in \cref{lemma:regret_decomposition_LS_L0}
    \begin{align*}
        R_1(T) &\leq 32 \Hls \sqrt{ST \log(2T^2U/\delta\gamma)}, \\
        R_2(T) &< \left(\frac{192}{5} + 384\sqrt{U}\max\{h,b\} + \frac{256 \log_2(1/\gamma)}{7\sqrt{U}} \right) \Hls \sqrt{ST \log(2T^2U/\delta\gamma)}, \\
        R_3(T) &\leq \frac{280 + 64\sqrt{U}}{3} \Hls\sqrt{ST\log(2T^2U/\delta\gamma)} + \frac{64}{3}\Hls S. 
    \end{align*}
    Following the decompositions in \cref{lemma:regret_decomposition,lemma:regret_decomposition_LS_L0}, the expected dynamic regret is upper bounded via
    \begin{align*}
        \E &\left[R(T) \mid \calE \right] = R_\gamma(T) + R_1(T) + R_2(T) + R_3(T) \\
            &< \left(\frac{2456}{15}+\left(\frac{64}{3} + 384\max\{h,b\}\right)\sqrt{U} + \frac{256 \log_2(1/\gamma)}{7\sqrt{U}} \right)\Hls \sqrt{ST \log\left(\frac{2T^2U}{\delta\gamma}\right)} 
            + \frac{64}{3}\Hls S + T\gamma \max\{h,b\}.
    \end{align*}
    By \cref{lemma:probability_good_event}, we have that ${\Pr(\calE) \geq 1 - \delta}$. Moreover, using the Lipschitz continuity of the cost function (\cref{lemma:lipschitz}), it holds that ${\Exp{R(T) \mid \calE^{c}} \leq TU \max\{h,b\}}$.
    Therefore, by decomposing the expectation over $\calE$ and $\calE^c$ we obtain
    \begin{align*}
        \Exp{R(T)} &= \Exp{R(T) \mid \calE} \Pr(\calE) + \Exp{R(T) \mid \calE^{c}} \Pr(\calE^{c}) \\
        & < \left(1 - \delta\right) \left(\left(\frac{2456}{15}+\left(\frac{64}{3} + 384\max\{h,b\}\right)\sqrt{U} + \frac{256 \log_2(1/\gamma)}{7\sqrt{U}} \right)\Hls \sqrt{ST \log(2T^2U/\delta\gamma)} \right.\\
        & \qquad \qquad \quad \left. + \frac{64}{3}\Hls S + T\gamma \max\{h,b\} \right) + \delta TU\max\{h,b\}.
    \end{align*}    
    Finally, with ${\Hls = 216U\max\{h,b\}}$ and by setting ${\gamma = \mathcal{O}\left(T^{-1/2}\right)}$ as well as ${\delta = \Theta\left(T^{-2}\right)}$ this gives ${\E \left[R(T)\right] = \tilde{\mathcal{O}}\left( U^{3/2} \sqrt{ST} \right)}$.
\end{rproofof}

\subsection{\texorpdfstring{Proof of \Cref{thm:regret_LS_L_positive}}{Regret Analysis Lost-Sales with Positive L}} \label[appendix]{app:regret_LS_Lpositive}

\LSpositiveLRegret*

\cref{thm:regret_LS_L_positive} provides an upper bound on the dynamic regret {\em in expectation}, since it depends on the number of time steps in which orders are suspended to reduce inventory, which \Cref{ass:main_assumption}.\ref{ass:bounded_depletion_steps} ensures is bounded in expectation. 
We analyze regret conditional on the event $\calE$ and state the total regret bound in expectation. 

\begin{proof}
To prove \cref{thm:regret_LS_L_positive}, we start with the regret decomposition specific to the \NSLSICL algorithm. 

\noindent\textbf{Regret Decomposition.}\quad
By \cref{lemma:probability_good_event}, our concentration guarantees hold for all time intervals in which the sum of on-hand and in transit inventory does not exceed the designated base-stock level. Transitioning from a higher level $\tau_v^k$ to a lower level $\tau_v^{k+1}$ requires a number of steps ${\alpha_{v,k}, \dots , \bar{\alpha}_{v,k}-1}$ to reduce inventory and cost observations collected during these intermediate steps do not contribute to the accuracy of the cost estimates.
Convenient for the following regret analysis, we partition $\Rb(T)$ based on the two sets ${\bigcup_{v,k} \{t \mid \alpha_v^k \leq t < \bar{\alpha}_v^k\}}$ and ${[T] \setminus \bigcup_{v,k} \{t \mid \alpha_v^k \leq t < \bar{\alpha}_v^k\}}$, and whether the optimal base-stock level is at most as large as or greater than $\tau_v^k$, writing ${\Rb(T) = \sum_{t \in [T]} \left(\mu_t(\tau_t) - \mu_t(\taugammaopt)\right) = R_{\tau_0}(T) + R_1(T) + R_2(T)}$ with
\begin{align*}
    R_{\tau_0}(T) \coloneq \ \ \ \sum_{\mathclap{t \in \bigcup_{v,k} \{t' \mid \alpha_v^k \leq t' < \bar{\alpha}_v^k\}}} \  \left(\mu_t(\tau_t) - \mu_t (\taugammaopt) \right), \ \
    R_1(T) \coloneq \ \ \ \sum_{\mathclap{\substack{t \in \bigcup_{v,k} \{t' \mid \bar{\alpha}_v^k \leq t' < \alpha_v^{k+1}\}, \\ \taugammaopt \leq \tau_v^k}}} \ \left(\mu_t(\tau_t) - \mu_t (\taugammaopt) \right), \ \ 
    R_2(T) \coloneq \ \ \ \sum_{\mathclap{\substack{t \in \bigcup_{v,k} \{t' \mid \bar{\alpha}_v^k \leq t' < \alpha_v^{k+1}\}, \\ \taugammaopt > \tau_v^k}}} \ \left(\mu_t(\tau_t) - \mu_t (\taugammaopt) \right). 
\end{align*}

Next, we provide the regret analysis for each of the terms $R_{\tau_0}(T), R_1(T)$ and $R_2(T)$.

\noindent\textbf{Bound of $R_{\tau_0}(T)$.}\quad
    By \cref{lemma:probability_good_event}, the concentration bound in \cref{lemma:concentration} does not apply in the inventory depletion time steps. We can, however, bound the expected number of such time steps as well as the cost incurred in each of these time steps.
    First, we bound the expected number of time steps of inventory reduction within an episode~$v$, that is, ${\abs*{\bigcup_{k} \{t \mid \alpha_v^k \leq t < \bar{\alpha}_v^k\}}}$.
    Note that the expected number of steps required to deplete an amount ${\tau_t - \tau_{t+1}}$ of inventory is ${\mathds{1}\{\tau_t \neq \tau_{t+1}\} L + \left(\tau_t - \tau_{t+1}\right)\nu}$. This holds since after at most $L$ time steps of ordering zero units the pipeline of outstanding orders has cleared and the inventory on-hand is at most~$\tau_t$.  By \Cref{ass:main_assumption}.\ref{ass:bounded_depletion_steps}, it takes another $(\tau_t - \tau_{t+1}) \nu$ steps until the inventory position has fallen down to $\tau_{t+1}$. Therefore, we have that 
    \begin{align*}
        \textstyle \E\left[\bigg|\bigcup_{k} \{t \mid \alpha_v^k \leq t < \bar{\alpha}_v^k\}\bigg|\right] &\leq \textstyle \sum_{t \in [t_v, t_{v+1}-1]} \left(\mathds{1}\{\tau_t \neq \tau_{t+1}\} L + \left(\tau_t - \tau_{t+1}\right) \nu \right) \\
        &\leq \textstyle L \sum_{t \in [t_v, t_{v+1}-1]} \mathds{1}\{\tau_t \neq \tau_{t+1}\} + \nu \sum_{t \in [t_v, t_{v+1}-1]} \left(\tau_t - \tau_{t+1}\right).
    \end{align*}
    The number of times the base-stock level is decreased in episode~$v$ is bounded as 
        ${\sum_{t \in [t_v, t_{v+1}-1]} \mathds{1}\{\tau_t \neq \tau_{t+1}\} \leq \abs*{\A_{\gamma}} - 1 \leq \lceil U/\gamma \rceil}$.
    Moreover, we know that the sequence of implemented base-stock levels over the epochs in ${[t_v, t_{v+1}-1]}$ is non-decreasing. Therefore, we have that the total amount by which inventory is reduced accumulated over the steps in episode~$v$ is $\sum_{t \in [t_v, t_{v+1}-1]} \left(\tau_t - \tau_{t+1}\right) \leq U$.    
    This implies 
        ${\E\left[\Big|\bigcup_{k} \{t \mid \alpha_v^k \leq t < \bar{\alpha}_v^k\}\Big|\right] \leq L \lceil U/\gamma \rceil + U\nu}$.
    
    Finally, by Lipschitz continuity of $\mu_t(\cdot)$ (\cref{lemma:lipschitz}), in each step a regret of at most ${U \max \{ h, b \}}$ is incurred. Since the total number of episodes~$\hat{S}$ is at most $S$ (\cref{lemma:false_alarms}), summing over all episodes yields the expected regret upper bound
    \begin{align*}
        R_{\tau_0}(T) = \ \ \ \sum_{\mathclap{t \in \bigcup_{v,k} \{t' \mid \alpha_v^k \leq t' < \bar{\alpha}_v^k\}}} \ \ \left(\mu_t(\tau_t) - \mu_t (\taugammaopt) \right) 
        \leq \sum_{v=1}^{\hat{S}} \left( L \lceil U/\gamma \rceil + U \nu\right) U\max\{ h, b \} \leq \left(L\lceil U/\gamma \rceil + U \nu \right) S U \max\{h, b\}.
    \end{align*}

\noindent\textbf{Bound of $R_1(T)$.}\quad
    Let ${t \in I_{v,i}}$ for some stationary interval ${I_{v,i}=[\beta_{v,i}, \beta_{v, i+1}-1]}$ in episode~$v$. Furthermore, let ${\bar{\alpha}_{v,k} \leq t < \alpha_{v,k+1}}$ for an epoch~$k$ with ${\taugammaopt \leq \tau_v^k}$. To bound the regret in these time steps, we use the fact that the costs of both base-stock level $\taugammaopt$ and $\tau_v^k$ are observed in each time step. 

    We know that base-stock level ${\tau_v^k = \sup \Avk{v}{k}}$ was not evicted before the start of epoch~$k+1$. This implies that $\tau_v^k$ does not satisfy the {\em elimination condition}~\eqref{eq:elimintation_condition_lsl}, that is,
    \vspace{-5pt}
    \begin{equation} \label{eq:R1_first_inequality_lsl}
        \hat{\mu}(\tau_v^k, s, t) - \min_{\tau \in \A_{\gamma}, \tau \leq \tau_v^k} \hat{\mu}(\tau, s, t) \leq 4 b_{s, t} \text{ for all } s \in [\bar{\alpha}_{v,k},t],
    \end{equation}
    or base-stock level $\tau_v^k - \gamma$ violates the {\em separation condition}~\eqref{eq:separation_condition}, and consequently
    \vspace{-5pt}
    \begin{equation} \label{eq:R1_second_inequality_lsl}
        \hat{\mu}(\tau_v^k - \gamma, \bar{\alpha}_{v,k}, t) - \min_{\tau \in \A_{\gamma}, \tau \leq \tau_v^k} \hat{\mu}(\tau, \bar{\alpha}_{v,k}, t) \leq 2b_{\bar{\alpha}_{v,k}, t} + \max \{ h, b\} \gamma.
    \end{equation}
    We consider both cases separately.

    \noindent\textbf{Case: $\tau_v^k$ Does Not Satisfy the Elimination Condition.}\quad
    We start with the case in which we assume that \cref{eq:R1_first_inequality_lsl} holds. Here, base-stock level $\tau_v^k$ was not evicted from $\Avk{v}{k}$ because its empirical mean cost is not sufficiently separated from that of the empirically best level. Then, for ${s = \bar{\alpha}_{v,k}}$ in \cref{eq:R1_first_inequality_lsl} and with $\min_{\tau \in \A_{\gamma}, \tau \leq \tau_v^k} \hat{\mu}(\tau, \bar{\alpha}_{v,k}, t) \leq \hat{\mu}(\taugammaopt, \bar{\alpha}_{v,k}, t)$ since $\taugammaopt \leq \tau_v^k$, we have $\hat{\mu}(\tau_v^k, \bar{\alpha}_{v,k}, t) - \hat{\mu}(\taugammaopt, \bar{\alpha}_{v,k}, t) \leq 4 b_{\bar{\alpha}_{v,k}, t}$.
    Because we consider time steps ${t \in [\bar{\alpha}_{v,k}, \alpha_{v,k} - 1]}$ within stationary interval $I_{v,i}$ in which ${\taugammaopt \leq \tau_v^k}$, following \cref{lemma:probability_good_event} it holds that
        ${\abs*{\hat{\mu}(\tau_v^k, \bar{\alpha}_{v,k}, t) - \mu_t(\tau_v^k)} \leq b_{\bar{\alpha}_{v,k}, t}}$ and ${\abs*{\hat{\mu}(\taugammaopt, \bar{\alpha}_{v,k}, t) - \mu_t(\taugammaopt)} \leq b_{\bar{\alpha}_{v,k}, t}}$.
    As a result, we can bound the regret per time step for all ${\bar{\alpha}_{v,k} \leq t < \alpha_{v,k+1}}$ by
        $\mu_{t}(\tau_v^k) - \mu_t(\taugammaopt) \leq 6b_{\bar{\alpha}_{v,k}, t}$.
    
    To determine the cumulative regret over all time steps in ${\bigcup_{k} \{t \in I_{v,i} \mid \bar{\alpha}_v^k \leq t < \alpha_v^{k+1}\}}$, we consider the set of epochs which start or end in interval~$I_{v,i}$. The number of such epochs is upper bounded by the total number of epochs in episode~$v$, which is at most ${\abs*{\mathcal{A}_{\gamma}} \leq \lceil U/\gamma\rceil + 1}$. The number of time steps in these epochs is no larger than $\abs{I_{v,i}}$. With Jensen's inequality, we get the following regret over the time steps in the interval
    \begin{align*}
        \ \ \sum_{\mathclap{\substack{t \in \bigcup_{k} \{t' \in I_{v,i} \mid \bar{\alpha}_v^k \leq t' < \alpha_v^{k+1}\}, \\ \taugammaopt \leq \tau_v^k}}} \quad \left( \mu_{t}(\tau_v^k) - \mu_{t}(\taugammaopt) \right) & \leq 6\Hls \sqrt{2 \log(2T^2U/\delta\gamma)} \sum_{k : [\bar{\alpha}_{v,k}, \alpha_{v,k+1}-1] \cap I_{v,i} \neq \emptyset} \sum_{t=1}^{\alpha_{v,k+1} - \bar{\alpha}_{v,k}} \sqrt{\frac{1}{t}} \\
        & \textstyle \leq 12\Hls \sqrt{2 \log(2T^2U/\delta\gamma)\abs*{I_{v,i}}\left( \Big\lceil \frac{U}{\gamma} \Big\rceil + 1 \right)}.
    \end{align*}
    Since there are at most $2S-1$ intervals $I_{v,i}$ of length at most $T$, it follows the regret bound
    \begin{equation*}
        \sum_{v=1}^{\hat{S}} \sum_{i=1}^{S_v} \sum_{\substack{t \in \bigcup_{k} \{t' \in I_{v,i} \mid \bar{\alpha}_v^k \leq t' < \alpha_v^{k+1}\}, \\ \taugammaopt \leq \tau_v^k}} \left( \mu_t(\tau_v^k) - \mu_t(\taugammaopt) \right) \leq 24 \Hls \sqrt{ST \log(2T^2U/\delta\gamma)(U / \gamma + 2)}.
    \end{equation*}

    \noindent\textbf{Case: $\tau_v^k$ Does Not Satisfy the Separation Condition.}\quad
    Next, we consider the time steps in which \cref{eq:R1_second_inequality_lsl} is satisfied. Here, the base-stock level $\tau_v^k$ was not eliminated from the active set due to the {\em separation condition}~\eqref{eq:separation_condition}.
    Using $\underset{\tau \in \A_{\gamma}, \tau \leq \tau_v^k}{\min} \hat{\mu}(\tau, \bar{\alpha}_{v,k}, t) \leq \hat{\mu}(\taugammaopt, \bar{\alpha}_{v,k}, t)$, it follows from \cref{eq:R1_second_inequality_lsl} that
    \[
        \hat{\mu}(\tau_v^k - \gamma, \bar{\alpha}_{v,k}, t) - \hat{\mu}(\taugammaopt, \bar{\alpha}_{v,k}, t) \leq 2b_{\bar{\alpha}_{v,k}, t} + \max \{ h, b\} \gamma.
    \]
    By \cref{lemma:probability_good_event} and since both $\tau_v^k - \gamma$ and $\taugammaopt$ are at most as large as $\tau_v^k$, we have that
    \[
        \abs*{\hat{\mu}(\tau_v^k - \gamma, \bar{\alpha}_{v,k}, t) - \mu_t(\tau_v^k - \gamma)} \leq b_{\bar{\alpha}_{v,k}, t} \quad \text{and} \quad \abs*{\hat{\mu}(\taugammaopt, \bar{\alpha}_{v,k}, t) - \mu_t(\taugammaopt)} \leq b_{\bar{\alpha}_{v,k}, t}
    \]
    and we can bound the regret per time step for ${\bar{\alpha}_{v,k} \leq t < \alpha_{v,k+1}}$ by ${\mu_t(\tau_v^k - \gamma) - \mu_t(\taugammaopt) \leq 4b_{\bar{\alpha}_{v,k}, t} + \max \{ h, b\} \gamma}$.
    Using the Lipschitz continuity of $\mu_t(\cdot)$ as per \cref{lemma:lipschitz}, we obtain ${\mu_t(\tau_v^k) - \mu_t(\taugammaopt) \leq \mu_t(\tau_v^k - \gamma) - \mu_t(\taugammaopt) + \max\{h,b\} \gamma \leq 4b_{\bar{\alpha}_{v,k},t} + 2 \max\{h,b\} \gamma}$.
    Applying the same arguments as in the first case yields the following bound on the regret accumulated over the interval
    \vspace{-5pt}
    \begin{align*}
        \qquad \sum_{\mathclap{\substack{t \in \bigcup_{k} \{t' \in I_{v,i} \mid \bar{\alpha}_v^k \leq t' < \alpha_v^{k+1}\}, \\ \taugammaopt \leq \tau_v^k}}} \ \ \left( \mu_{t}(\tau_v^k) - \mu_{t}(\taugammaopt) \right) 
        &\leq 4\Hls \sqrt{2 \log(2T^2U/\delta\gamma)} \quad \sum_{\mathclap{\substack{k \geq 1: \\ [\bar{\alpha}_{v,k}, \alpha_{v,k+1}-1] \cap I_{v,i} \neq \emptyset}}} \quad \sum_{t=1}^{\alpha_{v,k+1} - \bar{\alpha}_{v,k}} \sqrt{\frac{1}{t}} + \qquad \quad \sum_{\mathclap{\substack{t \in \bigcup_{k} \{t' \in I_{v,i} \mid \bar{\alpha}_v^k \leq t' < \alpha_v^{k+1}\}, \\ \taugammaopt \leq \tau_v^k}}} \ \ 2 \max\{h,b\} \gamma \\
        & \leq \textstyle 8\Hls \sqrt{2 \log(2T^2U/\delta\gamma)\abs*{I_{v,i}}\left( \Big\lceil \frac{U}{\gamma} \Big\rceil + 1 \right)} + 2\abs{I_{v,i}}\max\{h,b\}\gamma.
    \end{align*}
    Finally, by Jensen's inequality, the sum over at most ${2S - 1}$ intervals with a total length of $T$ is bounded via
    \vspace{-5pt}
    \begin{equation*}
        \sum_{v=1}^{\hat{S}} \sum_{i=1}^{S_v} \sum_{\substack{t \in \bigcup_{k} \{t' \in I_{v,i} \mid \bar{\alpha}_v^k \leq t' < \alpha_v^{k+1}\}, \\ \taugammaopt \leq \tau_v^k}} \left( \mu_t(\tau_v^k) - \mu_t(\taugammaopt) \right) \leq 16 \Hls \sqrt{ST \log(2T^2U/\delta\gamma)(U / \gamma + 2)} + 2T\gamma \max\{h,b\}.
    \end{equation*}

    \noindent\textbf{Final Bound of $R_1(T)$.}\quad
    The regret component $R_1(T)$ is therefore bounded as
    \[
        R_{1}(T) = \sum_{\substack{t \in \bigcup_{v,k} \{t' \mid \bar{\alpha}_v^k \leq t' < \alpha_v^{k+1}\}, \\ \taugammaopt \leq \tau_v^k}} \left(\mu_t(\tau_t) - \mu_t (\taugammaopt) \right) \leq 24 \Hls \sqrt{ST \log(2T^2U/\delta\gamma)(U / \gamma + 2)} + 2T\gamma \max\{h,b\}.
    \]

\noindent\textbf{Bound of $R_2(T)$.}\quad
    Let ${t \in I_{v,i}}$ for some stationary interval ${I_{v,i} = [\beta_{v,i}, \beta_{v,i+1} - 1]}$ in episode~$v$. Moreover, let ${\bar{\alpha}_{v,k} \leq t < \alpha_{v,k+1}}$ for some epoch ${k \geq 2}$ and ${\tau_v^k < \taugammaopt}$. Note that in the first epoch of each episode it holds that ${\taugammaopt \leq \tau_v^k}$ since ${\tau_v^k = U}$ and hence the regret in these epochs is captured by the term $R_1(T)$.
    In the following, we first show that no new epoch is triggered in such time steps~$t$ and hence interval~$I_{v,i}$ lies entirely within epoch~$k$. We then use this fact to upper bound the length of $I_{v,i}$, after which we bound the regret per-time step given that the base-stock level implemented must always satisfy the {\em separation condition}~\eqref{eq:separation_condition}.

    We begin by showing that the next epoch~$k+1$ is not started in interval $I_{v,i}$. This is because if ${\tau_v^k < \taugammaopt}$, then by convexity we have that ${\mu_t(\tau_v^k) \leq \mu_t(\tau)}$ for all ${\tau \leq \tau_v^k}$ and together with the Lipschitz continuity of the cost function (\cref{lemma:lipschitz}) we have that
    \vspace{-5pt}
    \begin{align*}
        \mu_t(\tau_v^k) \leq \mu_t \Big( \underset{\tau \in \A_{\gamma}, \tau \leq \tau_v^k}{\arg \min}\, \hat{\mu}(\tau, \bar{\alpha}_{v,k}, t) \Big) \quad 
       \text{and} \quad \mu_t(\tau_v^k - \gamma) \leq \mu_t \Big( \underset{\tau \in \A_{\gamma}, \tau \leq \tau_v^k}{\arg \min}\, \hat{\mu}(\tau, \bar{\alpha}_{v,k}, t) \Big) + \max\{h,b\} \gamma.
    \end{align*}
    It follows with \cref{lemma:probability_good_event} that ${\hat{\mu}(\tau_v^k - \gamma, \bar{\alpha}_{v,k}, t) - \underset{\tau \in \A_{\gamma}, \tau \leq \tau_v^k}{\min} \hat{\mu}(\tau, \bar{\alpha}_{v,k}, t) \leq 2b_{\bar{\alpha}_{v,k}, t} + \max\{h,b\} \gamma}$.
    Therefore, the {\em separation condition}~\eqref{eq:separation_condition} is violated by base-stock level ${\tau_v^k - \gamma}$. Thus, $\tau_v^k$ is not eliminated from the active set $\Avk{v}{k}$ and hence sampled throughout the entire interval~$I_{v,i}$, until either the next change that causes the optimal base-stock level to be smaller than $\tau_v^k$ occurs, or until the change at $\beta_{v,i}$ is detected. This means, we have ${[\beta_{v,i}, \beta_{v,i+1}] \subseteq [\alpha_{v,k}, \alpha_{v,k+1} - 1]}$. For notational clarity, we furthermore assume that ${\beta_{v,i} \leq \bar{\alpha}_{v,k}}$, that is, inventory and orders are reduced to the target base-stock level $\tau_v^k$ after step $\beta_{v,i}$. Otherwise, all following arguments hold with $\bar{\alpha}_{v,k}$ replaced by $\beta_{v,i}$.
    
    As a result, the estimators $\hat{\mu}(\tau, \bar{\alpha}_{v,k}, t)$ are well-defined for all $\tau \leq \tau_v^{k}$ and ${\bar{\alpha}_{v,k} < t < \beta_{v,i+1}}$, and our concentration guarantees (\cref{lemma:probability_good_event}) apply since $\tau_v^{k}$ does not change between step $\alpha_{v,k}$ and any ${t < \beta_{v,i+1}}$. The same holds for estimators $\hat{\mu}(\tau, \bar{\alpha}_{v,k-1}, \alpha_{v,k})$ with $\tau \leq \tau_v^{k-1}$ and epoch $k-1$ in the same episode because $\tau_v^{k-1}$ is fixed over the time steps in $[\bar{\alpha}_{v,k-1}, \alpha_{v,k} - 1]$.

    \noindent\textbf{Bound of $\abs{I_{v,i}}$.}\quad
    Next, we determine the maximum number of time steps until the change is detected. 
    Let ${\tau' \coloneq \arg\min_{\tau \in \A_{\gamma}, \tau \leq \tau_v^{k-1}} \hat{\mu}(\tau, \bar{\alpha}_{v,k-1}, \alpha_{v,k})}$ represent the base-stock level with the minimum estimated cost in epoch $k-1$.
    Note that since
        ${\hat{\mu}(\tau', \bar{\alpha}_{v,k-1}, \alpha_{v,k}) - \min_{\tau \in \A_{\gamma}, \tau \leq \tau_v^{k-1}} \hat{\mu}(\tau, \bar{\alpha}_{v,k-1}, \alpha_{v,k}) = 0 < 4 b_{\bar{\alpha}_{v,k-1}, \alpha_{v,k}}}$
    base-stock level $\tau'$ does not satisfy the {\em elimination condition}~\eqref{eq:elimintation_condition_lsl}, and hence it must hold that ${\tau' \leq \tau_v^k}$.
    
    Using ${\tau' \leq \tau_v^k < \taugammaopt}$ and the convexity of the cost function, we get that ${\mu_t(\taugammaopt) \leq \mu_t(\tau_v^k) \leq \mu_t(\tau')}$ and with \cref{lemma:probability_good_event} that
    \vspace{-5pt}
    \begin{equation} \label{eq:R3_lower_bound_difference_new_estimator}
        \hat{\mu}(\tau_v^k, \bar{\alpha}_{v,k}, t) - \hat{\mu}(\tau', \bar{\alpha}_{v,k}, t) \leq \mu_t(\tau_v^k) - \mu_t(\tau') + 2b_{\bar{\alpha}_{v,k}, t} \leq 2b_{\bar{\alpha}_{v,k}, t}.
    \end{equation}
    Furthermore, the change has not been detected at time $t$. Therefore, the {\em change point condition}~\eqref{eq:change_condition_lsl} is not satisfied, neither for $\tau_v^k$ nor for $\tau'$ and it holds that
    \vspace{-5pt}
    \begin{align} \label{eq:R3_change_condition_good_1}
        \abs*{\hat{\mu}(\tau',\bar{\alpha}_{v,k}, t) - \hat{\mu}(\tau',  \bar{\alpha}_{v,k-1}, \alpha_{v,k})} &\leq b_{\bar{\alpha}_{v,k}, t} + b_{ \bar{\alpha}_{v,k-1}, \alpha_{v,k}} \\  \label{eq:R3_change_condition_good_2}
        \text{and} \quad \abs*{\hat{\mu}(\tau_v^k,\bar{\alpha}_{v,k}, t) - \hat{\mu}(\tau_v^k, \bar{\alpha}_{v,k-1}, \alpha_{v,k})} &\leq b_{\bar{\alpha}_{v,k}, t} + b_{\bar{\alpha}_{v,k-1}, \alpha_{v,k}}.
    \end{align}
    From \crefrange{eq:R3_lower_bound_difference_new_estimator}{eq:R3_change_condition_good_2} it follows
    \begin{align*}
        &\hat{\mu}(\tau_v^k, \bar{\alpha}_{v,k-1}, \alpha_{v,k}) - \hat{\mu}(\tau', \bar{\alpha}_{v,k-1}, \alpha_{v,k}) \\ &\leq \abs*{\hat{\mu}(\tau_v^k, \bar{\alpha}_{v,k-1}, \alpha_{v,k}) - \hat{\mu}(\tau_v^k,\bar{\alpha}_{v,k}, t)} + \abs*{\hat{\mu}(\tau_v^k,\bar{\alpha}_{v,k}, t) - \hat{\mu}(\tau',\bar{\alpha}_{v,k}, t)} + \abs*{\hat{\mu}(\tau',\bar{\alpha}_{v,k}, t) - \hat{\mu}(\tau', \bar{\alpha}_{v,k-1}, \alpha_{v,k})} \\
        &\leq 4b_{\bar{\alpha}_{v,k}, t} + 2b_{ \bar{\alpha}_{v,k-1}, \alpha_{v,k}}.
    \end{align*}
    Moreover, we know that the {\em separation condition}~\eqref{eq:separation_condition} is satisfied by $\tau_v^k$ over the interval ${[\bar{\alpha}_{v,k-1}, \alpha_{v,k} - 1]}$ since that base-stock level has not been evicted at the termination of epoch $k-1$, and consequently
    \begin{equation} \label{eq:R3_lower_bound_difference_old_estimator}
        \hat{\mu}(\tau_v^k, \bar{\alpha}_{v,k-1}, \alpha_{v,k}) - \hat{\mu}(\tau', \bar{\alpha}_{v,k-1}, \alpha_{v,k}) > 2b_{ \bar{\alpha}_{v,k-1}, \alpha_{v,k}} + \max \{ h, b\} \gamma.
    \end{equation}
    Comparing upper and lower bound yields ${4b_{\bar{\alpha}_{v,k}, t} + 2b_{ \bar{\alpha}_{v,k-1}, \alpha_{v,k}} > 2b_{ \bar{\alpha}_{v,k-1}, \alpha_{v,k}} + \max \{ h, b\} \gamma}$ and hence
        ${4b_{\bar{\alpha}_{v,k}, t} > \max \{ h, b\} \gamma}$.
   
    As this holds for all ${t \in \bigcup_{k} \{t' \in I_{v,i} \mid \bar{\alpha}_v^k \leq t' < \alpha_v^{k+1}\}}$, in particular the last step ${t = \beta_{v, i+1} - 1}$, with ${b_{\bar{\alpha}_{v,k}, \beta_{v,i+1}} = \Hls \sqrt{\frac{2\log(2T^2U/\delta\gamma)}{\beta_{v,i+1} - \bar{\alpha}_{v,k}}}}$ the maximum number of steps until the change is detected is given by
    \begin{align} \label{eq:R3_upper_bound_length_I}
        \abs*{I_{v,i}} \leq \beta_{v,i+1} - \bar{\alpha}_{v,k} = \frac{2\log(2T^2U/\delta\gamma)}{(b_{\bar{\alpha}_{v,k}, \beta_{v,i+1}}/\Hls)^2} 
        \leq \frac{32 \log(2T^2U/\delta\gamma) \Hls^2}{\max \{ h, b\}^2 \gamma^2}.
    \end{align}
    
    \noindent\textbf{Bound of the Regret per Time Step.}\quad 
    Next, we bound the instantaneous regret $\mu_t(\tau_v^k) - \mu_t(\taugammaopt)$. 
    We begin by writing $\tau_v^k$ in terms of $\tau'$ and $\taugammaopt$ as
    \[
        \tau_v^k = \frac{\taugammaopt - \tau'}{\taugammaopt - \tau'} \tau_v^k = \frac{\taugammaopt \tau' - \tau_v^k \tau' + \tau_v^k \taugammaopt - \taugammaopt \tau'}{\taugammaopt - \tau'} = \frac{\taugammaopt - \tau_v^k}{\taugammaopt - \tau'} \tau'+ \frac{\tau_v^k - \tau'}{\taugammaopt - \tau'} \taugammaopt.
    \]
    Then, since $\tau' \leq \tau_v^k < \taugammaopt$ and by convexity we have that
    \begin{align} \label{eq:R3_bound_regret_per_t}
         \mu_t(\tau_v^k) &\leq \frac{\taugammaopt - \tau_v^k}{\taugammaopt - \tau'} \mu_t(\tau') + \frac{\tau_v^k - \tau'}{\taugammaopt - \tau'} \mu_t(\taugammaopt) \notag \\
          \left((\taugammaopt - \tau_v^k) + (\tau_v^k - \tau')\right) \mu_t(\tau_v^k) &\leq (\taugammaopt - \tau_v^k) \mu_t(\tau') + (\tau_v^k - \tau') \mu_t(\taugammaopt) \notag \\
          (\tau_v^k - \tau')\mu_t(\tau_v^k) - (\tau_v^k - \tau') \mu_t(\taugammaopt) &\leq (\taugammaopt - \tau_v^k) \mu_t(\tau') - (\taugammaopt - \tau_v^k) \mu_t(\tau_v^k) \notag \\
         \mu_t(\tau_v^k) - \mu_t(\taugammaopt) &\leq \frac{\taugammaopt - \tau_v^k}{\tau_v^k - \tau'} \left(\mu_t(\tau') - \mu_t(\tau_v^k)\right).
    \end{align}
    
    What remains is an upper bound on the right-hand side of \cref{eq:R3_bound_regret_per_t}. We trivially upper bound the difference $\taugammaopt - \tau_v^k$ by $U$ and lower bound the denominator $\tau_v^k - \tau'$ using the Lipschitz continuity (\cref{lemma:lipschitz}), the concentration inequality in \cref{lemma:probability_good_event}, and the {\em separation condition} in \cref{eq:R3_lower_bound_difference_old_estimator} by
    \begin{align*}
        \tau_v^k - \tau' &\geq \left(\mu_{\bar{\alpha}_{v,k-1}}(\tau_v^k) - \mu_{\bar{\alpha}_{v,k-1}}(\tau')\right) / \max\{ h, b\} \\
        &\geq \left(\hat{\mu}(\tau_v^k, \bar{\alpha}_{v,k-1}, \alpha_{v,k}) - \hat{\mu}(\tau', \bar{\alpha}_{v,k-1}, \alpha_{v,k}) - 2 b_{\bar{\alpha}_{v,k-1}, \alpha_{v,k}}\right) / \max\{h ,b\} \geq \gamma.
    \end{align*}
     Lastly, we establish a bound for the difference between $\mu_t(\tau')$ and $\mu_t(\tau_v^k)$. By \cref{eq:R3_lower_bound_difference_old_estimator} we have that $\hat{\mu}(\tau', \bar{\alpha}_{v,k-1}, \alpha_{v,k}) + 2b_{\bar{\alpha}_{v,k-1}, \alpha_{v,k}} < \hat{\mu}(\tau_v^k, \bar{\alpha}_{v,k-1}, \alpha_{v,k}) $ and consequently
    \begin{align*}
        \hat{\mu}(\tau', \bar{\alpha}_{v,k}, t) - b_{\bar{\alpha}_{v,k}, t} & \leq \hat{\mu}(\tau', \bar{\alpha}_{v,k-1}, \alpha_{v,k}) + b_{\bar{\alpha}_{v,k-1}, \alpha_{v,k}} \quad \text{by \cref{eq:R3_change_condition_good_1}} \\
        & < \hat{\mu}(\tau_v^k, \bar{\alpha}_{v,k-1}, \alpha_{v,k}) - b_{\bar{\alpha}_{v,k-1}, \alpha_{v,k}}  \quad \text{by \cref{eq:R3_lower_bound_difference_old_estimator}}\\
        & \leq \hat{\mu}(\tau_v^k, \bar{\alpha}_{v,k}, t) + b_{\bar{\alpha}_{v,k}, t}  \quad \text{by \cref{eq:R3_change_condition_good_2}}.
    \end{align*}
    Therefore, with \cref{lemma:probability_good_event} we get ${\mu_t(\tau') - \mu_t(\tau_v^k) \leq \hat{\mu}(\tau', \bar{\alpha}_{v,k}, t) - \hat{\mu}(\tau_v^k, \bar{\alpha}_{v,k}, t) + 2b_{\bar{\alpha}_{v,k}, t} < 4b_{\bar{\alpha}_{v,k}, t}}$.
    Ultimately, substituting this into \cref{eq:R3_bound_regret_per_t} we obtain the following upper bound for the regret per time step
        ${\mu_t(\tau_v^k) - \mu_t(\taugammaopt) < \frac{U}{\gamma} 4b_{\bar{\alpha}_{v,k}, t}}$.
    
\noindent\textbf{Final Bound of $R_2(T)$.}\quad
    Summing over the time steps in interval~$I_{v,i}$, it follows
    \begin{align*}
        \qquad \sum_{\mathclap{\substack{t \in \bigcup_{k} \{t' \in I_{v,i} \mid \bar{\alpha}_v^k \leq t' < \alpha_v^{k+1}\}, \\ \taugammaopt > \tau_v^k}}} \quad \left( \mu_t(\tau_v^k) - \mu_t(\taugammaopt) \right) &< \frac{U}{\gamma} 4\Hls \sqrt{2 \log(2T^2U/\delta\gamma)} \sum_{t = 1}^{\abs*{I_{v,i}}} \sqrt{\frac{1}{t}} 
        \leq \frac{U}{\gamma} 8\Hls \sqrt{2 \log(2T^2U/\delta\gamma)\abs*{I_{v,i}}}.
    \end{align*}
    On the one hand, we can upper bound the regret summed over all intervals using the Cauchy-Schwarz inequality and the fact that there are at most ${2S-1}$ such intervals~$I_{v,i}$ with a total length of at most $T$ by
    \begin{align*}
      \sum_{\substack{t \in \bigcup_{v,k} \{t' \mid \bar{\alpha}_v^k \leq t' < \alpha_v^{k+1}\}, \\ \taugammaopt > \tau_v^k}} \left( \mu_t(\tau_v^k) - \mu_t(\taugammaopt) \right) &< \sum_{v=1}^{\hat{S}} \sum_{i=1}^{S_v} \frac{U}{\gamma} 8\Hls \sqrt{2 \log\left(\frac{2T^2U}{\delta\gamma}\right) \abs*{I_{v,i}}} 
      \leq \frac{U}{\gamma} 16\Hls \sqrt{ST \log\left(\frac{2T^2U}{\delta\gamma}\right)}.
    \end{align*}    
    On the other hand, with the maximum length of interval~$I_{v,i}$ in \cref{eq:R3_upper_bound_length_I} we can upper bound the regret incurred over the interval by
    \vspace{-5pt}
    \begin{align*}
        \sum_{\substack{t \in \bigcup_{k} \{t' \in I_{v,i} \mid \bar{\alpha}_v^k \leq t' < \alpha_v^{k+1}\}, \\ \taugammaopt > \tau_v^k}} \left( \mu_t(\tau_v^k) - \mu_t(\taugammaopt) \right) &< \frac{U}{\gamma} 8\Hls \sqrt{2 \log(2T^2U/\delta\gamma)} \sqrt{\frac{32\log(2T^2U/\delta\gamma) \Hls^2}{\max \{ h, b\}^2 \gamma^2}} \\[-12pt]
        &= \frac{64U\Hls^2 \log(2T^2U/\delta\gamma)}{\max \{ h, b\} \gamma^2}
    \end{align*}
    and the sum over the up to $2S-1$ intervals by
    \vspace{-5pt}
    \begin{align*}
        \sum_{\substack{t \in \bigcup_{v,k} \{t' \mid \bar{\alpha}_v^k \leq t' < \alpha_v^{k+1}\}, \\ \taugammaopt > \tau_v^k}} \left(\mu_t(\tau_t) - \mu_t (\taugammaopt) \right) < \sum_{v=1}^{\hat{S}} \sum_{i=1}^{S_v} \frac{64U\Hls^2 \log\left(\frac{2T^2U}{\delta\gamma}\right)}{\max \{ h, b\} \gamma^2} \leq \frac{128\Hls^2 S U \log\left(\frac{2T^2U}{\delta\gamma}\right)}{\max \{ h, b\} \gamma^{2}}.
    \end{align*}
    As a result, the regret component $R_2(T)$ is bounded via
    \vspace{-5pt}
    \begin{align*}
        R_{2}(T) = \sum_{\substack{t \in \bigcup_{v,k} \{t' \mid \bar{\alpha}_v^k \leq t' < \alpha_v^{k+1}\}, \\ \taugammaopt > \tau_v^k}} \left(\mu_t(\tau_t) - \mu_t (\taugammaopt) \right) &< \min\bigg\{\frac{U}{\gamma} 16\Hls \sqrt{ST {\textstyle\log\left(\frac{2T^2U}{\delta\gamma}\right)}}, \frac{128\Hls^2 S U \log\left(\frac{2T^2U}{\delta\gamma}\right)}{\max \{ h, b\} \gamma^{2}}\bigg\} \\[-12pt]
        &= \frac{16\Hls U}{\gamma}\min\bigg\{\sqrt{ST {\textstyle\log\left(\frac{2T^2U}{\delta\gamma}\right)}}, \frac{8\Hls S \log\left(\frac{2T^2U}{\delta\gamma}\right)}{\max \{ h, b\} \gamma}\bigg\}.
    \end{align*}

\noindent\textbf{Final Regret Bound.}\quad
    From \cref{lemma:regret_decomposition} and the preceding analysis, we conclude that the expected dynamic regret of \NSLSICL, conditioned on the event $\calE$, is upper bounded by
    \begin{align*}
        \E[R(T) \mid \calE] &= R_{\gamma}(T) + R_{\tau_0}(T) + R_1(T) + R_2(T) \\
        &< 24 \Hls \sqrt{ST {\textstyle\log\left(\frac{2T^2U}{\delta\gamma}\right)\left(\frac{U}{\gamma} + 2\right)}} + \frac{16\Hls U}{\gamma}\min\bigg\{\sqrt{ST {\textstyle\log\left(\frac{2T^2U}{\delta\gamma}\right)}}, \frac{8\Hls S \log\left(\frac{2T^2U}{\delta\gamma}\right)}{\max \{ h, b\} \gamma}\bigg\} \\
        & \qquad + \left(3T\gamma + LSU\lceil U/\gamma \rceil + SU^2 \nu \right) \max\{h,b\}.
    \end{align*}
    Together with ${\Pr(\calE) \geq 1 - \delta}$ (\cref{lemma:probability_good_event}) and ${\Exp{R(T) \mid \calE^{c}} \leq TU \max\{h,b\}}$ by the Lipschitz continuity of the cost function (\cref{lemma:lipschitz}), we obtain
    \begin{align*}
        \Exp{R(T)} &= \Exp{R(T) \mid \calE} \Pr(\calE) + \Exp{R(T) \mid \calE^{c}} \Pr(\calE^{c}) \\
        & < \left(1 - \delta\right) \Bigg(24 \Hls \sqrt{ST {\textstyle\log\left(\frac{2T^2U}{\delta\gamma}\right)\left(\frac{U}{\gamma} + 2\right)}} + \frac{16\Hls U}{\gamma}\min\bigg\{\sqrt{ST {\textstyle\log\left(\frac{2T^2U}{\delta\gamma}\right)}}, \frac{8\Hls S \log\left(\frac{2T^2U}{\delta\gamma}\right)}{\max \{ h, b\} \gamma}\bigg\} \\
        & \qquad \qquad \quad + \left(3T\gamma + LSU\lceil U/\gamma \rceil + SU^2 \nu \right) \max\{h,b\} \Bigg) + \delta TU\max\{h,b\}.
    \end{align*}    
    
    Consequently, with ${\Hls = 72(L+3)U \max\{h,b\}}$ and setting  ${\delta = \Theta\left(T^{-2}\right)}$ as well as ${\gamma = \Theta\left(U(L+1)^{2/3}T^{-1/4}\right)}$ or ${\gamma = \Theta\left(U(L+1)^{2/3}T^{-1/3}\right)}$, respectively, the expected regret is bounded via 
    \[
        \E[R(T)] = \tilde{\mathcal{O}}\left(U(L+1)^{2/3}\min\{ S^{1/2}T^{3/4}, ST^{2/3}\} \right).
    \]
    If the value of $S$ is known in advance, we can set the discretization parameter ${\gamma = \Theta\left(U(L+1)^{2/3}S^{1/3}T^{-1/3}\right)}$ and obtain $\E[R(T)] = \tilde{\mathcal{O}}\left(U (L+1)^{2/3} S^{1/3}T^{2/3}\right)$.
\end{proof}
\section{Performance under Alternative Regret Metric} \label[appendix]{app:regret_transformation}

We compare our main regret benchmark with the following alternative benchmark defined using the expected instantaneous costs incurred by the algorithm
\begin{equation} \label{def:regret_expected_cost}
    R_C(T) \coloneq \sum_{t=1}^T \Big( \E_{F_t}[C_t(\tau_t)] - \min_{\tau \in [0,U]} \mu_t(\tau)\Big).
\end{equation}
The comparator is the same as in \Cref{def:regret}, only the cost assigned to the algorithm is $\E_{F_t}[C_t(\tau_t)]$ instead of $\mu_t(\tau_t)$.
Our main result for this benchmark is the following.

\begin{restatable}{theorem}{AlternativeRegretBenchmark}
\label{thm:alternative_regret_benchmark}
The expected dynamic regret (\cref{def:regret_expected_cost}) of \NSBLIC (\cref{algo:backlog}) is upper bounded via
\[
    \E[R_C(T)] = \begin{cases} \tilde{\mathcal{O}}\left( \sqrt{ST}(L+1) \sigma + S \sqrt{T} LU(L+U)\right) & \text{with } \gamma = \Theta(T^{-1/2}) \\
                                \tilde{\mathcal{O}}\left( \sqrt{ST}\left((L+1) \sigma + LU(L+U)\right)\right) & \text{with } \gamma = \Theta(T^{-1/2}S^{1/2}).
                \end{cases}
\]
For \NSLSIC (\cref{algo:lost_sales}) we have 
\vspace{-7pt}
\[
    \E[R_C(T)] = \tilde{\mathcal{O}}\left( U^{3/2}\sqrt{ST}\right) \ \text{ with } \gamma = \Theta(T^{-1/2}),
\] 
and for \NSLSICL (\cref{algo:lost_sales_lead_time})
\[
    \E[R_C(T)] = \begin{cases} \tilde{\mathcal{O}}\left( U(L+1)^{2/3}S^{1/2}T^{3/4} + UL^{1/3}ST^{1/4} \right) & \text{with } \gamma = \Theta(U(L+1)^{2/3}T^{-1/4}) \\
                               \tilde{\mathcal{O}}\left( U(L+1)^{2/3}ST^{2/3} \right) & \text{with } \gamma = \Theta(U(L+1)^{2/3}T^{-1/3}) \\
                               \tilde{\mathcal{O}}\left( U(L+1)^{2/3}S^{1/3}T^{2/3} + UL^{1/3}S^{2/3}T^{1/3} \right) & \text{with } \gamma = \Theta(U(L+1)^{2/3}S^{1/3}T^{-1/3}).
                \end{cases}
\]
\end{restatable}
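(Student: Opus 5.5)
The plan is to write $R_C(T) = R(T) + \sum_{t=1}^T \big(\E_{F_t}[C_t(\tau_t)] - \mu_t(\tau_t)\big)$. The first term is bounded by \cref{thm:regret_backlogging,thm:regret_LS_L_zero,thm:regret_LS_L_positive}. The second term is a \emph{transient} gap: the expected instantaneous cost along the realized trajectory minus the stationary average cost of the implemented level. It is nonzero only when the system is not in the steady state of the currently played policy under the current $F_t$. The whole task is to count and price the steps at which this happens.

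\textbf{Zero lead time.} With $L=0$ and lost sales, the expected one-step cost depends only on the post-order stock $\max\{I_t,\tau_t\}$. This equals $\tau_t$ whenever $I_t \le \tau_t$, in which case $\E_{F_t}[C_t(\tau_t)] = \mu_t(\tau_t)$ exactly. A gap arises only after a downward switch ($U \to \tau_v^k$, or a new epoch). Then $I_t > \tau_t$, and the extra cost is at most $h(I_t-\tau_t)$, which equals $\mu_t(I_t) - \mu_t(\tau_t)$ up to Lipschitz terms (\cref{lemma:lipschitz}). So I would charge this overshoot to the regret of the higher level that was just played. Since $I_t \le \tau_{t-1}$, the gap at step $t$ is dominated by $\mu_t(\tau_{t-1})-\mu_t(\tau_t)$ plus a term already counted in $R_2$ for $U$. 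This should give $\E[R_C(T)] \lesssim \E[R(T)]$, hence $\tilde{\mathcal O}(U^{3/2}\sqrt{ST})$.

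\textbf{Backlogging.} By \cref{lemma:stationary_distribution_backlogging}, once the inventory position equals $\tau_v^k$, the process reaches stationarity within $L$ steps. Nonstationarity therefore occurs only (i) in the $L$ steps after each change point, and (ii) after each downward switch of $\tau_v^k$. A downward switch requires zero orders until demand depletes the position by $\tau_v^{k-1}-\tau_v^k$, and then $L$ further steps. Each such step costs at most $O(L+U)$ in the transient gap, using sub-Gaussian concentration of backlog. The number of switches per episode is at most $|\A_\gamma| \approx U/\gamma$, and the total depletion per episode is at most $U$. So the overhead per episode is $O(L\cdot U/\gamma + U)$ steps of cost $O(L+U)$. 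Summing over $\hat S\le S$ episodes (\cref{lemma:false_alarms}) gives $\tilde{\mathcal O}(S\, L U (L+U)/\gamma)$. With $\gamma=T^{-1/2}$ this yields the first case. With $\gamma = \sqrt{S/T}$ it yields $\sqrt{ST}\,LU(L+U)$, while the discretization term $T\gamma$ stays at $\sqrt{ST}$, giving the second case.

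\textbf{Lost sales with $L>0$.} The depletion steps are already charged in $R_{\tau_0}$. What remains is the extra cost in the window of $L$ steps after $\bar\alpha_{v,k}$, where pipeline orders still reflect the old position, and after each change point. I would bound this by $L\,U\max\{h,b\}$ per epoch and per change. This gives $\tilde{\mathcal O}(SLU\cdot U/\gamma)$. I expect the stated lower-order terms such as $UL^{1/3}ST^{1/4}$ to come instead from a refined count: within a stationary interval, only epochs whose gap exceeds the current confidence radius are short, so the relevant count is of order $\sqrt{|I|}/\ldots$ rather than $U/\gamma$. Substituting each of the three choices of $\gamma$ then gives the three rows.

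\textbf{Main obstacle.} The hardest step is making this epoch count sharp enough, and showing rigorously in the lost-sales $L>0$ case that the post-$\bar\alpha$ transient cost is $O(LU)$. The latter requires a coupling: compare the actual state with the stationary state of $\tau_v^k$ started from the reset vector \cref{eq:counterfactual_inventory_vector_reset}, and show they merge after $L$ steps plus one depletion period (using \Cref{ass:main_assumption}.\ref{ass:bounded_depletion_steps}).
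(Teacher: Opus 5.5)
Your backlogging argument is essentially the paper's. Only the first $L$ steps of each interval with fixed $(F_t,\tau_t)$ (\cref{lemma:stationary_distribution_backlogging}) and the depletion steps contribute, each with gap $O(U+L)$ by Lipschitzness of the one-period cost in the post-replenishment stock. The genuine gap is in the lost-sales case with $L>0$. There the chain under a fixed base-stock level does not reach its stationary law after $L$ steps, neither after $\bar\alpha_{v,k}$ nor after a demand change. So the ``$L$-step window'' has no basis, and the coupling you propose (merging after $L$ steps plus one depletion period) is false in general. For $L=1$ the on-hand stock after arrival obeys $x_{t+1}=\tau-\min\{x_t,D_t\}$, so two copies driven by the same demands merge only when $D_t\le\min\{x^A_t,x^B_t\}$. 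Take $D=0$ with probability $\epsilon$ and $D=2U$ otherwise. Then $\nu\approx 1$, yet both copies oscillate $x\mapsto\tau-x$ for about $1/\epsilon$ steps, and the per-step gap (roughly $-b(x_t-\tau/2)$) alternates in sign without dying out after $L$ steps. The missing idea is the average-cost Poisson equation, which is what the paper uses. On each maximal interval $K_j$ with fixed demand distribution and base-stock level, $\sum_{t\in K_j}(\E[C_t]-\mu_{\zeta_j}(\tau_{\zeta_j}))$ telescopes to a difference of bias values. It is therefore at most the bias span $36\max\{h,b\}LU$ of \citet[Lemma 9]{agrawal2022learning}, and no per-step convergence or coupling is needed; in the example, the alternating gaps have bounded partial sums. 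With this, the crude count $J\le S(\lceil U/\gamma\rceil+1)$ already gives the stated lower-order terms, so the refined epoch count you anticipate is unnecessary. For $\gamma=U(L+1)^{2/3}T^{-1/4}$ one gets $SLU^2/\gamma=SLU\,T^{1/4}/(L+1)^{2/3}\le UL^{1/3}ST^{1/4}$, and likewise $UL^{1/3}S^{2/3}T^{1/3}$ for $\gamma=U(L+1)^{2/3}S^{1/3}T^{-1/3}$.

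Two further points. First, the depletion steps are not ``already charged in $R_{\tau_0}$.'' That term controls $\mu_t(\tau_t)-\mu_t(\taugammaopt)$ on those steps, whereas $R_C-R$ needs the separate quantity $\E[C_t]-\mu_t(\tau_t)$ there. It is bounded the same way (at most $U\max\{h,b\}$ per step times the expected count $S(L\lceil U/\gamma\rceil+U\nu)$), which is the paper's $\Delta_{\tau_0}$. Second, for \NSLSIC your observation that the gap vanishes whenever $I_t\le\tau_t$ is correct. You are also right that the switches from $U$ back to $\tau_v^k$ must be accounted for; the paper's decomposition creates depletion periods only at epoch starts, so it does not isolate them. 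However, charging them to $R_2$ does not work as written. Your inequality via $I_t\le\tau_{t-1}$ covers only the first step after a switch, and the stock may take many more steps than the $U$-block lasted to drain, each with gap up to $[\mu_t(U)-\mu_t(\tau_v^k)]^+$. You need an expected depletion time per switch (a $\nu$-type bound as in \Cref{ass:main_assumption}.\ref{ass:bounded_depletion_steps}) together with the expected number of $U$-blocks, which is at most $\sum_v (t_{v+1}-t_v)\sqrt{v/(UT)}\le\sqrt{ST/U}$. This yields an additional $O(U^{3/2}\nu\max\{h,b\}\sqrt{ST})$ term. That is consistent with the claimed rate, but it is a separate term, not something already contained in $R_2$.
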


The proof of \Cref{thm:alternative_regret_benchmark} follows by combining our regret bounds for $R(T)$ in \cref{thm:regret_backlogging,thm:regret_LS_L_zero,thm:regret_LS_L_positive} with the following transformation lemma, which controls the transient state correction between the two benchmarks.

\begin{restatable}[Regret Transformation]{lemma}{RegretTransformationLemma} \label{lemma:regret_transformation}
    The difference between $R_C(T)$ and the dynamic regret $R(T)$ in \cref{def:regret} satisfies
    \begin{align*}
        \E[R_C(T) - R(T)] = \begin{cases}        
            \mathcal{O}\left( SU \left( \frac{L(U+L)}{\gamma} + U + L \right) \right) \text{ under backlogging} \\[6pt]
            \mathcal{O}\left( SU\left(\frac{LU}{\gamma} + U + L\right) \right) \text{ under lost-sales.}
        \end{cases}
    \end{align*}
\end{restatable}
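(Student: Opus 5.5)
The plan is to write $R_C(T)-R(T)=\sum_{t=1}^T\big(\E[C_t(\tau_t)]-\mu_t(\tau_t)\big)$ and to charge each term to how far the actual inventory state is from the stationary state of the base-stock policy currently played. The comparator cancels, so only this transient correction remains. The time axis will be split into maximal blocks on which both the demand distribution $F_t$ and the implemented level $\tau_t$ are constant. On such a block of length $n$, starting from state $x$, the quantity $\sum_{t}\E[C_t(\tau)]-n\mu(\tau)$ is the relative value (bias) of $x$ under $\tau$, up to a bounded terminal correction. So it suffices to bound the bias function and to count the blocks.

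First I bound the bias. In the backlogging case, \cref{lemma:stationary_distribution_backlogging} shows that once the inventory position equals $\tau$, the state is exactly stationary after $L$ steps. If the position exceeds $\tau$ after a decrease of the level, orders stop until demand drains the excess $\Delta\le U$. Before stationarity, each step's cost differs from the stationary cost by at most $\max\{h,b\}$ times the deviation of on-hand inventory, which is $\mathcal{O}(U+L)$ in expectation under sub-Gaussian demand. The transient length is $\mathcal{O}(L+U)$ in expectation, since draining needs roughly $\Delta/\E[D]$ steps. This gives a bias of order $\mathcal{O}((L+1)(U+L))$ per decrease, and $\mathcal{O}(U+L)$ per change of $F_t$ or increase of the level. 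For lost-sales I use the coupling behind \cref{lemma:feedback_structure}: the trajectories from $x$ and from the stationary state under $\tau$ coalesce once both have on-hand inventory zero or position $\tau$. \Cref{ass:main_assumption}.\ref{ass:bounded_depletion_steps} bounds the expected time for this by $\mathcal{O}(L+U\nu)$, and Lipschitz costs then give a bias of $\mathcal{O}(U(L+U))$ per block boundary.

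Second I count the block boundaries in each algorithm. Changes of $F_t$ contribute at most $S$, and \cref{lemma:false_alarms} bounds episode restarts by $S$. The number of epochs per episode is at most $|\A_\gamma|\le U/\gamma+1$, because the level only decreases within an episode. Each decrease of size $d$ drains at cost $\mathcal{O}(Ld+\ldots)$, and across an episode the drained amounts sum to at most $U$. Only the $L$-step pipeline clearing is paid per epoch, giving the $LU/\gamma$ term per episode. Multiplying boundaries by the per-boundary bias yields the backlogging bound $\mathcal{O}(SU(L(U+L)/\gamma+U+L))$ and the lost-sales bound $\mathcal{O}(SU(LU/\gamma+U+L))$. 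For $L=0$ the $1/\gamma$ term vanishes, which covers the \NSLSIC switches to $U$ as well. Each such switch costs only $\mathcal{O}(U)$ with no pipeline, and there are at most about $\sqrt{T}$ sampling blocks in expectation. I would either absorb this into the stated bound or note that the $L=0$ cost is myopic given $I_t$, which makes the correction telescope.

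The main obstacle is controlling the bias after downward switches in the lost-sales model with $L>0$. There the system is not exactly regenerative, so the coupling time has to be bounded uniformly over starting states using only \Cref{ass:main_assumption}.\ref{ass:bounded_depletion_steps}. I would first try to show that after $L$ order-free steps plus a depletion time of expected length at most $U\nu$, the counterfactual reset state of \cref{eq:counterfactual_inventory_vector_reset} coincides with the factual one. A related concern is that the \NSLSIC switches might not fit the $S$-scaling of the stated bound. If so, I would handle them by the telescoping argument above.
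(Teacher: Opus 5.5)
Your backlogging argument is essentially the paper's. It uses Lipschitz dependence of the one-period cost on post-replenishment inventory and exact stationarity $L$ steps after the position reaches $\tau$. It then charges $O(L(U+L))$ per epoch boundary and $O(U+L)$ per draining step, with $O(LU/\gamma+U)$ draining steps per episode in expectation. (A change of $F_t$ costs $O(L(U+L))$ rather than $O(U+L)$, which is harmlessly absorbed.)

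The genuine gap is the lost-sales case with $L>0$, which you identify as the crux but do not resolve.

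\emph{Coalescence fails.} Once the position sits at $\tau$, each order equals the previous period's sales. Take a period in which both systems have zero on-hand and demand exceeds both arriving orders: each sells exactly its arriving order and reorders it, so two different pipelines merely rotate. Coalescence needs a run of $L$ periods in which neither system stocks out. \Cref{ass:main_assumption}.\ref{ass:bounded_depletion_steps} only says demand is not too small, so it cannot guarantee such a run. Matching the reset state \cref{eq:counterfactual_inventory_vector_reset} does not help either, since that state serves counterfactual lower levels, not the stationary law.

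\emph{The count is too large.} Even granting a uniform per-boundary bias of $O(U(L+U\nu))$, multiplying by the $O(SU/\gamma)$ boundaries leaves an $SU^3\nu/\gamma$ term outside the claimed $O(SU(LU/\gamma+U+L))$.

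Your remark that only pipeline clearing should be paid per epoch is the right accounting, but it needs two ingredients the proposal lacks. The first is a bias-span bound of order $\max\{h,b\}LU$ for a fixed base-stock level on states with position at most $\tau$. The paper imports this from \citet[Lemma~9]{agrawal2022learning}, and by the rotation example it cannot come from coalescence. The second is splitting each epoch into two parts. The depletion steps $[\alpha_{v,k},\bar{\alpha}_{v,k}-1]$ are bounded crudely by $\max\{h,b\}U$ per step times $L\lceil U/\gamma\rceil+U\nu$ expected steps per episode. The intervals $K_j$ with constant $(F_t,\tau_t)$, starting once the position is at most $\tau$, are handled by telescoping the Poisson equation and charging the span once per interval.

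Your concern about the \NSLSIC sampling blocks is well founded. The paper's own count of the $K_j$ and of depletion steps also presumes the implemented level only decreases within an episode, so it does not address these switches either. Neither of your remedies works, however. With $L=0$ the correction at step $t$ is $\E[c_t(\max\{I_t,\tau_t\})]-c_t(\tau_t)$, where $c_t$ is the convex one-period cost. It vanishes when $I_t\le\tau_t$, so it lives on the draining steps after each $U$-block. There it is nonnegative whenever $\tau_v^k$ lies at or above the minimizer of $c_t$, so nothing telescopes. Draining an excess $U-\tau_v^k$ takes a number of steps proportional to that excess, each with deviation up to the remaining excess. One switch therefore costs order $(U-\tau_v^k)^2$, not $O(U)$. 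The expected number of blocks is of order $\sqrt{T/U}$ up to logarithms already when $S=1$. Under a drain-time bound the resulting term is of order $U^{3/2}\sqrt{ST}$ up to logarithms. It leaves the \NSLSIC rate unchanged, but it cannot be absorbed into a $T$-independent bound and must be carried as a separate term.
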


\subsection{\texorpdfstring{Proof of \Cref{lemma:regret_transformation} (Regret Transformation)}{Proof of Regret Transformation Lemma}}

We start off with a decomposition of the expected difference between the two regret metrics.

\begin{lemma}\label{lemma:decomposition_regret_difference}
    We have $\E[R_C(T) - R(T)] = \Delta_{\tau_0} + \Delta_1$ where
    \begin{align*}
        \Delta_{\tau_0} \coloneq \E\Bigl[\sum_{v=1}^{\hat{S}} \sum_{k \geq 2} \sum_{t \in [\alpha_{v,k}, \bar{\alpha}_{v,k} -1]} \left(C_t(\tau_v^k) - \mu_t(\tau_v^k)\right)\Bigr] \ \text{ and } \
        \Delta_1 \coloneq \E\Bigl[\sum_{v=1}^{\hat{S}} \sum_{k \geq 1} \sum_{t \in [\bar{\alpha}_{v,k}, \alpha_{v,k+1} - 1]} \left(C_t(\tau_v^k) - \mu_t(\tau_v^k)\right)\Bigr].
    \end{align*}
\end{lemma}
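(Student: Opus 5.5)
The plan is to start from the definitions and cancel the common comparator. By \cref{def:regret} and \cref{def:regret_expected_cost}, both $R_C(T)$ and $R(T)$ subtract the same term $\sum_{t=1}^T \min_{\tau \in [0,U]} \mu_t(\tau)$. Hence
\[
    R_C(T) - R(T) = \sum_{t=1}^T \bigl(\E_{F_t}[C_t(\tau_t)] - \mu_t(\tau_t)\bigr).
\]
Taking the outer expectation over the algorithm's randomness, and using the tower property, gives $\E[R_C(T) - R(T)] = \E\bigl[\sum_{t=1}^T (C_t(\tau_t) - \mu_t(\tau_t))\bigr]$. Here $C_t(\tau_t)$ is the realized pseudo cost of the implemented policy. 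The sum is finite and each summand is integrable, because costs are bounded in the lost-sales case and sub-Gaussian under backlogging (\cref{lemma:costs_sub-Gaussian}). So the swap of sum and expectation needs no further justification.

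Next I would partition the horizon $[T]$ using the hierarchical episode/epoch structure of \cref{sec:algorithm}. Every $t$ lies in exactly one episode $v \le \hat S$ and in exactly one epoch $k$ of that episode, and on that epoch the played level is $\tau_t = \tau_v^k$. I will treat \NSLSIC separately at the end.

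Each epoch interval $[\alpha_{v,k}, \alpha_{v,k+1}-1]$ splits at $\bar\alpha_{v,k}$ into two pieces:
\begin{itemize}
\item the depletion steps $[\alpha_{v,k}, \bar\alpha_{v,k}-1]$;
\item the remaining steps $[\bar\alpha_{v,k}, \alpha_{v,k+1}-1]$.
\end{itemize}
For $k=1$ the depletion piece is empty. This is because $\tau_v^1 = U$ is at least the current inventory position, since all levels are at most $U$, so $\bar\alpha_{v,1} = \alpha_{v,1}$. That is why the first sum starts at $k \ge 2$.

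Regrouping the summands accordingly and using linearity of expectation then yields exactly $\Delta_{\tau_0} + \Delta_1$. Two points keep this well defined even though $\hat S$ and the epoch boundaries are random. First, the triple sum is a deterministic reindexing of $t \in [T]$ for every sample path. Second, I would use the conventions $\alpha_{v,k+1} = t_{v+1}$ for the last epoch of an episode and $t_{\hat S+1} = T$, so that the pieces cover $[T]$ without overlap.

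The only step requiring care is the bookkeeping of these boundary cases. These are:
\begin{itemize}
\item an epoch or episode ending before $\bar\alpha_{v,k}$ is reached, in which case the second interval is empty and the first is truncated;
\item the algorithms without an explicit $\bar\alpha$.
\end{itemize}
For \NSBLIC, I would define $\bar\alpha_{v,k}$ by the same formula as in the lost-sales case; the decomposition itself does not depend on how $\bar\alpha$ is used algorithmically. For \NSLSIC, with $L=0$, the inventory never exceeds the new level after one step, so the first piece has length at most one. For both algorithms the decomposition holds verbatim. I would state these conventions explicitly and then conclude.
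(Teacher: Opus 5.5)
Your core argument is the paper's proof. You cancel the common comparator $\sum_t \min_{\tau}\mu_t(\tau)$ and reindex $[T]$ into $[\alpha_{v,k},\bar\alpha_{v,k}-1]$ and $[\bar\alpha_{v,k},\alpha_{v,k+1}-1]$. The $k=1$ depletion piece is empty because $\tau_v^1=U$ dominates the inventory position. This is correct for \NSBLIC and \NSLSICL, where the level played throughout epoch $k$ is $\tau_v^k$. Two minor points. With the closed intervals $[\alpha_{v,k},\alpha_{v,k+1}-1]$ you need $t_{\hat S+1}=T+1$, not $T$, to cover step $T$. And \cref{lemma:costs_sub-Gaussian} concerns the fixed-level process, although integrability under switching levels is immediate anyway.

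Your paragraph on \NSLSIC is wrong in two ways.

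First, $L=0$ does not make the depletion piece last at most one step. After a downward switch the order is $Q_t=0$, and on-hand stock falls only through sales, $I_{t+1}=[I_t-D_t]^+$. It can therefore take arbitrarily many periods before $I_t\le\tau_t$. That length depends on how fast demand depletes stock (the quantity $\nu$ bounds in the paper), not on $L$. For the bare identity this is harmless, since the partition never uses the lengths.

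Second, and this is what breaks your claim that the decomposition holds verbatim: inside an epoch, \NSLSIC plays $U$ whenever the obligation counter satisfies $N\ge 1$. So $\tau_t=\tau_v^k$ fails on every sampling block. On those steps the summand of $\E[\sum_t (C_t(\tau_t)-\mu_t(\tau_t))]$ is $C_t(U)-\mu_t(U)$, not $C_t(\tau_v^k)-\mu_t(\tau_v^k)$. The regrouping therefore does not produce $\Delta_{\tau_0}+\Delta_1$ as written. Moreover, each return from $U$ to $\tau_v^k$ can open a fresh depletion stretch inside $[\bar\alpha_{v,k},\alpha_{v,k+1}-1]$. The later bounds treat that interval as one where the position is held at $\tau_v^k$, and that reading fails too.

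To cover \NSLSIC, either write the summands with the played level $\tau_t$ and refine the partition at every switch, including the end of each sampling block. Or state the lemma only for the two algorithms whose levels are non-increasing within an episode. The paper's one-line proof silently makes the same simplification, so do not follow it on this point.
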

\begin{proof}
    We split each epoch into a (possibly empty) depletion and a non-depletion period. In epoch $k \geq 2$ of episode $v$, the depletion period is $[\alpha_{v,k},\bar{\alpha}_{v,k}-1]$, during which orders are suspended until the inventory position falls to the new target $\tau_v^k$. On the remaining period $[\bar{\alpha}_{v,k},\alpha_{v,k+1}-1]$, the inventory position is maintained at $\tau_v^k$. Since the comparator $\min_{\tau \in [0,U]} \mu_t(\tau)$ cancels out, we have
    \[
        \E[R_C(T) - R(T)] = \E\Bigl[\sum_{t=1}^T \left(C_t(\tau_t)-\mu_t(\tau_t)\right)\Bigr]
        =
        \Delta_{\tau_0}+\Delta_1,
    \]
    where $\Delta_{\tau_0}$ collects the depletion periods and $\Delta_1$ collects the non-depletion periods.
\end{proof}

We next use the Lipschitz structure of the one-period cost to relate the cost mismatch to the distance between the actual inventory level before demand and a stationary counterpart.
\begin{lemma}\label{lemma:lipschitz_in_s}
    Let $s_t$ denote the inventory level after replenishment in step $t$ under policy $\tau_t$ and demand distribution $F_t$, and let $s_t^{\mathrm{stat}}$ denote the inventory position in the corresponding stationary system. Then, we have that
    \[
        \abs*{\E_{F_t}[C_t(\tau_t)] - \mu_t(\tau_t)} \leq \max\{h, b\} \cdot \E_{F_t}\left[\abs*{s_t - s^{\mathrm{stat}}_t}\right].
    \]
\end{lemma}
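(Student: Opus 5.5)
The plan is to write both quantities as expectations of the same one-period cost function evaluated at two different inventory levels, and then apply a pointwise Lipschitz bound. Fix a step $t$ and condition on the realized pre-demand state; let $D_t \sim F_t$ be independent of it. By \cref{eq:cost_function}, the per-period cost is a function $g(s, D_t) \coloneq h\,[s - D_t]^+ + b\,[D_t - s]^+ - b D_t$ of the on-hand level after replenishment $s$ and the demand. This holds in the lost-sales case (where $s = I_t + Q_{t-L}$) and in the backlogging case (where $s = s_t(0)$, as in \cref{def:MRP_backlogging}). Hence $\E_{F_t}[C_t(\tau_t)] = \E_{F_t}[g(s_t, D_t)]$.

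Next I would represent $\mu_t(\tau_t)$ in the same form. In the stationary system under base-stock level $\tau_t$ and distribution $F_t$, the stationary cost is the expectation of $g(s^{\mathrm{stat}}_t, D_t)$. Here $s^{\mathrm{stat}}_t$ is drawn from the stationary distribution of the on-hand level, independent of the current demand. For backlogging this is exactly \cref{eq:stationary_distribution_cost_function_backlogging} together with \cref{eq:definition_stationary_s}, with $s^{\mathrm{stat}}_t = \tau_t - \sum_{i=1}^L D_{t-i}$. For lost-sales I would invoke the existence of a stationary distribution for the base-stock chain (as used in \citet{agrawal2022learning}) together with the ergodic-average definition \cref{eq:asymptotic_cost_function}.

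Since the lemma only concerns expectations, I am free to choose the coupling between $s_t$ and $s^{\mathrm{stat}}_t$. I would take them on a common probability space, both independent of $D_t$; for example, drive both from the same past demands, as in the counterfactual constructions of \cref{lemma:feedback_structure}. Then
\[
\abs*{\E_{F_t}[C_t(\tau_t)] - \mu_t(\tau_t)} = \abs*{\E\left[g(s_t, D_t) - g(s^{\mathrm{stat}}_t, D_t)\right]} \leq \E\left[\abs*{g(s_t, D_t) - g(s^{\mathrm{stat}}_t, D_t)}\right].
\]

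The key pointwise estimate is that $s \mapsto g(s, d)$ is $\max\{h,b\}$-Lipschitz for every fixed $d$. The term $-bd$ does not depend on $s$. The remainder $h[s-d]^+ + b[d-s]^+$ is piecewise linear with slopes $-b$ and $h$. One can also see this from the identity used in the proof of \cref{lemma:lipschitz}: there $\frac{h+b}{2} + \frac{\abs{h-b}}{2} = \max\{h,b\}$ bounds the slope, and that argument carries over verbatim with $s_t(0), s_t'(0)$ replaced by $s_t, s^{\mathrm{stat}}_t$. Applying this estimate inside the expectation gives the claim.

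I expect the main obstacle to be rigorously identifying $\mu_t(\tau_t)$ as $\E[g(s^{\mathrm{stat}}, D)]$ in the lost-sales case with positive lead time. There the state chain is not as explicit as in \cref{lemma:stationary_distribution_backlogging}. I would handle it by citing ergodicity of the lost-sales base-stock chain under \Cref{ass:main_assumption}, and then pass the limit in \cref{eq:asymptotic_cost_function} through by bounded convergence, using that costs are bounded by $U\max\{h,b\}$ plus a demand term independent of the state. Everything else is a direct triangle inequality.
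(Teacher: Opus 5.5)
Your proposal is correct and follows essentially the same route as the paper. The paper likewise shows that the one-period cost is $\max\{h,b\}$-Lipschitz in the post-replenishment inventory level, using the identity from the proof of \cref{lemma:lipschitz}. It then identifies $\mu_t(\tau_t)$ with $\E_{F_t}[C_t(s_t^{\mathrm{stat}},D_t)]$ by ergodicity, citing \cref{eq:stationary_distribution_cost_function_backlogging} for backlogging and an external ergodicity result for lost-sales, and concludes by taking expectations and applying the triangle inequality.

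Your explicit remark on the coupling adds precision the paper leaves implicit: the bound holds for any coupling in which $s_t$ and $s_t^{\mathrm{stat}}$ are independent of the shared demand $D_t$.
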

\begin{proof}
    We first show that the immediate cost $C_t$ (\cref{eq:cost_function}) is Lipschitz continuous in inventory after replenishment $I_t + Q_{t-L}$ with factor $\max\{h,b\}$.
    Let $C_t(s_t, D_t)$ be the cost incurred when the inventory before demand is $s_t = I_t + Q_{t-L}$ and demand $D_t$ is realized.
    From \cref{eq:cost_in_lipschitz_proof} in the proof of the Lipschitz continuity in the base-stock level (\cref{lemma:lipschitz}) it follows for two inventory levels $s_t$ and $s'_t$ that
    \begin{align*}
        \abs*{C_t(s_t, D_t) - C_t(s'_t, D_t)} &\leq \frac{h+b}{2} \abs*{\abs*{s_t-D_t} - \abs*{s'_t-D_t}} + \frac{\abs*{h-b}}{2} \abs*{s_t - s'_t} \\
        &\leq \frac{h+b}{2} \abs*{s_t - s'_t} + \frac{\abs*{h-b}}{2} \abs*{s_t - s'_t} = \max\{h,b\} \abs*{s_t - s'_t}.    
    \end{align*}
    In particular, the Lipschitz continuity implies
        $\abs*{C_t(s_t,D_t)-C_t(s_t^{\mathrm{stat}},D_t)} \leq \max\{h,b\} \abs*{s_t-s_t^{\mathrm{stat}}}$.
    
    By the ergodicity of the inventory process under policy $\tau_t$, the long-run average cost equals the expected immediate cost under the unique stationary distribution, that is, ${\mu_t(\tau_t)=\E_{F_t}\left[C_t(s_t^{\mathrm{stat}},D_t)\right]}$ (see \cref{eq:stationary_distribution_cost_function_backlogging} for backlogging, \citet{huh2009adaptive} for lost-sales).
    Taking expectations on the above inequality and using the triangle inequality yields the result.
\end{proof}

For the analysis of $\Delta_1$, we partition the non-depletion steps 
\(
    \bigcup_{v,k}[\bar{\alpha}_{v,k}, \alpha_{v,k+1} - 1]
\)
into maximal intervals on which both the demand distribution and the implemented base-stock level are fixed. Equivalently, an interval starts whenever a non-depletion period begins, at some $\bar{\alpha}_{v,k}$, or whenever a demand change occurs within a non-depletion period. We denote these intervals by $K_1,\ldots,K_J$, ordered by their start times, and write $\zeta_j=\min K_j$ to be the start point of interval $K_j$.  By construction, the inventory system evolves over each interval under a fixed demand distribution and base-stock level.  Thus we can write $\Delta_1$ in \cref{lemma:decomposition_regret_difference} as
\begin{equation} \label{eq:Deltaone}
    \Delta_1 = \E\biggl[\sum_{j=1}^J \sum_{t \in K_j} \left(C_t(\tau_{\zeta_j}) - \mu_t(\tau_{\zeta_j})\right)\biggr].
\end{equation}

Next, we bound the number $J$ of intervals $K_j$. There are $S-1$ demand change points in total. Moreover, within each episode the base-stock levels decrease monotonically, so there are at most $\abs{\mathcal{A}_{\gamma}}-1 \leq \lceil U/\gamma\rceil$ epoch changes per episode. Since the number of episodes is at most $S$ by \cref{lemma:false_alarms}, the total number of epoch starts is at most $S\lceil U/\gamma\rceil$. Each interval starts either at a demand change or an epoch start, and thus
\vspace{-5pt}
\begin{equation} \label{eq:J}
    J \leq S\left(\lceil U/\gamma\rceil+1\right)-1 .
\end{equation}
We bound $\Delta_{\tau_0}$ and $\Delta_1$ for the two inventory models separately.

\begin{rproofof}{\Cref{lemma:regret_transformation}}
\noindent\textbf{Bound of $\Delta_1$ under Backlogging.}\quad
    By construction, the system operates on each interval $K_j$ under a fixed demand distribution and base-stock level. By \cref{lemma:stationary_distribution_backlogging}, after at most $L$ time steps following $\zeta_j$, the backlog inventory process reaches the stationary distribution associated with $(F_{\zeta_j},\tau_{\zeta_j})$. Hence only the first $L$ steps of each interval can contribute to the transient mismatch. It follows
    \begin{equation} \label{eq:Deltaone_backlog}
        \Delta_1 = \E\bigg[\sum_{j=1}^J \sum_{t = \zeta_j}^{\min\{\zeta_j+L, \zeta_{j+1}\}-1} \left(C_t(\tau_{\zeta_j}) - \mu_{\zeta_j}(\tau_{\zeta_j})\right) \bigg].
    \end{equation}
    
    Consider a time step ${t \in [\zeta_j, \min\{\zeta_j+L, \zeta_{j+1}\}-1]}$. Let the actual inventory after replenishment be $s_t = I_t + Q_{t-L}$ and that in steady state under $(\tau_{\zeta_j}, F_{\zeta_j})$ be denoted $s^{\mathrm{stat}}_t$. With \cref{lemma:lipschitz_in_s} we have that
    \begin{equation}
    \label{eq:EC_mu}
        \abs*{\E_{F_{\zeta_j}}[C_t(\tau_{\zeta_j})] - \mu_{\zeta_j}(\tau_{\zeta_j})} \leq \max\{h, b\} \cdot \E_{F_{\zeta_j}}\left[\abs*{s_t - s^{\mathrm{stat}}_t}\right].
    \end{equation}
    Since $t\in K_j$, the inventory position is maintained at $\tau_{\zeta_j}$, hence
    \[
        s_t = I_t + Q_{t-L} = I_t + \sum_{i=0}^{L} Q_{t-i} - \sum_{i=0}^{L-1} Q_{t-i} = \tau_{\zeta_j} - \sum_{i=0}^{L-1} Q_{t-i}.
    \]
    Moreover, by \cref{lemma:stationary_distribution_backlogging}, we have ${s^{\mathrm{stat}}_t = \tau_{\zeta_j} - \sum_{i=1}^L D^{(j)}_{t-i}}$ where we denote $\{D^{(j)}_{t'}\}_{t' \in [T]}$ as an i.i.d.\ demand sequence sampled from distribution $F_{\zeta_j}$.
    The absolute difference between the two inventory levels is bounded by the triangle inequality via
    \begin{equation} \label{eq:Q_t}
        \abs*{s_t - s^{\mathrm{stat}}_t} = \abs*{\sum_{i=1}^L D^{(j)}_{t-i} - \sum_{i=0}^{L-1} Q_{t-i}} \leq \sum_{i=1}^L D^{(j)}_{t-i} + \sum_{i=0}^{L-1} Q_{t-i}.
    \end{equation}
    It remains to bound the recent order quantities.  Let $P_t \coloneq I_t + \sum_{i=0}^L Q_{t-i}$ denote the inventory position after ordering. The backlogging dynamics imply
        $P_t = P_{t-L} - \sum_{i=1}^L D_{t-i} + \sum_{i=0}^{L-1} Q_{t-i}$.
    Rearranging gives
    \[
        \sum_{i=0}^{L-1} Q_{t-i} = P_t-P_{t-L} + \sum_{i=1}^L D_{t-i}.
    \]
    Since $P_t\leq U$ and $P_{t-L}\geq 0$, we have that $\sum_{i=0}^{L-1} Q_{t-i} \leq U + \sum_{i=1}^L D_{t-i}$.  Moreover, denoting $\bar{D} \coloneq \sup_{t \in [T]} \E_{F_t}[D]$ (since under \Cref{ass:main_assumption}.\ref{ass:sub-Gaussian_demand} the absolute first moments of the demand distributions $F_t$ are finite constants) we have via \cref{eq:Q_t} that
    \[
        \E_{F_{\zeta_j}}\left[\abs*{s_t - s^{\mathrm{stat}}_t}\right] \leq \sum_{i=1}^L \E_{F_{\zeta_j}}\left[D^{(j)}_{t-i}\right] + U + \sum_{i=1}^L \E_{F_{t-i}}\left[D_{t-i}\right] \leq U + 2L\bar{D}
    \]
    Finally, together with \cref{eq:J,eq:Deltaone_backlog,eq:EC_mu} it follows that
    \[
        \Delta_1 \leq \E\biggl[\sum_{j=1}^J L \max\{h,b\} (U + 2L\bar{D})\biggr] \leq (S\left(\lceil U/\gamma \rceil + 1 \right) - 1) L  \max\{h,b\} (U + 2L\bar{D}).
    \]

\noindent\textbf{Bound of $\Delta_{\tau_0}$ under Backlogging.}\quad
    Fix an episode $v$ and epoch $k\geq 2$. During the depletion period $[\alpha_{v,k},\bar{\alpha}_{v,k}-1]$, orders are suspended until the inventory position falls down to the new target $\tau_v^k$. Let $s_t=I_t+Q_{t-L}$ denote the actual inventory after replenishment, and let $s_t^{\mathrm{stat}}$ denote the corresponding stationary inventory after replenishment under policy $\tau_v^k$ and demand distribution $F_t$. By \cref{lemma:lipschitz_in_s}, we have that
    \begin{equation} \label{eq:Ct_mu_st_ststat}
        \abs*{\E_{F_t}[C_t(\tau_v^k)] - \mu_t(\tau_v^k)} \leq \max\{h, b\} \E_{F_t}\left[\abs*{s_t - s_t^{\mathrm{stat}}}\right].
    \end{equation}

    Expressing the inventory on-hand at time $t$ as
        $I_t = I_{t-L} + \sum_{i=1}^L Q_{t-L-i} - \sum_{i=1}^L D_{t-i}$
    we obtain
    \[
        s_t = I_t + Q_{t-L} = I_{t-L} + \sum_{i=0}^L Q_{t-L-i} - \sum_{i=1}^L D_{t-i}.
    \]
    We observe that the sum $I_{t-L} + \sum_{i=0}^L Q_{t-L-i}$ is precisely the inventory position after ordering at time $t-L$ which is at least $\tau_{t-L}$. It immediately follows that
        $\tau_{t-L}-\sum_{i=1}^L D_{t-i} \leq s_t \leq U$.
    Moreover, by \cref{lemma:stationary_distribution_backlogging}, the steady-state inventory after replenishment under policy $\tau_v^k$ and demand distribution $F_t$ satisfies $s^{\mathrm{stat}}_t = \tau_v^k - \sum_{i=1}^L D^{(t)}_{t-i} \leq \tau_v^k$, where $\{D^{(t)}_{t'}\}_{t'\in[T]}$ are i.i.d.\ draws from $F_t$. 
    Hence, if $s_t\ge s_t^{\mathrm{stat}}$, then
    \[
        s_t-s_t^{\mathrm{stat}} \leq U- \bigg(\tau_v^k - \sum_{i=1}^L D^{(t)}_{t-i}\bigg) \leq U + \sum_{i=1}^L D^{(t)}_{t-i}.
    \]
    Otherwise, if $s_t < s_t^{\mathrm{stat}}$, then
    \[
        s_t^{\mathrm{stat}}-s_t \leq \tau_v^k - \bigg(\tau_{t-L} - \sum_{i=1}^L D_{t-i}\bigg) \leq U + \sum_{i=1}^L D_{t-i}.
    \]
    With $\bar{D} \coloneq \sup_{t \in [T]} \E_{F_t}[D]$, this yields
        $\E_{F_t}\left[\abs*{s_t-s_t^{\mathrm{stat}}}\right] \leq U + L \bar{D}$.
    Substituting this into \cref{eq:Ct_mu_st_ststat} gives
    \[
        \abs*{\E_{F_t}[C_t(\tau_v^k)] - \mu_t(\tau_v^k)} \leq \max\{h, b\} \left(U + L \bar{D}\right).
    \]
    Finally, it takes in expectation at most $L + (\tau_v^{k-1} - \tau_v^k)\nu$ steps to deplete an amount $\tau_v^{k-1} - \tau_v^k$ of inventory at the start of epoch $k$ in episode $v$ (see proof of bound on $R_{\tau_0}(T)$ in \cref{app:regret_LS_Lpositive}).
    The number of epochs per episode is deterministically upper bounded by $\lceil U/\gamma\rceil$ and the total amount by which the base-stock level decreases over the course of an episode by $U$. This yields
    \begin{align*}
        \E\biggl[\sum_{k \geq 2} \sum_{t \in [\alpha_{v,k}, \bar{\alpha}_{v,k} - 1]} \abs*{\E_{F_t}[C_t(\tau_v^k)] - \mu_t(\tau_v^k)}\biggr] &\leq \max\{h, b\} \left(U+L\bar{D} \right) \E\biggl[\sum_{k \geq 2} (L + (\tau_v^{k-1} - \tau_v^k)\nu) \biggr] \\
        &\leq \max\{h, b\} \left(U+L\bar{D} \right) \left(\lceil U/\gamma\rceil L + U\nu \right).
    \end{align*}
    Summing across all up to $S$ episodes (\cref{lemma:false_alarms}), we obtain
    \[
        \Delta_{\tau_0} \leq \E\biggl[\sum_{v=1}^{\hat{S}} \sum_{k \geq 2} \sum_{t \in [\alpha_{v,k}, \bar{\alpha}_{v,k} - 1]} \abs*{\E_{F_t}[C_t(\tau_v^k)] - \mu_t(\tau_v^k)}\biggr] \leq S \max\{h, b\} \left(U+L\bar{D} \right) \left(\lceil U/\gamma\rceil L + U\nu \right).
    \]

\noindent\textbf{Bound of $\Delta_1$ under Lost-Sales.}\quad
    Fix an interval $K_j$ and let $h^{(j)}$ be the bias function for the average cost problem with demand distribution $F_{\zeta_j}$ and base-stock level $\tau_{\zeta_j}$. We temporarily modify notation here, and denote $C_t(s, \tau)$ to be the random cost under demand $D \sim F_t$ with state $s$ and base-stock policy $\tau$.  Note that $C_t(\tau_t) = C_t(s_t, \tau_t)$ under this notation.
    Then, for every state $s$ and $t \in K_j$, we have
    \[
        \mu_{\zeta_j}(\tau_{\zeta_j}) + h^{(j)}(s) = c_{\zeta_j}(s, \tau_{\zeta_j}) + \E_{F_{\zeta_j}}\left[ h^{(j)}(S_{t+1}) \mid S_t = s \right],
    \]
    or equivalently
    \(
        c_{\zeta_j}(s, \tau_{\zeta_j}) - \mu_{\zeta_j}(\tau_{\zeta_j}) = h^{(j)}(s) - \E_{F_{\zeta_j}}\left[ h^{(j)}(S_{t+1}) \mid S_t = s \right].
    \)
    Summing over one stationary interval and rearranging the telescoping sum across $t$ gives
    \begin{align*}
        \sum_{t \in K_j} \left( \E_{F_{\zeta_j}}[C_t(\tau_{\zeta_j})] - \mu_{\zeta_j}(\tau_{\zeta_j}) \right) &= \sum_{t \in K_j} \E_{F_{\zeta_j}}\left[ h^{(j)}(S_t) - h^{(j)}(S_{t+1})\right] 
        = \E_{F_{\zeta_j}}\left[ h^{(j)}(S_{\zeta_j}) - h^{(j)}(S_{\zeta_{j+1}})\right].
    \end{align*}
    The bias function in the lost-sales model has a uniformly bounded span of $36\max\{h,b\}LU$ \citep[Lemma 9]{agrawal2022learning}, that is, $\max_s h^{(j)}(s) - \min_s h^{(j)}(s) \leq 36 \max\{h,b\}LU$.
    Therefore, we have that
    \[
        \sum_{t \in K_j} \left( \E_{F_{\zeta_j}}[C_t(\tau_{\zeta_j})] - \mu_{\zeta_j}(\tau_{\zeta_j}) \right) \leq 36\max\{h,b\}LU.
    \]
    Summing over the $J$ intervals, with \cref{eq:Deltaone,eq:J} we obtain
    \begin{align*}
        \Delta_1 &= \E\biggl[\sum_{j=1}^J \sum_{t \in K_j} \left( C_t(\tau_{\zeta_j}) - \mu_{\zeta_j}(\tau_{\zeta_j})\right) \biggr]
        \leq (S\left(\lceil U/\gamma \rceil + 1 \right) - 1) 36\max\{h,b\}LU.
    \end{align*}

\noindent\textbf{Bound of $\Delta_{\tau_0}$ under Lost-Sales.}\quad
    Consider an epoch $k$ in episode $v$ and a time step $t \in [\alpha_{v,k},\bar{\alpha}_{v,k}-1]$. Under \Cref{ass:main_assumption}.\ref{ass:bounded_action_space}, both the actual and stationary inventory after replenishment in the lost-sales regime lie in $[0,U]$. Thus, with $s_t=I_t+Q_{t-L}$ and $s_t^{\mathrm{stat}}$ denoting the corresponding stationary inventory after replenishment under policy $\tau_v^k$ and demand distribution $F_t$, \Cref{lemma:lipschitz_in_s} gives
    \[
         \abs*{\E_{F_t}[C_t(\tau_v^k)] - \mu_t(\tau_v^k)} \leq \max\{h, b\} \E_{F_t}\left[\abs*{s_t - s^{\mathrm{stat}}_t}\right] \leq \max\{h,b\} U.
    \]
    With the upper bound on the expected number of inventory depletion steps per episode (see proof of the bound on $R_{\tau_0}(T)$ in \cref{app:regret_LS_Lpositive}) and the upper bound on the number of episodes $\hat{S}$ (\cref{lemma:false_alarms}) we obtain
    \[
        \Delta_{\tau_0} \leq \E\biggl[\sum_{v=1}^{\hat{S}} \sum_{k \geq 2} \sum_{t \in [\alpha_{v,k}, \bar{\alpha}_{v,k} -1]} \abs*{\E_{F_t}[C_t(\tau_v^k)] - \mu_t(\tau_v^k)}\biggr] \leq S(L \lceil U/\gamma \rceil + U\nu) \max\{h,b\} U.
    \]

\noindent\textbf{Final Bound.}\quad
    Overall, we have for the backlogging model
    \begin{align*}
        \Delta_1 \leq (S\left(\lceil U/\gamma \rceil + 1 \right) - 1) L  \max\{h,b\} (U + 2L\bar{D}) \ \ \text{ and } \ \ 
        \Delta_{\tau_0} \leq S \max\{h, b\} \left(U+L\bar{D} \right) \left(\lceil U/\gamma\rceil L + U\nu \right),
    \end{align*}
    and for the lost-sales model
    \begin{align*}
        \Delta_1&\leq 36 (S\left(\lceil U/\gamma \rceil + 1 \right) - 1) \max\{h,b\}LU \ \ \text{ and } \ \
        \Delta_{\tau_0} \leq  S\left(L \lceil U/\gamma \rceil + U\nu\right) \max\{h,b\} U.
    \end{align*}
    Combining this with \Cref{lemma:decomposition_regret_difference} gives the final result. 
\end{rproofof}

\subsection{Discussion and Related Literature}
\label{app:related_lit_discussion_avg_cost}

Below, we summarize the implications of this alternative benchmark and relate our guarantees to existing non-stationary MDP results.

\paragraph{Implications of Alternative Benchmark.} The benchmark $R_C(T)$ in \cref{def:regret_expected_cost} is more demanding than the stationary average cost regret $R(T)$ in \cref{def:regret}.  The main difference is that $R_C(T)$ also accounts for transient state effects after policy or demand distribution changes.  Our results show that these transient effects do not change the regret order in the zero lead time settings.  For both backlogging and lost-sales with $L = 0$, the bounds under $R_C(T)$ differ from those under $R(T)$ only by an additional term which is independent of $T$. Hence, the stronger benchmark does not affect the asymptotic regret scaling in this regime, even when the number of distribution changes is unknown.

With positive lead times, the transient effects are more pronounced because the current state depends on past orders. When $S$ is known, however, the algorithms can be tuned so that the rates for $R_C(T)$ are of the same order as $R(T)$. Thus, knowledge of the degree of non-stationarity allows the algorithm to compensate for the transient effects.
When $S$ is unknown, the bounds acquire additional terms that are linear in $S$ but do not worsen the leading dependence on $T$. Specifically, under backlogging, this results in an additional $\mathcal{O}(S\sqrt{T})$ term, while for lost-sales the additional term scales as $\mathcal{O}(ST^{1/4})$. Moreover, we observe that the additional $T$-dependent terms arise from the discretization argument, specifically through the number of epochs per episode in the bound on depletion periods and through the number of intervals $K_j$. To the best of our knowledge, comparable transient regret guarantees are not available for continuous-state, continuous-action infinite horizon MDPs with non-stationarity, hence it remains unclear whether this dependence is intrinsic or an artifact of our analysis.

\paragraph{Relation to Non-Stationary MDP Literature.}
We first note that, to the best of our knowledge, all papers that study non-stationary infinite horizon average cost MDPs consider the transient cost benchmark (\cref{def:regret_expected_cost}).  Below we compare the resulting regret guarantees and assumption on whether or not the degree of non-stationarity is known.

Most of the literature largely focuses on finite state and action spaces, and many sharp guarantees require prior knowledge of the degree of non-stationarity.
For example, \citet{auer2008near} study piecewise stationary finite MDPs, and their restarting-based algorithm obtains a dynamic regret of $\tilde{\mathcal{O}}(S^{1/3}T^{2/3})$. To avoid the information loss caused by periodic restarts, \citet{gajane2018sliding} introduce a sliding-window approach which achieves the same order. Similarly, \citet{ortner2020variational} also consider continuously  varying, finite MDPs and establish $\tilde{\mathcal{O}}(V^{1/3}T^{2/3})$ regret. However, these guarantees strictly depend on knowing the exact number of shifts $S$ or the total variation budget $V$ to calibrate their window sizes or restart frequencies.

Parameter-free approaches remove this requirement, typically at the cost of a weaker regret rate. \citet{wei2021non} obtain optimal dynamic regret guarantees for finite MDPs when structural quantities such as the diameter, $S$, or $V$ are known, while their parameter-free reduction incurs an additional $\tilde{\mathcal{O}}(T^{3/4})$ term.
Likewise, \citet{cheung2023nonstationary} consider finite MDPs and achieve $\tilde{\mathcal{O}}(V^{1/4}T^{3/4})$ regret when the variation budget is known, while their Bandit-over-RL approach achieves the same order without prior knowledge of the variation budget. For single-item lost-sales inventory systems with zero lead time, fixed ordering costs, and known $V$, they further obtain $\tilde{\mathcal{O}}(V^{1/3}T^{2/3})$ under additional assumptions on the demand distribution. Our results instead address inventory control problems with continuous demand and action spaces, positive lead times, and unknown non-stationarity. We obtain near-optimal rates when $L=0$, $\sqrt{T}$-type rates for backlogging with $L>0$ under the stationary average cost benchmark, and comparable expected cost guarantees when the degree of non-stationarity is known.


Overall, the transient regret analysis demonstrates that our algorithms remain competitive even under this substantially stronger benchmark. The results suggest that the main difficulty introduced by transient evaluation is the combination of a continuous policy space, strictly positive lead times and an unknown frequency of change. To the best of our knowledge, this is the first work to establish dynamic regret guarantees in $R_C(T)$ under this general setting, while maintaining the strong performance under $R(T)$ whenever $L=0$ or the degree of non-stationarity is known, and otherwise still retaining the rate with respect to $T$.

\subsection{Simulation Results}
We also empirically evaluate our algorithms under the expected cost benchmark in \cref{def:regret_expected_cost}. The results are shown in \cref{fig:simulations_expected_cost_vs_S,fig:simulations_expected_cost_vs_L} and are directly comparable to the dynamic regret experiments in \cref{sec:results_experiments} (see \cref{fig:simulations_regret_vs_S,fig:simulations_regret_vs_L}). Overall, the empirical behavior under expected cost is qualitatively consistent with the behavior under stationary average cost regret.  We omit replicating the other simulations as such.

As shown in \cref{fig:simulations_expected_cost_vs_S}, increasing the number of demand changes $S$ increases the cumulative expected cost across all models and demand distributions. The dependence on $S$ follows the same broad trends observed for dynamic regret.  The algorithms remain stable as the environment changes more frequently, and the gap between the backlogging and lost-sales regimes is modest in the tested instances.  \Cref{fig:simulations_expected_cost_vs_L} shows the dependence on the lead time $L$. As in the dynamic regret experiments, the cumulative expected cost grows approximately linearly with $L$. This supports the conclusion that the proposed algorithms retain favorable dependence on lead time even under the stronger expected cost benchmark, in contrast to generic finite-state approaches whose state spaces grow rapidly with $L$.
Taken together, these experiments suggest that the transient effects captured by $R_C(T)$ do not materially change the qualitative conclusions from \cref{sec:results_experiments}.

\begin{figure}[h]
    \centering
    \begin{subfigure}{0.9\textwidth}
        \begin{tikzpicture}

\begin{axis}[
    name=plot_normal,
    width=6.5cm,
    height=3.7cm,
    xmajorgrids=true, 
    grid style={dashed, gray!30},
    xmode=log,
    ymode=log,
    ymax=7.2e6,
    ytick={5e6},
    yticklabels={$5 \cdot 10^6$},
    axis x line=bottom,
    axis y line=middle,
    xmax=1000,
    enlargelimits=false,
    xtick={1,5,10,22,100,464},
    xticklabels={
                {$1$},
                {\hspace{-4mm}$5$},
                {\hspace{-0mm}$\log(T)$},
                {\hspace{7mm}$T^{1/3}$},
                {\hspace{3mm}$T^{1/2}$},
                {$T^{2/3}$}
                },
    xlabel={$S$},
    ylabel={$\sum_{t=1}^{T}\E[C_t]$},
    xlabel style={
    at={(axis description cs:1,-0.06)},
    anchor=north
    },
    ylabel style={
    at={(axis description cs:-0.2,0.65)},
    anchor=north
    },
    clip=false
]

\addplot[thick, mark=o, color=nupurple] 
coordinates {
    (1, 464685.5555)
    (5, 872759.6469)
    (10, 925998.2271)
    (22, 959525.9627)
    (100, 1001339.937)
    (464, 1057087.505)
};

\addplot[thick, mark=square*, color=kulblue] 
coordinates {
    (1, 903974.2835)
    (5, 2084257.401)
    (10, 2252243.227)
    (22, 2389729.174)
    (100, 2539277.092)
    (464, 2759834.811)
};

\addplot[thick, mark=triangle*, color=darkorange] 
coordinates {
    (1, 1567575.268)
    (5, 3266646.553)
    (10, 4230753.177)
    (22, 4664757.753)
    (100, 4980443.711)
    (464, 5382121.586)
};
\end{axis}

\hspace{0.5cm}
\begin{axis}[
    name=plot_normal_LS,
    at={(plot_normal.right of south east)},
    xshift=1.25cm,
    width=6.5cm,
    height=3.7cm,
    xmajorgrids=true, 
    grid style={dashed, gray!30},
    xmode=log,
    ymode=log,
    ymax=7.2e6,
    ytick={5e6},
    yticklabels={$5 \cdot 10^6$},
    axis x line=bottom,
    axis y line=middle,
    xmax=1000,
    enlargelimits=false,
    xtick={1,5,10,22,100,464},
    xticklabels={
                {$1$},
                {\hspace{-4mm}$5$},
                {\hspace{-0mm}$\log(T)$},
                {\hspace{7mm}$T^{1/3}$},
                {\hspace{3mm}$T^{1/2}$},
                {$T^{2/3}$}
                },    
    xlabel={$S$},
    ylabel={$\sum_{t=1}^{T}\E[C_t]$},
    xlabel style={
    at={(axis description cs:1,-0.06)},
    anchor=north
    },
    ylabel style={
    at={(axis description cs:-0.2,0.65)},
    anchor=north
    },
    clip=false
]

\addplot[thick, mark=o, color=nupurple] 
coordinates {
    (1, 470285.1073)
    (5, 870403.8514)
    (10, 938986.3421)
    (22, 983827.7092)
    (100, 1027735.493)
    (464, 1069195.598)
};

\addplot[thick, mark=square*, color=kulblue] 
coordinates {
    (1, 869809.8537)
    (5, 1362679.913)
    (10, 1572747.25)
    (22, 2089951.252)
    (100, 2260819.937)
    (464, 2363510.707)
};

\addplot[thick, mark=triangle*, color=darkorange] 
coordinates {
    (1, 1215430.557)
    (5, 1828500.464)
    (10, 2222742.841)
    (22, 2949814.985)
    (100, 3931385.495)
    (464, 4074598.266)
};
\end{axis}

\begin{axis}[
  hide axis,
  name=dummy_axis,
  xmin=0, xmax=1,
  ymin=0, ymax=1,
  legend columns=1,
  legend cell align=left,
  at={(plot_normal_LS.north east)},
  anchor=north east,                 
  xshift=1.5cm,                     
  yshift=-0.0cm,  
  legend style={
    draw=gray!30,
  }
]
\addlegendimage{thick, mark=o, color=nupurple}
\addlegendentry{0}

\addlegendimage{thick, mark=square*, color=kulblue}
\addlegendentry{2}

\addlegendimage{thick, mark=triangle*, color=darkorange}
\addlegendentry{5}
\end{axis}

\end{tikzpicture}
        \caption{Normal.}
        \label{fig:simulations_expected_cost_vs_S_normal}
    \end{subfigure}
    \begin{subfigure}{0.9\textwidth}
        \begin{tikzpicture}

\begin{axis}[
    name=plot_uniform,
    width=6.5cm,
    height=3.7cm,
    xmajorgrids=true, 
    grid style={dashed, gray!30},
    xmode=log,
    ymode=log,
    ymax=7.2e6,
    ytick={5e6},
    yticklabels={$5 \cdot 10^6$},
    axis x line=bottom,
    axis y line=middle,
    xmax=1000,
    enlargelimits=false,
    xtick={1,5,10,22,100,464},
    xticklabels={
                {$1$},
                {\hspace{-4mm}$5$},
                {\hspace{-0mm}$\log(T)$},
                {\hspace{7mm}$T^{1/3}$},
                {\hspace{3mm}$T^{1/2}$},
                {$T^{2/3}$}
                },    
    xlabel={$S$},
    ylabel={$\sum_{t=1}^{T}\E[C_t]$},
    xlabel style={
    at={(axis description cs:1,-0.06)},
    anchor=north
    },
    ylabel style={
    at={(axis description cs:-0.2,0.65)},
    anchor=north
    },
    clip=false
]

\addplot[thick, mark=o, color=nupurple] 
coordinates {
    (1, 133241.832)
    (5, 620993.61)
    (10, 695300.7436)
    (22, 757592.2475)
    (100, 878853.8624)
    (464, 985570.0092)
};

\addplot[thick, mark=square*, color=kulblue] 
coordinates {
    (1, 391437.7705)
    (5, 1011736.636)
    (10, 1432024.992)
    (22, 2125273.05)
    (100, 2493436.454)
    (464, 2825036.987)
};

\addplot[thick, mark=triangle*, color=darkorange] 
coordinates {
    (1, 928444.4544)
    (5, 2020792.1)
    (10, 2913646.151)
    (22, 4653134.907)
    (100, 5300733.744)
    (464, 5804595.701)
};
\end{axis}

\hspace{0.5cm}
\begin{axis}[
    name=plot_uniform_LS,
    at={(plot_uniform.right of south east)},
    xshift=1.25cm,
    width=6.5cm,
    height=3.7cm,
    xmajorgrids=true, 
    grid style={dashed, gray!30},
    xmode=log,
    ymode=log,
    ymax=7.2e6,
    ytick={5e6},
    yticklabels={$5 \cdot 10^6$},
    axis x line=bottom,
    axis y line=middle,
    xmax=1000,
    enlargelimits=false,
    xtick={1,5,10,22,100,464},
    xticklabels={
                {$1$},
                {\hspace{-4mm}$5$},
                {\hspace{-0mm}$\log(T)$},
                {\hspace{7mm}$T^{1/3}$},
                {\hspace{3mm}$T^{1/2}$},
                {$T^{2/3}$}
                },
    xlabel={$S$},
    ylabel={$\sum_{t=1}^{T}\E[C_t]$}, 
    xlabel style={
    at={(axis description cs:1,-0.06)},
    anchor=north
    },
    ylabel style={
    at={(axis description cs:-0.2,0.65)},
    anchor=north
    },
    clip=false
]

\addplot[thick, mark=o, color=nupurple] 
coordinates {
    (1, 145549.8391)
    (5, 619903.1319)
    (10, 729627.9607)
    (22, 813887.8876)
    (100, 981692.4734)
    (464, 1062779.677)
};

\addplot[thick, mark=square*, color=kulblue] 
coordinates {
    (1, 371103.4197)
    (5, 890994.0255)
    (10, 1348154.767)
    (22, 1956823.464)
    (100, 2282070.27)
    (464, 2552032.664)
};

\addplot[thick, mark=triangle*, color=darkorange] 
coordinates {
    (1, 682369.6621)
    (5, 1342325.789)
    (10, 1869484.54)
    (22, 2862547.352)
    (100, 4048856.688)
    (464, 4311104.664)
};
\end{axis}

\begin{axis}[
  hide axis,
  name=dummy_axis,
  xmin=0, xmax=1,
  ymin=0, ymax=1,
  legend columns=1,
  legend cell align=left,
  at={(plot_uniform_LS.north east)},
  anchor=north east,                 
  xshift=1.5cm,                     
  yshift=-0.0cm,  
  legend style={
    draw=gray!30,
  }
]
\addlegendimage{thick, mark=o, color=nupurple}
\addlegendentry{0}

\addlegendimage{thick, mark=square*, color=kulblue}
\addlegendentry{2}

\addlegendimage{thick, mark=triangle*, color=darkorange}
\addlegendentry{5}
\end{axis}

\end{tikzpicture}
        \caption{Uniform.}
        \label{fig:simulations_expected_cost_vs_S_uniform}
    \end{subfigure}
    \caption{Expected cost vs. $S$ of \NSBLIC under backlogging (left), and \NSLSIC or \NSLSICL under lost-sales (right), shown on a log-log scale and for different values of lead time~$L$.  In \cref{fig:simulations_expected_cost_vs_S_normal} demand is drawn from the truncated normal distribution and in \cref{fig:simulations_expected_cost_vs_S_uniform} from the uniform distribution.}
    \label{fig:simulations_expected_cost_vs_S}
\end{figure}

\begin{figure}[ht]
    \centering
    \begin{subfigure}{0.9\textwidth}
        \begin{tikzpicture}

\begin{axis}[
  name=plot_normal,
  axis x line=bottom,
  axis y line=middle,
  width=5.5cm,
  height=3.7cm,
  xmajorgrids=true, 
  grid style={dashed, gray!30},
  xmin=0,
  xmax=0.85,
  ymin=0,
  ymax=5.4e6,
  ytick={5e6},
  yticklabels={$5 \cdot 10^6$},
  scaled y ticks=false,
  enlargelimits=false,
  xtick={0,0.3,0.75},
  xticklabels={$0$,$2$,$5$},
  xlabel={$L$},
  ylabel={$\sum_{t=1}^{T}\E[C_t]$},
  xlabel style={
  at={(axis description cs:1,-0.03)},
  anchor=north
  },
  ylabel style={
  at={(axis description cs:-0.25,0.65)},
  anchor=north
  },
  clip=false
]

\addplot[thick, mark size=2pt, color=black, mark=o, solid, forget plot]
coordinates {(0,464685.5555) (0.3,903974.2835) (0.75,1567575.268)};

\addplot[thick, mark size=2pt, color=red!85!black, mark=square*, solid, forget plot]
coordinates {(0,872759.6469) (0.3,2084257.401) (0.75,3266646.553)};

\addplot[thick, mark size=2pt, color=darkorange, mark=triangle*, solid, forget plot]
coordinates {(0,925998.2271) (0.3,2252243.227) (0.75,4230753.177)};

\addplot[thick, mark size=2pt, color=green!70!black, mark=diamond*, solid, forget plot]
coordinates {(0,959525.9627) (0.3,2389729.174) (0.75,4664757.753)};

\addplot[thick, mark size=2pt, color=kulblue, mark=star, forget plot]
coordinates {(0,1001339.937) (0.3,2539277.092) (0.75,4980443.711)};

\addplot[thick, mark size=2pt, color=nupurple, mark=x, forget plot]
coordinates {(0,1057087.505) (0.3,2759834.811) (0.75,5382121.586)};

\end{axis}

\begin{axis}[
  name=plot_normal_LS,
  at={(plot_normal.right of south east)},
  xshift=1.75cm,
  axis x line=bottom,
  axis y line=middle,
  width=5.5cm,
  height=3.7cm,
  xmajorgrids=true, 
  grid style={dashed, gray!30},
  xmin=0,
  xmax=0.85,
  ymin=0,
  ymax=5.4e6,
  ytick={5e6},
  yticklabels={$5 \cdot 10^6$},
  scaled y ticks=false,
  enlargelimits=false,
  xtick={0,0.3,0.75},
  xticklabels={$0$,$2$,$5$},
  ylabel={$\sum_{t=1}^{T}\E[C_t]$},
  xlabel={$L$},
  xlabel style={
  at={(axis description cs:1,-0.03)},
  anchor=north
  },
  ylabel style={
  at={(axis description cs:-0.25,0.65)},
  anchor=north
  },
  clip=false
]

\addplot[thick, mark size=2pt, color=black, mark=o, solid, forget plot]
coordinates {(0,470285.1073) (0.3,869809.8537) (0.75,1215430.557)};

\addplot[thick, mark size=2pt, color=red!85!black, mark=square*, solid, forget plot]
coordinates {(0,870403.8514) (0.3,1362679.913) (0.75,1828500.464)};

\addplot[thick, mark size=2pt, color=darkorange, mark=triangle*, solid, forget plot]
coordinates {(0,938986.3421) (0.3,1572747.25) (0.75,2222742.841)};

\addplot[thick, mark size=2pt, color=green!70!black, mark=diamond*, solid, forget plot]
coordinates {(0,983827.7092) (0.3,2089951.252) (0.75,2949814.985)};

\addplot[thick, mark size=2pt, color=kulblue, mark=star, forget plot]
coordinates {(0,1027735.493) (0.3,2260819.937) (0.75,3931385.495)};

\addplot[thick, mark size=2pt, color=nupurple, mark=x, forget plot]
coordinates {(0,1069195.598) (0.3,2363510.707) (0.75,4074598.266)};

\end{axis}

\begin{axis}[
  hide axis,
  name=dummy_axis,
  xmin=0, xmax=1,
  ymin=0, ymax=1,
  legend columns=2,
  legend cell align=left,
  at={(plot_normal_LS.north east)}, 
  anchor=north east,                  
  xshift=4.0cm,                    
  yshift=-0.0cm,    
  legend style={
    draw=gray!30,
  }
]

\addlegendimage{thick, mark=o, color=black}
\addlegendentry{1}

\addlegendimage{thick, mark=square*, color=red!85!black}
\addlegendentry{$\mathcal{O}(1)$}

\addlegendimage{thick, mark=triangle*, color=darkorange}
\addlegendentry{$\log(T)$}

\addlegendimage{thick, mark=diamond*, color=green!70!black}
\addlegendentry{$T^{1/3}$}

\addlegendimage{thick, mark=star, color=kulblue}
\addlegendentry{$T^{1/2}$}

\addlegendimage{thick, mark=x, color=nupurple}
\addlegendentry{$T^{2/3}$}
\end{axis}

\end{tikzpicture}
        \caption{Normal.}
        \label{fig:simulations_expected_cost_vs_L_normal}
    \end{subfigure}
    \begin{subfigure}{0.9\textwidth}
        \begin{tikzpicture}

\begin{axis}[
  name=plot_uniform,
  axis x line=bottom,
  axis y line=middle,
  width=5.5cm,
  height=3.7cm,
  xmajorgrids=true, 
  grid style={dashed, gray!30},
  xmin=0,
  xmax=0.85,
  ymin=0,
  ymax=5.4e6,
  ytick={5e6},
  yticklabels={$5 \cdot 10^6$},
  scaled y ticks=false,
  enlargelimits=false,
  xtick={0,0.3,0.75},
  xticklabels={$0$,$2$,$5$},
  xlabel={$L$},
  ylabel={$\sum_{t=1}^{T}\E[C_t]$},
  xlabel style={
  at={(axis description cs:1,-0.03)},
  anchor=north
  },
  ylabel style={
  at={(axis description cs:-0.25,0.65)},
  anchor=north
  },
  clip=false,
]

\addplot[thick, mark size=2pt, color=black, mark=o, solid, forget plot]
coordinates {(0,133241.832) (0.3,391437.7705) (0.75,928444.4544)};

\addplot[thick, mark size=2pt, color=red!85!black, mark=square*, solid, forget plot]
coordinates {(0,620993.61) (0.3,1011736.636) (0.75,2020792.1)};

\addplot[thick, mark size=2pt, color=darkorange, mark=triangle*, solid, forget plot]
coordinates {(0,695300.7436) (0.3,1432024.992) (0.75,2913646.151)};

\addplot[thick, mark size=2pt, color=green!70!black, mark=diamond*, solid, forget plot]
coordinates {(0,757592.2475) (0.3,2125273.05) (0.75,4653134.907)};

\addplot[thick, mark size=2pt, color=kulblue, mark=star, forget plot]
coordinates {(0,878853.8624) (0.3,2493436.454) (0.75,5300733.744)};

\addplot[thick, mark size=2pt, color=nupurple, mark=x, forget plot]
coordinates {(0,985570.0092) (0.3,2825036.987) (0.75,5804595.701)};

\end{axis}

\begin{axis}[
  name=plot_uniform_LS,
  at={(plot_uniform.right of south east)},
  xshift=1.75cm,
  axis x line=bottom,
  axis y line=middle,
  width=5.5cm,
  height=3.7cm,
  xmajorgrids=true, 
  grid style={dashed, gray!30},
  xmin=0,
  xmax=0.85,
  ymin=0,
  ymax=5.4e6,
  ytick={5e6},
  yticklabels={$5 \cdot 10^6$},
  scaled y ticks=false,
  enlargelimits=false,
  xtick={0,0.3,0.75},
  xticklabels={$0$,$2$,$5$},
  xlabel={$L$},
  ylabel={$\sum_{t=1}^{T}\E[C_t]$},
  xlabel style={
  at={(axis description cs:1,-0.03)},
  anchor=north
  },
  ylabel style={
  at={(axis description cs:-0.25,0.65)},
  anchor=north
  },
  clip=false
]

\addplot[thick, mark size=2pt, color=black, mark=o, solid, forget plot]
coordinates {(0,145549.8391) (0.3,371103.4197) (0.75,682369.6621)};

\addplot[thick, mark size=2pt, color=red!85!black, mark=square*, solid, forget plot]
coordinates {(0,619903.1319) (0.3,890994.0255) (0.75,1342325.7892)};

\addplot[thick, mark size=2pt, color=darkorange, mark=triangle*, solid, forget plot]
coordinates {(0,729627.9607) (0.3,1348154.767) (0.75,1869484.54)};

\addplot[thick, mark size=2pt, color=green!70!black, mark=diamond*, solid, forget plot]
coordinates {(0,813887.8876) (0.3,1956823.464) (0.75,2862547.352)};

\addplot[thick, mark size=2pt, color=kulblue, mark=star, forget plot]
coordinates {(0,981692.4734) (0.3,2282070.27) (0.75,4048856.688)};

\addplot[thick, mark size=2pt, color=nupurple, mark=x, forget plot]
coordinates {(0,1062779.677) (0.3,2552032.664) (0.75,4311104.664)};

\end{axis}

\begin{axis}[
  hide axis,
  name=dummy_axis,
  xmin=0, xmax=1,
  ymin=0, ymax=1,
  legend columns=2,
  legend cell align=left,
  at={(plot_uniform_LS.north east)},
  anchor=north east,                  
  xshift=4.0cm,                     
  yshift=-0.0cm,                     
  legend style={
    draw=gray!30,        
  }
]

\addlegendimage{thick, mark=o, color=black}
\addlegendentry{1}

\addlegendimage{thick, mark=square*, color=red!85!black}
\addlegendentry{$\mathcal{O}(1)$}

\addlegendimage{thick, mark=triangle*, color=darkorange}
\addlegendentry{$\log(T)$}

\addlegendimage{thick, mark=diamond*, color=green!70!black}
\addlegendentry{$T^{1/3}$}

\addlegendimage{thick, mark=star, color=kulblue}
\addlegendentry{$T^{1/2}$}

\addlegendimage{thick, mark=x, color=nupurple}
\addlegendentry{$T^{2/3}$}
\end{axis}

\end{tikzpicture}
        \caption{Uniform.}
        \label{fig:simulations_expected_cost_vs_L_uniform}
    \end{subfigure}
    \caption{Expected cost vs. lead time~$L$ of \NSBLIC under backlogging (left), and \NSLSIC or \NSLSICL under lost-sales (right), shown for different values of~$S$. In \cref{fig:simulations_expected_cost_vs_L_normal} demand is drawn from the truncated normal distribution and in \cref{fig:simulations_expected_cost_vs_L_uniform} from the uniform distribution.}
    \label{fig:simulations_expected_cost_vs_L}
\end{figure}

\end{APPENDICES}








\end{document}